\documentclass[11pt]{amsart}
\usepackage[utf8]{inputenc}
\usepackage[english]{babel}

\usepackage{setspace}
\usepackage{hyperref}
\usepackage{amssymb}
\usepackage{amsmath}
\usepackage{amsthm}
\usepackage{stmaryrd}
\usepackage{graphicx}
\usepackage{mathrsfs}
\usepackage[noabbrev]{cleveref}

\usepackage{halloweenmath}

\usepackage{mathtools}

\usepackage{relsize}

\usepackage{imakeidx}
\makeindex

\usepackage{quiver}

\usepackage[textwidth=3cm, textsize=small, colorinlistoftodos]{todonotes}

\usepackage{xcolor}
\hypersetup{
    colorlinks=true,
    linkcolor=blue,
    filecolor=magenta,      
    urlcolor=cyan,
    citecolor=blue,
}
\usepackage{enumerate}

\input xy
\xyoption{all}
\usepackage[color,matrix,arrow]{xy}
\usepackage{tikz}
\usepackage{tikz-cd}
\usetikzlibrary{bending}
\usetikzlibrary{arrows.meta}
\usetikzlibrary{matrix,arrows,decorations.markings, calc, positioning}
\usetikzlibrary {shapes.geometric}
\usetikzlibrary {intersections}

\usepackage{geometry}
\makeatletter
\providecommand{\leftsquigarrow}{%
  \mathrel{\mathpalette\reflect@squig\relax}%
}
\newcommand{\reflect@squig}[2]{%
  \reflectbox{$\m@th#1\rightsquigarrow$}%
}
\makeatother

\let\P=\relax

\let\epsilon=\relax

\let\th=\relax
\let\SS=\relax

\let\mod=\relax

\DeclareMathOperator{\THH}{THH}

\DeclareMathOperator{\HH}{HH}

\DeclareMathOperator{\Tor}{Tor}
\DeclareMathOperator{\Hom}{Hom}

\DeclareMathOperator{\mod}{\, \, (\text{mod} \,}

\newcommand{\CC}{\mathbb{C}}
\newcommand{\FF}{\mathbb{F}}

\newcommand{\RR}{\mathbb{R}}
\newcommand{\ZZ}{\mathbb{Z}}

\newcommand{\PP}{\mathbb{P}}

\newcommand{\SS}{\mathbb{S}}

\newcommand{\wedgey}{\vee}
\newcommand{\smashy}{\wedge}

\newcommand{\id}{\text{id}}

\newcommand{\th}{\text{th}}

\newcommand{\epsilon}{\varepsilon}

\newcommand{\Ab}{\mathcal{A}\emph{b}}

\newcommand{\AC}{\mathcal{A}^{C_p}}
\newcommand{\ACop}{(\mathcal{A}^{C_p})^{\text{op}}}

\newcommand{\P}{\mathcal{P}}

\newcommand{\I}{\mathcal{I}}

\newcommand{\op}{^\text{op}}

\newcommand{\uR}{\underline{R}}
\newcommand{\uF}{\underline{F}}
\newcommand{\uM}{\underline{M}}
\newcommand{\uN}{\underline{N}}
\newcommand{\uL}{\underline{L}}

\newcommand{\uI}{\underline{I}}
\newcommand{\uA}{\underline{A}}

\makeatletter
\newcommand*{\boxwedge}{\mathbin{\mathpalette\@boxwedge{}}}
\newcommand*{\@boxwedge}[2]{%
  \sbox0{$#1\boxtimes\m@th$}%
  \dimen2=.5\dimexpr\wd0-\ht0-\dp0\relax % side bearing
  \dimen@=\dimexpr\ht0+\dp0\relax
  \def\lw{.07} % line width as factor for height of \boxtimes
  \kern\dimen2 % side bearing
  \tikz[
    line width=\lw\dimen@,
    line join=round,
    x=\dimen@,
    y=\dimen@,
    baseline=0
  ]
  \draw
    (\lw/2,0) rectangle (1-\lw,1-\lw)
    (\lw,0) -- (.5,1-\lw-\lw/2) -- (1-\lw-\lw/2,0)
  ;%
  \kern\dimen2 % side bearing
}
\makeatother

\theoremstyle{plain}
\newtheorem{theorem}{Theorem}[section]
\newtheorem{lemma}[theorem]{Lemma}
\newtheorem{proposition}[theorem]{Proposition}
\newtheorem{corollary}[theorem]{Corollary}

\theoremstyle{definition}
\newtheorem{definition}[theorem]{Definition}

\newtheorem{example}[theorem]{Example}
\newtheorem{remark}[theorem]{Remark}

\title{The algebraic structure of twisted topological Hochschild homology}
\author{Danika Van Niel}

\begin{document}

\begin{abstract}
Topological Hochschild homology (THH) is an invariant of ring spectra developed by B\"okstedt. In recent years many equivariant analogues to THH have emerged. One example is twisted THH which is an invariant of $C_n$-equivariant ring spectra developed by Angeltveit, Blumberg, Gerhardt, Hill, Lawson, and Mandell. In this paper, we study the algebraic structure of twisted THH, and perform some computations. Specifically, we compute $C_2$-twisted THH of the Real bordism spectrum and show that the $C_p$-twisted THH of geometric ring $C_p$-spectra reduces to a computation of classical THH. We extend the algebraic structure of twisted THH to the twisted B\"okstedt spectral sequence of Adamyk, Gerhardt, Hess, Klang, and Kong. We show that, under appropriate flatness conditions and for $R$ a commutative ring $C_p$-spectrum, the $C_p$-twisted B\"okstedt spectral sequence is a spectral sequence of commutative $\underline{E}_\star(R)$-algebras.
\end{abstract}

\maketitle

\section{Introduction}\label{Sec:Intro}

Algebraic K-theory is an invariant of rings that is famously difficult to compute. While algebraic K-theory is defined using topology, it has connections to many fields of mathematics, including number theory and algebraic geometry. One very successful approach to the study of algebraic K-theory is trace methods. Trace methods are tools that allow us to approximate algebraic K-theory by mapping from algebraic K-theory to more computable invariants. One example of such an approximation, the Dennis trace, relates algebraic K-theory to a classical invariant of rings, Hochschild homology (HH). For an associative ring $A$, the Dennis trace is a map:
\[
K_*(A) \to \HH_*(A).
\]
\noindent For a closer approximation to algebraic K-theory, B\"okstedt defined a topological analogue of HH called topological Hochschild homology (THH), which is an invariant of ring spectra \cite{BokstedtTHHComputations}. There is a trace map, the topological Dennis trace, from algebraic K-theory to THH. Topological Hochschild homology has an $S^1$-action. Using this $S^1$-action one can define topological cyclic homology (TC) and the cyclotomic trace, which gives an even more accurate approximation to algebraic K-theory \cite{Bokstedt-Hsiang-Madsen(Spaces)}. Further, the topological Dennis trace factors through the cyclotomic trace:
\[
K(A) \to \text{TC}(A) \to \THH(A)
\]
\noindent for an associative ring spectrum $A$.

One of the main tools we use to compute THH is the B\"okstedt spectral sequence which relates HH to THH \cite{BokstedtTHHComputations}. For $k$ a field and $A$ an associative ring spectrum this spectral sequence takes the form:
\[
E^2_{*,*} = \HH_*(H_*(A;k)) \Rightarrow H_*(\THH(A);k).
\]
\noindent One way to facilitate spectral sequence calculations is to understand algebraic structures in the spectral sequence. Angeltveit and Rognes study the algebraic structure of the B\"okstedt spectral sequence in \cite{Angeltveit-Rognes}, building off of results of \cite{EKMM} and \cite{MSV}. Let us recall that a Hopf algebra can be thought of as both an algebra and a coalgebra with an antipode such that these structures are compatible. For $A$ a commutative ring spectrum, the authors of \cite{EKMM} use maps on the circle to induce the necessary maps on THH to prove $\THH(A)$ is an $A$-Hopf algebra in the stable homotopy category. This is possible because for $A$ a commutative ring spectrum, $\THH(A) \cong A \otimes S^1$ \cite{MSV}. For example, the fold map of spaces $S^1 \wedgey S^1 \to S^1$ induces the product map $\THH(A) \smashy_A \THH(A) \to \THH(A)$. Angeltveit and Rognes extend this result by using simplicial maps on the circle, allowing the algebraic structure to extend to the B\"okstedt spectral sequence under certain flatness conditions \cite{Angeltveit-Rognes}. These authors then use this algebraic structure to facilitate many computations of THH.

As stated above, THH has an $S^1$-action and is an invariant of ring spectra. Equivariant norms $N_H^G$ for $G$ a finite group and $H \leq G$ were used in the proof of the Kervaire invariant one problem in \cite{HHR}. Building off of these equivariant norms, the authors of \cite{twTHH} show that for an associative ring spectrum $A$, $\THH(A)$ can be written as the equivariant norm $N_e^{S^1}A$. When $A$ is a commutative ring spectrum this recovers the tensor homeomorphism $N_e^{S^1}A \cong A \otimes S^1$ \cite{EKMM,MSV}.

In recent years, equivariant analogues of algebraic K-theory and THH have emerged, meaning analogues to THH which are invariants of equivariant spectra. For example, Angeltveit, Blumberg, Gerhardt, Hill, Lawson, and Mandell construct a generalization of THH called $C_n$-twisted THH, for $C_n$ a finite cyclic subgroup of $S^1$, denoted $\THH_{C_n}$ \cite{twTHH}. This generalized theory is an invariant of $C_n$-equivariant ring spectra. Building off the equivariant norms of \cite{HHR}, the authors of \cite{twTHH} show that for an associative ring $C_n$-spectrum $R$, $\THH_{C_n}(R)$ can be written as the equivariant norm $N_{C_n}^{S^1}R$. Twisted THH is related to the equivariant algebraic K-theory of Merling \cite{EquivariantKTheory}, and Malkiewich--Merling \cite{ATheory}, as seen in \cite{AGHKKShadows}. 

To compute the equivariant homology of $C_n$-twisted THH, Adamyk, Gerhardt, Hess, Klang, and Kong construct an equivariant analogue to the B\"okstedt spectral sequence \cite{AGHKK}. The twisted B\"okstedt spectral sequence has coefficients in an equivariant analogue to rings.

To study equivariant homotopy theory, one needs equivariant analogues of familiar algebraic objects. Mackey functors arise naturally in equivariant homotopy theory as the equivariant analogue to abelian groups. Let $G$ be a finite abelian group. For a $G$-equivariant spectrum $E$, the equivariant homotopy groups of $E$ form a $G$-Mackey functor. The category of $G$-Mackey functors has a symmetric monoidal product called the box product, denoted $\square$, allowing one to define an equivariant analogue to rings, called $G$-Green functors \cite{Lewis}. Lewis defines $G$-Mackey fields to be commutative $G$-Green functors with no nontrivial ideals \cite{Lewis}. In this paper, we develop the notion of graded $G$-Mackey fields. We will show that if we use $C_p$-Mackey fields as the coefficients in the twisted B\"okstedt spectral sequence, the spectral sequence is easier to compute. We indicate that an object is a Mackey functor with an underline, the symbol $*$ indicates a $\ZZ$-grading, and the symbol $\star$ indicates an $RO(G)$-grading for the given $G$. In this paper, we compute $H\uF_\star$ for all $C_p$-Mackey fields $\uF$ where $H\uF$ is the Eilenberg-Mac Lane spectrum of $\uF$. 

An important $C_2$-spectrum is $MU_\RR$, the Real bordism spectrum. Hill, Hopkins, and Ravenel use $MU_\RR$ in their solution of the Kervaire invariant one problem in \cite{HHR}. In this paper, we compute the equivariant homology of $\THH_{C_2}(MU_\RR)$ with coefficients in the $C_2$-Mackey field $\uF$, such that $\uF(C_2/C_2) = \FF_2$ and $\uF(C_2/e) = 0$. 

\begin{theorem} For $\uF$ as above the $RO(C_2)$-graded equivariant homology of $\THH_{C_2}(MU_\RR)$ with coefficients in $\uF$ is

\begin{center}
    $\underline{H}_\star(\THH_{C_2}(MU_\RR) ; \uF) \cong H\uF_\star [\beta_1, \beta_2, \ldots] \square_{H\uF_{\star}} \mathlarger{\mathlarger{\Lambda}}_{H\uF_{\star}}(z_1, z_2, \ldots) $
\end{center}
\noindent as an $H\uF_{\star}$-module. Here $|\beta_i| = i\rho$ and $|z_i| = 1 + i\rho$.
\end{theorem}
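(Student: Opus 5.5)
I would compute this with the $C_2$-twisted B\"okstedt spectral sequence of Adamyk--Gerhardt--Hess--Klang--Kong, which for a ring $C_2$-spectrum $R$ has the form
\[
E^2_{*,\star} = \underline{\HH}^{C_2}_{*}\bigl(\underline{H}_\star(R;\uF)\bigr) \Rightarrow \underline{H}_{*+\star}(\THH_{C_2}(R);\uF).
\]
Here the $E^2$-term is twisted Hochschild homology of the Green functor $\underline{H}_\star(R;\uF)$ over $H\uF_\star$, with the $C_2$-action twisting the cyclic structure.

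\textbf{Input.} The first step is to identify $\underline{H}_\star(MU_\RR;\uF)$. From the Real analogue of the Milnor computation $H\FF_2\wedge MU\simeq H\FF_2[b_1,b_2,\dots]$, as in Hu--Kriz and HHR, we have $H\underline{\FF_2}\wedge MU_\RR\simeq H\underline{\FF_2}[\bar b_1,\bar b_2,\dots]$ with $|\bar b_i|=i\rho$, a free $H\underline{\FF_2}$-module on a polynomial basis. Smashing over $H\underline{\FF_2}$ with $H\uF$ (the Green functor map $\underline{\FF_2}\to\uF$ exists since $\uF$ is the quotient killing the underlying level) gives
\[
\underline{H}_\star(MU_\RR;\uF)\cong H\uF_\star[\beta_1,\beta_2,\dots],\qquad |\beta_i|=i\rho,
\]
which is flat over $H\uF_\star$. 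This flatness is what lets the spectral sequence have the stated $E^2$-term. Here I would use the paper's computation of $H\uF_\star$ for the Mackey field $\uF$.

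\textbf{$E^2$-term.} Next I compute twisted Hochschild homology of the polynomial algebra. The generator of $C_2$ acts on $MU_\RR$ by the involution, but on the $\uF$-homology classes $\beta_i$ the twisting should act trivially. The reason is that $\uF(C_2/e)=0$, so only the fixed level survives, and the classes $\beta_i$ come from fixed-point data. With trivial twist, a Koszul resolution gives the HKR-type answer
\[
E^2\cong H\uF_\star[\beta_i]\,\square_{H\uF_\star}\,\Lambda_{H\uF_\star}(\sigma\beta_i),
\]
where $\sigma\beta_i$ sits in filtration $1$ and internal degree $i\rho$, so its total degree is $1+i\rho$. I set $z_i=\sigma\beta_i$. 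Making this Koszul computation rigorous in Mackey functors over a Mackey field needs the box product to be exact. The paper's results on graded Mackey fields should make $H\uF_\star$-modules behave like modules over a graded field.

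\textbf{Collapse and extensions.} The $E^2$-page is generated, as an algebra, by classes in filtrations $0$ and $1$. Differentials $d^r$ with $r\ge2$ vanish on filtration $0$ and $1$ for degree reasons. The multiplicative structure on the twisted B\"okstedt spectral sequence, established later in the paper for commutative $R$ under flatness hypotheses, then forces the spectral sequence to collapse at $E^2$. The statement is only as $H\uF_\star$-modules, and modules over the graded Mackey field $H\uF_\star$ split, so there are no module extension problems. This gives the claimed isomorphism.

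\textbf{Main obstacle.} I expect the hardest step to be justifying that the $C_2$-twist acts trivially on $\underline{H}_\star(MU_\RR;\uF)$, and that the resulting twisted $\underline{\HH}$ is exactly the exterior-times-polynomial answer. This is the one place where the equivariant twisting could change the result relative to classical Hochschild homology.
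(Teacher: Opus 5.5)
Your proposal follows essentially the paper's route. The common steps are:
\begin{itemize}
\item the twisted B\"okstedt spectral sequence of \Cref{EquivBokSS};
\item the polynomial input $H\uF_\star(MU_\RR)\cong H\uF_\star[\beta_1,\beta_2,\ldots]$, obtained from a Real orientation of $H\uF$ and \cite[Corollary 5.18]{HHR};
\item a trivial twist (your reason is fine: $\uF(C_2/e)=0$ leaves only a level with trivial Weyl action);
\item a Koszul computation of the $E^2$-term;
\item collapse, because the algebra generators lie in filtrations $0$ and $1$ of a multiplicative spectral sequence. The paper invokes \cite[Proposition 4.2.8]{AGHKK} for the multiplicativity rather than its later \Cref{BokAlgebra}; either works.
\end{itemize}

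The genuine difference is how you get the Real orientation. The paper proves \Cref{RealOrientedField} directly. It identifies $\underline{\widetilde{E}}^{\rho}(\CC\PP^n)(C_2/C_2)$, for $E=H\uF$ and $n=1,\infty$, with $\widetilde{H}^1(\RR\PP^n;\FF_2)$, using geometric fixed points and $\Phi^{C_2}H\uF\simeq H\FF_2$. You instead transport the known Real orientation of $H\underline{\FF}_2$ along the ring map $H\underline{\FF}_2\to H\uF$ induced by the quotient $\underline{\FF}_2\to\uF$. It is cleaner to phrase this as transporting the orientation and then applying \cite[Corollary 5.18]{HHR} to $H\uF$, rather than as ``smashing over $H\underline{\FF}_2$''. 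That phrasing gives the ring structure on the $E^2$-input for free. Your route is shorter; the paper's yields the sharper statement that no such orientation exists in odd characteristic.

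You also dispose of the extension problem explicitly: every $H\uF_\star$-module has zero underlying level and is free over the graded field $\pi^{C_2}_\star H\uF$. The paper leaves this implicit.

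The one thing you leave unchecked is a hypothesis of \Cref{EquivBokSS}. The generator of $C_2$ must act trivially on the coefficient spectrum $H\uF$ itself, not merely on $\underline{H}_\star(MU_\RR;\uF)$. The paper verifies this in \Cref{the trivial action} via $H\uF^\star H\uF$. A quicker argument: $\ell_\gamma$ is a self-map of an Eilenberg--Mac Lane spectrum inducing the identity on $\underline{\pi}_0=\uF$. Since $[H\uF,H\uF]^{C_2}\cong\Hom(\uF,\uF)$, it is homotopic to the identity.
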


Classically, the algebraic structure in the B\"okstedt spectral sequence has lead to computations of THH. In the current work, we study the algebraic structures of $C_p$-twisted THH and the twisted B\"okstedt spectral sequence. Angeltveit, Blumberg, Gerhardt, Hill, Lawson, and Mandell show in \cite{twTHH} that for a commutative $C_n$-ring spectrum $R$, $\THH_{C_n}(R)$ is $R \otimes_{C_n} S^1$. Using equivariant simplicial models of the circle, we demonstrate that $C_p$-twisted THH has the structure of an $R$-algebra.

\begin{theorem}
     For $p$ prime and $R$ a commutative $C_p$-ring spectrum, $\THH_{C_p}(R)$ is a commutative $R$-algebra in the category of $C_p$-spectra.
\end{theorem}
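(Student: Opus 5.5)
The plan is to use the identification from \cite{twTHH} that for a commutative $C_p$-ring spectrum $R$ we have $\THH_{C_p}(R) \simeq R \otimes_{C_p} S^1$. Here $S^1$ carries the $C_p$-action by rotation and the equivariant tensor is the indexed (norm-type) tensor of commutative ring $C_p$-spectra with $C_p$-spaces, so that $R \otimes_{C_p} C_p/H \cong N_H^{C_p} R$ on orbits and the construction is extended to $C_p$-simplicial sets by geometric realization. Since $R\otimes_{C_p}(-)$ takes values in commutative ring $C_p$-spectra and is functorial in $C_p$-spaces, the commutative $R$-algebra structure will come from a single equivariant map of $C_p$-spaces, namely the inclusion of a fixed point.

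First, I fix an explicit $C_p$-simplicial model of the circle: the simplicial set $S^1_{p} = \Delta^1/\partial\Delta^1$ subdivided $p$ times. This has $p$ vertices permuted freely by $C_p$ and $p$ nondegenerate $1$-simplices. With this model $R\otimes_{C_p} S^1$ is the realization of a simplicial commutative ring $C_p$-spectrum whose $q$-simplices are $N_e^{C_p}$ applied to products of $R$, as in the twisted cyclic bar construction. A fixed point is needed, but no vertex is fixed, so I instead use the $C_p$-equivariant cofibration $C_p/C_p = * \to S^1$ given by a fixed point of a topological model; the only fixed points of the rotation action are on the geometric realization. To keep things simplicial, the cleanest replacement is $S^1 \simeq S(\RR^2)$ with the fixed points taken in an equivalent model. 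The alternative is to use the map $C_p/C_p \to *$ backwards: the unit comes from the $C_p$-map $* \to S^1$ only up to homotopy after passing to a model where it exists. Concretely, I will take the $C_p$-CW structure on $S^1$ with one $0$-cell and a free $C_p$ ... Since rotation has no fixed points, I expect this route to fail.

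Correct route: the $R$-algebra structure should come from $R = R\otimes_{C_p} C_p/C_p$, viewing $\THH_{C_p}(R)= N_{C_p}^{S^1}R$ restricted to $C_p$. In the subdivided model the $0$-skeleton is $C_p/e$, giving $N_e^{C_p}R \to \THH_{C_p}(R)$, and I precompose with the counit-type map. The better choice is the twisted cyclic bar construction of \cite{twTHH}, whose $0$-simplices are $R$ itself, with twisted last face map using the generator $g$. The inclusion of $0$-simplices $R \to \THH_{C_p}(R)$ is a map of commutative ring $C_p$-spectra. This holds because each simplicial level $R^{\wedge (q+1)}$ carries the $C_p$-action on $R$ diagonally, the degeneracy maps $R\to R^{\wedge(q+1)}$ are ring maps, and realization of a simplicial commutative ring is commutative and receives the $0$-skeleton as a ring map. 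This makes $\THH_{C_p}(R)$ a commutative $R$-algebra, with commutativity inherited levelwise.

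The main obstacle is verifying that, for commutative $R$, the twisted cyclic bar construction is a simplicial object in commutative ring $C_p$-spectra, i.e.\ that the twisted face map $d_q$, which multiplies the last factor acted on by $g$ into the first, is a $C_p$-equivariant ring map. Equivariance uses that $g$ commutes with the diagonal action, and the ring-map property uses commutativity of $R$ and that $g\colon R\to R$ is a ring map. I would also check that realization preserves commutative monoids and that the resulting structure agrees with the norm identification of \cite{twTHH}.
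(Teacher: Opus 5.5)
Your final argument is correct, but it takes a different route from the paper.

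\textbf{Your route.} You work on the algebraic side.
\begin{enumerate}
\item For commutative $R$, the twisted cyclic bar construction $B^{cy,C_p}_\bullet(R)$ is a simplicial object in commutative ring $C_p$-spectra. The one real check is that the twisted face map $d_q=(\mu\wedge\id^{q-1})\circ\alpha_q$ is an equivariant ring map. This uses that $C_p$ is abelian and that $\gamma$ acts by ring automorphisms.
\item Realization is strong symmetric monoidal, so $|B^{cy,C_p}_\bullet(R)|$ is a commutative ring $C_p$-spectrum.
\item The iterated degeneracies $r\mapsto r\wedge 1\wedge\cdots\wedge 1$ form a map of simplicial commutative rings out of the constant object $R$. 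Its realization is the unit.
\end{enumerate}

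\textbf{The paper's route.} The paper works on the side of $C_p$-simplicial sets, via $R\otimes_{C_p}pS^1_\bullet\cong B^{cy,C_p}_\bullet(R)$.
\begin{enumerate}
\item The unit comes from the inclusion $C_p\to pS^1_\bullet$ of the free orbit of vertices.
\item The product $\THH_{C_p}(R)\wedge_R\THH_{C_p}(R)\to\THH_{C_p}(R)$ comes from the fold map $pS^1_\bullet\vee_{C_p}pS^1_\bullet\to pS^1_\bullet$, together with the fact that $R\otimes_{C_p}(-)$ preserves pushouts.
\item Associativity, unitality and commutativity are then checked as identities of simplicial $C_p$-sets.
\end{enumerate}
Levelwise, the fold map induces exactly your multiplication $R^{\wedge(k+1)}\wedge_R R^{\wedge(k+1)}\to R^{\wedge(k+1)}$, so the two structures agree.

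\textbf{What each buys.} Your version avoids building $pS^1_\bullet\vee_{C_p}pS^1_\bullet$ and the pushout argument. The cost is checking by hand that the twisted face map is a ring map; in the paper this comes for free because $R\otimes_{C_p}(-)$ lands in commutative ring $C_p$-spectra. The paper's version presents every structure map as the image of a simplicial map of circle models. That is exactly the form used in \Cref{BokAlgebra} to get filtration-preserving maps out of $R\otimes_{C_p}(pS^1_\bullet\vee_{C_p}pS^1_\bullet)$. It also mirrors Angeltveit--Rognes, including the later discussion of why a coproduct fails.

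\textbf{Cleanup.} You should discard your opening paragraphs.
\begin{itemize}
\item The tensor in \cite{twTHH} and in the paper is the balanced tensor $R\otimes_{C_p}X$ with free $C_p$-spaces $X$, defined as a coequalizer. Hence $R\otimes_{C_p}C_p\cong R$, not $N_e^{C_p}R$. This is precisely why no fixed point is needed: the unit comes from a free orbit.
\item The detour through $N_e^{C_p}R\to\THH_{C_p}(R)$ and a ``counit-type'' map goes the wrong way, since the norm counit is $N_e^{C_p}i_e^*R\to R$.
\end{itemize}
None of this enters your final argument, so it is cleanup rather than a gap. You might add one line noting that the point-set change-of-universe functors in the definition of $\THH_{C_p}$ are strong symmetric monoidal, so they carry your commutative $R$-algebra structure along.
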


Using the equivariant simplicial maps that provide the algebraic structure on $C_p$-twisted THH, we extend this algebraic structure to the twisted B\"okstedt spectral sequence. Before we discuss this induced structure, recall that the authors of \cite{AGHKK} show that the twisted B\"okstedt spectral sequence is a spectral sequence of $\underline{E}_\star$-algebras. In the current work we show that for $R$ a commutative ring $C_p$-spectrum and under certain flatness conditions, this spectral sequence is a spectral sequence of $\underline{E}_\star(R)$-algebras.

\begin{theorem}
For a prime $p$, let $R$ and $E$ be commutative ring $C_p$-spectra, such that the chosen generator of $C_p$ acts trivially on $E$ and $\underline{E}_\star(R)$ is flat over $\underline{E}_\star$. The twisted B\"okstedt spectral sequence is a spectral sequence of commutative $\underline{E}_\star(R)$-algebras.
\end{theorem}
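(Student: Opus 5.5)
The proof will follow the Angeltveit--Rognes strategy: realize all the algebraic structure on $\THH_{C_p}(R)$ by equivariant simplicial maps of simplicial models of the circle, and push that structure through the filtration that gives the twisted B\"okstedt spectral sequence. By the preceding theorem, $\THH_{C_p}(R)\simeq R\otimes_{C_p} S^1$, and its commutative $R$-algebra structure comes from two maps. The unit comes from the inclusion of the $C_p$-orbit $C_p/e \hookrightarrow S^1$. The product comes from the fold map of $C_p$-simplicial sets
\[
S^1 \cup_{C_p/e} S^1 \to S^1,
\]
where both circles are given the simplicial model with $p$ vertices, permuted by the generator of $C_p$ acting as rotation by $2\pi/p$. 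The first step is to set up the twisted B\"okstedt spectral sequence of \cite{AGHKK} as the spectral sequence of the skeletal filtration of the simplicial $C_p$-spectrum $E\wedge \THH_{C_p}(R)_\bullet$. Its $E^1$-term is built from $\underline{E}_\star(R^{\wedge p(q+1)})$ in simplicial degree $q$, with the twisted face maps. Under the hypothesis that $\underline{E}_\star(R)$ is flat over $\underline{E}_\star$, the Künneth isomorphism identifies this $E^1$-term with $\underline{E}_\star(R)^{\square p(q+1)}$ over $\underline{E}_\star$. The $E^2$-term is then the twisted Hochschild homology of $\underline{E}_\star(R)$ as a module over $\underline{E}_\star$. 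The hypothesis that the generator acts trivially on $E$ is used so that $E$ is unaffected by the twist, exactly as in \cite{AGHKK}.

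Next I check that the unit and fold maps are maps of $C_p$-simplicial sets. The unit is the inclusion of the $0$-skeleton. The pushout $S^1\cup_{C_p/e}S^1$ is a simplicial set whose degree-$q$ simplices are a pushout of finite $C_p$-sets. Applying $R\otimes_{C_p}(-)$ levelwise therefore gives maps of simplicial commutative $C_p$-ring spectra:
\[
R \to \THH_{C_p}(R)_\bullet, \qquad \THH_{C_p}(R)_\bullet \wedge_R \THH_{C_p}(R)_\bullet \to \THH_{C_p}(R)_\bullet .
\]
Simplicial maps preserve skeleta, so after smashing with $E$ these maps induce morphisms of the associated spectral sequences. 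Commutativity, associativity and unitality of the product hold already at the level of simplicial $C_p$-sets: the fold map commutes with the twist of the two circles, and it is associative on the triple pushout. Hence these identities are respected on every page.

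The main obstacle is to identify the spectral sequence of $E\wedge\big(\THH_{C_p}(R)_\bullet\wedge_R\THH_{C_p}(R)_\bullet\big)$ with the box product over $\underline{E}_\star(R)$ of two copies of the twisted B\"okstedt spectral sequence. Only with this identification does the fold map give a pairing $E^r\square_{\underline{E}_\star(R)}E^r\to E^r$ that satisfies the Leibniz rule. I would first treat the diagonal of the bisimplicial object and use the Eilenberg--Zilber shuffle map. Then I would use the flatness of $\underline{E}_\star(R)$ over $\underline{E}_\star$ to get a Künneth isomorphism at $E^1$, so that
\[
E^1\big(\text{pushout}\big)\cong E^1\square_{\underline{E}_\star(R)}E^1 ,
\]
using the same equivariant Künneth spectral sequence argument as \cite{AGHKK} for the $\underline{E}_\star$-algebra structure. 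From there, the standard argument (Angeltveit--Rognes, Prop.~4.2 style) shows that the pairing is compatible with differentials and converges to the product on $\underline{E}_\star(\THH_{C_p}(R))$.

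Finally, the unit map $R\to\THH_{C_p}(R)$ lands in filtration $0$. This makes every page an $\underline{E}_\star(R)$-algebra, with $\underline{E}_\star(R)$ mapping to the $0$-line and the differentials $\underline{E}_\star(R)$-linear. Together with the commutative product above, this shows the spectral sequence is one of commutative $\underline{E}_\star(R)$-algebras.
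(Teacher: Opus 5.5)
Your proposal is correct and follows essentially the same route as the paper. The unit and fold maps come from $C_p$-equivariant simplicial maps of the $p$-vertex circle and so preserve the skeletal filtration. Flatness gives ${}'E^1\cong E^1\,\square_{\underline{E}_\star(R)}\,E^1$ for the filtration on $R\otimes_{C_p}(pS^1_\bullet\vee_{C_p}pS^1_\bullet)$. The homology cross product (your Eilenberg--Zilber step) composed with the fold map gives the pairing, and the algebra identities are inherited from the simplicial diagrams.

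One correction: $R\otimes_{C_p}pS^1_q\cong R^{\wedge(q+1)}$, not $R^{\wedge p(q+1)}$. The $C_p$-coequalizer collapses the $p(q+1)$ factors of $R\otimes pS^1_q$, and the twist survives only in the last face map. So $E^1_{q,\star}\cong\underline{E}_\star(R)^{\square(q+1)}$, and your K\"unneth step should be applied to $R^{\wedge(q+1)}\wedge_R R^{\wedge(q+1)}$. This works because $\underline{E}_\star(R)^{\square(q+1)}$ is flat over $\underline{E}_\star(R)$ by base change.
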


We then leverage known computations of THH from \cite{BCSTHH(Thom)} to make new computations of twisted THH.

\begin{proposition}
    Let $p$ and $q$ be distinct primes, and $\uF$ the $C_p$-Mackey field such that $\uF(C_p/C_p) = \FF_q$ and $\uF(C_p/e) = 0$. The following are equivalences of spectra:
    \begin{align*}
        \Phi^{C_p}(\THH_{C_p}&(H\uF))  \simeq H\FF_q \smashy \Omega(S^3)_+, \quad \text{and} \\
        \Phi^{C_2}(\THH_{C_2}&(MU_\RR))  \simeq MO \smashy BBO_+.
    \end{align*}
\end{proposition}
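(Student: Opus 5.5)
The plan is to reduce both equivalences to classical THH computations by commuting geometric fixed points past the norm. The general step is a formula for $\Phi^{C_p}\THH_{C_p}(R)$ for a ring $C_p$-spectrum $R$. Using the identification $\THH_{C_p}(R)\simeq N_{C_p}^{S^1}R$ from \cite{twTHH}, I would realize $\THH_{C_p}(R)$ as the geometric realization of the $C_p$-twisted cyclic bar construction. In simplicial degree $k$ this is $N_e^{C_p}\iota^*R \wedge R^{\wedge k}$, suitably normed; concretely, it is a $C_p$-spectrum built from indexed smash products of copies of $R$ and its underlying spectrum.

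Geometric fixed points commute with geometric realization and with smash products, and they satisfy $\Phi^{C_p}N_e^{C_p}X\simeq X$. Hence applying $\Phi^{C_p}$ levelwise should yield the ordinary cyclic bar construction on $\Phi^{C_p}R$, with the twisting by the generator becoming trivial on fixed points. This would give
\[
\Phi^{C_p}\THH_{C_p}(R)\simeq \THH(\Phi^{C_p}R).
\]
Equivalently, $\Phi^{C_p}N_{C_p}^{S^1}R\simeq N_e^{S^1/C_p}\Phi^{C_p}R$, which is the standard "geometric fixed points of a norm" formula.

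The main obstacle is checking this carefully: that the levelwise identification is compatible with the face maps, in particular the twisted last face, and that the cofibrancy/flatness needed to make $\Phi^{C_p}$ strong monoidal on the derived level holds. I would handle this by taking a cofibrant model of $R$ as an associative ring $C_p$-spectrum, so that the derived and point-set constructions agree.

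It then remains to compute geometric fixed points of the inputs and apply known THH results from \cite{BCSTHH(Thom)}.

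For $H\uF$ with $\uF(C_p/e)=0$, the underlying spectrum is trivial, so $H\uF$ is concentrated over $C_p/C_p$. Its geometric fixed points therefore agree with its categorical fixed points, giving $\Phi^{C_p}H\uF\simeq H\FF_q$ as a ring. Here $q\neq p$ is used: with $p$ invertible on $\uF$, the isotropy separation cofiber sequence has $E{C_p}_+\wedge H\uF\simeq *$. Bökstedt's computation $\THH(H\FF_q)\simeq H\FF_q\wedge\Omega S^3_+$ then gives the first equivalence.

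For $MU_\RR$, the fact $\Phi^{C_2}MU_\RR\simeq MO$ as $E_\infty$ ring spectra yields $\THH(MO)$. Because $MO$ is the Thom spectrum of the $E_\infty$ map $BO\to BGL_1\SS$, the Blumberg–Cohen–Schlichtkrull theorem gives $\THH(MO)\simeq MO\wedge BBO_+$.
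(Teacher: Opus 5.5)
Your route is the paper's. You show $\Phi^{C_p}\THH_{C_p}(A)\simeq \THH(\Phi^{C_p}A)$ for cofibrant $A$, identify $\Phi^{C_p}H\uF\simeq H\FF_q$ and $\Phi^{C_2}MU_\RR\simeq MO$, and apply Blumberg--Cohen--Schlichtkrull. For $\Phi^{C_p}H\uF$ the paper quotes Oru\c{c}, and your concentration argument proves the same fact. For the first step, the paper writes $B^{cy,C_p}_\bullet(A)\cong A\smashy_{(A\smashy A^{\mathrm{op}})}B_\bullet(A,A,{}^\gamma A)$ and uses the HHR formula $\Phi^{G}(M\smashy_A N)\cong \Phi^G M\smashy_{\Phi^G A}\Phi^G N$ for cofibrant modules. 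You instead work levelwise on the cyclic bar construction. The inputs are the same: $\Phi^{C_p}$ is strong monoidal on cofibrant objects, commutes with realization, and trivializes the $\gamma$-twist.

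The one thing you must fix is your description of the simplicial levels. The $k$-simplices of $B^{cy,C_p}_\bullet(R)\cong R\otimes_{C_p}pS^1_\bullet$ are $R^{\smashy(k+1)}$ with the \emph{diagonal} $C_p$-action. No factor $N_e^{C_p}\iota^*R$ occurs; norms from the trivial group appear when one subdivides ordinary $\THH$ to exhibit its $C_p$-action, not here. With your description, the identity $\Phi^{C_p}N_e^{C_p}X\simeq X$ would turn level $k$ into $\iota^*R\smashy(\Phi^{C_p}R)^{\smashy k}$. That is not the cyclic bar construction on $\Phi^{C_p}R$, and for $R=H\uF$ it is contractible, since $\iota^*H\uF\simeq *$. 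So drop that identity. What you actually use is $\Phi^{C_p}(R^{\smashy(k+1)})\simeq(\Phi^{C_p}R)^{\smashy(k+1)}$, with face maps going to face maps. Your alternative formulation $\Phi^{C_p}N_{C_p}^{S^1}R\simeq N_e^{S^1/C_p}\Phi^{C_p}R$ is the correct statement.

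Finally, $q\neq p$ is not what makes $(EC_p)_+\smashy H\uF$ contractible. That holds for every $q$, because the underlying spectrum of $H\uF$ is already contractible. Neither your argument nor the paper's actually uses $q\neq p$.
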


We then find a formula for $\underline{E}_\star(\THH_{C_p}(A))$ for $A$ any cofibrant, associative ring $C_p$-spectrum, and $E$ any geometric $C_p$-spectrum. One can use the computations above with this formula to prove the following.

\begin{proposition}
    Let $A$ be a cofibrant, associative ring $C_p$-spectrum, $p$ prime. If $E$ is a geometric $C_p$-spectrum, then
   \begin{align*}
    \underline{E}_\star(\THH_{C_p}(A))&(C_p/C_p) \cong \Phi^{C_p}(E)_{\dim(\star^{C_p})}(\Phi^{C_p}\THH_{C_p}(A)), \quad \text{and} \\
    \underline{E}_\star(\THH_{C_p}(A))&(C_p/e) \cong 0.
    \end{align*}
\end{proposition}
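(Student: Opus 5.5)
The plan is to use the defining feature of a geometric $C_p$-spectrum $E$: $E \simeq \widetilde{E\mathcal{P}} \smashy E$, where $\widetilde{E\mathcal{P}} = \widetilde{EC_p}$ is the cofiber of $E{C_p}_+ \to S^0$. Equivalently, the underlying spectrum $\iota^*_e E$ is contractible. Then for any $C_p$-spectrum $X$ the smash product $E \smashy X$ is also geometric, since $\widetilde{EC_p}\smashy E \smashy X \simeq E\smashy X$. Its homotopy Mackey functor is therefore concentrated at the top level and can be computed by geometric fixed points. We will apply this with $X = \THH_{C_p}(A)$. Cofibrancy of $A$ ensures that $\THH_{C_p}(A)$, as the norm $N_{C_p}^{S^1}A$ restricted to $C_p$, has the correct homotopy type, and that geometric fixed points commute with the smash product.

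The first step handles the underlying level. For $V = \star$ an element of $RO(C_p)$, we have $\underline{E}_V(X)(C_p/e) = \pi_{|V|}\bigl(\iota^*_e(E\smashy X)\bigr) \cong \pi_{|V|}\bigl(\iota^*_e E \smashy \iota^*_e X\bigr)$. This vanishes because $\iota^*_e E \simeq *$. That gives the second isomorphism.

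The second step handles the top level. We have $\underline{E}_V(X)(C_p/C_p) = [S^V, E\smashy X]^{C_p}$. Since $E\smashy X \simeq \widetilde{EC_p}\smashy E\smashy X$, maps into it from $S^V$ are the same as maps from $\widetilde{EC_p}\smashy S^V$. The standard equivalence $[\widetilde{EC_p}\smashy Y, \widetilde{EC_p}\smashy Z]^{C_p} \cong [\Phi^{C_p}Y, \Phi^{C_p}Z]$ then applies. In other words, the categorical fixed points of a geometric spectrum agree with its geometric fixed points. This gives
\[
[S^V, E\smashy X]^{C_p} \cong [\Phi^{C_p}S^V, \Phi^{C_p}(E\smashy X)] \cong \pi_{\dim(V^{C_p})}\bigl(\Phi^{C_p}E \smashy \Phi^{C_p}X\bigr),
\]
using $\Phi^{C_p}S^V \simeq S^{V^{C_p}}$ and the strong monoidality of $\Phi^{C_p}$ on cofibrant objects. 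The right-hand side is exactly $\Phi^{C_p}(E)_{\dim(\star^{C_p})}(\Phi^{C_p}\THH_{C_p}(A))$.

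The main obstacle is justifying the two point-set inputs. First, $\Phi^{C_p}$ must be strong monoidal on $E\smashy \THH_{C_p}(A)$, which requires cofibrancy or flatness of $\THH_{C_p}(A)$. This should follow from cofibrancy of $A$ and the norm model of \cite{twTHH}, possibly after cofibrantly replacing $E$. Second, the identification $(\widetilde{EC_p}\smashy Y)^{C_p} \simeq \Phi^{C_p} Y$ must be checked compatibly with the $RO(C_p)$-grading; this is standard. Naturality of these isomorphisms in the Mackey structure is automatic here, since one level is zero. The restriction and transfer maps are therefore trivial, and the Mackey functor is determined by its top value.
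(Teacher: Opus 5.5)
Your proposal is correct and is essentially the paper's argument. The paper also uses $E\simeq \widetilde{E}\mathcal{P}\smashy E$ to see that $E\smashy\THH_{C_p}(A)$ is geometric, so the $C_p/e$ level vanishes. It then identifies the top level via $[D,Y]^{C_p}\cong[\Phi^{C_p}D, Y^{C_p}]$ and $Y^{C_p}\simeq\Phi^{C_p}Y$ for $Y$ concentrated over $C_p$, followed by monoidality of $\Phi^{C_p}$.

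The only difference is that the paper goes one step further. It uses its comparison $\Phi^{C_p}\THH_{C_p}(A)\cong\THH(\Phi^{C_p}A)$, which is where cofibrancy of $A$ is really needed. You stop at $\Phi^{C_p}\THH_{C_p}(A)$, which is exactly what this statement asks for.
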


\subsection{Organization}

In \Cref{Sec:Mackey Functors} we recall the definitions and properties of Mackey functors, Green functors, Mackey fields, and develop the theory of $\ZZ$-graded and $RO(C_p)$-graded Mackey fields. We end the section by computing the $RO(C_p)$-graded homotopy groups of the Eilenberg-Mac Lane spectra of $C_p$-Mackey fields. In \Cref{Sec:Hochschild Homology} we recall the constructions of Hochschild homology for Green functors, and twisted topological Hochschild homology (THH). \Cref{A Computation} has a computation of the equivariant homology of $C_2$-twisted THH of the Real bordism spectrum.

In Sections \ref{Sec:Algebraic structure} and \ref{Sec: Equivariant Bokstedt SS Structure} we study the algebraic structure of $C_p$-twisted THH and the $C_p$-twisted B\"okstedt spectral sequence respectively. \Cref{More Computations} has more computations of twisted THH, specifically $\THH_{C_p}(H\uF)$, $\THH_{C_2}(MU_\RR)$, and $\underline{E}_\star(\THH_{C_p}(A))$ for $\uF$ a $C_p$-Mackey field such that $\uF(C_p/C_p) = \FF_q$ and $\uF(C_p/e) = 0$ for $p$ and $q$ different primes, $E$ a geometric $C_p$-spectrum, and $A$ an associative ring $C_p$-spectrum.

\subsection{Notation and conventions} 

Throughout this paper let $G$ be a finite abelian group. We are working with genuine orthogonal $G$-spectra indexed on a complete universe. We use $*$ to denote $\ZZ$-gradings, $\star$ to denote $RO(G)$-gradings, and $\bullet$ to denote simplicial gradings. Whenever discussing rotations, we mean counter clockwise rotations.

\subsection{Acknowledgments}

The author thanks Teena Gerhardt for all of her support and guidance on this project. The author also thanks Anna Marie Bohmann, David Chan, Michael Hill, Chloe Lewis, Cary Malkiewich, and Maximilien P\'eroux for many useful conversations. The author is thankful for suggested edits from Teena Gerhardt, Michael Hill, and Cary Malkiewich.

The author has been partially supported by NSF Grants DMS-1810575, DMS-2052042, DMS-2104233, DMS-RTG 2135960, and DMS-RTG 2135884, as well as a Dissertation Continuation Fellowship and Dissertation Completion Fellowship from Michigan State University. This material is in part based on work supported by the National Science Foundation under DMS-1928930, during a visit to the Simons Laufer Mathematical Sciences Institute (previously known as MSRI) in Berkeley, California, during the Fall 2022 semester.

\section{\texorpdfstring{$C_p$}{Cp}-Mackey functors}\label{Sec:Mackey Functors}

Let $p$ be a prime. In this section we will introduce $C_p$-Mackey functors. There is a more general concept of a $G$-Mackey functor for any finite group $G$, but for the purposes of this paper we will focus on $G = C_p$.

Let $S,T,$ and $U$ be finite $C_p$-sets. A \emph{span} from $S$ to $T$ is a diagram
% https://q.uiver.app/#q=WzAsMyxbMCwwLCJTIl0sWzEsMCwiVSJdLFsyLDAsIlQiXSxbMSwwXSxbMSwyXV0=
\[\begin{tikzcd}
	S & U & T
	\arrow[from=1-2, to=1-1]
	\arrow[from=1-2, to=1-3]
\end{tikzcd}\]
\noindent where the maps are $C_p$-equivariant. An \emph{isomorphism of spans} is a commutative diagram of finite $G$-sets
% https://q.uiver.app/#q=WzAsNCxbMSwwLCJVIl0sWzIsMSwiVCJdLFswLDEsIlMiXSxbMSwyLCJWIl0sWzAsMywiXFxjb25nIl0sWzAsMl0sWzMsMl0sWzMsMV0sWzAsMV1d
\[\begin{tikzcd}[column sep=small,row sep=small]
	& U \\
	S && T \\
	& V
	\arrow["\cong", from=1-2, to=3-2]
	\arrow[from=1-2, to=2-1]
	\arrow[from=3-2, to=2-1]
	\arrow[from=3-2, to=2-3]
	\arrow[from=1-2, to=2-3]
\end{tikzcd}\]
\noindent and the composition of spans is given by the pullback. Given two spans $S \leftarrow U_1 \to T$ and $S \leftarrow U_2 \to T$, there is a monoidal product via the disjoint union, $S \leftarrow U_1 \bigsqcup \, U_2  \to T$.

\begin{definition}
    The \emph{Burnside category} of $C_p$, denoted $\AC$, has as objects finite $C_p$-sets. The morphism set $\AC(S,T)$ is the group completion of the monoid of isomorphism classes of spans $S \leftarrow U \to T$.  
\end{definition}

This category allows us to define Mackey functors.

\begin{definition}
    A $C_p$\emph{-Mackey functor} is an additive functor $\underline{M}\colon \ACop \to \Ab$ that sends disjoint unions to direct sums.
\end{definition}

This definition, while simple, is not very explicit. We will now build Mackey functors an equivalent way which the reader may find to be more intuitive.

Let $Fin^{C_p}$ be the category of finite $C_p$-sets. A $C_p$-Mackey functor $\underline{M}$ is equivalent to a pair of additive functors 
\[
M_*, M^*\colon Fin^{C_p} \to \Ab
\]
\noindent which send disjoint unions to direct sums, and are covariant and contravariant, respectively. Further, for any $S \in Fin^{C_p}$, $M_*(S) = M^*(S)$, denoted $\underline{M}(S)$. Recall that any finite $C_p$-set is isomorphic to $\bigsqcup_{\,i} C_p/C_p \sqcup \bigsqcup_{\,j} C_p/e$. Since Mackey functors are additive, one only needs to know $\underline{M}(C_p/C_p)$ and $\uM(C_p/e)$ to know $\underline{M}(S)$ for any finite $C_p$-set $S$. Additionally it is required that if the following diagram is a pullback in $Fin^{C_p}$, then the second diagram commutes:
% https://q.uiver.app/#q=WzAsOCxbMCw0LCJcXHVuZGVybGluZXtNfShXKSJdLFsyLDQsIlxcdW5kZXJsaW5le019KFgpIl0sWzAsNiwiXFx1bmRlcmxpbmV7TX0oWikiXSxbMiw2LCJcXHVuZGVybGluZXtNfShZKSJdLFswLDAsIlciXSxbMiwwLCJYIl0sWzAsMiwiWiJdLFsyLDIsIlkiXSxbMCwxLCJNXyooZycpIl0sWzIsMCwiTV4qKGYnKSJdLFsyLDMsIk1fKihnKSIsMl0sWzMsMSwiTV4qKGYpIiwyXSxbNiw3LCJnIiwyXSxbNCw2LCJmJyIsMl0sWzQsNSwiZyciXSxbNSw3LCJmIl1d
\[\begin{tikzcd}[column sep=small,row sep=small]
	W && X \\
	\\
	Z && Y \\
	{\underline{M}(W)} && {\underline{M}(X)} \\
	\\
	{\underline{M}(Z)} && {\underline{M}(Y).}
	\arrow["{M_*(g')}", from=4-1, to=4-3]
	\arrow["{M^*(f')}", from=6-1, to=4-1]
	\arrow["{M_*(g)}"', from=6-1, to=6-3]
	\arrow["{M^*(f)}"', from=6-3, to=4-3]
	\arrow["g"', from=3-1, to=3-3]
	\arrow["{f'}"', from=1-1, to=3-1]
	\arrow["{g'}", from=1-1, to=1-3]
	\arrow["f", from=1-3, to=3-3]
\end{tikzcd}\]

There is a natural surjection $q\colon C_p/e \to C_p/C_p$. The homomorphism $M_*(q)\colon \underline{M}(C_p/e) \to \underline{M}(C_p/C_p)$ is called the \emph{transfer map}, denoted $tr$. The homomorphism $M^*(q)\colon \underline{M}(C_p/C_p) \to \underline{M}(C_p/e)$ is called the \emph{restriction map}, denoted $res$. 

Each level $\uM(C_p/H)$ naturally has a group action given by the set of $C_p$-maps of $C_p/H$ into itself. This set of maps is isomorphic to the \emph{Weyl group} with respect to $H$, which is defined as $WH \coloneqq N_{C_p}(H)/H$ where $N_{C_p}(H)$ denotes the normalizer of $H$ in $C_p$. 

\begin{remark} \label{MackeyGroupAction}

Let $\uM$ be a $C_p$-Mackey functor. Since $C_p$ is abelian $WH = C_p/H$ for $H \leq C_p$, meaning the abelian group $\underline{M}(C_p/e)$ has a $C_p$-action and $\uM(C_p/C_p)$ has an action of the trivial group. Thus $\uM(C_p/C_p)$ and $\uM(C_p/e)$ are $\ZZ[C_p]$-modules and the transfer and restriction maps are maps of $\ZZ[C_p]$-modules. Therefore $\uM$ admits a $C_p$-action.
\end{remark}

A \emph{Lewis diagram} is a succinct way to describe a Mackey functor. In this paper we will write the Lewis diagram of a $C_p$-Mackey functor, say $\uM$, as the following:
% https://q.uiver.app/#q=WzAsMixbMCwwLCJcXHVNKENfMi9DXzIpIl0sWzAsMiwiXFx1TShDXzIvZSkiXSxbMCwxLCJyZXMiLDIseyJjdXJ2ZSI6Mn1dLFsxLDAsInRyIiwyLHsiY3VydmUiOjJ9XV0=
\[\begin{tikzcd}[column sep=small,row sep=small]
	{\uM(C_p/C_p)} \\
	\\
	{\uM(C_p/e)}
	\arrow["res"', curve={height=12pt}, from=1-1, to=3-1]
	\arrow["tr"', curve={height=12pt}, from=3-1, to=1-1]
\end{tikzcd}\]
note that we are not indicating the Weyl action in the diagram.

\begin{definition}
 A \emph{map between $C_p$-Mackey functors} $\phi\colon \underline{M} \to \underline{N}$ is a natural transformation. This map is defined by $WH$-equivariant group homomorphisms $\phi_H\colon \underline{M}(C_p/H) \to \underline{N}(C_p/H)$ for all $H \leq C_p$. Further, these homomorphisms must respect the transfer and restriction maps. 
\end{definition}

 Let us consider some important examples of $C_p$-Mackey functors. Note that these definitions can all be extended to $G$-Mackey functors for $G$ a finite group, but we will focus on $C_p$-Mackey functors.

\begin{example}
    Let $L$ be an abelian group. The \emph{constant $C_p$-Mackey functor} over $L$, denoted $\underline{L}$, is a Mackey functor where $\underline{L}(C_p/H) = L$ with a trivial $WH$-action, the restriction is the identity, and the transfer is multiplication by $p$.
\end{example}

For the next example, we first need the following definition.

\begin{definition}
    The \emph{Burnside ring of $G$}, denoted $A(G)$, is the group completion of the monoid of isomorphism classes of finite $G$-sets under disjoint union. Multiplication in this ring is given by the Cartesian product of finite $G$-sets.
\end{definition}

\begin{example} \label{BurnsideMackeyFunctor}
    The \emph{$C_p$-Burnside Mackey functor} is denoted $\underline{A}_{C_p}$, or $\underline{A}$ when $C_p$ is clear from context. This functor is defined by $\underline{A}(C_p/e) = A(e) = \ZZ$, $\underline{A}(C_p/C_p) = A(C_p) = \ZZ \oplus \ZZ$, the transfer and restriction maps are given by induction and restriction maps on finite sets. More explicitly, $tr(x) = (0,x)$ and $res(y,z) = y + pz$ for $x,y,z \in \ZZ$.
    
\end{example}

Lewis defines an important example of a $C_p$-Mackey functor in \cite[Definition 5.5]{Lewis} called the J-Mackey functor. We will use these functors in our computations of equivariant homology and cohomology in \Cref{Sec: Homotopy Groups}.

\begin{definition}[{\cite[Definition 5.5]{Lewis}}] \label{J-Mackey Functor Example} 
    Let $H \leq C_p$, $p$ prime. The functor $J_{C_p/H}(V)$ for $V$ a $\ZZ[WH]$-module is the following depending on $H$:
% https://q.uiver.app/#q=WzAsNixbMCwwLCJKX3tDX3AvZX0oVilcXGNvbG9uIl0sWzIsMCwiVl57Q19wfSJdLFsyLDIsIlYiXSxbNCwwLCJKX3tDX3AvQ19wfShWKVxcY29sb24iXSxbNiwwLCJWIl0sWzYsMiwiMCJdLFsxLDIsImluYyIsMix7ImN1cnZlIjoyfV0sWzIsMSwidHIiLDIseyJjdXJ2ZSI6Mn1dLFs0LDUsIjAiLDIseyJjdXJ2ZSI6Mn1dLFs1LDQsIjAiLDIseyJjdXJ2ZSI6Mn1dXQ==
\[\begin{tikzcd}[column sep=small,row sep=small]
	{J_{C_p/e}(V)\colon} && {V^{C_p}} && {J_{C_p/C_p}(V)\colon} && V \\
	\\
	&& V &&&& 0
	\arrow["inc"', curve={height=12pt}, from=1-3, to=3-3]
	\arrow["0"', curve={height=12pt}, from=1-7, to=3-7]
	\arrow["tr"', curve={height=12pt}, from=3-3, to=1-3]
	\arrow["0"', curve={height=12pt}, from=3-7, to=1-7]
\end{tikzcd}\]
\noindent where $tr(x) = \sum_{i = 0}^{p-1} \gamma^i x$ for $x \in V$ and $\gamma$ a generator of $C_p$.
\end{definition}

Mackey functors are the equivariant analogue of abelian groups. In classical homotopy theory many invariants take values in abelian groups. The equivariant analogues of those invariants take values in Mackey functors. The following is an important example of this phenomenon.

\begin{example}
    Let $E$ be a $C_p$-spectrum. For each $n \in \ZZ$, the equivariant homotopy groups of $E$ assemble to a $G$-Mackey functor, denoted $\underline{\pi}_n(E)$, defined by

\begin{center}
$\underline{\pi}_n(E)(C_p/H) \coloneqq \pi_n(E^H)$.
\end{center}    
\end{example}

For a $C_p$-Mackey functor $\uM$, there is an associated Eilenberg-Mac Lane $C_p$-spectrum, denoted $H\uM$ (see, for example, \cite{dosSantos, dosSantosZhaohu}). As is true classically, these Eilenberg-Mac Lane $C_p$-spectra are characterized by their $\ZZ$-graded homotopy groups:
\[
\underline{\pi}_n(H\uM) = \begin{cases}
    \uM & n = 0 \\
    \underline{0} & \text{else}.
\end{cases}
\]

There are two important gradings of $C_p$-Mackey functors we will use in this paper, $\ZZ$ and $RO(C_p)$. Given a group $G$, $RO(G)$ is the ring of formal differences of real representations of $G$. For example, $RO(C_2)$ can be written as $\{k + \ell \sigma \, | \, k,\ell \in \ZZ\}$ where $\sigma$ is the sign representation.

\begin{definition} A $\ZZ$\emph{-graded $C_p$-Mackey functor} is defined as a collection of $C_p$-Mackey functors $\{ \underline{M}_n \}_{n \in \ZZ}$, written as $\underline{M}_*$. A map of $\ZZ$-graded $C_p$-Mackey functors $\underline{M}_* \to \underline{N}_*$, is a collection of maps of $C_p$-Mackey functors $\{\underline{M}_n \to \underline{N}_n \}_{n \in \ZZ}$. An $RO(C_p)$\emph{-graded $C_p$-Mackey functor} is similarly defined.
\end{definition}

The homotopy groups of $G$-spectra are naturally $RO(G)$-graded.

\begin{definition}
Let $E$ be a $C_p$-spectrum. For all $\alpha = [\gamma] - [\beta] \in RO(C_p)$, the equivariant homotopy groups of $E$, denoted $\underline{\pi}_\alpha(E),$ are defined by
\[
    \underline{\pi}_\alpha(E)(C_p/H) = \pi_\alpha(E^H) = [S^\gamma \smashy C_p/H_+, S^\beta \smashy E]^{C_p} = [S^\gamma, S^\beta \smashy E]^H.
\]
\noindent Where $[-,-]^H$ denotes the homotopy classes of $H$-equivariant maps.
\end{definition}

For each subgroup $H \leq C_p$ and each $\alpha \in RO(C_p)$, there are level wise $C_p$-actions on $\underline{\pi}_\alpha(E)$ defined by
\[
    C_p \times \underline{\pi}_\alpha(E)(C_p/H) \to C_p/H  \times \underline{\pi}_\alpha(E)(C_p/H) \to \underline{\pi}_\alpha(E)(C_p/H),
\]
\noindent where the second map is the Weyl group action. These level wise $C_p$-actions assemble into a $C_p$-action on $\underline{\pi}_\alpha(E)$.

The category of $C_p$-Mackey functors is an abelian category that has a symmetric monoidal product called the box product, denoted $\square$. The box product was first defined by Lewis in Section 1 of \cite{Lewis}. The unit for the box product is the Burnside Mackey functor $\underline{A}$, see \Cref{BurnsideMackeyFunctor}. Lewis gives a formula for the box product of two $C_p$-Mackey functors in \cite{LewisROG}.

\begin{definition}[{\cite{LewisROG}}]  \label{BoxProduct} 
Let $p$ be prime and choose a generator $\gamma$ for $C_p$. Let $\uM$ and $\uN$ be $C_p$-Mackey functors, where their transfer and restriction maps are decorated with an $\uM$ and $\uN$ respectively.
We can define $\underline{M} \, \square \, \underline{N}$:
% https://q.uiver.app/#q=WzAsMixbMCwwLCJcXEJpZyhcXHVuZGVybGluZXtNfShDX3AvQ19wKSBcXG90aW1lcyBcXHVuZGVybGluZXtOfShDX3AvQ19wKSBcXG9wbHVzIFxcYmlnKFxcdW5kZXJsaW5le019KENfcC9lKSBcXG90aW1lcyBcXHVuZGVybGluZXtOfShDX3AvZSlcXGJpZykvQ19wXFxCaWcpL197RlJ9Il0sWzAsMiwiXFx1bmRlcmxpbmV7TX0oQ19wL2UpIFxcb3RpbWVzIFxcdW5kZXJsaW5le059KENfcC9lKSJdLFswLDEsInJlcyIsMix7ImN1cnZlIjoyfV0sWzEsMCwidHIiLDIseyJjdXJ2ZSI6Mn1dXQ==
\[\begin{tikzcd}[row sep=1.1em]
	{\Big(\underline{M}(C_p/C_p) \otimes \underline{N}(C_p/C_p) \oplus Im(tr) \Big)/_{FR}} \\
	\\
	{\underline{M}(C_p/e) \otimes \underline{N}(C_p/e).}
	\arrow["res"', curve={height=12pt}, from=1-1, to=3-1]
	\arrow["tr"', curve={height=12pt}, from=3-1, to=1-1]
\end{tikzcd}\]
\noindent The $C_p$-action on $\uM(C_p/e) \otimes \uN(C_p/e)$ is given by $\gamma(x \otimes y) = \gamma(x) \otimes \gamma(y)$. The restriction map is defined by $res(a \otimes b) = res_{\uM}(a) \otimes res_{\uN}(b)$ and for any element $tr(z) \in Im(tr)$, $res(tr(z)) = z + \gamma z + \ldots + \gamma^{p-1} z$. The notation $FR$ denotes the Frobenius reciprocity submodule which is generated by elements of the form:
\begin{align*}
    (a \otimes tr_{\uN}(y), 0) & - (0, tr(res_{\uM}(a) \otimes y)), \, \text{and} \\
    (tr_{\uM}(x) \otimes b, 0) & - (0, tr(x \otimes res_{\uN}(b)))
\end{align*}
\noindent for $x \in \uM(C_p/e), y \in \uN(C_p/e), a \in \uM(C_p/C_p)$, and $b \in \uN(C_p/C_p).$ 
\end{definition}

\begin{proposition}[{\cite{Lewis, LewisROG, ShulmanThesis}}] \label{box product map} 
    Let $p$ be prime. For $C_{p}$-Mackey functors $\uM, \uN$ and $\uL$, maps $\uM \, \square \, \uN \to \uL$ are in natural bijective correspondence with collections of Weyl equivariant maps which satisfy certain conditions. Namely,
    \begin{align*}
        f_{C_p}\colon & \uM(C_{p}/C_{p}) \otimes \uN(C_{p}/C_{p}) \to \uL(C_{p}/C_{p}) \\
        f_e\colon & \uM(C_{p}/e) \otimes \uN(C_{p}/e) \to \uL(C_{p}/e)
    \end{align*}
    \noindent such that the following compatibility conditions are satisfied:
    \begin{enumerate}
        \item $res \circ f_{C_p} = f_e \circ (res \otimes res)$ \\
        \item $f_{C_p} \circ (tr \otimes \id) = tr \circ f_e \circ (\id \otimes res)$ \\
        \item $f_{C_p} \circ (\id \otimes tr) = tr \circ f_e \circ (res \otimes \id).$
    \end{enumerate}
\end{proposition}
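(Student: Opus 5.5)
We prove the correspondence by building maps in both directions and checking that they are mutually inverse. The tool is the explicit Lewis formula for $\uM \, \square \, \uN$ in \Cref{BoxProduct}.

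\textbf{From a map to a collection.} Given a map of Mackey functors $\phi\colon \uM \,\square\, \uN \to \uL$, we take its two levels $\phi_{C_p}$ and $\phi_e$ and precompose with the canonical maps
\[
\uM(C_p/C_p)\otimes \uN(C_p/C_p) \to (\uM\,\square\,\uN)(C_p/C_p), \qquad \uM(C_p/e)\otimes\uN(C_p/e) = (\uM\,\square\,\uN)(C_p/e).
\]
The first map is inclusion into the first summand followed by the quotient by $FR$. Call the resulting maps $f_{C_p}$ and $f_e$.

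\begin{enumerate}
\item Weyl equivariance of $f_e$ holds because $\phi_e$ is equivariant for the diagonal action $\gamma(x\otimes y)=\gamma x\otimes \gamma y$.
\item Condition (1) holds because $\phi$ commutes with restriction, and $res(a\otimes b)=res_{\uM}(a)\otimes res_{\uN}(b)$ in the box product.
\item Conditions (2) and (3) hold because, in the quotient by $FR$, $(tr_{\uM}(x)\otimes b,0)$ is identified with $(0,tr(x\otimes res_{\uN}(b)))$, and $\phi$ commutes with transfers. The symmetric Frobenius relation handles the other condition.
\end{enumerate}

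\textbf{From a collection to a map.} Given $(f_{C_p}, f_e)$, we set $\phi_e = f_e$. On the top level we define $\phi_{C_p}$ on the direct sum $\uM(C_p/C_p)\otimes\uN(C_p/C_p)\oplus Im(tr)$ by
\[
\phi_{C_p}(a\otimes b, tr(z)) = f_{C_p}(a\otimes b) + tr_{\uL}(f_e(z)).
\]

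The main obstacle is showing this is well defined on $Im(tr)$, since an element $tr(z)$ may have several preimages $z$. We would argue as follows. The transfer $tr\colon \uM(C_p/e)\otimes\uN(C_p/e)\to Im(tr)$ factors through the coinvariants $(\uM(C_p/e)\otimes\uN(C_p/e))_{C_p}$. Likewise $tr_{\uL}\circ f_e$ factors through these coinvariants, because $f_e$ is equivariant and $tr_{\uL}(\gamma w)=tr_{\uL}(w)$ in any $C_p$-Mackey functor (a consequence of the double coset/pullback axiom).

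One must also know that the kernel of $tr$ out of the coinvariants is killed by $tr_{\uL} f_e$. This depends on the precise meaning of $Im(tr)$ in Lewis's formula. We would interpret the second summand, as in \cite{LewisROG}, as the orbit group $(\uM(C_p/e)\otimes \uN(C_p/e))/C_p$, on which the transfer is the tautological map. With that reading, well-definedness follows from the two factorizations above.

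\textbf{Remaining checks.} Next we check that $\phi_{C_p}$ kills $FR$; this is exactly conditions (2) and (3). We then check that $\phi$ respects the Mackey structure:
\begin{enumerate}
\item Transfers are respected by construction.
\item Restriction on the first summand is condition (1).
\item On the orbit summand, restriction is compatible because
\[
res_{\uL}\, tr_{\uL}(f_e z) = \sum_i \gamma^i f_e(z) = f_e\Big(\sum_i \gamma^i z\Big) = f_e(res(tr(z))).
\]
This uses equivariance of $f_e$ and the Mackey double coset formula $res\circ tr = \sum_i \gamma^i$ in $\uL$.
\end{enumerate}

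\textbf{Inverse and naturality.} The two constructions are inverse to each other. This is because $(\uM\,\square\,\uN)(C_p/C_p)$ is generated by the images of $\uM(C_p/C_p)\otimes\uN(C_p/C_p)$ and of the transfer, so a map is determined by $f_{C_p}$ and $f_e$. Naturality in $\uM$, $\uN$ and $\uL$ is immediate from the formulas.
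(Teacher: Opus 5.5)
Your argument is correct. The paper gives no proof of its own here: the proposition is quoted from Lewis, Lewis's $RO(G)$ paper and Shulman's thesis. So the comparison is with the standard route in those sources.

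You verify the universal property directly against the explicit presentation in \Cref{BoxProduct}. In one direction, (2)--(3) come from the $FR$ relations together with $\phi\circ tr=tr\circ\phi$. In the other, $\phi_{C_p}(a\otimes b,[z])=f_{C_p}(a\otimes b)+tr_{\uL}f_e(z)$ kills $FR$ by (2)--(3), and it commutes with restriction by (1) and $res_{\uL}\circ tr_{\uL}=\sum_i\gamma^i$.

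The usual route runs the other way. There $\uM\,\square\,\uN$ is defined (as a Day convolution) so that maps out of it are Dress pairings: families $\uM(X)\otimes\uN(X)\to\uL(X)$ over all finite $C_p$-sets $X$, natural for restrictions and satisfying Frobenius reciprocity along every map. By additivity such a family is determined by its two orbit components. The only morphisms between orbits are $q\colon C_p/e\to C_p/C_p$, which yields (1)--(3), and the Weyl automorphisms, which yield equivariance. That approach works for any finite group and is what justifies Lewis's explicit formula in the first place. Yours is more elementary and self-contained relative to the paper's definition, but it is special to $C_p$ and depends on reading the summand $Im(tr)$ correctly.

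You were right to flag that reading. The summand must be the coinvariants $(\uM(C_p/e)\otimes\uN(C_p/e))_{C_p}$, not literally the image of the norm. Under the literal reading the proposition fails. Take $\uM=\uN=\uL$ to be the Mackey functor with $\FF_p$ (trivial action) at both levels, $tr=\id$ and $res=0$. The pair with $f_e$ the multiplication and $f_{C_p}=0$ satisfies (1)--(3). But the norm vanishes and $FR$ kills the first summand, so the top level would be $0$, and no $\phi$ with $\phi_e=f_e$ could commute with transfers.

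One small correction: $tr_{\uL}(\gamma w)=tr_{\uL}(w)$ is just functoriality ($q\circ\gamma=q$), not the double coset formula. You need the double coset formula only for $res\circ tr=\sum_i\gamma^i$.
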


There is also a box product on graded $C_p$-Mackey functors. 

\begin{definition}[{\cite[Definition 2.4]{LM}}]
Let $\underline{N}_*$ and $\underline{M}_*$ be $\ZZ$-graded $C_p$-Mackey functors. We define $\underline{N}_* \, \square \, \underline{M}_*$ as a $\ZZ$-graded $C_p$-Mackey functor such that for $n \in \ZZ$

\begin{center}
    $(\underline{N}_* \, \square \, \underline{M}_*)_n = \underset{n = k + \ell}{\bigoplus} (\underline{N}_k \, \square \, \underline{M}_\ell)$.
\end{center}

\noindent The definition is similar for $RO(C_p)$-graded Mackey functors.
\end{definition}

The unit for the product on $\ZZ$-graded $C_p$-Mackey functors is $\underline{A}_*$, which is $\underline{A}$ in degree $0$ and $\underline{0}$ in all other degrees. Similarly, the unit for the product on $RO(C_p)$-graded $C_p$-Mackey functors is $\uA_\star$, which is $\uA$ in degree $0$ and $\underline{0}$ in all other degrees.

\subsection{Green functors} \label{Subsec: Green functors}

An equivariant analogue to rings can be built using the box product on Mackey functors.

\begin{definition}[{\cite[Definition 2.1(a)]{Lewis}}] A \emph{$C_p$-Green functor} $\underline{R}$ is a Mackey functor with a unit map $\eta \colon \underline{A} \to \underline{R}$ and a multiplication map $\phi \colon \underline{R} \, \square \, \underline{R} \to \underline{R}$ such that the usual diagrams commute. A $C_p$-Green functor is said to be commutative if $\phi \cong \phi \circ \tau$ where $\tau$ swaps the two copies of $\underline{R}$.
\end{definition}

\begin{definition}[{\cite[Definition 3.1]{LM}}] A \emph{$\ZZ$-graded $C_p$-Green functor} $\underline{R}_*$ is a collection of Mackey functors, $\{\uR_n\}_{n \in \ZZ}$ with a unit map $\eta \colon \underline{A}_* \to \underline{R}_*$ and a multiplication map $\phi \colon \underline{R}_* \, \square \, \underline{R}_* \to \underline{R}_*$ such that the usual diagrams commute. A $\ZZ$-graded $C_p$-Green functor is said to be commutative if $\phi \cong \phi \circ \tau$ where $\tau$ rotates the two copies of $\uR_*$.
\end{definition}

In order to define an $RO(C_p)$-graded commutative $C_p$-Green functor we will first define the rotating isomorphism. For $\alpha, \beta \in RO(C_p)$, the switch map $S^\alpha \smashy S^\beta \to S^\beta \smashy S^\alpha$ gives an element in the Burnside ring $A(C_p)$. Let us refer to this element as $s(\alpha,\beta)$. Further, for $\uM_\star$ and $\uN_\star$ $RO(C_p)$-graded Mackey functors, $s(\alpha, \beta)$ induces an automorphism $\uM_\alpha \, \square \, \uN_\beta \to \uM_\alpha \, \square \, \uN_\beta$. This automorphism, along with the symmetry isomorphism of the tensor product of abelian groups, gives an isomorphism $r_{\alpha,\beta}\colon \uM_\alpha \, \square \, \uN_\beta \to \uN_\beta \, \square \, \uM_\alpha$.

\begin{definition} \label{rotating isomorphism}
    Let $\uM_\star$ and $\uN_\star$ be $RO(C_p)$-graded Mackey functors. The \emph{rotating isomorphism}, noted $\tau\colon \uM_\star \, \square \, \uN_\star \to \uN_\star \, \square \, \uM_\star$ is defined at the level $\gamma \in RO(C_p)$, by $r_{\alpha,\beta}\colon \uM_\alpha \, \square \, \uN_\beta \to \uN_\beta \, \square \, \uM_\alpha$, as defined above, for all $\alpha + \beta = \gamma$.
\end{definition}

We are now ready to define $RO(C_p)$-graded $C_p$-Green functors.

\begin{definition} An \emph{$RO(C_p)$-graded $C_p$-Green functor} $\underline{R}_\star$ is a collection of Mackey functors, $\{\uR_\alpha \}_{\alpha \in RO(C_p)}$ with a unit map $\eta \colon \underline{A}_\star \to \underline{R}_\star$ and a multiplication map $\phi \colon \underline{R}_\star \, \square \, \underline{R}_\star \to \underline{R}_\star$ such that the usual diagrams commute. An $RO(C_p)$-graded $C_p$-Green functor is said to be commutative if $\phi \cong \phi \circ \tau$ where $\tau$ is the rotating isomorphism from \Cref{rotating isomorphism}. 

\end{definition}
This allows us to define Mackey modules, submodules, and ideals. Let us first consider the following definition.

\begin{definition}
    A \emph{subfunctor}, say $\underline{S}$, of a $C_p$-Mackey functor $\underline{M}$ is a functor from finite $C_p$-sets to abelian groups where $\underline{S}(C_p/e) \leq \underline{M}(C_p/e)$, $\underline{S}(C_p/C_p) \leq \underline{M}(C_p/C_p)$, and the transfer, restriction, and Weyl group action of $\underline{S}$ are induced from $\underline{M}.$
\end{definition}

Lewis defines modules over a Green functor in \cite{Lewis}, and Lewis and Mandell define $RO(C_p)$-graded modules over $RO(C_p)$-graded Green functors in \cite{LM}.

\begin{definition}[{\cite[Definition 2.1(b)]{Lewis},\cite[Definition 3.2]{LM}}] For $\underline{R}$ a $C_p$-Green functor, a \emph{left $\underline{R}$-module} is a $C_p$-Mackey functor $\underline{M}$ with a module structure map $\xi\colon \underline{R} \, \square \, \underline{M} \to \underline{M}$ such that the usual diagrams commute. \emph{Right $\underline{R}$-modules} and \emph{$\uR$-bimodules} are defined analogously. If $\underline{R}$ is commutative, then every left (right) $\uR$-module is an $\uR$-bimodule. 

Graded modules over $\ZZ$-graded Green functors and $RO(C_p)$-graded Green functors are defined similarly. It is notable that a $\ZZ$-graded \emph{$\underline{R}_*$-submodule} $\underline{N}_*$ of $\underline{M}_*$ is a subfunctor on every level, meaning that $\uN_n$ is a subfunctor of $\uM_n$ for all $n \in \ZZ$, where $\uN_n$ is closed under the action of $\underline{R}_*$. Similarly for an $RO(C_p)$-graded $\underline{R}_\star$-submodule.
\end{definition}

Lewis defines ideals of a Green functor in \cite{Lewis}.

\begin{definition}[{\cite[Definition 2.1(c)]{Lewis}}] 
    Let $\uR$ be a $C_p$-Green functor. A \emph{left ideal} $\uI$ of $\uR$ is a submodule of $\uR$ considered as a left module over itself. Analogously, one can define \emph{right ideals} and \emph{two sided ideals.}
\end{definition}

Graded ideals for $\ZZ$-graded and $RO(C_p)$-graded Green functors are defined similarly.

\begin{definition}
    Let $\uR_*$ be a $\ZZ$-graded $C_p$-Green functor. A \emph{left ideal} $\uI_*$ of $\uR_*$ is an $\uR_*$-submodule considered as a left $\uI_*$-module. Analogously, one can define \emph{right ideals} and \emph{two sided ideals.}
\end{definition}
\begin{definition}
    Let $\uR_\star$ be a $RO(C_p)$-graded $C_p$-Green functor. A \emph{left ideal} $\uI_\star$ of $\uR_\star$ is an $\uR_\star$-submodule considered as a left $\uI_\star$-module. Analogously, one can define \emph{right ideals} and \emph{two sided ideals.}
\end{definition}

Note that, as is true classically, if $\uI_\star$ is an $RO(C_p)$-graded left ideal of $\uR_\star$, then $\uI_0$ must be a left ideal of $\uR_0$. This is because by the definition of an $\uR_\star$-subfunctor, there is an inclusion map $\uI_0 \to \uR_0$ and the module structure map on $\uR_0 \, \square \, \uI_0$ must land in $\uI_0$. Therefore $\uI_0$ is an ideal of $\uR_0$.

Classically, we have relative tensor products of abelian groups defined from a coequalizer diagram. The following is the equivariant analogue. Lewis defines the non-graded case in \cite{Lewis}, and Lewis and Mandell extend this to the $RO(C_p)$-graded case in \cite{LM}.

\begin{definition}[{\cite{Lewis}, \cite[Definition 3.6(a)]{LM}}]
Let $\underline{L}$ and $\underline{M}$ be right and left $\underline{R}$-modules, respectively, for $\underline{R}$ a $C_p$-Green functor. Define $\underline{L} \, \square_{\underline{R}} \, \underline{M}$ as the coequalizer of the following diagram in the category of $C_p$-Mackey functors
% https://q.uiver.app/#q=WzAsMixbMCwwLCJcXHVuZGVybGluZXtMfSBcXCwgXFxzcXVhcmUgXFwsIFxcdW5kZXJsaW5le1J9IFxcLCBcXHNxdWFyZSBcXCwgXFx1bmRlcmxpbmV7TX0iXSxbMSwwLCJcXHVuZGVybGluZXtMfSBcXCwgXFxzcXVhcmUgXFwsIFxcdW5kZXJsaW5le019Il0sWzAsMSwiXFxyaG8gXFwsIFxcc3F1YXJlIFxcLCBcXGlkIiwwLHsib2Zmc2V0IjotMn1dLFswLDEsIlxcaWQgXFwsIFxcc3F1YXJlIFxcLCBcXGxhbWJkYSIsMix7Im9mZnNldCI6Mn1dXQ==
\[\begin{tikzcd}
	{\underline{L} \, \square \, \underline{R} \, \square \, \underline{M}} & {\underline{L} \, \square \, \underline{M}}
	\arrow["{\rho \, \square \, \id}", shift left=2, from=1-1, to=1-2]
	\arrow["{\id \, \square \, \lambda}"', shift right=2, from=1-1, to=1-2]
\end{tikzcd}\]
\noindent for $\rho$ and $\lambda$ being the right and left module actions, respectively. The definition is similar for $\ZZ$-graded and $RO(C_p)$-graded Green functors.
\end{definition}

\subsection{Mackey fields}

We are now ready to discuss an equivariant analogue to fields. Lewis defines Mackey fields in \cite{Lewis}. In this subsection we will naturally extend this definition to define graded Mackey fields.

\begin{definition}[{\cite[Definition 2.6(f)]{Lewis}}] 
A \emph{$C_p$-Mackey field} $\underline{F}$ is a nonzero, commutative $C_p$-Green functor with no nontrivial proper ideals.
\end{definition}

One may guess that the constant Mackey functor over a field is always a Mackey field, but this is not always true.

\begin{example}
    The constant $C_2$-Mackey functor over the field with two elements, $\underline{\FF}_2$, is not a $C_2$-Mackey field. Recall that $\underline{\FF}_2(C_2/C_2) = \FF_2$, $\underline{\FF}_2(C_2/e) = \FF_2$, $tr = 0$, and $res = \id$. This has the nontrivial ideal $\uI$ such that $\uI(C_2/C_2) = 0$, $\uI(C_2/e) = \FF_2$, $tr = 0$, and $res = 0$.
\end{example}

\begin{example}
    The constant $C_2$-Mackey functor over the field with $p$ elements, $\underline{\FF}_p$, for $p$ an odd prime is a Mackey field. Recall that $\underline{\FF}_p(C_2/C_2) = \FF_p$, $\underline{\FF}_p(C_2/e) = \FF_p$, $tr = 2$, and $res = \id$. In fact, $\underline{\FF}_p$ is a $C_q$-Mackey field for any distinct primes $p$ and $q$.
\end{example}

There are many other examples of Mackey fields. In fact any $C_p$-Mackey functor $\uM$ where $\uM(C_p/e) = 0$ and $\uM(C_p/C_p) = F$ for $F$ a field is a $C_p$-Mackey field. The following is another interesting example.

\begin{example}
    The fixed point $C_2$-Mackey functor on $\CC$, say $\underline{FP}(\CC)$, where the $C_2$-action on $\CC$ is complex conjugation is a $C_2$-Mackey field. This Mackey functor is defined by the following characteristics: $\underline{FP}(\CC)(C_2/C_2) = \CC^{C_2} = \RR$, $\underline{FP}(\CC)(C_2/e) = \CC$, $tr = 2Re$, and $res = inc$.
\end{example}

One may have noticed that for every example of a Mackey field $\uF$, $\uF(C_p/C_p)$ has been a field. This is not special to these examples, but is a feature of Mackey fields.

\begin{proposition}[{\cite[Proposition 3.9(f)]{Lewis}}]
    If $\uF$ is a $C_p$-Mackey field, then $\uF(C_p/C_p)$ is a field.
\end{proposition}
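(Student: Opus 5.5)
To show that $\uF(C_p/C_p)$ is a field, I will take a nonzero element $a \in \uF(C_p/C_p)$ and build from it an ideal of $\uF$ that contains $a$. Since $\uF$ has no nontrivial proper ideals, this ideal must be all of $\uF$. In particular the unit $1 \in \uF(C_p/C_p)$ (the image of $[C_p/C_p]$ under $\eta$) lies in it, and that will produce an inverse for $a$. Commutativity of $\uF(C_p/C_p)$ is inherited from commutativity of the Green functor, since $f_{C_p}$ is the top-level multiplication. Also $\uF(C_p/C_p) \neq 0$: if it were $0$, then $1=0$ at the top level, so $res(1)=1=0$ at the bottom level, and hence $\uF=0$, contradicting the definition.

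The ideal I use is the one generated by $a$, written out via \Cref{box product map}:
\[
\uI(C_p/C_p) = a\cdot \uF(C_p/C_p) + tr\big(res(a)\cdot \uF(C_p/e)\big), \qquad \uI(C_p/e) = res(a)\cdot \uF(C_p/e),
\]
where the bottom level is closed under the Weyl action because $res(a)$ is $C_p$-fixed. I will verify the following.
\begin{enumerate}
\item It is a subfunctor. $res(a x) = res(a)res(x)$ by condition (1), and $res(tr(res(a)y)) = \sum_i \gamma^i(res(a) y) = res(a)\sum_i \gamma^i y$, both of which lie in $\uI(C_p/e)$. Transfers land in $\uI(C_p/C_p)$ by construction.
\item It is closed under multiplication by $\uF$ on each level, using Frobenius reciprocity (conditions (2) and (3)). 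For example, $b\cdot tr(res(a)y) = tr(res(a)\,res(b)\,y)$.
\end{enumerate}
So $\uI$ is an ideal, and it is nonzero because $a \in \uI(C_p/C_p)$. Therefore $\uI = \uF$, and so
\[
1 = ab + tr(res(a)y)
\]
for some $b \in \uF(C_p/C_p)$ and $y \in \uF(C_p/e)$.

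The main obstacle is to get rid of the transfer term. By Frobenius reciprocity, $tr(res(a)y) = a\cdot tr(y)$, and therefore
\[
1 = a\,\big(b + tr(y)\big),
\]
which exhibits an inverse of $a$ in $\uF(C_p/C_p)$. The real work, then, is carefully matching the Frobenius reciprocity relations of \Cref{BoxProduct} to the multiplication map. Once that identity $tr(res(a)y) = a\cdot tr(y)$ is in hand, every element of $\uI(C_p/C_p)$ is visibly a multiple of $a$, and the conclusion is immediate.
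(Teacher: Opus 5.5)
Your proof is correct. The paper gives no argument of its own here; it simply quotes Lewis. So there is nothing in the text to compare against, and what you have written is a self-contained proof. It uses only the compatibility conditions (1)--(3) of \Cref{box product map} applied to the multiplication of $\uF$.

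Two facts you use implicitly deserve a sentence each.
\begin{enumerate}
\item The Weyl group acts on $\uF(C_p/e)$ by ring automorphisms, because $f_e$ is Weyl-equivariant. This is what lets you pull $res(a)$ out of $\sum_i \gamma^i(res(a)y)$.
\item The top level of $\uF \square \uI$ is generated by elements $x\otimes i$ and $tr(z\otimes j)$. So checking that $\uI$ is closed under the levelwise products and under transfer is exactly what makes the action map factor through $\uI$.
\end{enumerate}

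The ``main obstacle'' disappears if you apply Frobenius reciprocity at the outset. Since $tr(res(a)\,\uF(C_p/e)) = a\cdot tr(\uF(C_p/e)) \subseteq a\,\uF(C_p/C_p)$, you may simply take $\uI(C_p/C_p) = a\,\uF(C_p/C_p)$ and $\uI(C_p/e) = res(a)\,\uF(C_p/e)$, and the $\sum_i\gamma^i$ computation is no longer needed. Phrased this way, your argument shows slightly more. Every ideal $J$ of the ring $\uF(C_p/C_p)$ extends to a Mackey ideal whose top level is exactly $J$. Hence a nonzero proper $J$ would give a nonzero proper Mackey ideal, and simplicity of $\uF$ forces the top ring to be a field. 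The same construction, $G/H \mapsto res^G_H(J)\cdot \uF(G/H)$, works for any finite group $G$.
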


Note that it is not necessarily true that $\underline{F}(C_p/e)$ is a field. It turns out that $C_p$-Mackey fields can be ``sorted" by induction theory by whether $\uF(C_p/e)$ is zero or not.

We will now create a natural definition of graded Mackey fields.

\begin{definition}
    A \emph{$\ZZ$-graded $C_p$-Mackey field} is a nonzero, commutative $\ZZ$-graded $C_p$-Green functor with no nontrivial $\ZZ$-graded ideals.
    An $RO(C_p)$\emph{-graded Mackey field} is defined similarly.
\end{definition}

\subsection{Equivariant homotopy groups of Eilenberg-Mac Lane spectra} \label{Sec: Homotopy Groups}

In this section, we will compute $\underline{\pi}_\star(H\underline{F})$ for any $C_p$-Mackey field $\underline{F}$, $p$ prime. Note that Ferland and Lewis compute $\underline{\pi}_\star(H\underline{M})$ for $\uM$ any $C_p$-Mackey functor in Section 8 of \cite{Ferland-Lewis}. We found that computing $\underline{\pi}_\star(H\underline{M})$ is much simpler in the case when $\uM$ is a $C_p$-Mackey field, so it is worth revisiting. Let us first recall the definition of equivariant homology and cohomology.

Define $a_+$ as the disjoint union of the discrete space $a$ and a $C_p$-trivial point. For $C_p$-spectra $E$ and $D$ we regard $[D,E]^{C_p}$ as a $C_p$-Mackey functor by defining $[D,E]^{C_p}(a) = [\Sigma^\infty a_+ \smashy D, E]^{C_p}$ for $a$ a finite $C_p$-set. Similarly for a $C_p$-space $X$, $[\Sigma^\infty X,E]^{C_p}(a) = [\Sigma^\infty a_+ \smashy \Sigma^\infty X, E]^{C_p}$ is a $C_p$-Mackey functor.

\begin{definition}[{\cite{LMM, LMS}}] \label{EquivariantHomology}  Assume $E$ and $D$ are $C_p$-spectra, and $X$ a $C_p$-space. Then $C_p$-equivariant $E$\emph{-cohomology} and $E$\emph{-homology} of $X$ and $D$ are given by

    \begin{center}
    \begin{tabular}{ccc}
       $\underline{E}^\star (X) = [\Sigma^\infty_+ X, S^\star \smashy E]^{C_p}$,  & &  $\underline{E}_\star (X) = [S^\star , \Sigma^\infty_+ X \smashy E]^{C_p}$, \\
       $\underline{\tilde E}^\star (X) = [\Sigma^\infty X, S^\star \smashy E]^{C_p}$,  &  &  $\underline{\tilde E}_\star (X) = [S^\star , \Sigma^\infty X \smashy E]^{C_p}$, \\
       $\underline{E}^\star (D) = [D, S^\star \smashy  E]^{C_p}$,  & and & $\underline{E}_\star  (D) = [S^\star , D \smashy E]^{C_p}$.
    \end{tabular}
\end{center}

\end{definition}

\noindent Note that  $\underline{\pi}_\star(E \smashy D) = \underline{E}_\star(D)$. For a Mackey functor $\uM$, $\underline{H}_\star(-;\uM) \cong H\uM_\star(-)$ and $\underline{H}^\star(-;\uM) \cong H\uM^\star(-)$, meaning $E = H\uM$ is the Eilenberg-Mac Lane spectrum of $\uM$ in the notation in the above definition.

The following result of Oru\c{c} in \cite{Oruc} is more general than we state it here. For the purposes of this paper we will focus on $C_p$-Mackey fields.

\begin{proposition}[{\cite[Proposition 3.11]{Oruc}}] \label{OrucCoHomology}
 For $\uF$ a $C_p$-Mackey field such that $\uF(C_p/e)=0$, $\underline{M}$ a module over $\uF$, $E$ a $C_p$-spectrum, and $\alpha \in RO(C_p)$ we have
\begin{align*}
    \underline{H}_\alpha(E ; \underline{M}) & \cong J_{C_p/C_p}(H_{\dim(\alpha^{C_p})}(\Phi^{C_p}(E) ; \underline{M}(C_p/C_p))), \, \text{and} \\
    \underline{H}^\alpha(E ; \underline{M}) & \cong J_{C_p/C_p}(H^{\dim(\alpha^{C_p})}(\Phi^{C_p}(E) ; \underline{M}(C_p/C_p))).
\end{align*}
\noindent For $\uF'$ a $C_p$-Mackey field such that $\uF'(C_p/e) \neq 0$, $\uM'$ a module over $\uF'$, $E$ a $C_p$-spectrum, and $\alpha \in RO(C_p)$ we have
\begin{align*}
    \underline{H}_\alpha(E ; \underline{M}') & \cong J_{C_p/e}(H_{\dim(\alpha)}(E ; \underline{M}'(C_p/e)) \otimes \ZZ), \, \text{and} \\
    \underline{H}^\alpha(E ; \underline{M}') & \cong J_{C_p/e}(H^{\dim(\alpha)}(E ; \underline{M}'(C_p/e)) \otimes \ZZ),
\end{align*}
\noindent where $\ZZ$ has a trivial $C_p$-action unless both $p = 2$ and $\alpha = k - k\sigma$ for $k$ an odd integer, where $\sigma$ is the sign representation. The non-trivial $C_2$-action on $\ZZ$ is multiplication by $-1$.
\end{proposition}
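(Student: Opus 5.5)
The plan is to treat the two cases of the statement separately, according to whether $\uF(C_p/e)$ vanishes. In each case we first show that $H\uM$ is equivalent, as a $C_p$-spectrum, to a spectrum whose homotopy is controlled by a single level. Then we compute homology levelwise from the definition $\underline{H}_\alpha(E;\uM)(C_p/H) = [S^\alpha \smashy C_p/H_+, E \smashy H\uM]^{C_p}$. Cohomology is handled by the same argument with the variance reversed.

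\emph{Case $\uF(C_p/e)=0$.} Since $\uM$ is an $\uF$-module and the unit of $\uF$ restricts to $0$, we get $\uM(C_p/e)=0$. Hence the underlying spectrum of $H\uM$ is contractible, so $H\uM$ is concentrated over the isotropy $C_p$. In particular $H\uM \simeq \tilde E C_p \smashy H\uM$, where $\tilde E C_p$ is the cofiber of $E{C_p}_+ \to S^0$. The bottom level $\underline{H}_\alpha(E;\uM)(C_p/e)$ is an underlying homotopy group of $E\smashy H\uM$, so it vanishes; all restrictions and transfers therefore vanish, and the answer has the $J_{C_p/C_p}$ shape.

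For the top level we use the standard equivalence $[X, \tilde E C_p \smashy Y]^{C_p} \cong [\Phi^{C_p}X, \Phi^{C_p}Y]$. This gives
\[
\pi_\alpha^{C_p}(E\smashy H\uM) \cong \pi_{\dim \alpha^{C_p}}\bigl(\Phi^{C_p}E \smashy \Phi^{C_p}H\uM\bigr).
\]
It remains to identify $\Phi^{C_p}H\uM$ with $H\uM(C_p/C_p)$, i.e.\ to show $\Phi^{C_p}H\uM \simeq (H\uM)^{C_p}$ is Eilenberg--Mac Lane. This follows from the isotropy separation cofiber sequence $(E{C_p}_+\smashy H\uM)^{C_p} \to (H\uM)^{C_p} \to \Phi^{C_p}H\uM$, whose left term is the homotopy orbits of the contractible underlying spectrum and hence vanishes. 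The result is an Eilenberg--Mac Lane spectrum on $\uM(C_p/C_p)$, which gives the homology of $\Phi^{C_p}E$ with those coefficients.

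\emph{Case $\uF'(C_p/e)\neq 0$.} Here the key algebraic input is that $|C_p|=p$ is invertible in $\uF'(C_p/C_p)$, so that $\tfrac1p\,tr\circ res$ is the identity on the top level. To prove this, note that the image of $tr$ is a nonzero ideal (it contains $tr(1)$, which is nonzero since $res(tr(1)) = p$ times a unit, up to action); since the top level is a field, $tr$ is surjective, and $p$ must be invertible. I expect this to be the main obstacle, and I would check it directly from Lewis's classification of $C_p$-Mackey fields.

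With $p$ invertible, every $\uF'$-module is cohomological with $\uM'(C_p/C_p) = \uM'(C_p/e)^{C_p}$, so $\uM' \cong J_{C_p/e}(\uM'(C_p/e))$. The spectrum $H\uM'$ then splits off its free part: the map $E{C_p}_+\smashy H\uM' \to H\uM'$ is an equivalence after inverting $p$, because $\tilde E C_p \smashy H\uM'$ has geometric fixed points with $p$ invertible and also killed by the Tate construction. Consequently
\[
\pi_\alpha^{C_p}(E\smashy H\uM') \cong \bigl(\pi_{\dim\alpha}^{e}(S^{-\alpha}\smashy E\smashy H\uM')\bigr)^{C_p},
\]
which is the fixed points of the underlying homology. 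The homotopy orbit spectral sequence collapses because higher group homology of $C_p$ vanishes when $p$ is invertible. Keeping track of the $C_p$-action on the underlying sphere $S^\alpha$ gives the twist by $\ZZ$: the generator acts on $H_{\dim\alpha}$ of the nonequivariant sphere by its degree, which is $-1$ exactly when $p=2$ and $\alpha$ contains an odd number of copies of $\sigma$. This matches the stated condition on $\alpha = k - k\sigma$ within the degree considered. Finally, the levels assemble as $J_{C_p/e}$, with the restriction given by inclusion of fixed points and the transfer by the norm sum.
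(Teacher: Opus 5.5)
The paper gives no proof of this statement. It is quoted from Oru\c{c}, so I am assessing your argument on its own. Your first case is sound. $\uM(C_p/e)=0$ forces $EC_{p+}\smashy H\uM\simeq *$, so the bottom level vanishes. Moreover $[S^\alpha,\tilde EC_p\smashy E\smashy H\uM]^{C_p}\cong\pi_{\dim(\alpha^{C_p})}(\Phi^{C_p}E\smashy\Phi^{C_p}H\uM)$, and $\Phi^{C_p}H\uM\simeq(H\uM)^{C_p}$ is Eilenberg--Mac Lane on $\uM(C_p/C_p)$ (compare \Cref{phiHR}).

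The second case rests on a false lemma: $p$ need not be invertible in $\uF'(C_p/C_p)$.

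\emph{Counterexample.} Take a field $k$ of characteristic $p$. Set $\uF'(C_p/e)=\mathrm{Map}(C_p,k)$ with the translation action and $\uF'(C_p/C_p)=k$ (the constant functions). Let $res$ be the inclusion and $tr$ the sum of values. The only $C_p$-stable ideals of $\mathrm{Map}(C_p,k)$ are $0$ and the whole ring, $res$ is injective, and $tr(\delta_e)=1$. So this is a $C_p$-Mackey field with nonzero bottom level in which $p=0$. The fixed-point Green functor of $\FF_{p^p}$ with its Frobenius action is another example.

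\emph{Where your argument fails.} It breaks at two points. First, $res(tr(1))=\sum_g g\cdot 1=p$ can be $0$, so $tr(1)$ can vanish. Second, $tr$ is indeed surjective: it is nonzero by \Cref{transfer nonzero}, and its image is an ideal of the field $\uF'(C_p/C_p)$ by Frobenius reciprocity. But surjectivity only gives some $x\in R=\uF'(C_p/e)$ with $tr(x)=1$, not that $tr(1)=p$ is a unit. Hence your three later uses of $1/p$ are unsupported exactly in characteristic $p$:
\begin{enumerate}
\item that $\uF'$-modules are fixed-point functors;
\item that $\tilde EC_p\smashy H\uM'\simeq *$;
\item that higher group homology vanishes.
\end{enumerate}
The statement itself is still true there. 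In the example, $H\uF'\simeq C_{p+}\smashy Hk$, and the formula reduces to the Wirthm\"uller isomorphism.

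\emph{Repair.} Use the trace-one element $x$ wherever you used $1/p$.
\begin{enumerate}
\item Frobenius reciprocity gives $tr(x\cdot res(m))=m$ on the top level and $res(tr(xy))=y$ for $C_p$-fixed $y$. So every $\uF'$-module is $J_{C_p/e}(\uM'(C_p/e))$.
\item For any $R$-module $N$ with semilinear $C_p$-action, $\id_N=\sum_g g\circ(x\cdot -)\circ g^{-1}$. So $N$ is a summand of an induced module and its Tate cohomology vanishes. This applies to the underlying homotopy groups of $S^{-\alpha}\smashy E\smashy H\uM'$.
\item $\pi_0\Phi^{C_p}H\uF'=\uF'(C_p/C_p)/\mathrm{im}(tr)=0$, so the ring spectrum $\Phi^{C_p}H\uF'$ is contractible. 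Hence $\Phi^{C_p}$ of every $H\uF'$-module is contractible.
\item Thus $S^{-\alpha}\smashy E\smashy H\uM'$ is free, and its fixed points are its homotopy orbits. The homotopy-orbit spectral sequence is concentrated in filtration $0$, where the norm identifies coinvariants with invariants.
\end{enumerate}

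\emph{Two smaller points.} The underlying group you want is $\pi^e_0(S^{-\alpha}\smashy E\smashy H\uM')$, not $\pi^e_{\dim\alpha}$. Your sign rule (a twist exactly when $p=2$ and $\sigma$ occurs in $\alpha$ with odd multiplicity) is the correct one. It agrees with the condition as stated when $\dim\alpha=0$, which is the case needed for $E=\SS$. It differs whenever $\dim\alpha\neq 0$ and the multiplicity of $\sigma$ is odd. For instance, take $E=\Sigma^\infty S^\sigma$, $\alpha=\sigma$, and coefficients $\underline{\FF}_3$ for $C_2$. The literal statement gives top level $0$ rather than $\FF_3$, so you should not assert that your computation matches it.
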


Let $\SS_{C_p}$ and $\SS$ be the units in the category of ring $C_p$-spectra and the category of ring spectra, respectively. We can write $\underline{\pi}_\star(H\underline{F})$ as $\underline{H}_\star(\SS_{C_p}; \underline{F})$. Recall the following property of the geometric fixed point functor:
\begin{center}
    $\Phi^{C_p}(\Sigma^\infty X) \simeq \Sigma^\infty (X^{C_p}).$
\end{center}
The $C_p$-geometric fixed points of $\SS_{C_p}$ are as follows, $\Phi^{C_p}(\SS_{C_p}) = \Phi^{C_p}(\Sigma^\infty_{C_p} S^0) = \Sigma^\infty((S^0)^{C_p}) = \Sigma^\infty S^0 = \SS$. Therefore \Cref{OrucCoHomology} shows that for $\uF$ and $\uF'$ $C_p$-Mackey fields such that $\uF(C_p/e) = 0$ and $\uF'(C_p/e)\neq 0$,
\begin{equation} \label{C_pHomotopyGroups} \underline{\pi}_\alpha(H\underline{F}) \cong J_{C_p/C_p}(H_{\dim(\alpha^{C_p})}(\SS; \underline{F}(C_p/C_p)))
\end{equation}
\begin{equation}\label{eHomotopyGroups}
\underline{\pi}_\alpha(H\underline{F}') \cong J_{C_p/e}(H_{\dim(\alpha)}(\SS_{C_p}; \underline{F}'(C_p/e))\otimes \ZZ)
\end{equation}
\noindent for any $\alpha \in RO(C_p)$. As stated in \Cref{OrucCoHomology} $\ZZ$ has a non-trivial action only when $p = 2$ and $\alpha = k - \sigma k$ for $k$ an odd integer.

\begin{proposition}[{\cite{Lewis}}] \label{transfer nonzero}
    If $\uF$ is a $C_p$-Mackey field such that $\uF(C_p/e) \neq 0$, then the transfer map is nonzero.
\end{proposition}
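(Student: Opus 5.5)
We argue by contradiction, using only the Mackey-functor axioms and the absence of nontrivial ideals. Suppose $\uF$ is a $C_p$-Mackey field with $\uF(C_p/e) \neq 0$ but $tr = 0$. The plan is to build from this a proper nonzero ideal $\uI$ of $\uF$, contradicting the definition of a Mackey field.

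The candidate is the subfunctor $\uI$ with $\uI(C_p/C_p) = 0$ and $\uI(C_p/e) = \uF(C_p/e)$. This is modeled on the ideal exhibited for $\underline{\FF}_2$ in the example above, where the transfer vanishes as well. We check three things.
\begin{enumerate}
    \item $\uI$ is a subfunctor. It is closed under the Weyl action, since the bottom level is everything. It is closed under restriction, since the top level is $0$. It is closed under transfer precisely because $tr = 0$ by assumption.
    \item $\uI$ is an ideal, i.e.\ the multiplication $\uF \,\square\, \uI \to \uF$ lands in $\uI$. By \Cref{box product map}, the multiplication is described levelwise by $f_{C_p}$ and $f_e$. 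At the bottom level there is nothing to check. At the top level we need $f_{C_p}(a \otimes 0) = 0$, which is immediate, together with the contributions from $Im(tr)$ in the formula for the box product (\Cref{BoxProduct}). By compatibility condition (3), the map $f_{C_p} \circ (\id \otimes tr)$ factors as $tr \circ f_e \circ (res \otimes \id)$, which is $0$ because $tr = 0$.
    \item $\uI$ is nonzero and proper. It is nonzero since $\uF(C_p/e) \neq 0$. It is proper since $\uI(C_p/C_p) = 0 \neq \uF(C_p/C_p)$, and the latter is a field by \cite[Proposition 3.9(f)]{Lewis}, so in particular it is nonzero.
\end{enumerate}
Together these contradict the assumption that $\uF$ has no nontrivial proper ideals, so the transfer must be nonzero.

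The main obstacle is the ideal check in step (2): making precise that closure under the module action reduces to the two levelwise conditions via \Cref{box product map}. Concretely, one must confirm that the image of $\uF \,\square\, \uI$ at level $C_p/C_p$ is generated by $\uF(C_p/C_p) \otimes \uI(C_p/C_p)$ together with the transfers of $\uF(C_p/e) \otimes \uI(C_p/e)$. Both pieces vanish under our assumptions, the first because $\uI(C_p/C_p) = 0$ and the second because $tr = 0$.
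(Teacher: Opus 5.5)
Your proof is correct. The paper gives no proof of this statement; it quotes it from Lewis, so there is no in-paper argument to match, and your contradiction argument verifies it directly.

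The standard formulation (presumably Lewis's) is a little stronger and needs no contradiction. In any commutative Green functor, the subfunctor $\uI$ with $\uI(C_p/e)=\uF(C_p/e)$ and $\uI(C_p/C_p)=\mathrm{Im}(tr)$ is an ideal. Closure at the top level is Frobenius reciprocity, $a\cdot tr(x)=tr(res(a)\,x)$. If $\uF$ is a Mackey field with $\uF(C_p/e)\neq 0$, this ideal must be all of $\uF$. Hence $tr$ is surjective, and in particular nonzero. Your ideal is exactly this one under the hypothesis $tr=0$. The direct version buys surjectivity of the transfer, which is the form actually used when describing $\uF(C_p/C_p)$ as $R^{C_p}$ in the classification of Mackey fields.

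Two small remarks on your step (2). First, for the $\mathrm{Im}(tr)$ summand of $(\uF\,\square\,\uI)(C_p/C_p)$, what you need is that the multiplication is a map of Mackey functors and so commutes with transfers. Thus $tr(x\otimes y)\mapsto tr_{\uF}(xy)=0$. Compatibility condition (3) concerns elements $a\otimes tr_{\uI}(y)$ and is not the relevant point, though your closing paragraph states the correct reason. Second, $\uF(C_p/C_p)\neq 0$ needs only that restriction is unital: $res(1)=1\neq 0$ in $\uF(C_p/e)$. You do not need Lewis's result that $\uF(C_p/C_p)$ is a field.
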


We can now describe what any $C_p$-Mackey field must look like.

\begin{proposition} \label{All Mackey fields}
    Any $C_p$-Mackey field $\uF$ has of one of the following two forms:
% https://q.uiver.app/#q=WzAsNCxbMCwwLCJrIl0sWzAsMSwiMCJdLFsyLDAsIlJee0NfcH0iXSxbMiwxLCJSIl0sWzEsMCwiMCIsMix7ImN1cnZlIjoxfV0sWzAsMSwiMCIsMix7ImN1cnZlIjoxfV0sWzMsMiwidHIiLDIseyJjdXJ2ZSI6MX1dLFsyLDMsImluYyIsMix7ImN1cnZlIjoxfV1d
\[\begin{tikzcd}
	k && {R^{C_p}} \\
	0 && R
	\arrow["0"', curve={height=6pt}, from=1-1, to=2-1]
	\arrow["inc"', curve={height=6pt}, from=1-3, to=2-3]
	\arrow["0"', curve={height=6pt}, from=2-1, to=1-1]
	\arrow["tr"', curve={height=6pt}, from=2-3, to=1-3]
\end{tikzcd}\]
\noindent for $k$ and $R^{C_p}$ fields, $R$ a ring with $C_p$-action, $tr(x) = \sum_{i = 0}^{p-1} \gamma^i x$ for $x \in R$, and $tr \neq 0$.
\end{proposition}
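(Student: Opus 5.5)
I will split into two cases according to whether $\uF(C_p/e)$ is zero, as the paper's discussion suggests. Throughout I use that $\uF(C_p/C_p)$ is a field by \cite[Proposition 3.9(f)]{Lewis}, and that the ring structure of $\uF(C_p/e)$ is compatible with the Mackey structure via Frobenius reciprocity: $tr(x\cdot res(a)) = tr(x)\cdot a$ and $res$ is a ring map.

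\emph{Case $\uF(C_p/e)=0$.} Both restriction and transfer must then be zero. Setting $k=\uF(C_p/C_p)$, which is a field by the cited result, gives the first form. There is nothing else to verify.

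\emph{Case $\uF(C_p/e)\neq 0$.} Put $R=\uF(C_p/e)$. By \Cref{MackeyGroupAction}, $R$ is a commutative ring with a $C_p$-action by ring automorphisms, and $tr(x)=\sum_i\gamma^i x$ by the double coset formula. By \Cref{transfer nonzero}, $tr\neq 0$. I must show that $res$ is injective with image exactly $R^{C_p}$, so that $\uF(C_p/C_p)\cong R^{C_p}$ with $res=inc$. The image of $res$ always lies in $R^{C_p}$, since restricted elements are Weyl-fixed. Injectivity is quick: $\ker(res)$ is an ideal of the field $\uF(C_p/C_p)$, and $res(1)=1\neq 0$ because $R\neq 0$, so the kernel is zero. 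It then follows that $R^{C_p}$ is a field, since it is isomorphic to $\uF(C_p/C_p)$.

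The main obstacle is surjectivity onto $R^{C_p}$. My approach is to show that the transfer is surjective onto $\uF(C_p/C_p)$, and then to use $res(tr(x))=\sum\gamma^i x$.

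First, I will show that $\mathrm{Im}(tr)\subseteq \uF(C_p/C_p)$, together with $R$ at the bottom level, forms an ideal of $\uF$. This uses Frobenius reciprocity, $a\cdot tr(x)=tr(res(a)x)$, and closure of $R$ under multiplication and restriction. The subfunctor is nonzero, so it is all of $\uF$. Hence $tr$ is surjective and some $x_0$ satisfies $tr(x_0)=1$.

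Now let $y\in R^{C_p}$. Then
\[
res(tr(x_0y))=\sum_i\gamma^i(x_0)\,y=res(tr(x_0))\,y=y,
\]
using that $y$ is fixed. So $y$ lies in the image of $res$. This gives $res\colon \uF(C_p/C_p)\xrightarrow{\cong} R^{C_p}$, and under this identification $res$ becomes the inclusion and $tr$ becomes $\sum\gamma^i$, which is the second form.

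The point needing the most care is the verification that $(\mathrm{Im}(tr), R)$ is closed under the $\uF$-action via the box product description in \Cref{box product map}. This should be a routine check of the three compatibility conditions.
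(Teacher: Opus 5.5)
Your argument is correct, but it takes a different route from the paper. The paper argues topologically. It identifies $\uF$ with $\underline{\pi}_0(H\uF)$ and reads off the shape of $\uF$ from Oru\c{c}'s computation (\Cref{OrucCoHomology}, specialized to the sphere in \eqref{C_pHomotopyGroups} and \eqref{eHomotopyGroups}). That computation gives $J_{C_p/C_p}(k)$ when $\uF(C_p/e)=0$ and $J_{C_p/e}(R)$ otherwise. The paper then imports $tr\neq 0$ from Lewis (\Cref{transfer nonzero}).

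You instead work directly from the definition of a Mackey field:
\begin{itemize}
\item $\ker(res)$ vanishes because it is an ideal of the field $\uF(C_p/C_p)$ that does not contain $1$;
\item the subfunctor $(\mathrm{Im}(tr),R)$ is a nonzero ideal of $\uF$, hence all of $\uF$, so $tr$ is surjective;
\item a preimage $x_0$ of $1$ together with $res\circ tr=\sum_i\gamma^i$ gives surjectivity of $res$ onto $R^{C_p}$.
\end{itemize}
Your route is elementary and self-contained. It avoids relying on a topological computation whose proof itself rests on Lewis's structure theory for Mackey fields. It also yields $tr\neq 0$ for free, since $tr$ surjects onto the nonzero field $\uF(C_p/C_p)$, so the appeal to \Cref{transfer nonzero} becomes unnecessary. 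The paper's route is shorter given its citations and fits its goal of computing $H\uF_\star$.

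Three small presentation points:
\begin{itemize}
\item Your sentence that $R^{C_p}$ is a field should come after the surjectivity step, since at that point you only have injectivity.
\item The early claim ``$tr(x)=\sum_i\gamma^i x$'' should read $res(tr(x))=\sum_i\gamma^i x$. That is what the double coset formula gives and what you actually use.
\item State explicitly that $\gamma$ acts on $R$ by ring automorphisms, because the bottom-level multiplication is Weyl-equivariant (\Cref{box product map}). Your step $\sum_i\gamma^i(x_0y)=\bigl(\sum_i\gamma^i x_0\bigr)y$ depends on this.
\end{itemize}
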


\begin{proof}

    For any Mackey field $\uF$, $\underline{\pi}_0(H\uF) \cong \uF$ by definition of the Eilenberg-Mac Lane spectrum $H\uF$.

    If $\uF(C_p/e) = 0$ and $\uF(C_p/C_p) = k$ a field, then \Cref{C_pHomotopyGroups} shows us that $\underline{\pi}_0(H\uF) \cong J_{C_p/C_p}(k)$. By \Cref{J-Mackey Functor Example} this is the left hand diagram above.

    If $\uF(C_p/C_p) \neq 0$, then since $\uF$ is a Mackey field, it is also a commutative Green functor so $\uF(C_p/e)$ must be a ring, say $R$. \Cref{eHomotopyGroups} shows us that $\underline{\pi}_0(H\uF) \cong J_{C_p/e}(R)$. Recall that in degree $0$ the copy of $\ZZ$ has a trivial $C_p$ action for any prime $p$. By \Cref{J-Mackey Functor Example} $J_{C_p/e}(R)$ is the right hand diagram above, and $tr(x) = \sum_{i = 0}^{p-1} \gamma^i x$ for $x \in R$. Further, the transfer map of $J_{C_p/e}(R)$ must be non-zero by \Cref{transfer nonzero}.
\end{proof}

Let $\uF$ be a $C_p$-Mackey field such that $\uF(C_p/e) = 0$. Recall that by definition of the Eilenberg-Mac Lane spectrum $\underline{\pi}_0(H\uF) \cong \uF$, and $\underline{\pi}_n(H\uF) = \underline{0}$ for any nonzero integer $n$. Therefore \cref{C_pHomotopyGroups} shows us that $\underline{\pi}_{\alpha}(H\uF) \cong \underline{\pi}_{0}(H\uF) \cong \uF$ for any $\alpha \in RO(C_p)$ such that $\dim(\alpha^{C_p}) = 0$. Further, $\underline{\pi}_{\alpha}(H\uF) \cong \underline{0}$ for any $\alpha \in RO(C_p)$ such that $\dim(\alpha^{C_p}) \neq 0$. From this, we get the following description:

\begin{center}
$\underline{\pi}_\alpha(H\underline{F}) \cong \begin{cases}
    \uF & \deg(\alpha^{C_p}) = 0 \\
    \underline{0} & \text{else.}
    \end{cases}$
\end{center}

The $\ZZ$-graded homotopy groups of the Eilenberg-Mac Lane spectrum of a Mackey field gives a $\ZZ$-graded Mackey field since $\underline{\pi}_*(H\uF)$ is $\uF$ in degree $0$ and $\underline{0}$ in all other degrees. The $RO(C_p)$-graded case is more complicated.

\begin{proposition} \label{CpGradedMackeyField}
    Let $p$ be prime. For $\underline{F}$ a $C_p$-Mackey field such that $\uF(C_p/e) = 0$, $H\underline{F}_\star$ is an $RO(C_p)$-graded Mackey field.
\end{proposition}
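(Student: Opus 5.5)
We have to show that $H\uF_\star$ is nonzero, commutative, and has no nontrivial proper $RO(C_p)$-graded ideals.

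\textbf{Structure of $H\uF_\star$.} By \Cref{C_pHomotopyGroups} and the discussion after \Cref{All Mackey fields}, $\underline{\pi}_\alpha(H\uF)\cong \uF$ when $\dim(\alpha^{C_p})=0$ and is $\underline{0}$ otherwise. Every nonzero level $\uF_\alpha$ therefore has the form $J_{C_p/C_p}(k)$ with $k=\uF(C_p/C_p)$ a field. Each such level is determined by a single $k$-vector space at $C_p/C_p$; transfer, restriction and the underlying level are all zero. Nonvanishing is immediate, since $\underline{\pi}_0(H\uF)\cong\uF\neq \underline{0}$.

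\textbf{Products and commutativity.} For $J_{C_p/C_p}(V)$ and $J_{C_p/C_p}(W)$, the box product formula of \Cref{BoxProduct} gives $J_{C_p/C_p}(V\otimes W)$: the underlying level is $0$, so $Im(tr)=0$ and the Frobenius relations vanish. Hence a Green structure on $H\uF_\star$ amounts to an $RO(C_p)$-graded commutative-up-to-sign ring structure on $H\uF_\star(C_p/C_p)$.

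I would describe this top level as follows. Let $V$ range over the representations with $V^{C_p}=0$. Then $\Sigma^{V}$ is an equivalence on geometric fixed points, so multiplication by the Euler class $a_V\in \pi_{-V}(H\uF)(C_p/C_p)$ is an isomorphism; indeed, $H\uF\simeq \tilde E C_p\wedge H\uF$ because the underlying level is zero. This gives
\[
H\uF_\star(C_p/C_p)\cong k[a_\lambda^{\pm1}: \lambda \text{ nontrivial irreducible}]\big/(\text{relations}),
\]
that is, a graded ring in which every nonzero homogeneous element is a unit. Commutativity comes from $H\uF$ being a commutative ring spectrum, so the multiplication is commutative up to the rotating isomorphism. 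This is the standard fact that $\underline{\pi}_\star$ of an $E_\infty$ ring spectrum is a commutative $RO(G)$-graded Green functor (Lewis--Mandell).

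\textbf{No nontrivial ideals.} Let $\uI_\star$ be a nonzero graded ideal. Some level $\uI_\alpha$ is then a nonzero subfunctor of $J_{C_p/C_p}(k)$, so $\uI_\alpha(C_p/C_p)=k$ since $k$ is one-dimensional. Its generator $x$ is a homogeneous unit, with inverse $x^{-1}\in \uF_{-\alpha}(C_p/C_p)$. The module structure map $H\uF_\star\,\square\,\uI_\star\to\uI_\star$ then puts $x^{-1}x=1$ into $\uI_0(C_p/C_p)$. Therefore $\uI_0=\uF$, because $\uF$ is a Mackey field and $\uI_0$ is an ideal of $\uR_0=\uF$ containing the unit. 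Multiplying by $1$ then gives $\uI_\beta=\uF_\beta$ for every $\beta$, so $\uI_\star=H\uF_\star$.

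\textbf{Main obstacle.} The hard step is showing that every nonzero homogeneous element is invertible. This needs the multiplication $\uF_\alpha\square\uF_{-\alpha}\to\uF_0$ to be nonzero, i.e.\ an isomorphism. I would prove it by noting that geometric fixed points are strong monoidal, so $\Phi^{C_p}H\uF$ is a ring spectrum with homotopy concentrated in degree $0$ equal to $k$. Via \Cref{OrucCoHomology}, the top level of $\underline{\pi}_\alpha(H\uF)$ is identified with $\pi_{\dim\alpha^{C_p}}\Phi^{C_p}H\uF$. This identification has to be checked to be multiplicative, which reduces it to multiplication in $\pi_0\Phi^{C_p}H\uF=k$, a field.
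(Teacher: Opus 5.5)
Your proposal is correct and is essentially the paper's argument. Both rest on the fact that, on the $C_p/C_p$ level, the product $H\uF_\alpha\,\square\,H\uF_\beta\to H\uF_{\alpha+\beta}$ between nonzero levels is the field multiplication $k\otimes k\to k$, and both then move a nonzero element of some $\uI_\alpha$ into degree $0$ and back out (the paper via a case split on whether $\uI_0$ is $\uF$ or $\underline{0}$, you via ``every nonzero homogeneous element is a unit'').

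The difference is in your favor. The paper simply asserts that multiplicativity, whereas your $\tilde E C_p$/Euler-class argument actually proves it, as does your alternative via strong monoidality of $\Phi^{C_p}$ together with $\Phi^{C_p}H\uF\simeq Hk$. You also check nonvanishing and commutativity, which the paper leaves implicit; for commutativity, homotopy commutativity of $H\uF$ is all you need.
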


\begin{proof}
Let $\uF$ be a $C_p$-Mackey field such that $\uF(C_p/C_p) = k$ a field, and $\uF(C_p/e) = 0$. By way of contradiction, say there exists a nontrivial, proper $RO(C_p)$-graded ideal $\uI_\star$ of $H\uF_\star$. The definition of a graded ideal says that $\uI_\alpha$ is a submodule of $H\uF_\alpha$ for all $\alpha \in RO(C_p)$. One consequence is that $\underline{I}_\alpha(C_p/H) \subseteq H\underline{F}_\alpha(C_p/H)$ for all $\alpha\in RO(C_p)$ and $H = e, C_p$. The module structure map $H\uF_\alpha \, \square \, \uI_\beta \to \uI_{\alpha + \beta}$ comes from the module structure of $H\uF_\star$ as a module over itself. Therefore the module structure is induced from the multiplication map $H\uF_\alpha \, \square \, H\uF_\beta \to H\uF_{\alpha + \beta}$. If $H\uF_\alpha = \underline{0}$ then $H\uF_\alpha \, \square \, H\uF_\beta = \underline{0}$ so $H\uF_\alpha \, \square \, H\uF_\beta \to H\uF_{\alpha + \beta}$ is the $\underline{0}$ map, but $H\uF_{\alpha + \beta}$ could be non-zero. If $H\uF_\alpha \neq \underline{0} \neq H\uF_\beta$ then by \Cref{BoxProduct} $(H\uF_\alpha \, \square \, H\uF_\beta)(C_p/e) = 0$ and $(H\uF_\alpha \, \square \, H\uF_\beta)(C_p/C_p) = k \otimes k$. Note that if $H\uF_\alpha \neq \underline{0} \neq H\uF_\beta$ then by the above computation, $\deg(\alpha^{C_p}) = 0 = \deg(\beta^{C_p})$ so $\deg((\alpha + \beta)^{C_p}) = 0$ and $H\uF_{\alpha + \beta} = \uF \neq \underline{0}$. Therefore the multiplication map $H\uF_\alpha \, \square \, H\uF_\beta \to H\uF_{\alpha + \beta}$ is the $0$ map on the $C_p/e$ level and the multiplication map, $k \otimes k \to k$, on the $C_p/C_p$ level.

Recall that $\uI_0$ must be an ideal of $H\uF_0 = \uF$, and since $\uF$ is a Mackey field then $\uI_0$ is either $\uF$ or $\underline{0}$. We will show that for either situation $\uI_\star$ must be $\underline{0}_\star$ or $H\uF_\star$, which is a contradiction since $\uI_\star$ is assumed to be a nontrivial, proper ideal.

If $\uI_0 = \uF$, choose $\alpha \in RO(C_p)$ such that $H\uF_\alpha \neq \underline{0}$, so $H\uF_\alpha = \uF$. Consider the module structure map $H\uF_\alpha \, \square \, \uI_0 \cong \uF \, \square \, \uF \to \uI_\alpha$. As discussed above, this is induced by the multiplication map $H\uF_\alpha \, \square \, H\uF_0 \cong \uF \, \square \, \uF \to H\uF_{\alpha} \cong \uF$ which is the $0$ map on the $C_p/e$ level and the multiplication map $k \otimes k \to k$ on the $C_p/C_p$ level. So, $\uI_\alpha(C_p/C_p)$ must be $k$. Therefore $\uI_\alpha = \uF$ for all $\alpha$ such that $H\uF_\alpha \neq \underline{0}$ so $\uI_\star \cong H\uF_\star$.

    If $\uI_0 = \underline{0},$ choose $\alpha \in RO(C_p)$ such that $H\uF_\alpha \neq \underline{0}$,  so $H\uF_\alpha = \uF.$ Consider the module structure map $H\uF_\alpha \, \square \, \uI_{-\alpha} \cong \uF \, \square \, \uI_{-\alpha} \to \uI_0 = \underline{0}$. For any $\alpha \in RO(C_p)$, if $\dim(\alpha^{C_p}) = 0$ then $\dim(-\alpha^{C_p}) = 0$ and similarly if $\dim(\alpha^{C_p}) \neq 0$ then $\dim(-\alpha^{C_p}) \neq 0$. This demonstrates that $H\uF_\alpha = H\uF_{-\alpha}$ for all $\alpha \in RO(C_p)$. As discussed above, the map $\uF \, \square \, \uI_{-\alpha} \to \underline{0}$ is induced by the multiplication map $H\uF_\alpha \, \square \, H\uF_{-\alpha} \cong \uF \, \square \, \uF \to H\uF_{0} \cong \uF$ which is the $0$ map on the $C_p/e$ level and the multiplication map $k \otimes k \to k$ on the $C_p/C_p$ level. Since $\uI_{-\alpha}$ is a submodule of $H\uF_{-\alpha}$ then $\uI_{-\alpha}(C_p/C_p) \leq H\uF_{-\alpha}(C_p/C_p) = k$. Since $k$ is a field, there is no non-trivial subgroup of $k$ which multiplies with $k$ to $0$, therefore for $\uF \, \square \, \uI_{-\alpha} \to \underline{0}$ to be a multiplication map $\uI_{-\alpha}(C_p/C_p)$ must be $0$, and thus $\uI_{-\alpha} = \underline{0}$. Further, $\uI_\alpha$ must be $\underline{0}$ for all $\alpha$ such that $H\uF_\alpha \neq \underline{0}$ so $\uI_\star \cong \underline{0}_\star$.

    Therefore, $H\uF_\star$ is an $RO(C_p)$-graded Mackey field. \end{proof}

Let $\uF$ be a $C_2$-Mackey field such that $\uF(C_2/e) = R \neq 0$. Recall by the definition of the Eilenberg-Mac Lane spectrum and \cref{eHomotopyGroups} shows us that $\underline{\pi}_{k-k\sigma}(H\uF) \cong \underline{\pi}_{0}(H\uF) = \uF$ for $k$ an even integer, as the copy of $\ZZ$ has a trivial $C_2$-action in those degrees. In degrees $k - k\sigma$ for $k$ an odd integer, we have $J_{C_2/e}(R \otimes \ZZ)$ where $\ZZ$ has the non-trivial $C_2$-action of multiplication by $-1$ (see \Cref{J-Mackey Functor Example}). In all other degrees we have $J_{C_2/e}(0) \cong \underline{0}$. From this and \Cref{J-Mackey Functor Example} we get the following description:

\begin{center}
$\underline{\pi}_\alpha(H\underline{F}) \cong \begin{cases} \uF & \alpha = k - k\sigma, \, \, k \, \text{even} \\
    \begin{tikzcd}[column sep=small,row sep=small]
	{(R \otimes \ZZ)^{C_2}} \\
	\\
	{R \otimes \ZZ}
	\arrow["inc"', curve={height=12pt}, from=1-1, to=3-1]
	\arrow["{1 + \gamma}"', curve={height=12pt}, from=3-1, to=1-1]
\end{tikzcd} & \alpha = k - k\sigma, \, \, k \, \text{odd} \\
    \underline{0} & \text{else}
    \end{cases}$
\end{center}

\noindent where the non-trivial $C_2$-action, $\gamma$, is diagonal on $R \otimes \ZZ$ and the non-trivial $C_2$-action on $\ZZ$ is multiplication by $-1$.

Let $\uF$ be a $C_p$-Mackey field such that $\uF(C_p/e) = R \neq 0$ for $p$ an odd prime. A similar argument as above gives the following description:

\begin{center}
    $\underline{\pi}_\alpha(H\underline{F}) \cong \begin{cases} \uF & \dim(\alpha) = 0 \\
    \underline{0} & \text{else}.
    \end{cases}$
\end{center}

\noindent In fact, this is also the computation of the homotopy groups of $H\uF$ for a $C_2$-Mackey field if and only if $\uF(C_2/e)$ is characteristic two.

The $\ZZ$-graded homotopy groups of the Eilenberg-Mac Lane spectrum of a Mackey field gives a Mackey field since $\underline{\pi}_*(H\uF)$ is $\uF$ in degree $0$ and $\underline{0}$ in all other degrees. The $RO(C_p)$-graded case is more complicated. We will split into two cases, not because the proofs are dissimilar but because the statement of one is more broad than the other.

\begin{proposition}
    For $\uF$ a $C_p$-Mackey field such that $\uF(C_p/e) \neq 0$, and $p$ an odd prime, $H\uF_\star$ is an $RO(C_p)$-graded Mackey field.
\end{proposition}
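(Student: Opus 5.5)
I would mirror the proof of \Cref{CpGradedMackeyField}, replacing the condition $\dim(\alpha^{C_p}) = 0$ by $\dim(\alpha) = 0$. Write $\uF(C_p/e) = R \neq 0$ and $\uF(C_p/C_p) = R^{C_p}$, as in \Cref{All Mackey fields}. By the computation preceding the statement, valid for $p$ odd, $H\uF_\alpha \cong \uF$ when $\dim(\alpha) = 0$ and $H\uF_\alpha = \underline{0}$ otherwise. In particular the set $D = \{\alpha : \dim(\alpha) = 0\}$ is closed under negation and addition, and $H\uF_\alpha = H\uF_{-\alpha}$.

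The first step is to understand the multiplication $H\uF_\alpha \,\square\, H\uF_\beta \to H\uF_{\alpha+\beta}$ for $\alpha,\beta \in D$. I would argue that this is just the Green functor multiplication $\uF \,\square\, \uF \to \uF$ after choosing the identifications $H\uF_\alpha \cong \uF$. I would justify this as follows.
\begin{itemize}
\item By \Cref{OrucCoHomology} the identification is natural: it comes from the underlying homology $H_0(\SS; R)$ with trivial $\ZZ$-twist, since $p$ is odd.
\item Hence, at the $C_p/e$ level, the product is multiplication in $R$ up to a unit.
\item By \Cref{box product map}, the $C_p/C_p$ level is then determined, because restriction is injective (it is the inclusion $R^{C_p} \to R$).
\end{itemize}
This is the step I expect to be the main obstacle. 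One has to check that the possible sign or unit coming from the rotating isomorphism and from the Oru\c{c} identification is harmless. For odd $p$ the relevant $\ZZ$ has trivial action, so only units can appear, and units do not affect ideal arguments.

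Next, suppose $\uI_\star$ is a graded ideal. As noted in the paper, $\uI_0$ is an ideal of $H\uF_0 = \uF$, so $\uI_0 = \uF$ or $\uI_0 = \underline{0}$.

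If $\uI_0 = \uF$, take any $\alpha \in D$. The module map $H\uF_\alpha \,\square\, \uI_0 \to \uI_\alpha$ sends $u \otimes 1$ to $u$ at each level, for $u$ a generator corresponding to $1 \in \uF$. So $1 \in \uI_\alpha(C_p/C_p)$ and $1 \in \uI_\alpha(C_p/e)$. Since $\uI_\alpha$ is then a submodule of $H\uF_\alpha \cong \uF$ containing the unit, it is all of $\uF$. Therefore $\uI_\star = H\uF_\star$.

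If $\uI_0 = \underline{0}$, take $\alpha \in D$ and pick the invertible element $u_\alpha \in H\uF_\alpha(C_p/C_p)$, which has an inverse $u_{-\alpha}$ in degree $-\alpha$. Multiplying by $u_{\alpha}$ maps $\uI_{-\alpha}$ injectively into $\uI_0 = \underline{0}$ at both levels. The map is injective because it is, up to a unit, the identity under $H\uF_{-\alpha} \cong H\uF_0$, and at the $C_p/e$ level we use $res(u_\alpha)$. Hence $\uI_{-\alpha} = \underline{0}$ for all $\alpha \in D$, and $\uI_\star = \underline{0}_\star$.

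Finally, $H\uF_\star$ is nonzero, commutative (again because the rotating isomorphism only contributes units when $p$ is odd), and a Green functor. It is therefore an $RO(C_p)$-graded Mackey field.
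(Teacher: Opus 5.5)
Your proposal is correct and follows essentially the same route as the paper: split on whether $\uI_0$ is $\uF$ or $\underline{0}$; in the first case use that the graded multiplication $H\uF_\alpha \,\square\, H\uF_\beta \to H\uF_{\alpha+\beta}$ is the Green multiplication of $\uF$ to force $\uI_\alpha = \uF$; in the second, push $\uI_{-\alpha}$ into $\uI_0 = \underline{0}$. The differences are minor: the paper simply asserts the identification of the graded multiplication, which you justify via the $C_p/e$ level and injectivity of $res$, and in the second case the paper kills the $C_p/e$ level via $\mu(1\otimes k)=k$ and then the $C_p/C_p$ level via injectivity of restriction, whereas you use invertibility of $u_\alpha$ and $res(u_\alpha)$ at both levels.
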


\begin{proof}
Let $\uF$ be a $C_p$-Mackey field such that $\uF(C_p/e) = R \neq 0$ is a commutative ring, and $\uF(C_p/C_p) = R^{C_p}$ a field. By way of contradiction, say there exists a nontrivial, proper $RO(C_p)$-graded ideal $\uI_\star$ of $H\uF_\star$. The definition of a graded ideal says that $\uI_\alpha$ is a submodule of $H\uF_\alpha$ for all $\alpha \in RO(C_p)$. One consequence is that $\underline{I}_\alpha(C_p/H) \subseteq H\underline{F}_\alpha(C_p/H)$ for all $\alpha\in RO(C_p)$ and $H = e, C_p$. The module structure map $H\uF_\alpha \, \square \, \uI_\beta \to \uI_{\alpha + \beta}$ comes from the module structure of $H\uF_\star$ as a module over itself. Therefore the module structure is induced from the multiplication map $H\uF_\alpha \, \square \, H\uF_\beta \to H\uF_{\alpha + \beta}$. If $H\uF_\alpha = \underline{0}$ then $H\uF_\alpha \, \square \, H\uF_\beta = \underline{0}$ so $H\uF_\alpha \, \square \, H\uF_\beta \to H\uF_{\alpha + \beta}$ is the $\underline{0}$ map, but $H\uF_{\alpha + \beta}$ could be non-zero. If $H\uF_\alpha \neq \underline{0} \neq H\uF_\beta$ then by \Cref{BoxProduct} $(H\uF_\alpha \, \square \, H\uF_\beta)(C_p/e) = R \otimes R$ and $(H\uF_\alpha \, \square \, H\uF_\beta)(C_p/C_p) = \big(R^{C_p} \otimes R^{C_p} \oplus Im(tr)\big)/_{FR}$. For this case, the multiplication map $H\uF_\alpha \, \square \, H\uF_\beta \to H\uF_{\alpha + \beta}$ is defined as
% https://q.uiver.app/#q=WzAsNCxbMCwwLCJcXGJpZyhSXntDX3B9IFxcb3RpbWVzIFJee0NfcH0gXFxvcGx1cyBJbSh0cilcXGJpZykvX3tGUn0iXSxbMCwyLCJSIFxcb3RpbWVzIFIiXSxbMiwyLCJSIl0sWzIsMCwiUl57Q19wfSJdLFsxLDAsIiIsMCx7ImN1cnZlIjoyfV0sWzAsMSwiIiwwLHsiY3VydmUiOjJ9XSxbMSwyLCJcXG11IiwyXSxbMCwzLCJcXHBoaSJdLFszLDIsIiIsMSx7ImN1cnZlIjoyfV0sWzIsMywiIiwxLHsiY3VydmUiOjJ9XV0=
\[\begin{tikzcd}[column sep=small, row sep=small]
	{\big(R^{C_p} \otimes R^{C_p} \oplus Im(tr)\big)/_{FR}} && {R^{C_p}} \\
	\\
	{R \otimes R} && R
	\arrow["\phi", from=1-1, to=1-3]
	\arrow[curve={height=12pt}, from=1-1, to=3-1]
	\arrow[curve={height=12pt}, from=1-3, to=3-3]
	\arrow[curve={height=12pt}, from=3-1, to=1-1]
	\arrow["\mu"', from=3-1, to=3-3]
	\arrow[curve={height=12pt}, from=3-3, to=1-3]
\end{tikzcd}\]
where $\mu$ is the multiplication map, and $\phi(x \otimes y, tr(a \otimes b)) = xy + \sum_{i = 0}^{p-1} \gamma^i (ab)$. These maps respect the transfer and restriction maps as well as the Frobenius reciprocity relation.

Recall that $\uI_0$ must be an ideal of $H\uF_0 = \uF$, and since $\uF$ is a Mackey field then $\uI_0$ is either $\uF$ or $\underline{0}$. We will show that for either situation $\uI_\star$ must be $\underline{0}_\star$ or $H\uF_\star$, which is a contradiction since $\uI_\star$ is assumed to be a nontrivial, proper ideal.

If $\uI_0 = \uF$, choose $\alpha \in RO(C_p)$ such that $H\uF_\alpha \neq \underline{0}$, so $H\uF_\alpha = \uF$. Consider the module structure map $H\uF_\alpha \, \square \, \uI_0 \cong \uF \, \square \, \uF \to \uI_\alpha$. Since $\uI_\alpha$ is a submodule of $H\uF_\alpha \cong \uF$ then $\uI_\alpha(C_p/H) \subseteq \uF(C_p/H)$. As discussed above, the map $H\uF_\alpha \, \square \, \uI_0 \cong \uF \, \square \, \uF \to \uI_\alpha$ is induced by the multiplication map $H\uF_\alpha \, \square \, H\uF_0 \cong \uF \, \square \, \uF \to H\uF_{\alpha} \cong \uF$, which is the multiplication map $\mu\colon R \otimes R \to R$ on the $C_p/e$ level and $\phi$ as defined above on the $C_p/C_p$ level. Since $R^{C_p}$ is a field and $R$ is a commutative ring, both $\mu$ and $\phi$ are surjective as $\mu(1 \otimes x) = x$ for all $x \in R$ and $\phi(1 \otimes x, 0) = x$ for all $x \in R^{C_p}$, therefore $\uI_\alpha(C_p/C_p) = R^{C_p}$ and $\uI_\alpha(C_p/e) = R$. Therefore $\uI_\alpha = \uF$ for all $\alpha$ such that $H\uF_\alpha \neq \underline{0}$, so $\uI_\star \cong H\uF_\star$.

If $\uI_0 = \underline{0},$ choose $\alpha \in RO(C_p)$ such that $H\uF_\alpha \neq \underline{0}$, so $H\uF_\alpha = \uF.$ Consider the module structure map $H\uF_\alpha \, \square \, \uI_{-\alpha} \cong \uF \, \square \, \uI_{-\alpha} \to \uI_0 \cong \underline{0}$. For any $\alpha \in RO(C_p)$, if $\dim(\alpha) = 0$ then $\dim(-\alpha) = 0$ and similarly if $\dim(\alpha) \neq 0$ then $\dim(-\alpha) \neq 0$. This demonstrates that $H\uF_\alpha = H\uF_{-\alpha}$ for all $\alpha \in RO(C_p)$. Since $\uI_{-\alpha}$ is a submodule of $H\uF_{-\alpha} = \uF$, then $\uI_{-\alpha}(C_p/H) \subseteq \uF(C_p/H)$. Say $\uI_{-\alpha}(C_p/e) = K \leq R$, then the induced map, $\mu|_{R \otimes K}(1 \otimes k) = k$ must be $0$ for all $k \in K$, therefore $K = 0$. Say $\uI_{-\alpha}(C_p/C_p) = L \leq R^{C_p}$, then since $\uI_{-\alpha}$ is a submodule of $\uF$ there must be an inclusion map $\uI_{-\alpha} \to \uF$ which respects the restriction and transfer maps,
% https://q.uiver.app/#q=WzAsNCxbMCwwLCJMIl0sWzAsMiwiMCJdLFsyLDAsIlJee0NfcH0iXSxbMiwyLCJSIl0sWzEsMywiMCIsMl0sWzAsMiwiaW5jIl0sWzAsMSwiMCIsMix7ImN1cnZlIjoyfV0sWzEsMCwiMCIsMix7ImN1cnZlIjoyfV0sWzIsMywiaW5jIiwyLHsiY3VydmUiOjJ9XSxbMywyLCJ0ciIsMix7ImN1cnZlIjoyfV1d
\[\begin{tikzcd}[column sep=small, row sep=small]
	L && {R^{C_p}} \\
	\\
	0 && R
	\arrow["inc", from=1-1, to=1-3]
	\arrow[curve={height=12pt}, from=1-1, to=3-1]
	\arrow[curve={height=12pt}, from=1-3, to=3-3]
	\arrow[curve={height=12pt}, from=3-1, to=1-1]
	\arrow["0"', from=3-1, to=3-3]
	\arrow[curve={height=12pt}, from=3-3, to=1-3]
\end{tikzcd}\]
but the only way that this map of Mackey functors can respect the restriction maps is if $L = 0$. Therefore $\uI_{-\alpha}$ must be $\underline{0}$ for all $\alpha$ such that $H\uF_\alpha \neq \underline{0}$ so $\uI_\star \cong \underline{0}_\star$.

Therefore, $H\uF_\star$ is an $RO(C_p)$-graded Mackey field.
\end{proof}

The proof is the same for the following proposition.

\begin{proposition}
Let $\uF$ be a $C_2$-Mackey field, where $\uF(C_2/e) = R \neq 0$. Consider $\ZZ$ to be the $\ZZ[C_2]$-module where the $C_2$-action is multiplication by $-1$. If $R$ is such that $R \otimes \ZZ \cong R$ as $\ZZ[C_2]$-modules, then $H\uF_\star$ is an $RO(C_2)$-graded Mackey field.    
\end{proposition}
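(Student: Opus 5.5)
The plan is to reduce to the odd-prime argument by showing that, under the hypothesis $R \otimes \ZZ \cong R$ as $\ZZ[C_2]$-modules, $H\uF_\star$ has the same shape as in the odd case. By the computation preceding the odd-prime proposition, $H\uF_\alpha \cong \uF$ for $\alpha = k - k\sigma$ with $k$ even, and $H\uF_\alpha \cong \underline{0}$ for $\alpha$ not of the form $k - k\sigma$. For $k$ odd, $H\uF_{k-k\sigma} \cong J_{C_2/e}(R \otimes \ZZ)$ with the twisted sign action. The hypothesis gives a $C_2$-equivariant isomorphism $R \otimes \ZZ \cong R$. Since $J_{C_2/e}$ is functorial in $\ZZ[C_2]$-modules, this yields $J_{C_2/e}(R \otimes \ZZ) \cong J_{C_2/e}(R) \cong \uF$, the last step by \Cref{All Mackey fields}. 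So every nonzero $H\uF_\alpha$ is isomorphic to $\uF$, and these occur exactly when $\dim(\alpha) = 0$.

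Next I must identify the multiplication maps $H\uF_\alpha \, \square \, H\uF_\beta \to H\uF_{\alpha+\beta}$ between nonzero degrees. At the $C_2/e$ level, the product on underlying homotopy $R \otimes R \to R$ is multiplication in $R = \pi_0^e(H\uF)$, transported along the chosen identifications. This holds up to a unit: $S^{k-k\sigma}$ is nonequivariantly $S^0$, so products of underlying classes are computed by the ring structure of $R$, possibly twisted by the sign isomorphisms. At the $C_2/C_2$ level, the product is determined by restriction and Frobenius reciprocity, as in the map $\phi$ from the odd-prime proof. The key point is that restriction is injective, since it is $inc\colon R^{C_2} \to R$. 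Hence the product is determined by the underlying one and is surjective wherever the underlying product is. Concretely, a fixed generator of the underlying group in degree $k - k\sigma$, namely the image of $1$ under the isomorphism, multiplies against $H\uF_{-\alpha}$ onto a unit-multiple of $1 \in R$.

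With this in hand, I will run the odd-prime argument verbatim. The ideal $\uI_0$ is an ideal of the Mackey field $\uF$, so it is $\uF$ or $\underline{0}$. If $\uI_0 = \uF$, surjectivity of the module map $H\uF_\alpha \, \square \, \uI_0 \to \uI_\alpha$ on both levels gives $\uI_\alpha = H\uF_\alpha$ for every nonzero degree. If $\uI_0 = \underline{0}$, pair with $H\uF_\alpha$ and use $H\uF_{-\alpha} \neq \underline{0}$, since $\dim(-\alpha) = 0$. Then the underlying level of $\uI_{-\alpha}$ is killed by multiplication by a unit, so it vanishes. Injectivity of restriction then forces the fixed level to vanish too, exactly as in the diagram with $L$ in the odd-prime proof.

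The main obstacle is the middle step: checking that, under the isomorphism $R \otimes \ZZ \cong R$, the product in odd degrees really corresponds to multiplication in $R$ up to units. The ideal argument only needs the existence of an element $u$ with $u \cdot (-)$ injective on the underlying level. So I would choose $u \in H\uF_{1-\sigma}(C_2/e)$ and $v \in H\uF_{\sigma-1}(C_2/e)$ whose product is the unit in $H\uF_0(C_2/e)$. These come from the underlying invertibility of $S^{1-\sigma}$, since $S^{1-\sigma} \smashy S^{\sigma -1} \simeq S^0$. Multiplication by $u$ is then an isomorphism of underlying groups $H\uF_\beta(C_2/e) \to H\uF_{\beta+1-\sigma}(C_2/e)$ with inverse given by $v$, which suffices for both cases.
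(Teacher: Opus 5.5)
Your argument is correct, and its skeleton is the paper's. The paper proves this proposition only by remarking that the odd-prime proof goes through verbatim. There the hypothesis $R\otimes\ZZ\cong R$ makes every nonzero $H\uF_\alpha$ isomorphic to $\uF$, so the products can be read as the maps $\mu$ and $\phi$ of that proof.

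You run the same case analysis on $\uI_0$, but with a different key input. You do not identify the products with multiplication in $R$; as you rightly note, that holds only up to a unit once the isomorphism $R\otimes\ZZ\cong R$ is chosen, a point the paper passes over. Instead you use the invertible underlying classes $u\in H\uF_{1-\sigma}(C_2/e)$ and $v\in H\uF_{\sigma-1}(C_2/e)$, together with injectivity of restriction.

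This version never uses the hypothesis at all. The case $\uI_0=\uF$ needs only the unit, i.e.\ $x\cdot 1=x$ at both levels. That is also the right justification for surjectivity at the fixed level; injectivity of restriction, as your middle paragraph suggests, does not give it. The case $\uI_0=\underline{0}$ needs only powers of $u$ and $v$ on the underlying level. It also uses the fact that restriction in $J_{C_2/e}(V)$ is the inclusion $V^{C_2}\to V$ for every $\ZZ[C_2]$-module $V$, twisted or not.

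So your route proves the statement for every $C_2$-Mackey field with $\uF(C_2/e)\neq 0$. This includes, e.g.,\ the constant field $\underline{\FF}_3$, where $R\otimes\ZZ\not\cong R$ and the odd-degree pieces have zero fixed level. The paper's literal reduction to the odd-prime case is what needs the hypothesis. In your proof, the shape computation in your first paragraph is only cosmetic.
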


\section{Equivariant (topological) Hochschild homology}\label{Sec:Hochschild Homology}

Throughout this paper, we will study equivariant analogues of Hochschild homology (HH) and topological Hochschild homology (THH). In this section, we will recall these equivariant analogues, namely, Hochschild homology for Green functors \cite[Definition 2.25]{Witt} and twisted THH \cite[Definition 8.2]{twTHH}, which take as input Green functors and equivariant ring spectra, respectively. Both of these definitions are defined for any cyclic group $C_n$, but for the purpose of this paper we will consider only the cyclic groups $C_p$, $p$ prime.

\subsection{Hochschild homology for Green functors}

Green functors are an equivariant analogue to rings; therefore, it is natural to want an equivariant analogue of Hochschild homology which takes $\uR$-algebras as input, for $\uR$ a Green functor. In \cite{Witt} the authors define such an equivariant analogue, namely \emph{Hochschild homology for Green functors}. 

For this section, let $C_p = \langle \gamma \rangle$ be the cyclic group of order $p$ where $\gamma = e^{2\pi i/p}$. Recall from \Cref{MackeyGroupAction} that one can define a $C_p$-action on a $C_p$-Mackey functor. Let $\alpha_k\colon \uM^{\square \, k+1} \to \uM^{\square \, k+1}$ be the map that rotates the last copy of $\uM$ to the front and then acts on that $\uM$ by $\gamma$.

\begin{definition}[{\cite[Definition 2.20]{Witt}}] 
    Let $C_p = \langle \gamma \rangle$, $\uR$ a commutative $C_p$-Green functor, $\uM$ an $\uR$-algebra, and let all box products be over $\uR$. The $C_p$\emph{-twisted cyclic bar complex} of $\uM$, denoted $B_\bullet^{cy,C_p}(\uM)$, is a simplicial $C_p$-Mackey functor such that $B_k^{cy,C_p}(\uM) = \uM^{\square \, k+1}$, where the face and degeneracy maps $d_i\colon \uM^{\square \, k + 1} \to \uM^{\square \, k}$ and $s_i\colon \uM^{\square \, k+1} \to \uM^{\square \, k+2}$ are defined as follows:

    \begin{center}
        \begin{tabular}{l}
             $d_i = \begin{cases}
                 \id^{i}\, \square \, \mu\, \square \, \id^{k -i - 1} & 0 \leq i < k \\
                 (\mu\, \square \, \id^{k - 1}) \circ \alpha_k & i = k
             \end{cases}$ \\
             $s_i = \id^{i+1}\, \square \, \eta \, \square \, \id^{k - i}$ \, \, \,  $0 \leq i \leq k$
        \end{tabular}
    \end{center}
    \noindent where $\mu$ and $\eta$ are the multiplication and unit maps of $\uM$.
\end{definition}

There is an equivalence between the category of simplicial Mackey functors and the category of non-negatively graded dg Mackey functors by applying the Dold-Kan correspondence at each orbit. The homology of a simplicial Mackey functor is the homology of the associated normalized dg Mackey functor; details can be found in Section 4 of \cite{Witt}. 

\begin{definition}[{\cite[Definition 2.25]{Witt}}] Let $C_p = \langle \gamma \rangle$, $\uR$ a commutative $C_p$-Green functor, and $\uM$ an $\uR$-algebra. The \emph{Hochschild homology} of $\uM$, is defined by:
    \[
\underline{\HH}_i^{\uR,C_p}(\uM) = H_i(B_\bullet^{cy,C_p}(\uM)).
    \]
\end{definition}

Adamyk, Gerhardt, Hess, Klang, and Kong define Hochschild homology of graded Green functors in \cite{AGHKK}. Let $\alpha_k\colon \uM_\star^{\square \, k+1} \to \uM_\star^{\square \, k+1}$ be defined as first moving the last copy of $\uM_\star$ to the front by using the rotating isomorphism (see \Cref{rotating isomorphism}), and then acting on that copy of $\uM_\star$ by $\gamma$.

\begin{definition} [{\cite[Definition 4.1.7]{AGHKK}}] Let $C_p = \langle \gamma \rangle$, $\uR_\star$ a commutative $RO(C_p)$-graded Green functor, $\uM_\star$ an $\uR_\star$-algebra, and all box products are over $\uR_\star$. The $C_p$\emph{-twisted cyclic bar complex} of $\uM_\star$, denoted $B^{cy,C_p}_\bullet(\uM_\star)$, is a simplicial $RO(C_p)$-graded Mackey functor such that $B_k^{cy,C_p}(\uM_\star) = \uM_\star^{\square \, k+1}$, where the face and degeneracy maps $d_i\colon \uM_\star^{\square \, k + 1} \to \uM_\star^{\square \, k}$ and $s_i\colon \uM_\star^{\square \, k+1} \to \uM_\star^{\square \, k+2}$ are defined as follows:

    \begin{center}
        \begin{tabular}{l}
             $d_i = \begin{cases}
                 \id^{i}\, \square \, \mu\, \square \, \id^{k -i - 1} & 0 \leq i < k \\
                 (\mu\, \square \, \id^{k - 1}) \circ \alpha_k & i = k
             \end{cases}$ \\
             $s_i = \id^{i+1}\, \square \, \eta \, \square \, \id^{k - i}$ \, \, \,  $0 \leq i \leq k$
        \end{tabular}
    \end{center}
    \noindent where $\mu$ and $\eta$ are the multiplication and unit maps of $\uM_\star$.
\end{definition}

\begin{definition}[{\cite[Definition 4.1.8]{AGHKK}}] Let $C_p = \langle \gamma \rangle$, let $\uR_\star$ be a commutative $RO(C_p)$-graded Green functor, and let $\uM_\star$ be an $\uR_\star$-algebra. The \emph{Hochschild homology} of $\uM_\star$, is defined by:
    \[
\underline{\HH}_i^{\uR_\star,C_p}(\uM_\star) = H_i(B_\bullet^{cy,C_p}(\uM_\star)).
    \]
\end{definition}

Lewis and Mandell's paper \cite{LM} allows us to do homological algebra in the equivariant setting. As is true classically, there is a Tor functor perspective for Hochschild homology for Green functors. We recall the definition of Tor in this setting, as seen in \cite{LM} this definition is for a general finite group $G$ but we will only be considering the group $C_p$.

\begin{definition}[{\cite{LM}}] Let $\uR_\star$ be an $RO(C_p)$-graded Green functor. For $\uM_\star$ and $\uN_\star$ left and right $\uR_\star$-modules respectively, $\underline{\Tor}_{s,\star}^{\uR_\star}(\uN_\star, \uM_\star)$ is the $s^{\text{th}}$ left derived functor of $\uN_\star \, \square_{\uR_\star} \, \uM_\star$.
\end{definition}

In order to discuss a result connecting this equivariant Tor and Hochschild homology for Green functors, we must define the following notation for some important modules.

Let $\uR$ be a $C_p$-Green functor, $\uN$ a left $\uR$-module, and let the left $\uR$-module map for $\uN$ be denoted $\mu$. Let us define ${}^\gamma\uN$ as $\uN$ with the left $\uR$-module map ${}^\gamma \mu$, defined by the following diagram:
% https://q.uiver.app/#q=WzAsMyxbMCwwLCJcXHVSIFxcLCBcXHNxdWFyZSBcXCwgXFx1TiJdLFsyLDAsIlxcdU4iXSxbMCwyLCJcXHVSIFxcLCBcXHNxdWFyZSBcXCwgXFx1TiJdLFsyLDEsIlxcbXUiLDJdLFswLDIsIlxcaWQgXFwsIFxcc3F1YXJlIFxcLCBcXGdhbW1hIiwyXSxbMCwxLCJ7fV5cXGdhbW1hIFxcbXUiXV0=
\[\begin{tikzcd}[column sep=small, row sep=small]
	{\uR \, \square \, \uN} && \uN \\
	\\
	{\uR \, \square \, \uN}
	\arrow["{{}^\gamma \mu}", from=1-1, to=1-3]
	\arrow["{\gamma \, \square \, \id}"', from=1-1, to=3-1]
	\arrow["\mu"', from=3-1, to=1-3]
\end{tikzcd}\]
\noindent where $\gamma\colon \uR \to \uR$ acts on $\uR$ by $\gamma$, and note that the box products are not over $\uR$ in this diagram. This definition extends to ${}^\gamma \uM$ for $\uM$ an $\uR$-algebra.

We are now ready to discuss the following result which will aid us in our computation in \Cref{A Computation}.

\begin{proposition}[{\cite[Proposition 4.3.2]{AGHKK}}] \label{HH and Tor} Let $C_p = \langle \gamma \rangle$, and $\uR_\star$ be an $RO(C_p)$-graded commutative Green functor. If $\uM_\star$ is an $\uR_\star$-algebra and is flat as an $\uR_\star$-module, there is a natural isomorphism
    \[
    \underline{\HH}_*^{\uR_\star,C_p}(\uM_\star) \cong \underline{\Tor}_{*,\star}^{\uM_\star \, \square_{\uR_\star} \, \uM_\star^{\text{op}}}(\uM_\star, {}^\gamma \uM_\star).
    \]
\end{proposition}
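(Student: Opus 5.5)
We follow the classical argument identifying Hochschild homology with Tor over the enveloping algebra, carried out in the symmetric monoidal abelian category of $RO(C_p)$-graded $C_p$-Mackey functors with box product over $\uR_\star$. Write $\uM^e_\star = \uM_\star \, \square_{\uR_\star} \, \uM_\star^{\text{op}}$. The plan is to build a resolution of $\uM_\star$ by $\uM^e_\star$-modules, apply $-\,\square_{\uM^e_\star}\,{}^\gamma\uM_\star$, and identify the result with the twisted cyclic bar complex $B^{cy,C_p}_\bullet(\uM_\star)$.

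\textbf{Step 1: the resolution.} Take the two-sided bar construction $B_\bullet(\uM_\star,\uM_\star,\uM_\star)$ with $B_k = \uM_\star^{\square\, k+2}$ (box products over $\uR_\star$). Its faces multiply adjacent factors, and its augmentation $B_0 \to \uM_\star$ is $\mu$. The outer factors give each $B_k$ the structure of an $\uM^e_\star$-module. This module is isomorphic to $\uM^e_\star \, \square_{\uR_\star} \, \uM_\star^{\square\, k}$, which is extended from $\uR_\star$. The augmented complex is contractible by the usual extra degeneracy $s_{-1} = \eta \,\square\, \id$. That contraction is only a map of right $\uM_\star$-modules, but it is a map of Mackey functors levelwise, so exactness holds at each orbit $C_p/H$ and each degree $\alpha$.

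\textbf{Step 2: flatness gives Tor.} The resolution is not projective, only relatively free, so we use the flatness hypothesis. Since $\uM_\star$ is flat over $\uR_\star$, each $\uM_\star^{\square\, k}$ is flat over $\uR_\star$. It follows that $\uM^e_\star\,\square_{\uR_\star}\,\uM_\star^{\square\, k}$ is flat over $\uM^e_\star$, by base change of flat modules, which holds in Lewis--Mandell's homological algebra. Flat resolutions compute the left derived functors $\underline{\Tor}$ by the standard balancing argument, in the framework of \cite{LM}. Hence
\[
\underline{\Tor}^{\uM^e_\star}_{*,\star}(\uM_\star, {}^\gamma\uM_\star) \cong H_*\big(B_\bullet(\uM_\star,\uM_\star,\uM_\star)\,\square_{\uM^e_\star}\,{}^\gamma\uM_\star\big).
\]

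\textbf{Step 3: identify with the twisted cyclic bar complex.} Levelwise there is an isomorphism
\[
(\uM^e_\star \,\square_{\uR_\star}\, \uM_\star^{\square\, k})\,\square_{\uM^e_\star}\, {}^\gamma\uM_\star \cong \uM_\star^{\square\, k+1}.
\]
We must check that the induced faces agree with the $d_i$ of $B^{cy,C_p}_\bullet(\uM_\star)$.
\begin{itemize}
\item For $0\le i<k$ the faces are simply multiplication of adjacent factors, and these match directly.
\item The last face comes from the right $\uM_\star$-action on the last bar factor. Through the coequalizer, it moves the last factor around to act on the ${}^\gamma\uM_\star$ copy. Because of the twisted module structure this action passes through $\gamma$, and moving the factor uses the rotating isomorphism of \Cref{rotating isomorphism}. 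The result is exactly $(\mu\,\square\,\id^{k-1})\circ\alpha_k$.
\end{itemize}
Naturality in $\uM_\star$ is then clear from the construction.

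\textbf{Main obstacle.} We expect Step 3 to be the hardest. It requires careful bookkeeping of the rotating isomorphism $r_{\alpha,\beta}$, whose Burnside ring signs must cancel correctly, and of which side $\gamma$ acts on. We also need that $\gamma$ is compatible with $\mu$, so that ${}^\gamma\uM_\star$ is genuinely an $\uM^e_\star$-module. We would verify this first at the level $C_p/e$, where the box product is an ordinary tensor product. We would then deduce it at $C_p/C_p$ using the description of maps out of a box product in \Cref{box product map}, since there both maps are determined by their components together with their compatibility with transfers.
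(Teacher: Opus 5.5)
Your bar-resolution argument is correct and is the standard one. The paper gives no proof of its own and simply quotes the result from \cite{AGHKK}. Your route is the classical one transported to graded Mackey functors: $B_\bullet(\uM_\star,\uM_\star,\uM_\star)$ is flat over $\uM^e_\star=\uM_\star\,\square_{\uR_\star}\,\uM_\star^{\text{op}}$ by base change, flat resolutions compute $\underline{\Tor}$, and $B_\bullet(\uM_\star,\uM_\star,\uM_\star)\,\square_{\uM^e_\star}\,{}^\gamma\uM_\star\cong B^{cy,C_p}_\bullet(\uM_\star)$.

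One refinement to your last paragraph: multiplicativity of $\gamma$ alone does not make ${}^\gamma\uM_\star$ a module over the box product \emph{over} $\uR_\star$; you also need $\gamma$ to fix the image of $\uR_\star$. That is the same tacit hypothesis that makes the last face $(\mu\,\square\,\id^{k-1})\circ\alpha_k$ well defined on $\uM_\star^{\square_{\uR_\star}(k+1)}$ in the first place, and it holds, for example, when $\gamma$ acts trivially on $\uR_\star$. The Burnside-ring bookkeeping for the rotating isomorphisms is automatic by symmetric monoidal coherence.
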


\subsection{Twisted topological Hochschild homology}\label{Sec:TwTHH}

In \cite{twTHH}, Angeltveit, Blumberg, Gerhardt, Hill, Lawson, and Mandell define an equivariant analogue to THH which takes as input a ring $C_n$-spectrum, namely $C_n$\emph{-twisted topological Hochschild homology} for a general cyclic group $C_n$. The authors define $C_n$-twisted THH of a ring $C_n$-spectrum $A$ to be the norm $N_{C_n}^{S^1}A$, and they also show that twisted THH can be defined using a twisted analogue of the cyclic bar complex. For the purposes of this paper we will be focused on the cyclic groups $C_p$, $p$ prime.

For this section, let $C_p = \langle \gamma \rangle$ be the cyclic group of order $p$ where $\gamma = e^{2\pi i/p}$, let $A$ be an associative ring $C_p$-spectrum, and $\alpha_k\colon A^{\smashy k+1} \to A^{\smashy k+1}$ rotates the last copy of $A$ to the front and acts on that copy of $A$ by $\gamma$. 

\begin{definition} [{\cite[Definition 8.1]{twTHH}}] \label{twistedcyclicbar}
    Let $\gamma = e^{2\pi i/p}$ be the chosen generator of $C_p$, and let $A$ be an associative ring $C_p$-spectrum indexed over the trivial universe. The $C_p$\emph{-twisted cyclic bar complex} for $A$, denoted $B_\bullet^{cy,C_p}(A)$, is a simplicial object such that $B_k^{cy,C_p}(A) = A^{\smashy k+1}$. The face and degeneracy maps, $d_i\colon A^{\smashy k+1} \to A^{\smashy k}$ and $s_i\colon A^{\smashy k+1} \to A^{\smashy k+2}$ are defined as follows:

    \begin{center}
        \begin{tabular}{l}
             $d_i = \begin{cases}
                 \id^{i} \smashy \mu \smashy \id^{k -i - 1} & 0 \leq i < k \\
                 (\mu \smashy \id^{k - 1}) \circ \alpha_k & i = k
             \end{cases}$ \\
             $s_i = \id^{i+1} \smashy \eta \smashy \id^{k - i}$  \, \, \,  $0 \leq i \leq k$
        \end{tabular}
    \end{center}
    \noindent where $\mu$ and $\eta$ are the multiplication and unit maps of $A$.
\end{definition}

This twisted cyclic bar complex, along with some change of universe functors denoted $\I$, are one way to define twisted THH.

\begin{definition}[{\cite[Definition 8.2]{twTHH}}]
    Let $U$ be a complete $S^1$-universe, let $\widetilde{U} \coloneqq i_{C_p}^*U$ be the pullback of the universe to $C_p$, and let $A$ be an associative ring $C_p$-spectrum indexed on $\widetilde{U}$. The \emph{$C_p$-twisted topological Hochschild homology} of $A$ is $\THH_{C_p}(A) = \I^{U}_{\RR^\infty}|B_\bullet^{cy,C_p}(\I^{\RR^\infty}_{\widetilde{U}}A)|$.
\end{definition}

There is another useful perspective of twisted THH, in order to define this alternative perspective we must define the following notation.

Let $R$ be a ring spectrum, $E$ a left $R$-module, and let the left $R$-module map for $E$ be denoted $\mu$. Similarly to the definition above, we can define ${}^\gamma E$ as $E$ with the left $R$-module map ${}^\gamma \mu$, defined by the following diagram:
% https://q.uiver.app/#q=WzAsMyxbMCwwLCJBIFxcc21hc2h5IFIiXSxbMiwwLCJBIl0sWzAsMiwiQSBcXHNtYXNoeSBSIl0sWzIsMSwiXFxtdSIsMl0sWzAsMiwiXFxnYW1tYSBcXCwgXFxzcXVhcmUgXFwsIFxcaWQiLDJdLFswLDEsInt9XlxcZ2FtbWEgXFxtdSJdXQ==
\[\begin{tikzcd}[column sep=small, row sep=small]
	{R \smashy E} && E \\
	\\
	{R \smashy E}
	\arrow["{{}^\gamma \mu}", from=1-1, to=1-3]
	\arrow["{\gamma \, \square \, \id}"', from=1-1, to=3-1]
	\arrow["\mu"', from=3-1, to=1-3]
\end{tikzcd}\]
\noindent where $\gamma\colon R \to R$ acts on $R$ by $\gamma$. This definition extends to ${}^\gamma A$ for $A$ an $R$-algebra.

With this definition we can write the twisted cyclic bar complex, $B_{\bullet}^{cy,C_p}(A)$, as the \emph{twisted cyclic nerve}, $B_{\bullet}^{cy,C_p}(A, {}^\gamma A)$ \cite[Definition 2.20]{Witt}. The twisted cyclic nerve is defined as $B_{k}^{cy,C_p}(A, {}^\gamma A) = {}^\gamma A \smashy A^{\smashy k}$, with face and degeneracy maps as follows:
 \begin{center}
        \begin{tabular}{l}
             $d_i = \begin{cases}
                 \id^{i} \smashy \mu \smashy \id^{k -i - 1} & 0 \leq i < k \\
                 ({}^\gamma \mu \smashy \id^{k - 1}) \circ t & i = k
             \end{cases}$ \\
             $s_i = \id^{i+1} \smashy \eta \smashy \id^{k - i}$  \, \, \,  $0 \leq i \leq k$
        \end{tabular}
    \end{center}
    \noindent where $\mu$ and $\eta$ are the multiplication and unit maps of $A$, and $t$ rotates the last copy of $A$ to the front. As a $C_p$-spectrum, $\THH_{C_p}(A) = \THH(A, {}^\gamma A)$ \cite[Proposition 4.2.6]{AGHKK}.

Whenever discussing twisted topological Hochschild homology of a $C_p$-spectrum $A$, $\THH_{C_p}(A)$, we mean the left derived twisted topological Hochschild homology of $A$. In particular, this is the same as considering twisted THH of the cofibrant replacement of $A$.

The work of Adamyk, Gerhardt, Hess, Klang, and Kong in \cite[Theorem 4.2.7]{AGHKK} shows that there is an equivariant analogue of the B\"okstedt spectral sequence which demonstrates a relationship between Hochschild homology for Green functors and $C_n$-twisted THH. First, we recall that for $G$ an abelian group, and $\gamma \in G$, one can define a left $\gamma$-action on any genuine orthogonal $G$-spectrum, denoted $\ell_\gamma\colon X \to X$ (for more details see \cite[Section 3.1]{SchwedeGlobalHomotopy}). We will say that $\gamma$ acts on a $G$-spectrum trivially if $\ell_\gamma$ is equivariantly homotopic to the identity map.

For the purposes of this paper, we are only considering $C_p$-Mackey functors, but the following result as seen in \cite{AGHKK} is for a general cyclic group $C_n$.

\begin{theorem}[{\cite[Theorem 4.2.7]{AGHKK}}] \label{EquivBokSS}
     Let $C_p$ be a finite subgroup of $S^1$ such that $C_p = \langle \gamma\rangle$. Let $A$ be an associative ring $C_p$-spectrum, and $E$ a commutative ring $C_p$-spectrum such that $\gamma$ acts trivially on $E$. If $\underline{E}_\star(A)$ is flat over $\underline{E}_\star$, then there is a twisted B\"okstedt spectral sequence

\begin{center}
    $E^2_{s,\star} = \underline{\HH}_s^{\underline{E}_\star,C_p}(\underline{E}_\star(A)) \Rightarrow \underline{E}_{s + \star}(i_{C_p}^* \THH_{C_p}(A))$
\end{center}

\noindent where $d^r \colon E_{i, \alpha} \to E_{i - r, \alpha + r - 1}$.
\end{theorem}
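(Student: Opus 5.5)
The statement is quoted from \cite[Theorem 4.2.7]{AGHKK}. I would prove it the way one proves the classical B\"okstedt spectral sequence: take the spectral sequence of the simplicial spectrum $B^{cy,C_p}_\bullet(A)$ after smashing with $E$, and identify its $E^2$-page with Hochschild homology of Green functors.

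\textbf{Step 1: the skeletal filtration.} Replace $A$ cofibrantly, as the convention for $\THH_{C_p}$ allows. Then $i_{C_p}^*\THH_{C_p}(A)$ is, as a $C_p$-spectrum, the geometric realization $|B^{cy,C_p}_\bullet(A)|$; the change-of-universe functors $\I$ are harmless after restriction to $C_p$. Smashing with $E$ commutes with realization, so
\[
E \smashy i_{C_p}^*\THH_{C_p}(A) \simeq |E \smashy B^{cy,C_p}_\bullet(A)|.
\]
The skeletal filtration of this realization gives a spectral sequence of $RO(C_p)$-graded Mackey functors. Its $E^1$-page is $E^1_{s,\star}=\underline{E}_\star(A^{\smashy s+1})$ with $d^1=\sum(-1)^i d_i$, and it converges to $\underline{E}_{s+\star}(i_{C_p}^*\THH_{C_p}(A))$. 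Convergence holds because the filtration is exhaustive and nonnegatively indexed. The stated degrees of $d^r$ are the standard ones for this filtration.

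\textbf{Step 2: the K\"unneth identification.} Since $\underline{E}_\star(A)$ is flat over $\underline{E}_\star$ and $E$ is a commutative ring spectrum, the equivariant K\"unneth theorem of Lewis--Mandell \cite{LM} gives
\[
\underline{E}_\star(A^{\smashy s+1}) \cong \underline{E}_\star(A)^{\square_{\underline{E}_\star}\, s+1}.
\]
The argument is by induction on $s$: the K\"unneth spectral sequence has Tor terms vanishing in positive degrees by flatness. Under this isomorphism, the faces $d_i$ for $i<s$ become $\id\square\mu\square\id$, where $\mu$ is the product on $\underline{E}_\star(A)$ induced by the multiplication of $A$ and of $E$.

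\textbf{Step 3: the last face map (main obstacle).} The face $d_s$ uses $\alpha_s$, which rotates the last smash factor to the front and then acts on it by $\gamma$. Two things must be checked.
\begin{itemize}
\item[(a)] The smash-product rotation induces the rotating isomorphism of \Cref{rotating isomorphism} on box products. This is where the $RO(C_p)$-graded signs $s(\alpha,\beta)\in A(C_p)$ come from, and it follows from the naturality of the K\"unneth isomorphism with respect to symmetry.
\item[(b)] The action of $\gamma$ on $A$ alone, after smashing with $E$, induces the Weyl/$\gamma$-action on the Mackey functor $\underline{E}_\star(A)$. On $E\smashy A$, the map $\id_E\smashy\gamma$ differs from the diagonal action $\ell_\gamma$ by $\ell_\gamma^{-1}\smashy \id_A$ on $E$. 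The hypothesis that $\gamma$ acts trivially on $E$, i.e.\ $\ell_\gamma\simeq\id$ equivariantly, makes these two maps agree up to homotopy.
\end{itemize}
I expect (b) to be the delicate point. One must also check that the $\gamma$-action defined via $\ell_\gamma$ on homotopy Mackey functors agrees with the action from \Cref{MackeyGroupAction}.

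\textbf{Step 4: conclusion.} With Steps 2 and 3, $(E^1,d^1)$ is isomorphic to the Moore complex of $B^{cy,C_p}_\bullet(\underline{E}_\star(A))$. Taking homology, by Dold--Kan levelwise, gives
\[
E^2_{s,\star}\cong \underline{\HH}_s^{\underline{E}_\star,C_p}(\underline{E}_\star(A)).
\]
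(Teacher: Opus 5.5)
Your outline is the standard argument, and it is the one behind the cited result. The paper gives no proof of its own; it quotes AGHKK, where the twist is packaged as $\THH(A,{}^\gamma A)$. Steps 1, 2 and 3(a) are fine. The first half of 3(b) correctly uses $\ell^E_\gamma\simeq\id$ to replace $\id_E\smashy\ell^A_\gamma$ by $\ell_\gamma^{E\smashy A}$. The real content is the check you postpone at the end of 3(b), and it is not automatic. Since $\ell_\gamma$ is a natural automorphism of the identity functor on $C_p$-spectra, for $f\colon S^\alpha\to X$ you get $\ell^X_\gamma\circ f=f\circ\ell_\gamma^{S^\alpha}$. The Weyl action of \Cref{MackeyGroupAction} on $\underline{\pi}_\alpha(X)$, by contrast, conjugates by $\gamma$ on both $X$ and $S^\alpha$. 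So the map that $\ell_\gamma$ induces on $\underline{\pi}_\alpha(X)$ is the Weyl action followed by precomposition with $\gamma$ acting on the representation sphere $S^\alpha$. For $p$ odd this is harmless. Every nontrivial irreducible $C_p$-representation is $\CC$ with $\gamma$ acting by a root of unity, and multiplication by a unit complex scalar is $C_p$-equivariantly isotopic to the identity. Hence the two actions agree and your proof closes.

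For $p=2$ the check fails. The reflection of $S^\sigma$ has underlying degree $-1$ and fixed-point degree $1$, so it is the unit $1-[C_2]$ of $\underline{A}$, which restricts to $-1$. In degrees $a+b\sigma$ with $b$ odd, the map induced by $\ell_\gamma$ is therefore multiplication by $1-[C_2]$ at $C_2/C_2$, where the Weyl action is trivial, and $-1$ times the Weyl action at $C_2/e$. For a concrete case, take $E=H\underline{\ZZ}$; here $\ell_\gamma\simeq\id$, since self-maps of $H\underline{\ZZ}$ are detected on $\underline{\pi}_0$. Take $A$ the free associative ring on $S^\sigma$, so that $\underline{E}_\star(A)=\underline{E}_\star\langle x\rangle$ with $|x|=\sigma$ is free. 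The actual $d^1$ sends $1\otimes x$ to $x-\gamma(x)=2x$, so the weight-one part of $E^2_{0,\sigma}$ is $\underline{\ZZ}/2$. This is consistent with the weight-one summand of $\THH_{C_2}(A)$ being the mapping torus of the reflection. The Weyl-twisted Hochschild complex would give $\underline{\ZZ}$ there instead. So your argument proves the statement with $\gamma$ acting on $\underline{E}_\star(A)$ by the map induced from $\gamma\colon A\to A$. That agrees with the Weyl-action reading for $p$ odd. For $p=2$ it agrees whenever $1-[C_2]$ acts trivially on $\underline{E}_\star$, as it does for $H\underline{\FF}_2$ and for the field $\underline{F}$ with $\underline{F}(C_2/e)=0$ used in \Cref{thm:MUR computation}.
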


Classically, for $R$ a commutative ring spectrum, $\THH(R) \simeq R \otimes S^1$ \cite{MSV}. There is a similar interpretation for twisted THH in the commutative case which we will use throughout this paper. To introduce this version of the definition we will first recall that the category of orthogonal $C_p$-ring spectra and the category of unbased spaces with a free $C_p$-action are tensored over the category of unbased spaces with a free $C_p$-action. For $R$ a commutative ring $C_p$-spectrum indexed over the trivial universe $\RR^\infty$, consider the functor $R \otimes_{C_p} (-)$ on the category of unbased spaces with a free $C_p$-action to be the coequalizer of the following diagram
\begin{equation}\label{coequalizer for THHCn}
% https://q.uiver.app/#q=WzAsMixbMCwwLCJSIFxcb3RpbWVzIENfcCBcXG90aW1lcyAoLSkiXSxbMiwwLCJSIFxcb3RpbWVzICgtKSJdLFswLDEsInIiLDAseyJvZmZzZXQiOi0yfV0sWzAsMSwiXFxlbGwiLDIseyJvZmZzZXQiOjJ9XV0=
\begin{tikzcd}
	{R \otimes C_p \otimes (-)} && {R \otimes (-)}
	\arrow["\id \otimes r", shift left=2, from=1-1, to=1-3]
	\arrow["\ell \otimes \id"', shift right=2, from=1-1, to=1-3]
\end{tikzcd}
\end{equation}

\noindent where $r$ is the $C_p$-action on $(-)$ and $\ell$ is the induced $C_p$-action on $R$.

The authors of \cite{twTHH} show that for $U$ a complete $S^1$-universe and $\widetilde{U} = \iota_{C_p}^*U$, and $R$ a commutative ring $C_p$-spectrum indexed over $\widetilde{U}$, 
\[\THH_{C_p}(R) \simeq \I_{\RR^\infty}^{U}(\I_{\widetilde{U}}^{\RR^\infty}(R) \otimes_{C_p} S^1).
\]
This characterization will be heavily used in \Cref{Sec:Algebraic structure} in order to demonstrate the algebraic structure of twisted THH.

\section{Computation of twisted THH of the Real bordism spectrum}\label{A Computation}

Very few computations of twisted THH are in the literature \cite{AGHKK}. Namely, in \cite[Theorem 4.3.4]{AGHKK} they compute $H{\underline{\FF}_2}_\star(\THH_{C_2}(MU_\RR))$. In this section, we will compute $H\uF_\star(\THH_{C_2}(MU_\RR))$ for $\uF$ the $C_2$-Mackey field such that $\uF(C_2/C_2) = \FF_2$, $\uF(C_2/e) = 0$, and $MU_\RR$ the Real bordism spectrum.

We will use the twisted B\"okstedt spectral sequence defined in \cite{AGHKK} (see \Cref{EquivBokSS}) to do this computation. To use this spectral sequence, we need that the non-trivial element of $C_2$ acts trivially on $H\uF$, and that $H\uF_\star(MU_\RR)$ is flat over $\underline{\pi}_\star(H\uF)$.

First, we will study which $C_p$-Mackey fields $\uF$ give an Eilenberg-Mac Lane spectrum, $H\uF$, with a trivial $C_2$-action. Then, using the fact that $MU_\RR$ is a real-oriented spectrum, we will study for which $C_p$-Mackey fields $\uF$ that $H\uF_\star(MU_\RR)$ is flat over $H\uF_\star$. We finish this section with the computation.

\subsection{The trivial action}\label{the trivial action}

Recall that in \Cref{Sec:TwTHH} we discussed that one can define a $C_p$-spectrum to have a trivial $\gamma$-action, for $\gamma \in C_p$, if $\ell_\gamma$ is equivariantly homotopic to the identity map. Let $E$ be a $C_p$-spectrum. If the Weyl action on $\underline{\pi}_\star(E)$ is trivial, then the chosen generator $\gamma$ of $C_p$ induces the identity map on the $RO(C_p)$-graded homotopy groups of $E$. Furthermore, if the only element that induces the identity map in $E^{\star}E$ is the unit $1$, then $\ell_\gamma$ must be equivariantly homotopic to the identity map.

Computations in \Cref{Sec: Homotopy Groups} show that for $\uF$ a $C_p$-Mackey field such that $\uF(C_p/C_p) = k$ and $\uF(C_p/e) = 0$, whenever $\underline{\pi}_\alpha(H\uF) \neq \underline{0}$ then $\underline{\pi}_\alpha(H\uF) = \uF$ which has a trivial Weyl action. Therefore every level of $\underline{\pi}_\star(H\uF)$ has a trivial $C_p$-action.

To compute $H\uF^\star H\uF$ we consider the following interesting fact which Oru\c{c} proves within the proof of Theorem 3.11 in \cite{Oruc}.

\begin{proposition}[{\cite{Oruc}}] \label{phiHR} 
Let $\uF$ be a $C_p$-Mackey field such that $\uF(C_p/C_p) = k$ and $\uF(C_p/e) = 0$. Then if we consider $\Phi^{C_p}(H\uF)$ as a non-equivariant ring spectrum, it is isomorphic to the non-equivariant Eilenberg-Mac Lane spectrum $Hk$.
\end{proposition}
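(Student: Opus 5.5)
We have to show that for a $C_p$-Mackey field $\uF$ with $\uF(C_p/C_p)=k$ and $\uF(C_p/e)=0$, the spectrum $\Phi^{C_p}(H\uF)$ is $Hk$ as a ring spectrum. The plan is to identify $H\uF$ with something built from the isotropy separation sequence. Since $\uF(C_p/e)=0$, the underlying spectrum $i_e^*H\uF$ is contractible. Its homotopy groups are $\underline{\pi}_n(H\uF)(C_p/e)$, which is $\uF(C_p/e)=0$ in degree $0$ and $0$ otherwise.

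Consider the cofiber sequence
\[
E{C_p}_+\smashy H\uF \to H\uF \to \widetilde{E{C_p}}\smashy H\uF .
\]
The left-hand term is built from free cells $C_p/e_+\smashy S^n$. Smashing with $H\uF$ gives, on each such cell, the induced spectrum $C_{p+}\smashy H\uF \simeq C_{p+}\smashy i_e^*H\uF$, which is contractible. An induction over skeleta and a colimit argument then show that $E{C_p}_+\smashy H\uF\simeq *$. Hence $H\uF\simeq \widetilde{E{C_p}}\smashy H\uF$ as ring spectra, because the second map is a map of ring spectra (it is the unit of $\widetilde{E{C_p}}$ smashed with $H\uF$).

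Next, $\Phi^{C_p}(H\uF)=(\widetilde{E{C_p}}\smashy H\uF)^{C_p}\simeq (H\uF)^{C_p}$, and this equivalence is multiplicative: geometric fixed points are lax monoidal, and the categorical fixed point functor is lax monoidal. Its homotopy groups are
\[
\pi_n\big((H\uF)^{C_p}\big)=\underline{\pi}_n(H\uF)(C_p/C_p),
\]
which is $k$ for $n=0$ and $0$ otherwise. Alternatively, one can read this off from \eqref{C_pHomotopyGroups} with $\alpha=n$ trivial; that route is circular, though, since Oru\c{c} uses exactly this fact, so I would use the direct computation.

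A connective ring spectrum with homotopy concentrated in degree $0$ is equivalent, as a ring spectrum, to the Eilenberg--Mac Lane spectrum of its $\pi_0$. The Postnikov truncation map $X\to H\pi_0X$ is a ring map and here an equivalence. Its $\pi_0$ ring structure is that of $\uF(C_p/C_p)=k$, since multiplication on $(H\uF)^{C_p}$ induces the Green functor multiplication at level $C_p/C_p$. Thus $\Phi^{C_p}(H\uF)\simeq Hk$.

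The main obstacle I expect is the contractibility of $E{C_p}_+\smashy H\uF$ and keeping track of ring structures in the equivalence $\Phi^{C_p}\simeq(-)^{C_p}$. For the former I would argue via the equivalence $E{C_p}_+\smashy X\simeq *$ whenever $i_e^*X\simeq *$, which is standard because $\pi_*^{H}$ of $E{C_p}_+\smashy X$ is detected by the underlying spectrum. For the ring structures, I would simply note that $H\uF\to\widetilde{E{C_p}}\smashy H\uF$ is a map of commutative ring spectra.
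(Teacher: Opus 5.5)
Your argument is correct and is essentially the standard one. The paper gives no proof of its own and only cites Oru\c{c}, but the facts it records assemble into exactly your steps: $H\uF$ has contractible underlying spectrum, so $H\uF \simeq \widetilde{E}\mathcal{P} \smashy H\uF$ (\Cref{GeometricSpectra}), hence $\Phi^{C_p}(H\uF) \simeq (H\uF)^{C_p}$ (\Cref{geom=fixed}), and $(H\uF)^{C_p}$ has homotopy $\uF(C_p/C_p) = k$ concentrated in degree $0$. You prove the concentration directly via the isotropy separation sequence rather than citing it, and you track why each equivalence respects the ring structures, which is what the statement actually asks for.
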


Using this proposition along with \Cref{OrucCoHomology} we can see that $H\underline{F}^\alpha H\underline{F}(C_p/C_p) \cong Hk^{\dim(\alpha^{C_p})}Hk$ and $H\uF^\alpha H\uF(C_p/e) = 0$. Since we only need to consider $H\uF^\alpha H\uF$ for $\dim(\alpha^{C_p}) = 0$ then we only need to consider $Hk^0Hk$. 

Considering the case when $k = \FF_2$, then the question reduces to which elements of $\FF_2$ induce the identity on $H\FF_2^0 H\FF_2 \cong \FF_2$. The only element of $\FF_2$ which induces the identity on $\FF_2$ is the unit $1$. Therefore if $\uF(C_2/C_2) = \FF_2$, and $\uF(C_2/e) = 0$, then the action of $\gamma$ on $H\uF$ is trivial.

\subsection{Real oriented}

We will now recall what it means for a spectrum to be real oriented, and discuss some properties that real oriented spectra possess.

Consider $\CC \PP^n$ and $\CC \PP^\infty$ as pointed $C_2$-spaces under the action of complex conjugation, where the base point is $\CC \PP^0$. Note that the $C_2$-fixed point spaces of $\CC \PP^n$ and $\CC \PP^\infty$ are $\RR \PP^n$ and $\RR \PP^\infty$, respectively. The inclusion map $\CC\PP^1 \to \CC\PP^\infty$ induces a restriction map, $\underline{\widetilde{ E}}^{\rho}(\CC \PP^\infty)(C_2/C_2) \to \underline{\widetilde{ E}}^{\rho}(\CC \PP^1)(C_2/C_2)$, which is used in the following definition.

\begin{definition}[{\cite{ArakiOrientations}}]
Let $E$ be a $C_2$-equivariant homotopy commutative ring spectrum. A \emph{real orientation} of $E$ is a class $x \in \underline{\widetilde{ E}}^{\rho}(\CC \PP^\infty)(C_2/C_2)$ whose restriction to 
\[
\underline{\widetilde{ E}}^{\rho}(\CC \PP^1)(C_2/C_2) \cong \underline{E}^0(pt)(C_2/C_2)
\]

\noindent is the unit, where $\rho = 1 + \sigma$ is the regular representation. The spectrum $E$ is \emph{real oriented} if it has a real orientation.
\end{definition}

The following corollary builds off of this work of Araki.

\begin{corollary}[{\cite[Corollary 5.18]{HHR}}]
\label{RealOrientedToPolynomial}
 If $E$ is a real oriented ring $C_2$-spectrum, then there is a weak equivalence of ring spectra
\[
MU_\RR \smashy E \simeq E \smashy \underset{i \geq 1}{\bigwedge} S^0[S^{i\rho}]
\]
\noindent where $S^0[S^{i\rho}] = \underset{j \geq 0}{\bigvee} (S^{i\rho})^j$.
\end{corollary}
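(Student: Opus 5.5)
This corollary is \cite[Corollary 5.18]{HHR}. The plan follows the classical non-equivariant argument: a complex orientation of $E$ splits $MU \smashy E$ as a polynomial $E$-algebra. Equivariantly the roles of $\CC\PP^\infty$ and $BU$ are played by their Real versions, and $S^{2i}$ is replaced by $S^{i\rho}$.

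\textbf{Step 1: a Thom isomorphism and a homology basis.} Since $E$ is real oriented, it has a Thom isomorphism for Real vector bundles. Apply this to the universal Real bundles over $BU_\RR(n)$, whose Thom spectra assemble to $MU_\RR$. This gives an $E$-module equivalence
\[
E \smashy MU_\RR \simeq E \smashy BU_{\RR+}.
\]
Next, use the real orientation $x$ together with the Araki cell structure of $\CC\PP^\infty$, whose cells are of the form $D(k\rho)$. From these, extract dual classes $\beta_i \in \underline{E}_{i\rho}(\CC\PP^\infty)(C_2/C_2)$. One checks inductively over the skeleta $\CC\PP^n$, using the cofiber sequences $\CC\PP^{n-1}_+ \to \CC\PP^n_+ \to S^{n\rho}$, that $E \smashy \CC\PP^\infty_+$ is a free $E$-module on these classes.

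\textbf{Step 2: the polynomial description.} Pass from $\CC\PP^\infty$ to $BU_\RR$ via the Real splitting principle, i.e.\ the map $(\CC\PP^\infty)^n \to BU_\RR(n)$. This identifies $\underline{E}_\star(BU_\RR)$ with the polynomial algebra $\underline{E}_\star[b_1, b_2, \ldots]$, where $|b_i| = i\rho$.

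\textbf{Step 3: realizing the generators as ring maps.} Each $b_i$ is represented by a map $S^{i\rho} \to E \smashy MU_\RR$. Extend it freely to an $E$-algebra map $E \smashy S^0[S^{i\rho}] \to E \smashy MU_\RR$. Because $E \smashy MU_\RR$ is homotopy commutative, the smash product over $i \geq 1$ of these maps is a ring map
\[
E \smashy \underset{i \geq 1}{\bigwedge} S^0[S^{i\rho}] \to E \smashy MU_\RR.
\]

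\textbf{Step 4: checking it is an equivalence.} The map is an equivalence because it induces an isomorphism on $\underline{\pi}_\star$ at both orbits $C_2/C_2$ and $C_2/e$. Both sides are free $\underline{E}_\star$-modules on the same monomials in the $b_i$. On the left this uses $\underline{\pi}_\star(E \smashy S^{i\rho j}) = \underline{E}_{\star - ij\rho}$; on the right it is the polynomial description from Step 2.

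\textbf{Main obstacle.} The hardest step is the equivariant Thom isomorphism and the freeness of $E \smashy \CC\PP^\infty_+$. Two points need care. First, the cells $D(k\rho)$ must yield free summands $\Sigma^{k\rho}E$, which requires using the restriction condition on $x$ at every stage. Second, equivalences must be detected on $RO(C_2)$-graded homotopy Mackey functors, not only on underlying homotopy. For this I would follow Araki and Hu--Kriz: work with $C_2/C_2$-level classes and appeal to the equivariant Whitehead theorem.
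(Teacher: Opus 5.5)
The paper gives no proof of its own here; it imports the statement from \cite{HHR}, where it follows from Araki's computation $\underline{E}_\star(MU_\RR)\cong\underline{E}_\star[b_1,b_2,\ldots]$ with $|b_i|=i\rho$ (Real Thom isomorphism plus the $D(k\rho)$-cell structure of $\CC\PP^\infty$), then realizing the $b_i$ by a map out of $E\smashy\bigwedge_{i\geq 1}S^0[S^{i\rho}]$ and checking it on $\underline{\pi}_\star$, which is essentially your outline. The one point you should make explicit is that the Thom isomorphism of Step 1 is multiplicative (the Real orientation corresponds to a homotopy ring map $MU_\RR\to E$): Step 4 needs the monomials in the $b_i$, formed with the product of $E\smashy MU_\RR$, to be a basis, and Step 2 only gives the polynomial structure on $\underline{E}_\star(BU_\RR)$.
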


Let $\uF$ be the $C_p$-Mackey field such that $\uF(C_p/C_p) = k$ and $\uF(C_p/e) = 0$. Let us consider which finite fields $k$ make $H\underline{F}$ a real oriented spectrum. First we need the following definitions and results.

\begin{definition} \label{Concentrated} A $C_p$-spectrum $E$ is \emph{concentrated over} $C_p$ if $\underline{\pi}_\star^{C_p}(E) \neq \underline{0}$. Similarly, $E$ is \emph{concentrated over} $e$ if $\underline{\pi}^H_\star(E)$ is not $\underline{0}$ for any $H \leq C_p$.
\end{definition}

\begin{proposition}[{\cite[Remark 3.7]{Oruc}}]  \label{Determined&Concentrated} 
For any $C_p$-Mackey field $\underline{F}$ such that $\uF(C_p/e) = 0$, the Eilenberg-Mac Lane spectrum $H\underline{F}$ is concentrated over $C_p$.
\end{proposition}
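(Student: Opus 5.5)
The claim to prove is \Cref{Determined&Concentrated}: for a $C_p$-Mackey field $\uF$ with $\uF(C_p/e)=0$, the spectrum $H\uF$ is concentrated over $C_p$. Unwinding \Cref{Concentrated}, this means two things. First, the $C_p/C_p$-level of $\underline{\pi}_\star(H\uF)$ is nonzero. Second, all the information sits at that level, so that every $C_p/e$-level vanishes. I would prove both by reading them off the computation of $\underline{\pi}_\star(H\uF)$ already done in \Cref{Sec: Homotopy Groups}.

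The first step is to apply \Cref{OrucCoHomology} with $E=\SS_{C_p}$ and $\uM=\uF$, which gives \eqref{C_pHomotopyGroups}:
\[
\underline{\pi}_\alpha(H\uF)\cong J_{C_p/C_p}\bigl(H_{\dim(\alpha^{C_p})}(\SS;k)\bigr), \qquad k=\uF(C_p/C_p).
\]
By \Cref{J-Mackey Functor Example}, any Mackey functor of the form $J_{C_p/C_p}(V)$ is zero at $C_p/e$. It follows that $\underline{\pi}_\alpha(H\uF)(C_p/e)=0$ for every $\alpha\in RO(C_p)$. Since $\underline{\pi}_\alpha(H\uF)(C_p/e)=\pi_\alpha(i_e^*H\uF)$, this says that the underlying nonequivariant spectrum of $H\uF$ is contractible. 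This is the statement that the $e$-level carries nothing.

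The second step is nonvanishing at the top level. Take $\alpha=0$. Then $H_0(\SS;k)=k$, so $\underline{\pi}_0(H\uF)(C_p/C_p)=k\neq 0$, because a Mackey field is nonzero and $k$ is a field. This agrees with the explicit description $\underline{\pi}_\alpha(H\uF)\cong\uF$ whenever $\dim(\alpha^{C_p})=0$.

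There is also a conceptual alternative. $H\uF$ is a module over the ring spectrum $\widetilde{E}C_p$ smashed with it, since its underlying spectrum vanishes, and so $H\uF\simeq \widetilde{E}C_p\wedge H\uF$. Its homotopy is then entirely geometric, namely $\Phi^{C_p}H\uF\simeq Hk$ by \Cref{phiHR}.

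I expect the main obstacle to be interpretive rather than mathematical. The definition of "concentrated over $e$" as written in the paper is slightly garbled, so I would state explicitly which two conditions are being verified: nonzero at $C_p/C_p$, and zero at $C_p/e$. Once those are fixed, each follows in one line from \eqref{C_pHomotopyGroups}.
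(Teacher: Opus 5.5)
Your argument is correct. The paper gives no proof of its own here; it simply cites Oru\c{c}. So the useful comparison is with the direct argument.

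Your main step gets the vanishing at $C_p/e$ from \Cref{OrucCoHomology} with $E=\SS_{C_p}$. That is allowed by the order of the paper, but it runs backwards relative to the source. The formula $\underline{H}_\alpha(E;\uM)\cong J_{C_p/C_p}(H_{\dim(\alpha^{C_p})}(\Phi^{C_p}(E);\uM(C_p/C_p)))$ rests on exactly this concentration: one replaces $E\wedge H\uM$ by $\widetilde{E}\mathcal{P}\wedge E\wedge H\uM$ and reads off homotopy from geometric fixed points. So you are using a consequence of the statement to prove it.

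None of that machinery is needed. By the defining property of Eilenberg--Mac Lane spectra, $\pi_n(i_e^*H\uF)=\underline{\pi}_n(H\uF)(C_p/e)$ equals $\uF(C_p/e)=0$ for $n=0$ and $0$ otherwise. Hence the underlying spectrum is contractible. Then $(EC_p)_+\wedge H\uF\simeq *$, since it is built from free cells ${C_p}_+\wedge D^n$, on which only the underlying spectrum matters. The cofiber sequence $(EC_p)_+\wedge H\uF\to H\uF\to \widetilde{E}\mathcal{P}\wedge H\uF$ then gives $H\uF\simeq \widetilde{E}\mathcal{P}\wedge H\uF$.

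That equivalence is the sense of ``concentrated over $C_p$'' from LMS II.9 that is actually used in \Cref{geom=fixed} and \Cref{SwitchEquivariance}. It is also the same as $H\uF$ being geometric in the sense of \Cref{GeometricSpectra}. So you were right to treat vanishing at $C_p/e$ as the substantive condition behind the garbled definition, and your ``conceptual alternative'' is essentially this argument.

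Likewise, nonvanishing at the top level is immediate from $\underline{\pi}_0(H\uF)(C_p/C_p)=\uF(C_p/C_p)\neq 0$, with no need to pass through $H_0(\SS;k)$. Your route buys the whole of $\underline{\pi}_\star(H\uF)$ at once. The direct route is independent of Oru\c{c}'s computation, which is where this remark is needed in the first place.
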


\begin{definition}
    [{\cite[Definition 6.10]{HillSlicePrimer}}] A $C_p$-spectrum $E$ is \emph{geometric} if $\underline{\pi}_\star(E)(C_p/e) = 0$.
\end{definition}

We will use geometric spectra in this section and in \Cref{More Computations}. There is a particularly useful property that geometric spectra have. In order to state this property, we first need the following definition.

\begin{definition}[{\cite[II.2.10]{LMS}}]
Let $N$ be a normal subgroup of $G$. Denote $\P_N$ as the family of proper subgroups of $N$. Let $E\P_N$ denote the \emph{classifying space} of $\P_N$ such that $E\P_N^H$ is empty and for any proper subgroup $H$ of $G$, $E\P_N^H$ is weakly contractible. 

We will always assume that $E\P_N$ is a $G$-CW complex and let $\tilde E\P_N$ be the mapping cone of $E\P_N \to *$. For ease of notation, we will write $\tilde E\P$ for $\tilde E\P_G$.
\end{definition}

\begin{proposition}[{\cite[Proposition 6.11]{HillSlicePrimer}}] \label{GeometricSpectra} Let $E$ be a $C_p$-spectrum. Then $E$ is a geometric spectrum if and only if the natural map
\[
E \to \widetilde{E}\mathcal{P} \smashy E
\]
\noindent is a $C_p$-weak equivalence.
\end{proposition}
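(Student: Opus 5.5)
We prove the two implications separately, using the isotropy separation cofiber sequence
\[
E\mathcal{P}_+ \smashy E \to E \to \widetilde{E}\mathcal{P} \smashy E,
\]
obtained by smashing $E\mathcal{P}_+ \to S^0 \to \widetilde{E}\mathcal{P}$ with $E$. For $G = C_p$ the family $\mathcal{P}$ of proper subgroups is just $\{e\}$, so $E\mathcal{P} \simeq EC_p$ is a free contractible $C_p$-space. The natural map $E \to \widetilde{E}\mathcal{P}\smashy E$ is a $C_p$-weak equivalence exactly when $E\mathcal{P}_+ \smashy E$ is $C_p$-contractible. So the claim reduces to showing that $E\mathcal{P}_+\smashy E \simeq *$ if and only if $\underline{\pi}_\star(E)(C_p/e) = 0$.

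\emph{Forward direction.} Suppose $E \to \widetilde{E}\mathcal{P}\smashy E$ is an equivalence. It suffices to show that $\underline{\pi}_\star(\widetilde{E}\mathcal{P}\smashy E)(C_p/e) = 0$. The $C_p/e$ level is $[S^\alpha, \widetilde{E}\mathcal{P}\smashy E]^e$, which depends only on the underlying nonequivariant spectrum. Underlying, $\widetilde{E}\mathcal{P}$ is contractible, since $(\widetilde{E}\mathcal{P})^e$ is the cofiber of $EC_p \to *$ with $EC_p$ nonequivariantly contractible. Hence $i_e^*(\widetilde{E}\mathcal{P}\smashy E) \simeq *$, and the $C_p/e$ level vanishes in every $RO(C_p)$-degree.

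\emph{Reverse direction.} Suppose $\underline{\pi}_\star(E)(C_p/e)=0$. In particular the integer-graded underlying homotopy $\pi_*(i_e^*E)$ vanishes, so $i_e^*E\simeq *$. We need $E\mathcal{P}_+\smashy E \simeq *$ as a $C_p$-spectrum, and this is the main step. I would argue cellularly: $E\mathcal{P} = EC_p$ admits a $C_p$-CW structure built entirely from free cells $C_p/e_+ \smashy D^n$. For a free cell we have
\[
C_p/e_+ \smashy E \simeq C_p/e_+ \smashy i_e^*E \simeq *
\]
by the untwisting (shearing) isomorphism $G_+\smashy E \cong G_+\smashy i_e^*E$, which we take as standard. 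Induction on skeleta using cofiber sequences then shows that each $EC_p^{(n)}{}_+\smashy E$ is contractible. Passing to the colimit, homotopy groups of a sequential colimit of spectra are the colimits of the homotopy groups, so $EC_p{}_+\smashy E\simeq *$. Equivalently, one can check this on homotopy: the $C_p/e$ level vanishes because $i_e^*E\simeq*$, and the $C_p/C_p$ level of $E\mathcal{P}_+\smashy E$ is the homotopy orbit spectrum $(EC_{p+}\smashy E)^{C_p}\simeq (i_e^*E)_{hC_p}$ by the Adams isomorphism, which vanishes.

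Finally, the long exact sequence of the cofiber sequence gives that $E\to\widetilde{E}\mathcal{P}\smashy E$ induces isomorphisms on $\underline{\pi}_\star$ at both levels, so it is a $C_p$-weak equivalence. The only genuinely nontrivial input is the free-cell or Adams-isomorphism step. The rest is formal bookkeeping with the cofiber sequence and the observation that $\widetilde{E}\mathcal{P}$ is underlying contractible.
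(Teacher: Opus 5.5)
The paper gives no proof of this statement; it quotes it from Hill's slice primer and uses it as a black box, so there is no in-text argument to compare yours against.

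Your proof is correct. It is the standard isotropy-separation argument.

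\emph{Forward direction.} This only uses that $i_e^*\widetilde{E}\mathcal{P}\simeq *$ and that $i_e^*$ is strong monoidal.

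\emph{Converse.} This reduces to showing $E\mathcal{P}_+\wedge E\simeq *$, and your free-cell induction does that. The colimit step is fine: the skeletal inclusions are still $h$-cofibrations after smashing with $E$, so each $\pi_*^H$ commutes with the sequential colimit.

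\emph{Comparing your two routes for the converse.} The cellular one is more robust. It uses only the untwisting isomorphism $G/H_+\wedge E\cong G_+\wedge_H i_H^*E$. It therefore goes through verbatim for an arbitrary finite group $G$, with $\mathcal{P}$ the family of proper subgroups and ``geometric'' meaning $i_H^*E\simeq *$ for every proper $H$. The Adams-isomorphism route needs $E\mathcal{P}$ to be free, i.e.\ $\mathcal{P}=\{e\}$. That holds for $C_p$ but fails for a general $G$.

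\emph{Minor redundancy.} $C_p$-weak equivalences are already detected by integer-graded $\underline{\pi}_*$ at both orbit levels. The $RO(C_p)$-graded bookkeeping in your last step is therefore more than you need, though it is harmless.
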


The following two propositions are classical results which we will use in our next proof.

\begin{proposition}[{\cite[II]{LMS}}] \label{geom=fixed}
For $E$ a $C_p$-spectrum concentrated over $C_p$ we have
\begin{center}
    $\Phi^{C_p}(E) \simeq E^{C_p}.$
\end{center}
\end{proposition}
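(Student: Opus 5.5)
The statement is that for a $C_p$-spectrum $E$ concentrated over $C_p$ we have $\Phi^{C_p}(E) \simeq E^{C_p}$. The plan is to use the isotropy separation cofiber sequence and show that the ``free part'' contributes nothing to categorical fixed points. In the notation above, $\P=\P_{C_p}$ consists only of the trivial subgroup, so $E\P = EC_{p}$ is the free contractible $C_p$-space. We have the cofiber sequence of based $C_p$-spaces $E\P_+ \to S^0 \to \tilde E\P$. Smashing with $E$ and taking categorical fixed points gives a cofiber sequence of spectra
\[
(E\P_+ \smashy E)^{C_p} \to E^{C_p} \to (\tilde E\P \smashy E)^{C_p},
\]
and by the standard definition (cf.\ \cite[II]{LMS}) the last term is $\Phi^{C_p}(E)$. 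It therefore suffices to show that $(E\P_+ \smashy E)^{C_p}$ is contractible.

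I will interpret the hypothesis ``concentrated over $C_p$'' in the sense in which it is used for $H\uF$ in \Cref{Determined&Concentrated}: the homotopy Mackey functors of $E$ vanish at the underlying level $C_p/e$. In other words, $E$ is geometric, and the underlying nonequivariant spectrum $i_e^*E$ is contractible. With this reading the key step is immediate. $E\P_+$ is built from free cells $C_{p+} \smashy D^n$, so $E\P_+ \smashy E$ is built from the cells $C_{p+}\smashy D^n \smashy E \simeq C_{p+}\smashy (D^n \smashy i_e^*E)$. Each of these is $C_p$-contractible, since $[S^\gamma, C_{p+}\smashy X]^{C_p} \cong [S^{\gamma}, X]^e$ by the Wirthmüller isomorphism, and that vanishes when $i_e^*E \simeq *$. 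A colimit/cell induction, using that the homotopy groups commute with the sequential colimit over skeleta, then shows $E\P_+ \smashy E \simeq *$. Hence its fixed points vanish and $E^{C_p} \to \Phi^{C_p}(E)$ is an equivalence.

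An alternative for the key step, which I would cite if the cell argument feels heavy, is \Cref{GeometricSpectra}. Vanishing at $C_p/e$ means $E$ is geometric, so $E \to \tilde E\P\smashy E$ is a $C_p$-equivalence. Applying $(-)^{C_p}$ then gives $E^{C_p}\simeq (\tilde E\P\smashy E)^{C_p} = \Phi^{C_p}E$ directly.

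The main obstacle is definitional rather than technical. \Cref{Concentrated} is stated ambiguously, and the argument needs exactly the vanishing of the underlying homotopy. So the first task is to make explicit that ``concentrated over $C_p$'' means $\underline{\pi}_\star(E)(C_p/e)=0$, as in the application to $H\uF$. After that, the only check is that the equivalence is compatible with ring structures when $E$ is a ring spectrum. This follows because $S^0\to\tilde E\P$ is a map of commutative ring objects (up to homotopy), so the induced map on fixed points is multiplicative.
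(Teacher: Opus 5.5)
Your proof is correct. It is essentially the standard isotropy-separation argument behind the paper's citation of \cite[II]{LMS}; the paper gives no proof of its own. Vanishing of $\underline{\pi}_\star(E)(C_p/e)$ makes $E\P_+ \smashy E$ contractible, so $E \to \tilde E\P \smashy E$ is a $C_p$-equivalence, and applying $(-)^{C_p}$ gives $E^{C_p} \simeq \Phi^{C_p}(E)$. Your reading of \Cref{Concentrated} as $\underline{\pi}_\star(E)(C_p/e)=0$ is the right one, matching LMS and the paper's later uses, and it is genuinely needed: under the literal wording $E=\SS_{C_p}$ would qualify, while $\pi_0\Phi^{C_p}(\SS_{C_p})\cong\ZZ \not\cong A(C_p)\cong\pi_0(\SS_{C_p}^{C_p})$.
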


Let $E$ and $D$ be $C_p$-spectra. We let $[D,E]^{C_p}$ denote the homotopy classes of maps of $C_p$-spectra.

\begin{proposition}[{\cite[II.9.2-6]{LMS}}] \label{SwitchEquivariance}
Let $E$ and $D$ be $C_p$-spectra. If $E$ is concentrated over $C_p$, then
\begin{center}
    $[D,E]^{C_p} \cong [\Phi^{C_p}(D), E^{C_p}]$.
\end{center}

\end{proposition}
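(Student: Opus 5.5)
The plan is to reduce both sides to statements about $\widetilde{E}\mathcal{P}$-local $C_p$-spectra and then use that categorical fixed points identify that local category with ordinary spectra.

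\textbf{Step 1: identify $E$ as geometric.} I read ``concentrated over $C_p$'' as meaning that $E$ is geometric, so that $\underline{\pi}_\star(E)(C_p/e)=0$. This is the situation of \Cref{Determined&Concentrated}. By \Cref{GeometricSpectra}, $E \simeq \widetilde{E}\mathcal{P}\smashy E$.

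\textbf{Step 2: replace $D$ by $\widetilde{E}\mathcal{P}\smashy D$.} Apply $[-,E]^{C_p}$ to the isotropy separation cofiber sequence
\[
E\mathcal{P}_+\smashy D \to D \to \widetilde{E}\mathcal{P}\smashy D.
\]
Since $\mathcal{P}=\{e\}$, the spectrum $E\mathcal{P}_+ = EC_{p+}$ is built from free cells $C_p/e_+\smashy D^n$. For each such cell we have $[C_p/e_+\smashy X, E]^{C_p}\cong [X, E]^{e}$ for any $X$. This vanishes because the underlying spectrum of $E$ is contractible, since all its underlying homotopy groups are zero. A cell induction, together with a $\lim^1$ argument for the skeletal filtration (the $\lim^1$ terms also vanish since each stage is zero), gives $[E\mathcal{P}_+\smashy D, E]^{C_p}=0$ in every degree. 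The long exact sequence then yields
\[
[D,E]^{C_p}\cong[\widetilde{E}\mathcal{P}\smashy D,\ \widetilde{E}\mathcal{P}\smashy E]^{C_p}.
\]

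\textbf{Step 3: pass to fixed points.} The claim is that on $\widetilde{E}\mathcal{P}$-local $C_p$-spectra, the categorical fixed point functor $(-)^{C_p}$ induces bijections on homotopy classes of maps. Granting this, we get
\[
[D,E]^{C_p}\cong[(\widetilde{E}\mathcal{P}\smashy D)^{C_p},(\widetilde{E}\mathcal{P}\smashy E)^{C_p}] = [\Phi^{C_p}D,\ \Phi^{C_p}E].
\]
Finally, \Cref{geom=fixed} gives $\Phi^{C_p}E\simeq E^{C_p}$, which finishes the proof.

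\textbf{Main obstacle: the fixed-point claim in Step 3.} To prove it, I would exhibit an inverse functor $Y\mapsto \widetilde{E}\mathcal{P}\smashy \epsilon^*Y$, where $\epsilon^*$ is inflation along $C_p\to e$. Then I would check that the unit and counit are equivalences.

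For the unit $Y\to(\widetilde{E}\mathcal{P}\smashy\epsilon^*Y)^{C_p}=\Phi^{C_p}\epsilon^*Y\simeq Y$, the identification follows from the formula $\Phi^{C_p}\Sigma^\infty X\simeq\Sigma^\infty X^{C_p}$ together with its extension to inflated spectra.

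For the counit $\widetilde{E}\mathcal{P}\smashy\epsilon^*(Z^{C_p})\to Z$ with $Z$ local, I would test on underlying spectra and on geometric fixed points, which jointly detect equivalences of $C_p$-spectra. Underlying, both sides vanish. On geometric fixed points, the map becomes $Z^{C_p}\to\Phi^{C_p}Z\simeq Z^{C_p}$, using \Cref{geom=fixed} for the local spectrum $Z$.

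This is exactly the content cited from \cite[II.9]{LMS}. I expect the bookkeeping there, namely the point-set models for the change of universe and the cofibrancy of $D$, to be the main technical care needed.
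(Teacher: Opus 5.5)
Your proposal is correct and is essentially the argument of \cite[II.9]{LMS}, which the paper cites in place of giving its own proof: first kill $E\mathcal{P}_+\smashy D$ using that $E$ is $\widetilde{E}\mathcal{P}$-local, then use that $(-)^{C_p}$ is an equivalence from $\widetilde{E}\mathcal{P}$-local $C_p$-spectra to spectra, with inverse $\widetilde{E}\mathcal{P}\smashy\epsilon^*(-)$. Your reading of ``concentrated over $C_p$'' as $E\simeq\widetilde{E}\mathcal{P}\smashy E$ (i.e.\ geometric) is the intended LMS meaning, and it is the reading you need; the condition as literally worded in \Cref{Concentrated} would not be enough for the statement.
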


We are now ready to prove the following proposition.

\begin{proposition} \label{RealOrientedField}
Let $\underline{F}$ be the $C_2$-Mackey field where $\underline{F}(C_2/C_2) = k$ is a finite field and $\uF(C_2/e) = 0$. Then, $H\underline{F}$ is real oriented if and only if $k$ is characteristic 2.
\end{proposition}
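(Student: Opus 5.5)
The plan is to reduce the question of a real orientation for $H\uF$ to a non-equivariant cohomology computation on the geometric fixed points. Since $\uF(C_2/e)=0$, \Cref{Determined&Concentrated} says $H\uF$ is concentrated over $C_2$. \Cref{SwitchEquivariance} and \Cref{geom=fixed} (or directly \Cref{OrucCoHomology}) then identify the reduced cohomology groups in the definition of a real orientation:
\[
\underline{\widetilde{H\uF}}^{\rho}(X)(C_2/C_2) \cong \widetilde{H}^{\dim(\rho^{C_2})}(X^{C_2};k) = \widetilde{H}^{1}(X^{C_2};k)
\]
for $X = \CC\PP^\infty$ and $X = \CC\PP^1$. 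Here $\Phi^{C_2}(H\uF)\simeq Hk$ by \Cref{phiHR}, and $\Phi^{C_2}\Sigma^\infty X \simeq \Sigma^\infty X^{C_2}$.

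The fixed points are $(\CC\PP^\infty)^{C_2} = \RR\PP^\infty$ and $(\CC\PP^1)^{C_2}=\RR\PP^1\cong S^1$. The restriction map in the definition therefore becomes the map
\[
\widetilde{H}^1(\RR\PP^\infty;k)\to \widetilde{H}^1(\RR\PP^1;k)\cong k \cong \underline{H\uF}^0(pt)(C_2/C_2)
\]
induced by the inclusion $\RR\PP^1\hookrightarrow\RR\PP^\infty$. One should check that the identification of the target with $k$ matches the unit. This follows because $S^\rho$ has fixed points $S^1$, so the suspension isomorphism on the equivariant side corresponds to the ordinary suspension isomorphism on fixed points. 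Thus $H\uF$ is real oriented exactly when the unit $1\in k\cong \widetilde H^1(S^1;k)$ lies in the image of this restriction map.

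Next I compute the map via cellular cohomology. The inclusion $\RR\PP^1\to\RR\PP^\infty$ is the inclusion of the $1$-skeleton, and the cellular cochain complex of $\RR\PP^\infty$ with $k$ coefficients has differential $C^1\to C^2$ given by multiplication by $2$. So $\widetilde H^1(\RR\PP^\infty;k)=\ker(2\colon k\to k)$, and the restriction to $\widetilde H^1(\RR\PP^1;k)=k$ is the inclusion of this kernel. If $\operatorname{char} k = 2$, the kernel is all of $k$, the map is an isomorphism, and the preimage of $1$ gives a real orientation. If $\operatorname{char} k\neq 2$, then $2$ is invertible, $\widetilde H^1(\RR\PP^\infty;k)=0$, and no class restricts to the unit, so $H\uF$ is not real oriented. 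Finiteness of $k$ is not really used beyond $k$ being a field.

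The main obstacle is the first step: justifying rigorously that the $C_2/C_2$ level of the equivariant reduced cohomology of the based $C_2$-spaces $\CC\PP^n$ equals ordinary cohomology of the fixed points in degree $\dim(\rho^{C_2})=1$, compatibly with the restriction map and the unit. I would invoke \Cref{OrucCoHomology} for cohomology with $E=\Sigma^\infty X$ and use its naturality in $E$. I would then argue that the unit is preserved because the identification comes from the ring map $H\uF\to \Phi^{C_2}H\uF\simeq Hk$.
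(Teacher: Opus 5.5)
Your proposal is correct and follows essentially the paper's route. Both arguments use that $H\uF$ is concentrated over $C_2$ and that $\Phi^{C_2}H\uF\simeq Hk$ to identify the restriction map with $\widetilde H^1(\RR\PP^\infty;k)\to \widetilde H^1(\RR\PP^1;k)$, and then compute that map: you via the cellular differential $2\colon k\to k$, the paper via universal coefficients, $H^1(\RR\PP^\infty;k)\cong\Hom(\FF_2,k)$. Your explicit point that surjectivity of the restriction is what gives a preimage of the unit is, if anything, slightly more careful than the paper's wording.
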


\begin{proof}
For ease of notation let $E \coloneqq H \underline{F}$. Using \Cref{EquivariantHomology}, we have
\begin{align*}
    \underline{ \widetilde{ E}}^{\rho}(\CC \PP^\infty)(C_2/C_2) & \cong [ \Sigma^\infty \CC\PP^\infty, \Sigma^{\rho}E]^{C_2}(C_2/C_2) \\
    & \cong [\Sigma^\infty(C_2/C_2)_+ \smashy \Sigma^\infty \CC\PP^\infty, \Sigma^{\rho}E]^{C_2} \\
    & \cong [ \Sigma^\infty \CC\PP^\infty, \Sigma^{\rho}E]^{C_2}.
\end{align*}
By \Cref{Determined&Concentrated}, $E$ is concentrated over $C_2$, which means that $\underline{\pi}_\star^{C_2}(E) \neq 0$. This implies that $S^\rho \smashy E$ is also concentrated over $C_2$. Using \Cref{geom=fixed}, \Cref{SwitchEquivariance}, and the properties of geometric fixed points, we can continue our calculation in the following way:
\begin{align*}
    [ \Sigma^\infty \CC\PP^\infty, \Sigma^{\rho}E]^{C_2} & \cong [ \Phi^{C_2}(\Sigma^\infty \CC\PP^\infty), (S^\rho \smashy E)^{C_2}]^{e} \\
     & \cong [ \Sigma^\infty ((\CC\PP^\infty)^{C_2}), \Phi^{C_2}(S^\rho \smashy E) ]^e \\
     & \cong [ \Sigma^\infty \RR\PP^\infty, S^1 \smashy \Phi^{C_2}(E) ]^e.
\end{align*}
\noindent Since $[\Sigma^\infty \RR\PP^\infty, S^1 \smashy \Phi^{C_2}(E)]^e$ is non-equivariant, we can use \Cref{phiHR} to state
\begin{align*}
    [\Sigma^\infty \RR\PP^\infty, S^1 \smashy \Phi^{C_2}(E)]^e & \cong [\Sigma^\infty \RR\PP^\infty, S^1 \smashy Hk]^e \\
    & \cong \widetilde{ H}^1(\RR\PP^\infty ; k).
\end{align*}
\noindent By a similar argument, we have that
\begin{center}
    $\underline{ \widetilde{ E}}^{\rho}(\CC \PP^1)(C_2/C_2) \cong \widetilde{ H}^1(\RR\PP^1 ; k)$.
\end{center}

It is a classical computation that $H^1(\RR\PP^1 ; k) = k$. Using the Universal Coefficient Theorem we have that $H^1(\RR\PP^\infty; k) \cong \Hom(\FF_2, k)$ is $k$ when the characteristic of $k$ is $2$, and $0$ else. If $k$ is not characteristic $2$ then there exists no $\bar x \in H^1(\RR\PP^\infty ; k)$ that maps to the unit in $k$, so $E$ would not be real oriented.

Therefore let us assume that $k$ is characteristic $2$. Recall that the inclusion map $\CC\PP^1 \to \CC\PP^\infty$ induces a restriction map, $f\colon \underline{\widetilde{ E}}^{\rho}(\CC \PP^\infty)(C_2/C_2) \to \underline{\widetilde{ E}}^{\rho}(\CC \PP^1)(C_2/C_2)$. Similarly, the inclusion $\RR\PP^1 \to \RR\PP^\infty$ induces a restriction map, $h\colon \underline{\widetilde{ E}}^{\rho}(\RR \PP^\infty)(C_2/C_2) \to \underline{\widetilde{ E}}^{\rho}(\RR \PP^1)(C_2/C_2)$. These maps and the above isomorphisms give the following commutative diagram by naturality:

% https://q.uiver.app/#q=WzAsNCxbMCwwLCJbXFxTaWdtYV5cXGluZnR5IFxcQ0MgXFxQUF5cXGluZnR5LCBcXFNpZ21hXlxccmhvIEVdXntDXzJ9ICJdLFsxLDAsIltcXFNpZ21hXlxcaW5mdHkgXFxSUiBcXFBQXlxcaW5mdHksIFxcU2lnbWEgSGtdIl0sWzAsMSwiW1xcU2lnbWFeXFxpbmZ0eSBcXENDIFxcUFBeMSwgXFxTaWdtYV5cXHJobyBFXV57Q18yfSJdLFsxLDEsIltcXFNpZ21hXlxcaW5mdHkgXFxSUiBcXFBQXjEsIFxcU2lnbWEgSGtdLiJdLFswLDIsImYiLDJdLFsxLDMsImgiXSxbMCwxLCJcXGNvbmciLDEseyJzdHlsZSI6eyJib2R5Ijp7Im5hbWUiOiJub25lIn0sImhlYWQiOnsibmFtZSI6Im5vbmUifX19XSxbMiwzLCJcXGNvbmciLDEseyJzdHlsZSI6eyJib2R5Ijp7Im5hbWUiOiJub25lIn0sImhlYWQiOnsibmFtZSI6Im5vbmUifX19XV0=
\[\begin{tikzcd}
	{[\Sigma^\infty \CC \PP^\infty, \Sigma^\rho E]^{C_2} } & {[\Sigma^\infty \RR \PP^\infty, \Sigma Hk]^e \cong k} \\
	{[\Sigma^\infty \CC \PP^1, \Sigma^\rho E]^{C_2}} & {[\Sigma^\infty \RR \PP^1, \Sigma Hk]^e \cong k.}
	\arrow["\cong"{description}, draw=none, from=1-1, to=1-2]
	\arrow["f"', from=1-1, to=2-1]
	\arrow["h", from=1-2, to=2-2]
	\arrow["\cong"{description}, draw=none, from=2-1, to=2-2]
\end{tikzcd}\]
\noindent Since this diagram commutes and $h$ is an isomorphism, then so is $f$. Therefore $f$ sends the unit to the unit, meaning $E$ is real oriented.
\end{proof}

\subsection{The computation}

Now that we know some examples of $C_2$-Mackey fields $\uF$ which have an Eilenberg-Mac Lane spectrum that is real oriented, we can use \Cref{RealOrientedToPolynomial} to obtain the following result.

\begin{lemma} \label{MUR}
Let $\underline{F}$ be a $C_2$-Mackey field such that $\uF(C_2/e) = 0$. If $H\underline{F}$ is real oriented, then $H\underline{F}_\star(MU_\RR)$ is a free $H\underline{F}_\star$-module, that is,
\begin{center}
    $H\underline{F}_\star(MU_\RR) \cong H\underline{F}_\star [b_1, b_2, \ldots ]$
\end{center}
\noindent where $\deg(b_i) = i\rho$.

\end{lemma}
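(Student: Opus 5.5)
The plan is to apply \Cref{RealOrientedToPolynomial} with $E = H\uF$ and then take $\underline{\pi}_\star$ of both sides. Since $H\uF$ is real oriented by hypothesis, we obtain an equivalence of ring $C_2$-spectra
\[
MU_\RR \smashy H\uF \simeq H\uF \smashy \bigwedge_{i\geq 1} S^0[S^{i\rho}].
\]
By \Cref{EquivariantHomology}, $H\uF_\star(MU_\RR) = \underline{\pi}_\star(MU_\RR \smashy H\uF)$. This reduces the problem to computing the $RO(C_2)$-graded homotopy Mackey functors of the right-hand side.

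For the right-hand side, I will expand the infinite smash product. Smash products distribute over wedges, so $\bigwedge_{i\geq1} S^0[S^{i\rho}]$ is the wedge, over all finitely supported exponent sequences $(j_1,j_2,\ldots)$, of the representation spheres $S^{\sum_i j_i i\rho}$. Here I interpret the infinite smash as a colimit of finite smashes, and homotopy commutes with this filtered colimit and with wedges. Hence
\[
\underline{\pi}_\star\Big(H\uF \smashy \bigvee_{(j_i)} S^{\sum j_i i \rho}\Big) \cong \bigoplus_{(j_i)} \underline{\pi}_\star(\Sigma^{\sum j_i i\rho} H\uF) \cong \bigoplus_{(j_i)} H\uF_{\star - \sum j_i i\rho}.
\]
Each summand is a free rank-one $H\uF_\star$-module, shifted by a multiple of $\rho$. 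I label the generator of the summand indexed by $(j_i)$ as the monomial $b_1^{j_1}b_2^{j_2}\cdots$, where $b_i$ is the class of the bottom cell of $S^{i\rho}$ in degree $i\rho$. This gives freeness with the stated basis.

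To upgrade the module statement to the polynomial ring structure, I use that the equivalence is one of ring spectra. The multiplication on $S^0[S^{i\rho}]$ is the free monoid structure $(S^{i\rho})^{j}\smashy(S^{i\rho})^{j'}\to (S^{i\rho})^{j+j'}$, so products of the generators $b_i$ are the monomials above. One then has to check that these classes strictly commute with each other and with $H\uF_\star$, rather than only up to the rotating isomorphism of \Cref{rotating isomorphism}.

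I expect this commutativity check to be the main subtle step. The switch map on $S^{i\rho}\smashy S^{i'\rho}$ gives an element of $A(C_2)$. Acting on $H\uF_\star$, this element is detected only at $C_2/C_2$, since $\uF(C_2/e)=0$. There it acts through the degree of the fixed-point switch, which is $(-1)^{ii'}$, and this is $\pm1$ in $k$. Since real orientability forces characteristic $2$ (\Cref{RealOrientedField}), that sign is $1$, so the structure is genuinely polynomial. Even if one does not need commutativity, the freeness conclusion already follows from the second step.
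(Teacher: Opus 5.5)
Your proposal is correct and follows the paper's proof: apply \Cref{RealOrientedToPolynomial} with $E = H\uF$ and read off $\underline{\pi}_\star$ of $H\uF \smashy \bigwedge_{i\geq 1} S^0[S^{i\rho}]$. You also fill in details the paper leaves implicit: the decomposition into shifted copies of $H\uF_\star$, and the check that the rotating isomorphism acts through the fixed-point sign $(-1)^{ii'}$, which is trivial in characteristic $2$. These are fine, with one small caveat: \Cref{RealOrientedField} is stated only for finite $k$, although its proof of the characteristic-$2$ constraint works for any field.
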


\begin{proof}
Since $H\underline{F}$ is real oriented, we can use \Cref{RealOrientedToPolynomial} to show that
\begin{center}
    $MU_\RR \smashy H\underline{F} \simeq H\underline{F} \smashy \underset{i \geq 1}{\bigwedge} S^0[S^{i\rho}]$
\end{center}
\noindent which gives an isomorphism of $RO(C_2)$-graded Green functors
\begin{center}
$\underline{\pi}_\star(MU_\RR \smashy H\underline{F}) \cong \underline{\pi}_\star(H\underline{F})[b_1, b_2, \ldots]$
\end{center}
\noindent for $\deg(b_i) = i\rho$. \end{proof}

There is a classical standard argument which is a result of Cartan and Eilenberg's Theorem X.6.1 in \cite{Cartan-Eilenberg}. The argument is that if $k$ is a commutative ring, and $A$ is a commutative $k$-algebra that is flat as a module over $k$, then
\[
\Tor_*^{A^e}(A,A) \cong A \otimes_k \Tor_*^A(k,k).
\]

\noindent Using the homological algebra from \cite{LM} we can extend Cartan and Eilenberg's argument to the equivariant setting. That is, if $\uR_\star$ is a commutative $G$-Green functor, and $\uM_\star$ is a commutative $\uR_\star$-algebra that is flat as a module over $\uR_\star$, then
\[
\underline{\Tor}_{*,\star}^{\uM_\star \, \square_{\uR_\star} \, \uM_\star^{\text{op}}}(\uM_\star, \uM_\star) \cong \uM_\star \, \square_{\uR_\star} \, \underline{\Tor}_{*,\star}^{\uM_\star}(\uR_\star, \uR_\star).
\]

\noindent We will use this in the following calculation.

Let $\uF$ be the $C_2$-Mackey field where $\underline{F}(C_2/C_2) = \FF_2$, and $\uF(C_2/e) = 0$, then the results in \Cref{the trivial action} and \Cref{RealOrientedField} demonstrate that  $H\underline{F}$ has a trivial $C_2$-action and is real oriented.

\begin{theorem}\label{thm:MUR computation}

For $\underline{F}$ the $C_2$-Mackey field where $\underline{F}(C_2/C_2) = \FF_{2}$, and $\uF(C_2/e) = 0$,
\[
\underline{H}_\star(\THH_{C_2}(MU_\RR) ; \underline{F}) \cong H\underline{F}_\star [b_1, b_2, \ldots] \, \square_{H\underline{F}_{\star}} \, \mathlarger{\mathlarger{\Lambda}}_{H\underline{F}_{\star}}(z_1, z_2, \ldots) 
\]
\noindent as an $H\underline{F}_{\star}$-module. Here $|b_i| = i\rho$ and $|z_i| = 1+i\rho$. 

\end{theorem}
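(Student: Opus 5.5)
The plan is to run the twisted B\"okstedt spectral sequence of \Cref{EquivBokSS} with $A = MU_\RR$ and $E = H\uF$, where $\uF(C_2/C_2)=\FF_2$ and $\uF(C_2/e)=0$. Both hypotheses of that theorem are already in hand. By \Cref{the trivial action}, the generator $\gamma$ of $C_2$ acts trivially on $H\uF$. By \Cref{RealOrientedField} and \Cref{MUR}, $H\uF_\star(MU_\RR) \cong H\uF_\star[b_1,b_2,\ldots]$ is free, hence flat, over $H\uF_\star$. So the spectral sequence exists, with
\[
E^2_{s,\star} = \underline{\HH}_s^{H\uF_\star,C_2}(H\uF_\star[b_1,b_2,\ldots]) \Rightarrow \underline{H}_{s+\star}(\THH_{C_2}(MU_\RR);\uF).
\]

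To compute $E^2$, I first use \Cref{HH and Tor} to rewrite it as $\underline{\Tor}^{\uM_\star \square \uM_\star^{\text{op}}}_{*,\star}(\uM_\star, {}^\gamma\uM_\star)$, where $\uM_\star = H\uF_\star[b_i]$. Next I need to know that the twist by $\gamma$ is trivial on $\uM_\star$. Each $\underline{\pi}_\alpha$ that appears is either $\uF$ or $\underline{0}$, so the Weyl action is trivial. Moreover, the $b_i$ come from the $C_2$-spectrum $MU_\RR$, whose $\gamma$-action is the left action $\ell_\gamma$, and I would check that this induces the identity on $H\uF_\star(MU_\RR)$; the $C_2/e$ level is zero, so only the fixed level matters. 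Granting this, ${}^\gamma\uM_\star \cong \uM_\star$ as bimodules. The equivariant Cartan--Eilenberg isomorphism stated above then gives
\[
E^2 \cong \uM_\star \square_{H\uF_\star} \underline{\Tor}^{\uM_\star}_{*,\star}(H\uF_\star, H\uF_\star).
\]
For a polynomial Green functor over $H\uF_\star$, I would compute the Tor term with a Koszul resolution. This is the box product over $H\uF_\star$ of the one-variable complexes $\Sigma^{i\rho}\uM_\star \xrightarrow{b_i} \uM_\star$, and it should be exact because each $b_i$ is a nonzerodivisor in a free module. The result is the exterior algebra $\Lambda_{H\uF_\star}(\sigma b_1,\sigma b_2,\ldots)$ with $\sigma b_i$ in bidegree $(1, i\rho)$. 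Writing $z_i = \sigma b_i$, so that $|z_i| = 1+i\rho$, gives the claimed form of $E^2$.

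It remains to show that the spectral sequence collapses at $E^2$ and that there are no additive extension problems. The algebra generators sit in filtrations $0$ and $1$. By the $\underline{E}_\star$-algebra structure on the spectral sequence proved in \cite{AGHKK}, the differentials are derivations, so it suffices to check them on $b_i$ and $z_i$. The $b_i$ lie on the $0$-line and are permanent cycles. A differential $d^r$ with $r\ge 2$ on $z_i$ would land in filtration $1-r<0$, which is impossible. Hence $E^2 = E^\infty$. For extensions, $E^\infty$ is a free $H\uF_\star$-module. Since $H\uF_\star$ is an $RO(C_2)$-graded Mackey field (\Cref{CpGradedMackeyField}), lifting a basis splits the filtration as $H\uF_\star$-modules, giving the additive isomorphism.

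The step I expect to be the main obstacle is the middle one: justifying ${}^\gamma\uM_\star\cong\uM_\star$, and checking that the Koszul complex argument works over the $RO(C_2)$-graded Green functor $H\uF_\star$ using the homological algebra of \cite{LM}. My first attempt would reduce both to the $C_2/C_2$ level. That level is an ordinary $RO(C_2)$-graded $\FF_2$-algebra, since the $C_2/e$ level vanishes and box products reduce to tensor products over $\FF_2$ there by \Cref{BoxProduct}. Both claims then become classical statements.
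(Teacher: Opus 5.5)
Your proposal is correct and follows essentially the same route as the paper: you run the twisted B\"okstedt spectral sequence using the trivial $\gamma$-action on $H\uF$ and the freeness from \Cref{MUR}, then identify $E^2$ via \Cref{HH and Tor}, removal of the twist, the equivariant Cartan--Eilenberg isomorphism and a Koszul computation, and finally get collapse from the multiplicative structure of \cite{AGHKK}, since the algebra generators sit in filtrations $0$ and $1$. Two points the paper passes over quickly are filled in by your version: you justify ${}^\gamma\uM_\star\cong\uM_\star$ by reducing to the $C_2/C_2$ level rather than only citing the trivial action on $H\uF$, and you resolve additive extensions by splitting over the graded Mackey field $H\uF_\star$.
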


\begin{proof}

\Cref{RealOrientedField} shows that $H\underline{F}$ has a trivial $C_2$-action. In order to use the B\"okstedt spectral sequence, we need to show that $H\underline{F}_\star(MU_\RR)$ is flat over $H\underline{F}_\star$. The following isomorphism of $RO(C_2)$-graded Green functors is given by \Cref{MUR}:
\[  \underline{\pi}_\star(H\underline{F} \smashy  MU_\RR) \cong H\underline{F}_{\star}[ b_1, b_2, \ldots ]
\]
\noindent where $\deg(b_i) = i\rho$. Therefore $H\underline{F}_\star(MU_\RR)$ is flat over $H\underline{F}_\star$. Since $H\uF$ has a trivial $C_2$-action, then ${}^\gamma H\underline{F}_\star(MU_\RR)$ has the same left $H\underline{F}_\star$-module action as $H\underline{F}_\star(MU_\RR)$. Since the appropriate conditions hold, we can use the twisted B\"okstedt spectral sequence (see \Cref{EquivBokSS}):
\[
    E_{s,\star}^2 = \underline{\HH}_s^{H\underline{F}_{\star},C_2}(\underline{H}_\star(MU_\RR ; \underline{F})) \Rightarrow \underline{H}_{s + \star}(i_{C_2}^* \THH_{C_2}(MU_\RR) ; \underline{F}).
\]
\noindent The $E_2$-term is 
\begin{align*}
    E_{s,\star}^2 & = \underline{\HH}_s^{H\underline{F}_{\star},C_2}(\underline{H}_\star(MU_\RR ; \underline{F})) \\
    & \cong \underline{\HH}_s^{H\underline{F}_{ \star},C_2}(H\underline{F}_{\star} [b_1, b_2, \ldots]) & & \text{\Cref{MUR}} \\
    & \cong \underline{\Tor}_{*,\star}^{H\underline{F}_{\star} [b_1, \ldots]^e}(H\underline{F}_{\star} [b_1, \ldots], {}^\gamma H\underline{F}_{\star} [b_1, \ldots]) & & \text{\Cref{HH and Tor}} \\
    & \cong \underline{\Tor}_{*,\star}^{H\underline{F}_{\star} [b_1, \ldots]^e}(H\underline{F}_{\star} [b_1, \ldots], H\underline{F}_{\star} [b_1, \ldots]) \\
     & \cong H\underline{F}_{\star} [b_1, \ldots] \, \square_{H\underline{F}_{\star}} \, \underline{\Tor}_{*,\star}^{H\underline{F}_{\star} [b_1, \ldots]}(H\underline{F}_{\star}, H\underline{F}_{\star}) & & \text{\cite{Cartan-Eilenberg}} \\
   & \cong H\underline{F}_\star [b_1, b_2, \ldots] \, \square_{H\underline{F}_{\star}} \, \mathlarger{\mathlarger{\Lambda}}_{H\underline{F}_{\star}}(z_1, z_2, \ldots)
\end{align*}
\noindent the last isomorphism is a similar computation to the classical one using the Koszul complex, where $\deg(b_i) = (0,i\rho)$ and $\deg(z_i) = (1,i\rho)$.

Recall that $d^r\colon E^r_{s,\alpha} \to E^r_{s-r, \alpha + r - 1}$. Our spectrum $MU_\RR$ is commutative, so by \cite[Proposition 4.2.8]{AGHKK} we can view this as a spectral sequence of $H\underline{F}_\star$-algebras. Consider the differential $d^2$. We know that all the $d^2$ differentials are determined by what the differential does on the generators of the $E^2$ page, thus since the only generators are in the columns where $s = 0, 1$ then all of the differentials on the $E^2$-page are zero and the spectral sequence collapses.
\end{proof}

\section{Algebraic structure on twisted THH}\label{Sec:Algebraic structure}

In this section we will demonstrate that for $R$ a commutative $C_p$-ring spectrum, $\THH_{C_p}(R)$ is a commutative $R$-algebra for any prime $p$ in the category of $C_p$-spectra. We will then discuss the lack of coproduct structure on $\THH_{C_p}(R)$. First, we must discuss the simplicial model of twisted THH.

\subsection{Simplicial model of twisted THH}\label{SubSec: simplicial structures}

In this section, we will explore a simplicial model for $C_p$-twisted THH. Let $R$ be a commutative ring $C_p$-spectrum and recall that, by convention, our spectra are orthogonal spectra. For this introduction we will suppress the change of universe functors for ease of notation. We start this section by constructing $C_p$-equivariant simplicial models of the circle $pS^1_\bullet$. \Cref{pS^1 gives cyclic bar} tells us that this simplicial model of the circle gives the equivalence $\THH_{C_p}(R) \simeq R \otimes_{C_p} S^1$ from \cite{twTHH}. To define the product structure, we define $m\THH_{C_p}(R)$ to be $|R \otimes_{C_p} mpS^1_\bullet|$, and demonstrate that $m\THH_{C_p}(R)$ is $C_p$-weakly equivalent to $\THH_{C_p}(R)$ in \Cref{2THHtoTHH}. 

Recall the following simplicial model of $S^1$ from \cite{Loday}, where $C_{n+1} = \{ 1, \gamma, \ldots, \gamma^{n} \}$ indicates the number of elements on each level.
% https://q.uiver.app/#q=WzAsNyxbMSwxLCJDXzIiXSxbMiwxXSxbMCwxXSxbMSwwLCJcXG92ZXJzZXR7XFx1bmRlcnNldHt9e1xcdmRvdHN9fXtDXzN9Il0sWzIsMF0sWzAsMF0sWzEsMiwiZSJdLFswLDMsInNfMCIsMSx7Im9mZnNldCI6LTV9XSxbMywwLCJkXzEiLDFdLFswLDMsInNfMSIsMSx7Im9mZnNldCI6NX1dLFs0LDEsImRfMiIsMSx7ImxhYmVsX3Bvc2l0aW9uIjo0MCwib2Zmc2V0Ijo1LCJzaG9ydGVuIjp7InNvdXJjZSI6MTAsInRhcmdldCI6NDB9fV0sWzUsMiwiZF8wIiwxLHsibGFiZWxfcG9zaXRpb24iOjQwLCJvZmZzZXQiOi01LCJzaG9ydGVuIjp7InNvdXJjZSI6MTAsInRhcmdldCI6NDB9fV0sWzAsNiwiZF8xIiwxLHsib2Zmc2V0IjotNX1dLFs2LDAsInNfMCIsMV0sWzAsNiwiZF8wIiwxLHsib2Zmc2V0Ijo1fV1d
\[\begin{tikzcd}
	{} & {\overset{\underset{}{\vdots}}{C_3}} & {} \\
	{} & {C_2} & {} \\
	& e
	\arrow["{d_0}"{description, pos=0.4}, shift left=5, shorten <=1pt, shorten >=6pt, from=1-1, to=2-1]
	\arrow["{d_1}"{description}, from=1-2, to=2-2]
	\arrow["{d_2}"{description, pos=0.4}, shift right=5, shorten <=1pt, shorten >=6pt, from=1-3, to=2-3]
	\arrow["{s_0}"{description}, shift left=5, from=2-2, to=1-2]
	\arrow["{s_1}"{description}, shift right=5, from=2-2, to=1-2]
	\arrow["{d_1}"{description}, shift left=5, from=2-2, to=3-2]
	\arrow["{d_0}"{description}, shift right=5, from=2-2, to=3-2]
	\arrow["{s_0}"{description}, from=3-2, to=2-2]
\end{tikzcd}\]
\noindent Let us call this model $S^1_\bullet$. The face and degeneracy maps are as follows:

\begin{center}
\begin{tabular}{lcl}
    $d_i(\gamma^{j}) = \begin{cases} \gamma^j & j \leq i < n \\
    \gamma^{j - 1} & j > i
    \end{cases}$ & & $s_i(\gamma^{j}) = \begin{cases} \gamma^{j} & j \leq i \\
    \gamma^{j+1} & j > i.
    \end{cases}$ \\
    $d_n(\gamma^{j}) = \begin{cases} \gamma^{j} & j < n \\
    1 & j = n
    \end{cases}$
\end{tabular}
\end{center}

\noindent There are also maps $t\colon C_{n + 1} \to C_{n + 1}$ such that $t(\gamma^{j})$ is $\gamma^{j+1}$ for $j < n$ and $1$ for $j = n$. It is notable that $d_n = d_0 \circ t\colon C_{n+1} \to C_{n+1}$ for all $n$.

We will need additional models of the circle for our proofs and will construct these using the simplicial edgewise subdivision functor defined by B\"okstedt, Hsiang, and Madsen in \cite{Bokstedt-Hsiang-Madsen(Spaces)}. The simplicial $r$-fold edgewise subdivision functor, $sd_r(-)$, is defined so that for a simplicial object $X_\bullet$,

\begin{center}
    \begin{tabular}{l}
    $sd_r(X_\bullet)_n = X_{(n + 1)r - 1}$
    \end{tabular}
\end{center}

\noindent with face and degeneracy maps $\bar d_i$ and $\bar s_i$ defined by

\begin{center}
    \begin{tabular}{l}
    $\bar d_i = d_i \circ d_{i + n + 1} \circ \ldots \circ d_{i + (r - 1)(n+1)}$ \\
    $\bar s_i = s_{i + (r-1)(n+2)} \circ \ldots \circ s_{i + (n + 2)} \circ s_i$
    \end{tabular}
\end{center}

\noindent for $d_i$ and $s_i$ the face and degeneracy maps of the simplicial object $X_\bullet$.

\begin{remark}\label{dn=d0t}
    Recall in $S^1_\bullet$ there is an equality of maps $d_n = d_0 \circ t\colon C_{n+1} \to C_{n+1}$. It is also true that in  $sd_r(S^1_\bullet)$, $\bar d_n = \bar d_0 \circ t$. To see this, consider that $\bar d_0 = d_0 \circ d_{n+1} \circ \ldots \circ d_{(r-1)n + r - 1}$ and using the simplicial relation $d_i \circ d_j = d_{j-1} \circ d_i$ (for $i < j$), we can move $d_0$ to the front and get that $\bar d_0 = d_n \circ d_{2n+1} \circ \ldots \circ d_{(r-1)n + r - 2} \circ d_0$. Consider 
        \begin{align*}
            \bar d_0 \circ t &= d_n \circ d_{2n+1} \circ \ldots \circ d_{(r-1)n + r - 2} \circ d_0 \circ t \\
            &= d_n \circ d_{2n+1} \circ \ldots \circ d_{(r-1)n + r - 2} \circ d_{rn+r-1}
        \end{align*}
\noindent which is $\bar d_n$ by definition.
\end{remark}

Let us start by understanding the $2$-fold edgewise simplicial subdivision of the circle, $sd_2(S^1_\bullet)$.

\begin{example}
Let us refer to $sd_2(S^1_\bullet)$ as $2S^1_\bullet$. By definition, 
\begin{align*}
    sd_2(S^1_\bullet)_n &= S^1_{2n +1}, \\
    \bar d_i &= d_i \circ d_{i + n + 1}, \quad \text{and} \\
    \bar s_i &= s_{i + n + 2} \circ s_i.
\end{align*}
\noindent Therefore $2S^1_\bullet$ is
% https://q.uiver.app/#q=WzAsNyxbMSwxLCJDXzQiXSxbMiwxXSxbMCwxXSxbMSwwLCJcXG92ZXJzZXR7XFx1bmRlcnNldHt9e1xcdmRvdHN9fXtDXzZ9Il0sWzIsMF0sWzAsMF0sWzEsMiwiQ18yIl0sWzAsMywiXFxiYXIgc18wIiwxLHsib2Zmc2V0IjotNX1dLFszLDAsIlxcYmFyIGRfMSIsMV0sWzAsMywiXFxiYXIgc18xIiwxLHsib2Zmc2V0Ijo1fV0sWzQsMSwiXFxiYXIgZF8yIiwxLHsibGFiZWxfcG9zaXRpb24iOjQwLCJvZmZzZXQiOjUsInNob3J0ZW4iOnsic291cmNlIjoxMCwidGFyZ2V0Ijo0MH19XSxbNSwyLCJcXGJhciBkXzAiLDEseyJsYWJlbF9wb3NpdGlvbiI6NDAsIm9mZnNldCI6LTUsInNob3J0ZW4iOnsic291cmNlIjoxMCwidGFyZ2V0Ijo0MH19XSxbMCw2LCJcXGJhciBkXzEiLDEseyJvZmZzZXQiOi01fV0sWzYsMCwiXFxiYXIgc18wIiwxXSxbMCw2LCJcXGJhciBkXzAiLDEseyJvZmZzZXQiOjV9XV0=
\[\begin{tikzcd}
	{} & {\overset{\underset{}{\vdots}}{C_6}} & {} \\
	{} & {C_4} & {} \\
	& {C_2.}
	\arrow["{\bar d_0}"{description, pos=0.4}, shift left=5, shorten <=1pt, shorten >=6pt, from=1-1, to=2-1]
	\arrow["{\bar d_1}"{description}, from=1-2, to=2-2]
	\arrow["{\bar d_2}"{description, pos=0.4}, shift right=5, shorten <=1pt, shorten >=6pt, from=1-3, to=2-3]
	\arrow["{\bar s_0}"{description}, shift left=5, from=2-2, to=1-2]
	\arrow["{\bar s_1}"{description}, shift right=5, from=2-2, to=1-2]
	\arrow["{\bar d_1}"{description}, shift left=5, from=2-2, to=3-2]
	\arrow["{\bar d_0}"{description}, shift right=5, from=2-2, to=3-2]
	\arrow["{\bar s_0}"{description}, from=3-2, to=2-2]
\end{tikzcd}\]
\noindent The only nondegenerate elements are $1,\gamma \in C_2$ and $\gamma,\gamma^3 \in C_4$, where the boundary of the $1$-cell $\gamma$ is defined by

\begin{center}
    \begin{tabular}{l}
    $\bar d_0(\gamma) = 1$ and $\bar d_1(\gamma) = \gamma,$
    \end{tabular}
\end{center}

\noindent and the boundary of the $1$-cell $\gamma^3$ is defined by

\begin{center}
    \begin{tabular}{l}
    $\bar d_0(\gamma^3) = \gamma$ and $\bar d_1(\gamma^3) = 1$.
    \end{tabular}
\end{center}

\noindent If we refer to the non-degenerate $0$-cells as $v_0$ and $v_1$, then $2S^1_\bullet$ looks like:

\begin{center}
\begin{tikzpicture}[>=stealth]
    \draw[ 
        decoration={
            markings, 
            mark=at position 0.005 with {\fill circle (1.5pt) node[right] {\footnotesize$v_0$};}, 
            mark=at position 0.27 with {\arrow[thick]{>}},
            mark=at position 0.5 with {\fill circle (1.5pt) node[left] {\footnotesize$v_1$};}, 
            mark=at position 0.77 with {\arrow[thick]{>}},
        },
        postaction={decorate}
    ]
    (3,0) circle (0.6);
\end{tikzpicture}
\end{center}

\noindent where the $C_2$-action on $2S^1_\bullet$ is induced from applying the functor $sd_2(-)$ to $S^1_\bullet$. This $C_2$-action on $2S^1_k = C_{2k + 2}$ sends $\gamma^i$ to $\gamma^{j}$ where $j = i + k + 1 \mod 2k+2)$. Therefore the $C_2$-action on $2S^1_\bullet$ is counter clockwise rotation by $\pi$.

\end{example}

If we consider $2S^1_\bullet$ non-equivariantly, it is not the same as $dS^1_\bullet$ as defined in \cite{Angeltveit-Rognes}. Non-equivariantly, $2S^1_\bullet$ is equivalent to $d'S^1_\bullet$ as defined in \cite[Remark 3.6]{Angeltveit-Rognes}. 

We can use this process to define $mS^1_\bullet$ for any positive integer $m$, which will have the $C_m$-action of counter clockwise rotation by $2\pi/m$. Notice that in order for $mS^1_\bullet$ to have a simplicial $C_n$-action of counter clockwise rotation by $2\pi/n$ then $m$ must be a multiple of $n$. Consider two examples of $C_3$-equivariant simplicial models of the circle; $3S^1_\bullet$ and $6S^1_\bullet$:

\centerline{\begin{tikzpicture}[>=stealth]
    \draw[ 
        decoration={markings,
        mark=at position 0.09 with {\arrow[thick]{>}},
        mark=at position 0.25 with {\fill circle (1.5pt) node[above] {\footnotesize$v_0$};}, 
        mark=at position 0.45 with {\arrow[thick]{>}}, 
        mark=at position 0.58 with {\fill circle (1.5pt) node[left] {\footnotesize$v_1$};}, 
        mark=at position 0.76 with {\arrow[thick]{>}},
        mark=at position 0.91 with {\fill circle (1.5pt) node[right] {\footnotesize$v_2$};}, 
        },
        postaction={decorate}
        ]
        (1,0) circle (0.6);
    \draw[ 
        decoration={markings, 
        mark=at position 0.01 with {\arrow[thick]{>}},
        mark=at position 0.09 with {\fill circle (1.5pt) node[right] {\footnotesize$v_5$};},
        mark=at position 0.18 with {\arrow[thick]{>}},
        mark=at position 0.25 with {\fill circle (1.5pt) node[above] {\footnotesize$v_0$};}, 
        mark=at position 0.36 with {\arrow[thick]{>}},
        mark=at position 0.42 with {\fill circle (1.5pt) node[left] {\footnotesize$v_1$};}, 
        mark=at position 0.52 with {\arrow[thick]{>}},
        mark=at position 0.58 with {\fill circle (1.5pt) node[left] {\footnotesize$v_2$};}, 
        mark=at position 0.68 with {\arrow[thick]{>}},
        mark=at position 0.76 with {\fill circle (1.5pt) node[below] {\footnotesize$v_3$};},
        mark=at position 0.85 with {\arrow[thick]{>}},
        mark=at position 0.91 with {\fill circle (1.5pt) node[right] {\footnotesize$v_4$};}, 
        },
        postaction={decorate}
        ]
        (4,0) circle (0.6);
\end{tikzpicture}}

\noindent where the induced $C_3$-action on both of these simplicial objects is counter clockwise rotation by $2\pi/3$.

In \Cref{Sec:TwTHH} we recalled the definition of twisted THH as defined by Angeltveit, Blumberg, Gerhardt, Hill, Lawson, and Mandell in \cite{twTHH}. We also discussed the different perspectives these authors gave us on twisted THH including the geometric realization of the twisted cyclic bar construction, and a tensor product. Namely, for $U$ a complete $S^1$-universe, $\widetilde{U} = \iota_{C_p}^*U$, and $R$ a commutative ring $C_p$-spectrum indexed over $\widetilde{U}$, 
\[
\THH_{C_p}(R) \simeq \I_{\RR^\infty}^{U}(\I_{\widetilde{U}}^{\RR^\infty}(R) \otimes_{C_p} S^1).
\]
To simplify notation, we will often let $R$ be a commutative ring $C_p$-spectrum indexed on the trivial universe $\RR^\infty$. To generalize one can consider a commutative ring $C_p$-spectrum $\widetilde{R}$ indexed over $\widetilde{U}$ and let $R = \I_{\widetilde{U}}^{\RR^\infty} \widetilde{R}.$ The following proposition demonstrates which simplicial model of the circle is suitable for this tensor product perspective.

For the rest of this section, let the chosen generator of the group $C_p$ be $\gamma \coloneqq e^{2\pi i/p}$.

\begin{proposition} \label{pS^1 gives cyclic bar}
Let $R$ be a commutative ring $C_p$-spectrum indexed on the trivial universe $\RR^\infty$, for $p$ prime. Then $R \otimes_{C_p} pS^1_\bullet \cong B^{cy, C_p}_\bullet(R)$, the $C_p$-twisted cyclic bar construction.
\end{proposition}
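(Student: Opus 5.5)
We compare the two simplicial objects level by level and then check the structure maps. Since $pS^1_\bullet = sd_p(S^1_\bullet)$, in level $n$ we have $pS^1_n = S^1_{(n+1)p-1} = C_{p(n+1)}$. The subgroup $C_p \subset C_{p(n+1)}$ generated by $\gamma^{n+1}$ acts on this set by $\gamma^i \mapsto \gamma^{i+n+1}$. This action is free, and the elements $1,\gamma,\ldots,\gamma^n$ form a set of orbit representatives. The coequalizer \eqref{coequalizer for THHCn} is a colimit, and smash product commutes with colimits. So $R \otimes_{C_p} (C_p \times \{1,\ldots,\gamma^n\})$ is identified with $R \otimes \{1,\ldots,\gamma^n\} = R^{\smashy n+1}$. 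We therefore obtain levelwise isomorphisms
\[
R \otimes_{C_p} pS^1_n \cong R^{\smashy n+1} = B_n^{cy,C_p}(R).
\]
Concretely, the factor indexed by $\gamma^{j+k(n+1)}$, with $0\le j\le n$, is sent to the $j$-th factor, after acting on it by $\gamma^{k}$ (or its inverse, depending on conventions).

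Next we check the face maps. A simplicial map $f$ of free $C_p$-sets induces $R\otimes f$. On the summand of a point $x$ this map is the multiplication of $R$ over each fiber $f^{-1}(y)$, which is well defined because $R$ is commutative. For $0\le i<n$, the face $\bar d_i = d_i\circ d_{i+n+1}\circ\cdots$ sends the orbit representatives $\gamma^i$ and $\gamma^{i+1}$ to the same representative $\gamma^i$ of level $n-1$. It does not cross the wrap-around point, so no element is shifted between orbits. Under our identification it therefore becomes $\id^i\smashy\mu\smashy\id^{n-i-1}$. For the last face we use \Cref{dn=d0t}, which gives $\bar d_n=\bar d_0\circ t$. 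The map $t$ sends $\gamma^n$ to $\gamma^{n+1}$, and $\gamma^{n+1}$ lies in the orbit of $1$, translated by the generator $\gamma$. Through the coequalizer relation, this translation becomes the action of $\gamma$ on the corresponding copy of $R$. Thus $R\otimes t$ corresponds to $\alpha_n$: it rotates the last factor to the front and acts on it by $\gamma$. Composing with $\bar d_0 \leftrightarrow \mu\smashy\id^{n-1}$ then gives exactly the last face map of \Cref{twistedcyclicbar}. The degeneracies $\bar s_i$ insert a point with empty preimage. This corresponds to inserting the unit $\eta$ in the appropriate slot, so they match $s_i$.

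The main obstacle is the bookkeeping of the twist in the last face map. One has to pin down which representative the final step of the subdivided face lands on. One also has to fix the sign of the exponent of $\gamma$ so that it matches $\alpha_n$, rather than some conjugate of it. I would handle this by first working out $p=2$ and $n=0,1$ explicitly using the description of $2S^1_\bullet$, where the action is $\gamma^i\mapsto\gamma^{i+k+1}$. I would then generalize by tracking the single orbit that crosses the basepoint. If the sign conventions turn out to differ, I would choose the identification with $\gamma^{-k}$ instead, or equivalently use $\gamma^{-1}$, to make the isomorphism commute. Finally, I would verify that the levelwise isomorphisms are natural, so that together they assemble into an isomorphism of simplicial $C_p$-spectra.
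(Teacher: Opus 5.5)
Your proposal is correct and follows the paper's proof essentially step for step. You identify each level of the coequalizer with $R^{\smashy n+1}$ via orbit representatives, read off the inner faces and the degeneracies directly, and get the last face from $\bar d_n=\bar d_0\circ t$ (\Cref{dn=d0t}) together with $R\otimes t$ descending to $\alpha_n$. Your sign worry resolves without adjustment: with the quotient sending the factor at $\gamma^{j+k(n+1)}$ to slot $j$ acted on by $\gamma^{k}$, the factor at $\gamma^{n+1}$ lands in slot $0$ twisted by $\gamma$, which is exactly $\alpha_n$.
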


\begin{proof}

Let $\mu$ and $\eta$ be the multiplication and unit maps of $R$ respectively. To show that these simplicial objects are equivalent we will first show that every level is the same and then we will show that they have equivalent face and degeneracy maps.

The $k$-simplicies of $R \otimes_{C_p} pS^1_\bullet$ are defined by the following coequalizer diagram
% https://q.uiver.app/#q=WzAsMyxbMCwwLCIoXFxJX1Vee1xcUlJeXFxpbmZ0eX0gUikgXFxvdGltZXMgQ19wIFxcb3RpbWVzIHBTXjFfayJdLFsyLDAsIihcXElfVV57XFxSUl5cXGluZnR5fSBSKSBcXG90aW1lcyBwU14xX2siXSxbNCwwLCIoXFxJX1Vee1xcUlJeXFxpbmZ0eX0gUikgXFxvdGltZXNfe0NfcH0gcFNeMV9rIl0sWzAsMSwiXFxpZCBcXG90aW1lcyByIiwwLHsib2Zmc2V0IjotMn1dLFswLDEsIlxcZWxsIFxcb3RpbWVzIFxcaWQiLDIseyJvZmZzZXQiOjJ9XSxbMSwyXV0=
\[\begin{tikzcd}
	{R \otimes C_p \otimes pS^1_k} && {R \otimes pS^1_k} && {R \otimes_{C_p} pS^1_k}
	\arrow["{\id \otimes r}", shift left=2, from=1-1, to=1-3]
	\arrow["{\ell \otimes \id}"', shift right=2, from=1-1, to=1-3]
	\arrow[from=1-3, to=1-5]
\end{tikzcd}\]
\noindent where the map $r$ is the $C_p$-action on $pS^1_k$ and $\ell$ is the induced $C_p$-action on $R$.

Let $C_p = \langle \gamma \rangle$, and $pS^1_k = C_{pk + p} = \{1, x, \ldots, x^{pk + p - 1} \}$. The induced $C_p$-action on the set of elements $C_{pk + p}$ is defined by $\gamma x^i = x^{j}$ such that $j = i+k+1 \mod pk + p)$. There is a $C_{pk + p}$-action on $pS^1_k$ induced by $t\colon C_{pk + p} \to C_{pk + p}$ defined by $t(x^i) = x^{j}$ such that $j = i + 1 \mod pk + p)$.

As $C_p$-sets, $C_p \otimes pS^1_k = C_p \times C_{pk+p}$. Thus $R \otimes C_p \otimes pS^1_k$ can be written as 
\[
\underset{t = 0}{\overset{p-1}{\bigwedge}}( \underset{s = 0}{\overset{pk + p - 1}{\bigwedge}} R_{x^s,\gamma^t}). 
\]
For ease of notation let us write $R_{s,t}$ instead of $R_{x^s,\gamma^t}$. Similarly, $R \otimes pS^1_k$ can be written as 
\[
\underset{s = 0}{\overset{pk + p - 1}{\bigwedge}} R_{s}.
\]
With this notation, $\id \otimes r\colon R_{s,t} \mapsto R_{j}$ such that $j = s + t(k + 1) \mod pk + p)$, and $\ell \otimes \id \colon R_{s,t} \mapsto {}_{\gamma^t}R_{s}$ where ${}_{\gamma^t}R$ indicates $R$ which has been acted on by $\gamma^t$. By definition of the coequalizer, $R \otimes_{C_p} pS^1_k$ is the quotient space of $R \otimes pS^1_k$ where the quotient forces these two actions to agree. Recall that $\gamma^t x^s = x^j$ for $j = s + t(k + 1) \mod pk + p)$. Therefore $R \otimes_{C_p} pS^1_k = R^{\smashy k + 1}$.

We will now show that the face and degeneracy maps from $R \otimes_{C_p} pS^1_k$ are equivalent to the face and degeneracy maps from $B_k^{cy,C_p}(R)$. We will start by considering the face and degeneracy maps of $pS^1_k$ and induce the corresponding face and degeneracy maps of $R \otimes_{C_p} pS^1_{k}$.

Recall that $pS^1_k  = sd_p(S^1_\bullet)_k$ with face and degeneracy maps $\bar d_i$ and $\bar s_i$ defined as follows:

\begin{center}
    \begin{tabular}{l}
    $\bar d_i = d_i \circ d_{i + k + 1} \circ \ldots \circ d_{i + (p - 1)(k+1)}$ \\
    $\bar s_i = s_{i + (p-1)(k+2)} \circ \ldots \circ s_{i + (k + 2)} \circ s_i$
    \end{tabular}
\end{center}

\noindent where $d_i$ and $s_i$ are the face and degeneracy maps of $S^1_\bullet$, and $0 \leq i \leq k$. 

Let us start by finding what the induced face maps are on $R \otimes_{C_p} pS^1_k$, say $\delta_i\colon R \otimes_{C_p} pS^1_k \to R \otimes_{C_p} pS^1_{k-1}$, for $0 \leq i \leq k$ and $k > 0$. The map $(\id \otimes \bar d_i)\colon R \otimes pS^1_k \to R \otimes pS^1_{k-1}$ applies the multiplication map to $R_{i} \smashy R_{i+1}$ as well as $R_{i + n(k + 1)} \smashy R_{i + 1 + n(k + 1)}$ for all $0 \leq n < p$. Therefore $\delta_i\colon R \otimes_{C_p} pS^1_k \to R \otimes_{C_p} pS^1_{k-1}$ is the map $\id^{\smashy i} \smashy \mu \smashy \id^{\smashy k - i-1}$ for $0 \leq i < k$. 

Before figuring out what $\delta_k$ must be, recall that $pS^1_k$ has a $C_{pk+p}$-action induced by the map $t$. Consider $(\id \otimes t)\colon R \otimes pS^1_k \to R \otimes pS^1_k$, this map rotates the last copy of $R$ to the front. This map also rotates $R_k$ into the position $R_{k+1}$ was in. This is important as in the quotient $R \otimes_{C_p} pS^1_k$ we have the following two maps must be equivalent:
\[
    (\ell \otimes \id)(R_{0,1}) ={}_{\gamma}R_{0}, \quad \text{and} \quad (\id \otimes r)(R_{0,1}) = R_{k+1}.
\]
\noindent So the map that is induced on $R \otimes_{C_p} pS^1_k$ rotates the last copy of $R$ to the front and acts on that copy of $R$ by $\gamma$. Let us suggestively refer to this induced map as $\alpha_k$.

Recall from \Cref{dn=d0t} that $\bar d_k = \bar d_0 \circ t$, so the last face map $\delta_k$ is induced from $\id \otimes \bar d_k = (\id \otimes \bar d_0) \circ (\id \otimes t)\colon R \otimes pS^1_k \to R \otimes pS^1_{k-1}$. The universal property of the coequalizer shows that the maps $(\id \otimes \bar d_0)$ and $(\id \otimes t)$ induce maps on $R \otimes_{C_p} pS^1_k$, namely $\delta_0$ and $\alpha_k$ respectively. Further, by the uniqueness property, the map induced from their composition, $(\id \otimes \bar d_0) \circ (\id \otimes t)$ must be equivalent to the composition of the induced maps. Meaning, $\delta_k = \delta_0 \circ \alpha_k$. 

We similarly induce the degeneracy maps of $R \otimes_{C_p} pS^1_k$, say $\sigma_i\colon R \otimes_{C_p} pS^1_k \to R \otimes_{C_p} pS^1_{k+1}$, for $0 \leq i \leq k$ and $k \geq 0$. By a similar argument as above we can show that these can be written as $\sigma_i = \id^{\smashy i+1} \smashy \eta \smashy \id^{\smashy k - i}$.

Therefore, using \Cref{twistedcyclicbar} we can see that $R \otimes_{C_p} pS^1_\bullet$ is isomorphic to $B^{cy, C_p}_\bullet(R)$. \end{proof}

Let $U$ be a complete $S^1$-universe and $R$ a commutative ring $C_p$-spectrum indexed on $\widetilde{U} \coloneqq i_{C_p}^*(U)$, then a result of this proposition is that $|\I^{U}_{\RR^\infty}((\I_{\widetilde{U}}^{\RR^\infty}R) \otimes_{C_p} pS^1_\bullet)| \cong \THH_{C_p}(R)$. We can construct a similar structure $m\THH_{C_p}(R) \coloneqq |\I^{U}_{\RR^\infty}((\I_{\widetilde{U}}^{\RR^\infty}R) \otimes_{C_p} mpS^1_\bullet)|$.

We want an equivariant analogue of Angeltveit and Rognes' result \cite[Lemma 3.8]{Angeltveit-Rognes}, namely we want to show that $m\THH_{C_p}(R)$ is $C_p$-weakly equivalent to $\THH_{C_p}(R)$ (see \Cref{2THHtoTHH}). To obtain this we first need some definitions and lemmas. 

\begin{definition} \label{def: double bar}
    Let $R$ be a commutative ring $C_p$-spectrum indexed on the trivial universe, $p$ odd. The \emph{double bar complex} for $R$, denoted $B_\bullet(R,R,R)$, is a simplicial object such that $B_k(R,R,R) = R^{\smashy k+2}$. The face and degeneracy maps, $d_i\colon R^{\smashy k+2} \to R^{\smashy k+1}$ and $s_i\colon R^{\smashy k+2} \to R^{\smashy k+3}$ are defined as follows:
    \begin{align*}
        d_i & = \id^i \smashy \mu \smashy \id^{k - i} \\
        s_i & = \id^{\smashy i+1} \smashy \eta \smashy \id^{\smashy k - i + 1}
    \end{align*}
    \noindent for $0 \leq i \leq k$, where $\mu$ is the multiplication map and $\eta$ is the unit map for $R$.
\end{definition}

We will be considering smash products of simplicial $C_p$-spectra throughout this paper, so let us discuss what we mean. Let $X_\bullet$ and $Y_\bullet$ be two simplicial $C_p$-spectra. When we write $(X \smashy Y)_\bullet$, we mean the simplicial $C_p$-spectrum, where $(X \smashy Y)_k = X_k \smashy Y_k$ and the face and degeneracy maps from $(X \smashy Y)_k$ are the smash product of the respective face and degeneracy maps from $X_k$ and $Y_k$.

In general, for some commutative ring $C_p$-spectrum $R$ with $(R,R)$-bimodules $M$ and $N$, $M \smashy_{(R \smashy R)} N$ is the coequalizer of the bimodule action maps. Similarly, for $M$ and $N$ $R$-modules, one can construct $M \smashy_R N$ as the coequalizer of the right and left $R$-module action maps.

Let $R$ be a commutative ring $C_p$-spectrum. Consider the map 
\[
f\colon R \smashy (R \smashy 
R) \to R
\]
where $f$ rotates the last copy of $R$ to the front and then applies the $(R,R)$-bimodule map which multiplies the middle $R$ with the left and right copies of $R$. Namely, $f = \mu \circ (\id \smashy \mu) \circ t$ where $t$ rotates the last $R$ to the front. Also consider the map 
\[
h\colon (R \smashy R) \smashy B_k(R,R,R) \to B_k(R,R,R)
\]
where $h$ rotates the first copy of $R$ in $(R \smashy R)$ to the end and then applies the $(R,R)$-bimodule map on $B_k(R,R,R)$ which multiplies the first two copies of $R$ and the last two copies of $R$. Namely, $h = (\mu \smashy \id^{\smashy k}) \circ (\id^{\smashy k+2} \smashy \mu) \circ t'$ where $t'$ rotates the first copy of $R$ to the back. For the next lemma we will need the coequalizer of the following diagram in the category of spectra:
% https://q.uiver.app/#q=WzAsMixbMCwwLCJBIFxcc21hc2h5IEEiXSxbMiwwLCJBIFxcc21hc2h5IEJfayhBLEEsQSkiXSxbMCwxLCJmIiwwLHsib2Zmc2V0IjotMn1dLFswLDEsImciLDIseyJvZmZzZXQiOjJ9XV0=
\[\begin{tikzcd}
	{R \smashy (R \smashy R) \smashy B_k(R,R,R)} && {R \smashy B_k(R,R,R)}
	\arrow["f \smashy \id^{\smashy k+2}", shift left=2, from=1-1, to=1-3]
	\arrow["\id \smashy h"', shift right=2, from=1-1, to=1-3]
\end{tikzcd}\]
\noindent which we will write as $R \smashy_{(R \smashy R)} B_k(R,R,R)$.
Since $R$ and $B_k(R,R,R)$ are free $(R,R)$-bimodules, we can consider the map $R \smashy B_k(R,R,R) \to R^{\smashy k + 1}$ which multiplies the first copy of $R$ with the first and last copies of $R$ in $B_k(R,R,R)$.
This map respects the maps from the coequalizer, therefore the universal property of the coequalizer gives the reduction map: $R \smashy_{(R \smashy R)} B_k(R,R,R) \to R^{\smashy k + 1}$.

Recall from the definition of ${}^\gamma E$ for $E$ a $C_p$-spectrum (see \Cref{Sec:TwTHH}) that the spectra ${}^\gamma R$ and $R$ are the same except for their left $R$-module map. Therefore we can construct $R \smashy_{(R \smashy R)} B_k(R,R,{}^\gamma R)$ in a similar way as we constructed $R \smashy_{(R \smashy R)} B_k(R,R,R)$. 
Similarly to above, we have a reduction map: $R \smashy_{(R \smashy R)} B_k(R,R,{}^\gamma R) \to {}^\gamma R \smashy R^{\smashy k}$, since the maps in the coequalizer diagram are in such a way that the right module action of $R$ and the left module action of ${}^\gamma R$ are maintained.

Let us similarly construct, $B_k(R,R,R) \smashy_{(R \smashy R)} B_k(R,R,R)$. Consider the map
\[
k\colon B_k(R,R,R) \smashy (R \smashy R)  \to B_k(R,R,R)
\]
where $k$ rotates the first copy of $R$ in $(R \smashy R)$ to the front and then applies the $(R,R)$-bimodule map which multiplies the first two copies of $R$ and the last two copies of $R$. Namely, $k = (\mu \smashy \id^{\smashy k}) \circ (\id^{\smashy k+2} \smashy \mu) \circ t''$ where $t''$ rotates the first copy of $R$ in $(R \smashy R)$ to the front. We now define $B_k(R,R,R) \smashy_{(R \smashy R)} B_k(R,R,R)$ as the coequalizer of the following diagram in the category of spectra:
% https://q.uiver.app/#q=WzAsMixbMiwwLCJCX2soUixSLFIpIFxcc21hc2h5IEJfayhSLFIsUikuIl0sWzAsMCwiQl9rKFIsUixSKSBcXHNtYXNoeSAoUiBcXHNtYXNoeSBSKSBcXHNtYXNoeSBCX2soUixSLFIpIl0sWzEsMCwiayBcXHNtYXNoeSBcXGlkXntcXHNtYXNoeSBrKzJ9IiwwLHsib2Zmc2V0IjotMn1dLFsxLDAsIlxcaWRee1xcc21hc2h5IGsrMn0gXFxzbWFzaHkgaCIsMix7Im9mZnNldCI6Mn1dXQ==
\[\begin{tikzcd}
	{B_k(R,R,R) \smashy (R \smashy R) \smashy B_k(R,R,R)} && {B_k(R,R,R) \smashy B_k(R,R,R)}
	\arrow["{k \smashy \id^{\smashy k+2}}", shift left=2, from=1-1, to=1-3]
	\arrow["{\id^{\smashy k+2} \smashy h}"', shift right=2, from=1-1, to=1-3]
\end{tikzcd}\]
\noindent where $h$ is the same map as defined above. We construct $B_k(R,R,R) \smashy_{(R \smashy R)} B_k(R,R,{}^\gamma R)$ and $B_k(R,R,R)^{\smashy_R (n-1)} \smashy_{(R \smashy R)} B_k(R,R,{}^\gamma R)$ in a similar way, where $B_k(R,R,R) \smashy_R B_k(R,R,R)$ is the coequalizer of the right $R$-module action on the left copy of $B_k(R,R,R)$ and the left $R$-module action on the right copy of $B_k(R,R,R)$.
The reduction maps of these coequalizers are as follows: $B_k(R,R,R) \smashy_{(R \smashy R)} B_k(R,R,{}^\gamma R) \to {}^\gamma R \smashy R^{\smashy 2k+1}$, $B_k(R,R,R) \smashy_R B_k(R,R,R) \to R^{\smashy 2k + 3}$, and $B_k(R,R,R)^{\smashy_R (n-1)} \smashy_{(R \smashy R)} B_k(R,R,{}^\gamma R) \to {}^\gamma R \smashy R^{\smashy n(k+1) - 1}$.

\begin{lemma} \label{bar complexes isoms}
    Let $R$ be a commutative ring $C_p$-spectrum indexed over the trivial universe $\RR^\infty$, $p$ prime. Then we have the following isomorphisms of simplicial $C_p$-spectra:
\begin{align*}
    B_\bullet^{cy,C_p}(R) & \cong R \smashy_{(R \smashy R)} B_\bullet(R,R,{}^\gamma R), \, \text{and} \\
    sd_n(B_\bullet^{cy,C_p}(R)) & \cong (B(R,R,R)^{\smashy_R (n-1)} \smashy_{(R \smashy R)} B(R,R,{}^\gamma R))_\bullet.
\end{align*}
\end{lemma}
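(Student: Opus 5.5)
We prove both isomorphisms levelwise, using the reduction maps constructed just before the statement, and then check that the face and degeneracy maps agree. Throughout, the key input is that $R$ and $B_k(R,R,R)$ are free $(R,R)$-bimodules. The coequalizer $R \smashy_{(R\smashy R)} (R \smashy M \smashy R)$ is therefore canonically $M$ with its two outer factors glued: the inverse of the reduction map is the inclusion of $M \cong \SS \smashy M$ via the unit of the outer $R$. Because ${}^\gamma R$ differs from $R$ only in its left module map, and $\gamma$ acts by an automorphism, the same freeness argument applies to $B_k(R,R,{}^\gamma R)$.

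\emph{First isomorphism.} The reduction map $R \smashy_{(R\smashy R)} B_k(R,R,{}^\gamma R) \to {}^\gamma R \smashy R^{\smashy k}$ is an isomorphism. We check this by writing down the inverse ${}^\gamma R \smashy R^{\smashy k} \to R \smashy B_k(R,R,{}^\gamma R)$, which inserts units into the first and last bar factors. We then verify, using the unit axioms and the coequalizer relations $f \smashy \id = \id \smashy h$, that the two composites are the identity. The identification on the $R^{\smashy k}$ part is $C_p$-equivariant, since every map in sight (multiplications, units, rotations) is equivariant and $\gamma$ is a $C_p$-map because $C_p$ is abelian.

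For the simplicial structure, the inner faces $d_i$, $0<i<k$, of $B_\bullet(R,R,{}^\gamma R)$ multiply adjacent internal factors. After reduction they become $\id^{i}\smashy\mu\smashy\id^{k-i-1}$. The face $d_0$ multiplies the outer left $R$ into the first internal factor. The last face $d_k$ multiplies the last internal factor into ${}^\gamma R$, i.e.\ it acts by $\gamma$ and multiplies. After passing through the coequalizer, which glues the outer right factor back to the outer left via the rotation $t$ built into $f$, this becomes exactly $({}^\gamma\mu \smashy \id^{k-1})\circ t$. That is the last face map of the twisted cyclic nerve $B^{cy,C_p}_\bullet(R,{}^\gamma R)$, which equals $B^{cy,C_p}_\bullet(R)$ (recalled in \Cref{Sec:TwTHH}). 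Degeneracies insert $\eta$ and visibly match.

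\emph{Second isomorphism.} Levelwise, $sd_n(B^{cy,C_p}_\bullet(R))_k = R^{\smashy n(k+1)}$, and the reduction map gives $(B(R,R,R)^{\smashy_R(n-1)}\smashy_{(R\smashy R)}B(R,R,{}^\gamma R))_k \cong {}^\gamma R \smashy R^{\smashy n(k+1)-1}$. Both therefore have $n(k+1)$ factors; the isomorphism follows by iterating the freeness argument, using $B_k \smashy_R B_k \cong R^{\smashy 2k+3}$ to collapse the $n-1$ relative smash products. The faces of the subdivision are $\bar d_i = d_i \circ d_{i+k+1}\circ\cdots\circ d_{i+(n-1)(k+1)}$. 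These multiply the $i$th and $(i+1)$st factor inside each of the $n$ blocks of length $k+1$, which is precisely the diagonal face map of the $n$ bar complexes smashed levelwise. The exception is $\bar d_k$, whose constituent faces each multiply the last factor of a block into the first factor of the next block. By \Cref{dn=d0t}, $\bar d_k = \bar d_0\circ t$, so this corresponds to the $d_k$ faces of the bar complexes gluing across the relative smash products over $R$. The final such gluing, through ${}^\gamma R$ and the twisted coequalizer over $R\smashy R$, contributes the $\gamma$-twist of $\alpha$. Degeneracies $\bar s_i$ similarly insert units in each block.

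\emph{Main obstacle.} The hardest step is the bookkeeping for this last face map and the rotation. We must track how the cyclic rotation of $sd_n$ distributes $n$ copies of the gluing, only one of which carries $\gamma$. We also must confirm that the identification is $C_p$-equivariant, with the $C_p$-action on the subdivided side matched to the correct action on the bar side. I would handle this by first doing $n=2$ explicitly with labeled factors, as in the pictures of $2S^1_\bullet$, and then inducting on $n$ by peeling off one $B(R,R,R)\smashy_R$ factor.
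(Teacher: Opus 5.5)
Your proposal is correct and follows essentially the paper's argument. Both identify each simplicial level with ${}^\gamma R \smashy R^{\smashy n(k+1)-1}$ via the reduction maps and then match faces and degeneracies, handling the last face through the simplicial-identity rewrite $\bar d_k = \bar d_0 \circ \alpha$, so that only the wrap-around gluing through ${}^\gamma R$ carries the $\gamma$-twist. The paper organizes the comparison through an explicit intermediate simplicial object $X_\bullet$ instead of inducting on $n$, and your unit-inserting inverse to the reduction map spells out a step the paper only asserts.
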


\begin{proof}
Let $\mu$ and $\eta$ be the multiplication and unit maps of $R$ respectively, and let ${}^\gamma \mu$ be the left $R$-module structure map of ${}^\gamma R$ as defined in \Cref{Sec:TwTHH}.

The first isomorphism is very similar to the classical result that $B_\bullet^{cy}(R) \cong R \smashy_{(R \smashy R)} B_\bullet(R,R,R)$. The arguments for each isomorphism are similar, we will prove the second isomorphism as the proof requires more details and the proof of the first isomorphism is essentially a simplification of the proof of the second isomorphism.

For the second isomorphism we will start by defining a simplicial ring $C_p$-spectrum $X_\bullet$, which we will show is the image of the reduction map from $(B(R,R,R)^{\smashy_R (n-1)} \smashy_{(R \smashy R)} B(R,R,{}^\gamma R))_\bullet$ (discussed above), then we will demonstrate that $sd_n(B_\bullet^{cy, C_p}(R)) \cong X_\bullet$ which will finish the argument.

Let $X_\bullet$ be a simplicial ring $C_p$-spectrum such that $X_k = {}^\gamma R \smashy R^{\smashy nk + n - 1}$, with face and degeneracy maps $\delta_i\colon X_k \to X_{k-1}$ and $\sigma_i\colon X_k \to X_{k+1}$ defined as follows where we group the copies of $R$ and ${}^\gamma R$ in $X_k$ as $({}^\gamma R \smashy R^{\smashy k}) \smashy (R^{\smashy k+1})^{\smashy n - 1}$:
\begin{align*}
    \delta_i & = (\id^{\smashy i} \smashy \mu \smashy \id^{\smashy k-i-1}) \smashy (\id^{\smashy i} \smashy \mu \smashy \id^{\smashy k-i-1})^{\smashy n - 1} \\
    \delta_k & = (({}^\gamma \mu \smashy \id^{\smashy k-1}) \smashy (\mu \smashy \id^{\smashy k-1})^{\smashy n-1}) \circ t \\
    \sigma_j & = (\id^{\smashy j + 1} \smashy \eta \smashy \id^{\smashy k-j}) \smashy (\id^{\smashy j + 1} \smashy \eta \smashy \id^{\smashy k-j})^{\smashy n-1}
\end{align*}
\noindent where $t$ is the map that rotates the last copy of $R$ to the front, $0 \leq i < k,$ and $0 \leq j \leq k$.

Consider $(B(R,R,R)^{\smashy_R (n-1)} \smashy_{(R \smashy R)} B(R,R,{}^\gamma R))_\bullet$, constructed as described above. Since $R$, $B_\bullet(R,R,R),$ and $B_\bullet(R,R,{}^\gamma R)$ are free $(R,R)$-bimodules, we have the following level-wise reduction map (as discussed above), this reduction map is from $(B(R,R,R)^{\smashy_R (n-1)} \smashy_{(R \smashy R)} B(R,R,{}^\gamma R))_k = B_k(R,R,R)^{\smashy_R (n-1)} \smashy_{(R \smashy R)} B_k(R,R,{}^\gamma R)$ to $X_k$:
\begin{center}
    $\psi_k\colon (R^{\smashy k+2})^{\smashy_R (n-1)} \smashy_{(R \smashy R)} (R^{\smashy k+1} \smashy {}^\gamma R) \to {}^\gamma R \smashy R^{\smashy kn + n - 1}.$
\end{center}
The face and degeneracy maps of $(B(R,R,R)^{\smashy_R (n-1)} \smashy_{(R \smashy R)} B(R,R,{}^\gamma R))_\bullet$, say $\delta_i'$ and $\sigma_i'$, on the $k^\th$ level are defined by the face and degeneracy maps on $B_k(R,R,R)$ and $B_k(R,R,{}^\gamma R)$ (see \Cref{def: double bar}). The face and degeneracy maps 
    \begin{align*}
        \delta_i' \colon & B_k(R,R,R)^{\smashy_R (n-1)} \smashy_{(R \smashy R)} B_k(R,R,{}^\gamma R) \to B_{k-1}(R,R,R)^{\smashy_R (n-1)} \smashy_{(R \smashy R)} B_{k-1}(R,R,{}^\gamma R) \quad \text{and} \\
        \sigma_i' \colon & B_k(R,R,R)^{\smashy_R (n-1)} \smashy_{(R \smashy R)} B_k(R,R,{}^\gamma R) \to B_{k+1}(R,R,R)^{\smashy_R (n-1)} \smashy_{(R \smashy R)} B_{k+1}(R,R,{}^\gamma R)
    \end{align*}
\noindent are defined as follows where we group the copies of $R$ and ${}^\gamma R$ in $B_{k}(R,R,R)^{\smashy_R (n-1)} \smashy_{(R \smashy R)} B_{k}(R,R,{}^\gamma R)$ as $(R^{\smashy k+2})^{\smashy_R (n - 1)} \smashy_{R \smashy R} (R^{\smashy k+1} \smashy {}^\gamma R)$:
\begin{align*}
    \delta_i' & = (\id^{\smashy i} \smashy \mu \smashy \id^{\smashy k-i})^{\smashy n - 1} \smashy (\id^{\smashy i} \smashy \mu \smashy \id^{\smashy k-i}) \\
    \delta_k' & = (\id^{\smashy k} \smashy \mu)^{\smashy n-1} \smashy (\id^{\smashy k} \smashy {}^\gamma \mu) \\
    \sigma_j' & = (\id^{\smashy j + 1} \smashy \eta \smashy \id^{\smashy k-j+1})^{\smashy n - 1} \smashy (\id^{\smashy j + 1} \smashy \eta \smashy \id^{\smashy k-j+1})
\end{align*}
\noindent for $0 \leq i < k,$ and $0 \leq j \leq k$.

Since the level-wise reduction map $\psi_k$ does not affect the middle $k$ copies of $R$ within each copy of $B_k(R,R,R)$ and $B_k(R,R,{}^\gamma R)$ then $\psi_k$ commutes with all degeneracy maps as well as the $i^\th$ face maps for $0 < i < k$. The other two face maps require more discussion. The right $R$-module structure of ${}^\gamma R$ is the same as the right $R$-module structure of $R$, therefore $\psi_{k-1} \circ \delta_0' = \delta_0 \circ \psi_k$. Consider the maps $\psi_{k-1} \circ \delta_k'$, and $\delta_k \circ \psi_k$. These two maps are equivalent as they multiply the same copies of $R$ and ${}^\gamma R$. Therefore $(B(R,R,R)^{\smashy_R (n-1)} \smashy_{(R \smashy R)} B(R,R,{}^\gamma R))_\bullet \cong X_\bullet$. We still need to show that $sd_n(B_\bullet^{cy, C_p}(R)) \cong X_\bullet$.

Let $d_i$ and $s_i$ be the face and degeneracy maps of $B^{cy,C_p}_\bullet(R)$ (see \Cref{twistedcyclicbar}). By definition of the simplicial edgewise subdivision, and simplicial identities, the simplicial object $sd_n(B^{cy,C_p}_\bullet(R))$ is determined by the following:
\begin{align*}
    sd_n(B^{cy,C_p}_\bullet(R))_k & = B^{cy,C_p}_{(k+1)n-1}(R) \cong ({}^\gamma R \smashy R^{k}) \smashy (R^{\smashy k + 1})^{\smashy n - 1} \\
        \bar d_i & = d_i \circ d_{i+k+1} \circ \ldots \circ d_{i + (n-1)(k+1)} \\
        & = (\id^{\smashy i} \smashy \mu \smashy \id^{\smashy k-i -1}) \smashy (\id^{\smashy i} \smashy \mu \smashy \id^{\smashy k-i -1})^{\smashy n- 1} \\
        \bar d_k & = d_k \circ d_{2k + 1} \circ \ldots \circ d_{k + (n-2)(k+1)} \circ d_{(k+1)n - 1} \\
       & = d_k \circ d_{2k + 1} \circ \ldots \circ d_{k + (n-2)(k+1)} \circ d_0 \circ \alpha  \\
       & = (({}^\gamma \mu \smashy \id^{\smashy k-1}) \smashy ( \mu \smashy \id^{\smashy k-1})^{\smashy n-1}) \circ t \\
        \bar s_j & = s_{j + (n-1)(k+2)} \circ \ldots \circ s_{j + (k+2)} \circ s_j \\
        & = (\id^{\smashy j + 1} \smashy \eta \smashy \id^{\smashy k-j}) \smashy (\id^{\smashy j + 1} \smashy \eta \smashy \id^{\smashy k-j})^{\smashy n-1}
\end{align*}
\noindent where $t$ is the map that rotates the last copy of $R$ to the front, $0 \leq i < k,$ and $0 \leq j \leq k$.

These maps are equivalent to the face and degeneracy maps of $X_\bullet$ and $X_k \cong sd_n(B_\bullet^{cy,C_p}(R))_k$. This gives the following equivalence of $C_p$-spectra: $sd_n(B_\bullet^{cy, C_p}(R)) \cong X_\bullet$. Therefore $sd_n(B^{cy,C_p}_\bullet(R)) \cong (B(R,R,R)^{\smashy_R (n-1)} \smashy_{(R \smashy R)} B(R,R,{}^\gamma R))_\bullet$. \end{proof}

We will now prove the classical bar lemma in the case for a commutative ring $C_p$-spectrum, we will also need that the geometric realization is a $C_p$-weak equivalence on each level of spectra for a later result. Here $R_\bullet$ is defined to be $R$ at every simplicial level and all face and degeneracy maps are the identity.

\begin{lemma}[Bar lemma] \label{Bar Lemma}
     Let $R$ be a commutative ring $C_p$-spectrum indexed over the trivial universe $\RR^\infty$, $p$ prime. The map of simplicial $(R,R)$-bimodules
    \[
    B_\bullet(R,R,R) \to R_\bullet
    \]
    \noindent is a simplicial $C_p$-homotopy equivalence of right $R$-modules. Applying geometric realization to this map produces the map $B(R,R,R) \to R$ which is a $C_p$-weak equivalence on each level of the spectra.
\end{lemma}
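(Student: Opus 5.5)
The plan is to adapt the classical extra-degeneracy argument, checking that every map involved is $C_p$-equivariant and right $R$-linear. Write $B_k(R,R,R)=R\smashy R^{\smashy k}\smashy R$. The augmentation $\epsilon\colon B_\bullet(R,R,R)\to R_\bullet$ is the iterated multiplication $\mu^{(k+1)}\colon R^{\smashy k+2}\to R$. Since $R$ is a commutative ring $C_p$-spectrum, $\mu$ and $\eta$ are $C_p$-equivariant, and the $C_p$-action on $R^{\smashy k+2}$ is diagonal. Hence $\epsilon$ is a map of simplicial $C_p$-spectra. It commutes with the faces by associativity and with the degeneracies by unitality, and it is a map of $(R,R)$-bimodules because the bimodule structure only uses the outer factors.

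In the other direction, define $s\colon R_\bullet\to B_\bullet(R,R,R)$ levelwise by $\eta\smashy\id\colon R\to R\smashy R=B_0$ composed with the iterated degeneracies. Concretely, in level $k$ it is $\eta^{\smashy (k+1)}\smashy \id\colon R\to R^{\smashy k+1}\smashy R$. This is equivariant and right $R$-linear, and $\epsilon\circ s=\id$ by unitality. The extra degeneracy $s_{-1}\colon B_k\to B_{k+1}$, $s_{-1}=\eta\smashy\id^{\smashy k+2}$, inserts the unit on the far left. It satisfies $d_0s_{-1}=\id$, $d_{i+1}s_{-1}=s_{-1}d_i$ and $s_{j+1}s_{-1}=s_{-1}s_j$, and the last face $d_{k+1}s_{-1}$ corresponds to $s_{-1}d_k$ because the multiplication on the right factor is untouched. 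It is right $R$-linear because it does not touch the last factor, and it is $C_p$-equivariant because $\eta$ is.

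From $s_{-1}$ I will build the standard simplicial homotopy $h_i\colon B_k\to B_{k+1}$, $0\le i\le k$, between $\id$ and $s\circ\epsilon$, given by $h_i=(s_{-1})^{i+1}$ composed with the appropriate iterated faces. These are all built from $\eta$, $\mu$ and identities on the right factor, so each $h_i$ is equivariant and right $R$-linear, and the simplicial homotopy identities reduce to the same unit/associativity checks as classically. This gives a simplicial $C_p$-homotopy equivalence of right $R$-modules.

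For the second statement, apply geometric realization. A simplicial homotopy realizes to a homotopy parametrized by $\Delta^1$ with trivial action, so $|\epsilon|$ is a $C_p$-homotopy equivalence. Geometric realization of simplicial orthogonal spectra is computed levelwise as realization of simplicial based $C_p$-spaces. The homotopy therefore gives, at each level $V$, a $C_p$-homotopy equivalence $|B_\bullet(R,R,R)(V)|\to R(V)$, hence a $C_p$-weak equivalence on each level of the spectra.

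The main obstacle is not the homotopy identities but the spectrum-level technicalities. First, the simplicial homotopy must realize to an actual homotopy in orthogonal $C_p$-spectra; for this I will use that realization commutes with $-\smashy\Delta^1_+$. Second, the extra degeneracy is only right (not left) linear, which is why the statement only asserts an equivalence of right $R$-modules.
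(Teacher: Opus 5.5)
Your proposal is correct and follows essentially the same route as the paper: the augmentation by iterated multiplication, the section given by iterated units on the left, and the standard contracting homotopy $h_i = s_{-1}^{i+1} d_0^{i}$ (multiply the first $i+1$ factors, then insert $i+1$ units on the left). These maps are $C_p$-equivariant and never touch the right-most factor, so they give a simplicial $C_p$-homotopy equivalence of right $R$-modules, and geometric realization then yields the levelwise $C_p$-equivalence. Your write-up is somewhat more careful than the paper's about the indexing of the $h_i$ and about why the realized homotopy gives a $C_p$-equivalence at each level.
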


\begin{proof}
    Define the map $\eta\colon R_\bullet \to B_\bullet(R,R,R)$ as an iteration of units. Specifically, $\eta_k \colon R_k = R \to B_k(R,R,R) = R^{\smashy k+2}$ is $k + 1$ iterations of the unit map. Define the map $\epsilon\colon B_\bullet(R,R,R) \to R_\bullet$ as an iteration of products on the left-most copies of $R$. Specifically, $\epsilon_k \colon B_k(R,R,R) = R^{\smashy k+2} \to R_k = R$ is $k + 1$ iterations of the multiplication map. Then their composition $\epsilon \circ \eta\colon R_\bullet \to R_\bullet$ is the identity.
    
    Define $h_i\colon B_q(R,R,R) \to B_{q+1}(R,R,R)$ for $0 \leq i \leq q + 1$ to be $i$ iterations of the product on the left-most copies of $R$ and then $i+1$ iterations of the unit map. Therefore $h_0$ is equivalent to the identity map on $B_q(R,R,R)$ and $h_{q+1}$ is equivalent to $\eta \circ \epsilon$.

    Then $h_i$ is a simplicial $C_p$-homotopy equivalence between the identity map on $B_\bullet(R,R,R)$ and the composition $\eta \circ \epsilon$. This homotopy equivalence includes the single copy of $R$ into the right most copy of $R$ in $B_k(R,R,R)$, so this is an equivalence of right $R$-modules. 

This homotopy equivalence gives a $C_p$-weak equivalence on the geometric realization, and therefore a $C_p$-weak equivalence on each level of the spectra. \end{proof}

Hirschhorn gives a definition of Reedy cofibrant for a general Reedy category and model structure in \cite{HirschhornModelBook}. There are many classical results pertaining to Reedy cofibrant objects. One reference which goes into more detail in the equivariant case is Section 2.2 in \cite{Malkiewich-cycTHH}. We will use the equivariant model structure from \cite[Theorem 5.4.15]{EquivariantStructureSmashPowers}.

For a simplicial object $X_\bullet$ the \emph{latching object} $L_nX$ is the subspace of $X_n$ consisting of all the images of the degeneracy maps $s_i\colon X_{n-1} \to X_n$, $0 \leq i \leq n-1$, and a \emph{latching map} of $X$ is the natural map $L_nX \to X_n$ \cite[Definition 15.2.5(1)]{HirschhornModelBook}. For a simplicial $C_p$-spectrum $X_\bullet$, if every latching map $L_nX \to X_n$ is a cofibration, then $X_\bullet$ is \emph{Reedy cofibrant} \cite[Definition 15.3.3 (2)]{HirschhornModelBook}. We will use the notion of a \emph{Reedy h-cofibration} where each of the latching maps are $h$-cofibrations, meaning they satisfy the free homotopy extension property. In the chosen model structure, the forgetful map from commutative ring $C_p$-spectra to $C_p$-spectra preserves cofibrations \cite[Theorem 5.4.16]{EquivariantStructureSmashPowers}. This implies that for a cofibrant, commutative ring $C_p$-spectrum $R$, the underlying $C_p$-spectrum is also cofibrant. Further, the underlying $C_p$-spectrum is $h$-cofibrant by \cite[Theorem 5.4.18]{EquivariantStructureSmashPowers}. This tells us that the maps we will use later, namely $* \colon \Sigma^\infty* \to {}^\gamma R$, $\eta\colon \SS \to R$, and their smash products, are all $h$-cofibrations. Here $\Sigma^\infty*$ is the zero-spectrum.

We will give a sketch of why $B_\bullet^{cy,C_p}(R)$ is Reedy $h$-cofibrant, for $R$ a cofibrant commutative ring $C_p$-spectrum. The argument is similar for $sd_n(B_\bullet^{cy,C_p}(R))$ and $B_\bullet(R,R,R)$.

Consider $B_\bullet^{cy,C_p}(R)$. Recall that the degeneracy maps $s_i\colon B_{k-1}^{cy,C_p}(R) \to B_{k}^{cy,C_p}(R)$ are $s_i = \id^{\smashy i} \smashy \eta \smashy \id^{\smashy k - i - 1}$ for $0 \leq i \leq k-1$. Consider the following diagram of $h$-cofibrations:
% https://q.uiver.app/#q=WzAsNCxbMCwxLCJBIFxcc21hc2h5IFxcU1MgXFxzbWFzaHkgXFxTUyJdLFsyLDAsIkEgXFxzbWFzaHkgQSBcXHNtYXNoeSBcXFNTIl0sWzIsMiwiQSBcXHNtYXNoeSBcXFNTIFxcc21hc2h5IEEiXSxbNCwxLCJBIFxcc21hc2h5IEEgXFxzbWFzaHkgQSJdLFswLDIsIlxcaWQgXFxzbWFzaHkgXFxpZCBcXHNtYXNoeSBcXGV0YSIsMl0sWzAsMSwiXFxpZCBcXHNtYXNoeSBcXGV0YSBcXHNtYXNoeSBcXGlkIl0sWzEsMywiXFxpZCBcXHNtYXNoeSBcXGlkIFxcc21hc2h5IFxcZXRhIl0sWzIsMywiXFxpZCBcXHNtYXNoeSBcXGV0YSBcXHNtYXNoeSBcXGlkIiwyXV0=
\[\begin{tikzcd}[column sep=small, row sep=small]
	&& {R \smashy R \smashy \SS} \\
	{R \smashy \SS \smashy \SS} &&&& {R \smashy R \smashy R} \\
	&& {R \smashy \SS \smashy R}
	\arrow["{\id \smashy \id \smashy \eta}", from=1-3, to=2-5]
	\arrow["{\id \smashy \eta \smashy \id}", from=2-1, to=1-3]
	\arrow["{\id \smashy \id \smashy \eta}"', from=2-1, to=3-3]
	\arrow["{\id \smashy \eta \smashy \id}"', from=3-3, to=2-5]
\end{tikzcd}\]
\noindent The pushout product of this diagram of maps is the latching map $L_2 X \to X_2$. This latching map is a $h$-cofibration as it is the pushout product of $h$-cofibrations. The other latching maps $L_n X \to X_n$ are built similarly as the pushout product (indicated by $\boxwedge$) of cofibrant maps in the following way: $(\SS \to R)^{\boxwedge \, k} \, \boxwedge \, (\Sigma^\infty* \to R)$. Therefore $L_nX \to X_n$ is a $h$-cofibration for all $n$. Therefore $B_\bullet^{cy,C_p}(R)$ is Reedy $h$-cofibrant. The argument is similar to show that $sd_n(B_\bullet^{cy,C_p}(R))$ and $B_\bullet(R,R,R)$ are Reedy $h$-cofibrant.

The following classical result is in \cite{HirschhornModelBook} and was extended equivariantly in \cite{Malkiewich-cycTHH}.

\begin{proposition}[{\cite[Theorem 15.11.11]{HirschhornModelBook} \cite[Proposition 2.4]{Malkiewich-cycTHH}}] \label{prop: Reedy maps give equiv} Let $X_\bullet$ and $Y_\bullet$ be simplicial orthogonal $C_p$-spectra. If $X_\bullet$ is Reedy $h$-cofibrant, then $|X_\bullet|$ is $h$-cofibrant. Further, if $X_\bullet$ and $Y_\bullet$ are both Reedy $h$-cofibrant, then any map $X_\bullet \to Y_\bullet$ that is a weak equivalence on each simplicial level induces a $C_p$-equivalence $|X_\bullet| \to |Y_\bullet|$ on each level of the spectra.
\end{proposition}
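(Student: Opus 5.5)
This statement is cited from Hirschhorn and Malkiewich, so the plan is to recall the standard argument, checking that each step is compatible with the $C_p$-equivariant $h$-cofibration setting.

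\emph{First claim: $|X_\bullet|$ is $h$-cofibrant.} We filter the realization by skeleta, $|X_\bullet| = \operatorname{colim}_n \operatorname{sk}_n|X_\bullet|$. Each stage is obtained from the previous one by a pushout along the pushout-product map
\[
(L_nX \to X_n) \,\boxwedge\, (\partial\Delta^n_+ \to \Delta^n_+).
\]
The latching map $L_nX \to X_n$ is an $h$-cofibration by the Reedy hypothesis. The map $\partial\Delta^n_+ \to \Delta^n_+$ is a cofibration of spaces with trivial $C_p$-action. The pushout product of an $h$-cofibration with such a space-level cofibration is again an $h$-cofibration, since the homotopy extension property is preserved by smashing with an NDR pair. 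Hence each $\operatorname{sk}_{n-1} \to \operatorname{sk}_n$ is an $h$-cofibration, as $h$-cofibrations are stable under pushout. A sequential colimit of $h$-cofibrations is an $h$-cofibration, so $* \to |X_\bullet|$ is one. Everything here works levelwise on orthogonal $C_p$-spectra, because $h$-cofibrations are defined by a homotopy extension property checked in $C_p$-spaces.

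\emph{Second claim: levelwise equivalences realize to equivalences.} We induct on skeleta using the map of skeletal filtrations induced by $f\colon X_\bullet \to Y_\bullet$.

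\begin{itemize}
\item \emph{Latching objects.} First we show $L_nX \to L_nY$ is a $C_p$-weak equivalence for all $n$. The latching object is an iterated pushout of lower $X_k$ along latching maps, which are $h$-cofibrations. A gluing lemma for $h$-cofibrations (pushouts along $h$-cofibrations preserve levelwise $C_p$-equivalences) then gives the claim by a secondary induction.
\item \emph{Quotients.} By the gluing lemma, $X_n/L_nX \to Y_n/L_nY$ is a $C_p$-equivalence. Hence so is the map on filtration quotients
\[
\operatorname{sk}_n/\operatorname{sk}_{n-1} \simeq (X_n/L_nX) \wedge \Delta^n/\partial\Delta^n.
\]
\item \emph{Skeleta.} The cofiber sequences with $h$-cofibration inclusions, together with the five lemma on equivariant homotopy groups of each fixed-point space, give $\operatorname{sk}_n|X| \simeq \operatorname{sk}_n|Y|$ for all $n$.
\item \emph{Colimit.} Homotopy groups of $C_p$-fixed points commute with sequential colimits along $h$-cofibrations, which are closed inclusions. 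So $|X_\bullet| \to |Y_\bullet|$ is a $C_p$-equivalence on each spectrum level.
\end{itemize}

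\emph{Main obstacle.} The hardest step is the equivariant gluing lemma for $h$-cofibrations at the level of each orthogonal spectrum space. I would prove it by passing to $H$-fixed points for $H \le C_p$. Fixed points preserve pushouts along closed inclusions and preserve $h$-cofibrations, which reduces the statement to the classical nonequivariant gluing lemma for NDR pairs.
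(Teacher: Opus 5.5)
The paper gives no proof of this statement; it is quoted from Hirschhorn and Malkiewich. Your skeletal induction is the standard route for $h$-cofibrations, and the first claim, the latching-object induction and the colimit step are fine. The step that fails is \emph{Skeleta}.

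The conclusion is unstable: for every level $V$ and every $H \le C_p$ you must show that $|X_\bullet|(V)^H \to |Y_\bullet|(V)^H$ is a weak equivalence of spaces. A cofiber sequence of spaces $A \to B \to B/A$ does not induce a long exact sequence of homotopy groups, so there is no five lemma to apply. Indeed, a map of $h$-cofibration pairs that is an equivalence on $A$ and on $B/A$ need not be an equivalence on $B$. For example, let $B$ be the presentation complex of $\langle s,t \mid (st)^2s^{-3},\,(st)^2t^{-5}\rangle$, which is acyclic with fundamental group the binary icosahedral group, and let $A \subset B$ be the circle $s$. Any map $(B,A) \to (D^2,S^1)$ that is a homeomorphism on $A$ induces a weak equivalence $B/A \to D^2/S^1 = S^2$, since both are simply connected and the map is an isomorphism on $H_2$. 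But $B \to D^2$ is not an equivalence. Your five-lemma argument would be correct for the stable analogue ($\pi_*$-isomorphisms), where cofiber sequences along $h$-cofibrations do give long exact sequences, but not for level equivalences.

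The repair uses tools you already have. Present $\operatorname{sk}_n|X_\bullet|$ as the pushout of
\[
\operatorname{sk}_{n-1}|X_\bullet| \longleftarrow P_nX \longrightarrow X_n \wedge \Delta^n_+, \qquad P_nX = (L_nX \wedge \Delta^n_+) \cup_{L_nX \wedge \partial\Delta^n_+} (X_n \wedge \partial\Delta^n_+),
\]
where the right-hand map is an $h$-cofibration.
\begin{enumerate}
\item The map $P_nX \to P_nY$ is an equivalence by the gluing lemma. This uses $L_nX \simeq L_nY$, $X_n \simeq Y_n$, and the $h$-cofibration $L_nX \wedge \partial\Delta^n_+ \to X_n \wedge \partial\Delta^n_+$. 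Smashing with $\Delta^n_+$ and $\partial\Delta^n_+$ preserves these equivalences because all objects involved are $h$-cofibrant, hence levelwise well-pointed.
\item Combine this with $X_n \wedge \Delta^n_+ \simeq Y_n \wedge \Delta^n_+$ and the inductive hypothesis on $\operatorname{sk}_{n-1}$. One more application of the gluing lemma then gives $\operatorname{sk}_n|X_\bullet| \simeq \operatorname{sk}_n|Y_\bullet|$.
\end{enumerate}
With this, the quotient step becomes unnecessary.

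A smaller point concerns your remark that everything can be checked in $C_p$-spaces. An $h$-cofibration of orthogonal $C_p$-spectra is defined by the homotopy extension property in spectra, which is stronger than being a levelwise $h$-cofibration. Working only levelwise would show only that $|X_\bullet|$ is levelwise $h$-cofibrant. However, the closure properties you use (pushouts, sequential composites, pushout-products with $\partial\Delta^n_+ \to \Delta^n_+$) hold directly in spectra, so your first claim is unaffected.
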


For the proof of the following lemma we will be using a multi-simplicial set. For $X$ a multi-simplicial set the \emph{diagonal simplicial set} $d(X)$ on the $k^\th$ level is the $n$-simplicial set $X(k,\ldots, k)$, and can also be viewed as the composite functor
\[
\Delta^\text{op} \overset{D^{n-1}}{\longrightarrow} (\Delta^\text{op})^{\times n} \overset{X}{\longrightarrow} sSets
\]
where $D^{n-1}$ is the diagonal map and $sSets$ is the category of simplicial sets \cite[IV.1]{GoerssJardine}. A multi-simplicial set, when geometrically realized is homeomorphic to the geometric realization of its diagonal \cite{QuillenKTheoryI}.

\begin{lemma}
    Let $U$ be a complete $S^1$-universe, $R$ be a cofibrant, commutative ring $C_p$-spectrum indexed over the trivial universe $\RR^\infty$, $p$ prime. Then $|\I_{\RR^\infty}^U sd_n(B_\bullet^{cy,C_p}(R))| \to |\I_{\RR^\infty}^U B_\bullet^{cy,C_p}(R)|$ is a $C_p$-weak equivalence.
\end{lemma}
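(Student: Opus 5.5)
We follow the classical strategy of Angeltveit and Rognes, replacing each cyclic bar object with a bar-type model via \Cref{bar complexes isoms} and then collapsing the extra bar factors using \Cref{Bar Lemma}. Since $\I_{\RR^\infty}^U$ is a left adjoint (indeed an equivalence of homotopy categories on cofibrant objects) that commutes with geometric realization, we work first with the simplicial $C_p$-spectra indexed on $\RR^\infty$. At the end we apply $\I^U_{\RR^\infty}$, which preserves the relevant weak equivalences between $h$-cofibrant objects.

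\textbf{Step 1: rewrite both sides as realizations of bisimplicial objects.} By \Cref{bar complexes isoms},
\[
sd_n(B^{cy,C_p}_\bullet(R)) \cong (B(R,R,R)^{\smashy_R (n-1)} \smashy_{(R\smashy R)} B(R,R,{}^\gamma R))_\bullet
\quad\text{and}\quad
B^{cy,C_p}_\bullet(R)\cong R\smashy_{(R\smashy R)} B_\bullet(R,R,{}^\gamma R).
\]
Consider the multisimplicial object $(B_{k_1}(R,R,R)\smashy_R\cdots\smashy_R B_{k_{n-1}}(R,R,R))\smashy_{(R\smashy R)}B_{k_n}(R,R,{}^\gamma R)$. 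By the cited result of Quillen, its realization agrees with the realization of its diagonal. The diagonal is exactly the right-hand side of the second isomorphism, and $|sd_n X_\bullet|\cong |X_\bullet|$ equivariantly (Bökstedt–Hsiang–Madsen). Realizing one variable at a time, the left side becomes $|B(R,R,R)|^{\smashy_R(n-1)}\smashy_{(R\smashy R)}|B(R,R,{}^\gamma R)|$. A care point is that the $C_p$-action on $sd_n$ permutes the $n$ blocks cyclically, so it must be matched with the action on the multisimplicial object. We would check this using the explicit formula for $\bar d_k$ through $t$ and ${}^\gamma\mu$ from the proof of \Cref{bar complexes isoms}.

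\textbf{Step 2: collapse the untwisted bar factors.} By \Cref{Bar Lemma}, $B_\bullet(R,R,R)\to R_\bullet$ is a simplicial $C_p$-homotopy equivalence of right $R$-modules, and hence of bimodules after realization. Since the smash products over $R$ and over $R\smashy R$ reduce levelwise to free bimodule expressions, as in the reduction maps built before \Cref{bar complexes isoms}, smashing this homotopy equivalence with the remaining factors is still a simplicial homotopy equivalence. Iterating over the $n-1$ factors gives a levelwise weak equivalence from the multisimplicial model to $R\smashy_{(R\smashy R)}B_\bullet(R,R,{}^\gamma R)\cong B^{cy,C_p}_\bullet(R)$. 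This map is the one induced by multiplication, so it is compatible with the comparison map in the statement.

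\textbf{Step 3: pass to realizations.} $sd_n(B^{cy,C_p}_\bullet(R))$, $B^{cy,C_p}_\bullet(R)$ and $B_\bullet(R,R,R)$ are Reedy $h$-cofibrant by the sketch given before \Cref{prop: Reedy maps give equiv}. That proposition then turns the levelwise equivalences into $C_p$-equivalences on realizations, and applying $\I_{\RR^\infty}^U$ finishes the argument.

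The main obstacle is Step 2 equivariantly. The homotopy in \Cref{Bar Lemma} is only right-$R$-linear, while the $C_p$-action cyclically permutes the bar factors, so collapsing a single factor is not $C_p$-equivariant on its own. The first approach is to collapse all $n-1$ factors simultaneously, using the symmetric contraction given by multiplying into the end copies, and to check that the twisting by $\gamma$ is carried along by ${}^\gamma R$. If that fails, the fallback is to verify only that the map is an underlying equivalence on $H$-fixed points for $H=e,C_p$. This would use the identification $|sd_n X|^{C_n}$ with the cyclic structure, but that route is harder.
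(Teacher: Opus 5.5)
Your Steps 1--3 are the paper's argument: identify $sd_n(B^{cy,C_p}_\bullet(R))$ with the diagonal of the multisimplicial object via \Cref{bar complexes isoms} and Quillen's diagonal result, realize one direction at a time, collapse the $n-1$ untwisted bar factors one at a time with \Cref{Bar Lemma}, and pass to realizations using Reedy $h$-cofibrancy and \Cref{prop: Reedy maps give equiv}; note that the diagonal is the right-hand side of your first displayed isomorphism, not the second. The ``main obstacle'' you raise does not arise, so no fallback is needed: the $C_p$-action in this lemma is the levelwise diagonal action coming from $R$. The block-permuting operator $t^{k+1}$ on $sd_n$ generates a $C_{np}$-action whose $C_p$-subgroup is again that diagonal action. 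Every map in \Cref{Bar Lemma} is built from $\mu$ and $\eta$, and the $\gamma$-twist commutes with the action because $C_p$ is abelian. Hence collapsing a single factor is already $C_p$-equivariant, exactly as the paper does it.
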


\begin{proof} 
Recall from \Cref{bar complexes isoms} that $sd_n(B_\bullet^{cy,C_p}(R)) \cong (B(R,R,R)^{\smashy_R (n-1)} \smashy_{(R \smashy R)} B(R,R,{}^\gamma R))_\bullet$ as simplicial $C_p$-spectra. This is the diagonal spectrum of the $n$-simplicial $C_p$-spectrum $B_\bullet(R,R,R)^{\smashy_R (n-1)} \smashy_{(R \smashy R)} B_\bullet(R,R,{}^\gamma R)$ therefore by \cite[pg 10]{QuillenKTheoryI} we have the following homeomorphism
\[
|B_\bullet(R,R,R)^{\smashy_R (n-1)} \smashy_{(R \smashy R)} B_\bullet(R,R,{}^\gamma R)| \cong |(B(R,R,R)^{\smashy_R (n-1)} \smashy_{(R \smashy R)} B(R,R,{}^\gamma R))_\bullet|.
\]

\noindent Since this is a multi-simplicial $C_p$-spectrum we can therefore geometrically realize one simplicial direction at a time. 

Consider the following composition of maps, first the homeomorphism discussed above, followed by $n-1$ iterations of the map from the bar lemma on the left-most component (see \Cref{Bar Lemma}) and the identity map on the rest of the components:
    \begin{align*}
        |(B(R,R,R)^{\smashy_R (n-1)} \smashy_{(R \smashy R)} B(R,R,{}^\gamma R))_\bullet| & \cong |B_\bullet(R,R,R)^{\smashy_R (n-1)} \smashy_{(R \smashy R)} B_\bullet(R,R,{}^\gamma R)|  \\
        & \cong |B_\bullet(R,R,R)| \smashy_R |B_\bullet(R,R,R)^{\smashy_R (n-2)} \smashy_{(R \smashy R)} B_\bullet(R,R,{}^\gamma R)| \\
    & \to R \smashy_R |B_\bullet(R,R,R)^{\smashy_R (n-2)} \smashy_{(R \smashy R)} B_\bullet(R,R,{}^\gamma R)| \\
    & \vdots  \\
    & \to R \smashy_{(R \smashy R)} |B_\bullet(R,R,{}^\gamma R))|.
    \end{align*}
\noindent Recall from \Cref{bar complexes isoms} that $R \smashy_{(R \smashy R)} B_\bullet(R,R,{}^\gamma R)$ is isomorphic to $B_\bullet^{cy,C_p}(R)$, therefore $|B_\bullet^{cy,C_p}(R)| \cong R \smashy_{(R \smashy R)} |B_\bullet(R,R,{}^\gamma R)|$. The first two maps are homeomorphisms which respect the $C_p$-actions, therefore they are $C_p$-weak equivalences. The maps following all apply the map from the bar lemma on the left-most component which is a $C_p$-weak equivalence on each level of the spectra, with the identity map on $B_\bullet(R,R,R)^{\smashy_R k} \smashy_{(R \smashy R)} B_\bullet(R,R,{}^\gamma R)$ for some $0 \leq k \leq n-2$. Since the identity map on $B_\bullet(R,R,R)^{\smashy_R k} \smashy_{(R \smashy R)} B_\bullet(R,R,{}^\gamma R)$ is a map of Reedy $h$-cofibrant objects which is a weak equivalence on each simplicial level, then \Cref{prop: Reedy maps give equiv} says after passing to the geometric realization of this map is a $C_p$-weak equivalence on each level of the spectra.

Therefore this composite map, $|\I_{\RR^\infty}^U sd_n(B_\bullet^{cy,C_p}(R))| \to |\I_{\RR^\infty}^U B_\bullet^{cy,C_p}(R)|$, is a $C_p$-weak equivalence.
\end{proof}

Recall that $m\THH_{C_p}(R) \cong |\I_{\RR^\infty}^U sd_m(B_\bullet^{cy,C_p}(R))|$, and whenever discussing twisted THH of a $C_n$-spectrum $R$, we mean the left derived twisted THH of $R$. In particular, this is the same as considering twisted THH of the cofibrant replacement of $R$.

An equivariant analogue to \cite[Lemma 3.8]{Angeltveit-Rognes} immediately follows.

\begin{theorem}\label{2THHtoTHH}

Let $U$ be a complete $S^1$-universe, and let $\widetilde{U} \coloneqq i^*_{C_p} U$. Let $R$ be a commutative ring $C_p$-spectrum indexed on the $C_p$-universe $\widetilde{U}$, for $p$ prime. Then there is a $C_p$-weak equivalence

\begin{center}
    $\pi_m\colon m\THH_{C_p}(R) \to \THH_{C_p}(R)$.
\end{center}

\end{theorem}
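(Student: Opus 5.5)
The theorem should follow by combining the preceding lemma with the identifications already set up. We apply that lemma with $n = m$, since $m\THH_{C_p}(R)$ is built from $mpS^1_\bullet = sd_m(pS^1_\bullet)$. First we reduce to the trivial universe: replace $R$ by a cofibrant replacement, which is allowed because twisted THH is always left derived. Then set $R' \coloneqq \I_{\widetilde{U}}^{\RR^\infty} R$, a cofibrant commutative ring $C_p$-spectrum indexed on $\RR^\infty$. Cofibrancy passes through the change of universe functor, since it is a strong symmetric monoidal Quillen equivalence on the relevant categories.

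Next we identify the two sides with realizations of cyclic bar constructions.
\begin{itemize}
\item For the target, \Cref{pS^1 gives cyclic bar} gives $R' \otimes_{C_p} pS^1_\bullet \cong B^{cy,C_p}_\bullet(R')$, so $\THH_{C_p}(R) \cong |\I^U_{\RR^\infty} B^{cy,C_p}_\bullet(R')|$.
\item For the source, we claim $R' \otimes_{C_p} sd_m(pS^1_\bullet) \cong sd_m(R' \otimes_{C_p} pS^1_\bullet)$. This holds because $R' \otimes_{C_p} (-)$ is applied levelwise, and $sd_m$ only reindexes levels and composes face and degeneracy maps. Hence $m\THH_{C_p}(R) \cong |\I^U_{\RR^\infty} sd_m(B^{cy,C_p}_\bullet(R'))|$, as recorded just before the statement.
\end{itemize}

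We now define $\pi_m$ as the composite of these identifications with the map from the preceding lemma, $|\I^U_{\RR^\infty} sd_m(B^{cy,C_p}_\bullet(R'))| \to |\I^U_{\RR^\infty} B^{cy,C_p}_\bullet(R')|$. That lemma says this map is a $C_p$-weak equivalence. The map is assembled from three pieces: the diagonal homeomorphism for multisimplicial objects, the bar lemma equivalences $|B_\bullet(R,R,R)| \to R$, and \Cref{prop: Reedy maps give equiv} applied to Reedy $h$-cofibrant objects. Each piece commutes with $\I^U_{\RR^\infty}$ because that functor is applied levelwise and preserves colimits and $C_p$-equivalences between cofibrant objects.

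The main obstacle is checking that $\I^U_{\RR^\infty}$ commutes with geometric realization and carries the levelwise $C_p$-equivalences of the lemma to $C_p$-equivalences indexed on the complete universe $U$. I would handle this using that $\I^U_{\RR^\infty}$ is a left adjoint, so it commutes with the coends defining realization. I would also use that it preserves weak equivalences between $h$-cofibrant objects, and the Reedy $h$-cofibrancy sketch guarantees every object involved is of that kind.

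A secondary point is naturality. It is not strictly needed for the equivalence, but it matters later for using $\pi_m$ in the algebra structure. Following \cite[Lemma 3.8]{Angeltveit-Rognes}, I would check that $\pi_m$ agrees up to homotopy with the map induced on realizations by the canonical homeomorphism $|sd_m X| \cong |X|$.
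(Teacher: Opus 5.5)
Your proposal is correct and matches the paper. The paper deduces the theorem immediately from the preceding lemma with $n=m$, using the identification $m\THH_{C_p}(R)\cong|\I^U_{\RR^\infty}\, sd_m(B^{cy,C_p}_\bullet(R))|$ and the convention that twisted THH is left derived, so that $R$ may be replaced by a cofibrant model; the compatibility with $\I^U_{\RR^\infty}$ that you flag as the main obstacle is already built into that lemma's statement, so nothing further is needed at this step.
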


We are now ready to discuss the structure of twisted THH.

\subsection{Algebraic structure} \label{SubSec:Algebra structure}

In this section, we will show that for $R$ a commutative ring $C_p$-spectrum, $\THH_{C_p}(R)$ is a commutative $R$-algebra in the category of $C_p$-spectra for any prime $p$. The process for proving this is similar to the process that Angeltveit and Rognes use to show that for $A$ a commutative ring spectrum, $\THH(A)$ is a commutative $A$-algebra in the category of spectra \cite{Angeltveit-Rognes}. Note that Angeltveit and Rognes also show that $\THH(A)$ is an $A$-Hopf algebra in the stable homotopy category \cite{Angeltveit-Rognes}.

Recall that in \cite{Angeltveit-Rognes} the simplicial map $\eta\colon e \to S^1_\bullet$ is the inclusion of the point, which induces the unit map $\eta\colon A \to \THH(A)$ by applying the functor $A \otimes (-)$. The equivariant analogue to this simplicial map is $\eta\colon C_p \to pS^1_\bullet$ which includes the $p$ points into $pS^1_\bullet$. This induces the unit map $\eta\colon R \to \THH_{C_p}(R)$ by applying the functor $R \otimes_{C_p} (-)$. The intuition here is that we need $C_p$-equivariant analogues to the classical spaces used. First, instead of a point we require the $C_p$-orbit of a point. Second, instead of $S^1$ we need a $C_p$-equivariant model of the circle such that after applying the functor $R \otimes_{C_p} (-)$ to the circle we get $\THH_{C_p}(R)$.

\begin{example}
The inclusion map $C_3 \to 3S^1_\bullet$ can be pictured as follows:

    \centerline{\begin{tikzpicture}[>=stealth]
    \draw[
        decoration={markings,
        mark=at position 0.25 with {\fill circle (1.5pt) node[above] {\footnotesize$v_0$};}, 
        mark=at position 0.58 with {\fill circle (1.5pt) node[left] {\footnotesize$v_1$};}, 
        mark=at position 0.91 with {\fill circle (1.5pt) node[right] {\footnotesize$v_2$};}, 
        },
        postaction={decorate},
        draw=white
       ]
       (-1,0) circle (0.6);

\draw[->] (0.1,0.15) -- (1.8,0.15);

    \draw[ 
        decoration={markings,
        mark=at position 0.09 with {\arrow[thick]{>}},
        mark=at position 0.25 with {\fill circle (1.5pt) node[above] {\footnotesize$v_0$};}, 
        mark=at position 0.45 with {\arrow[thick]{>}}, 
        mark=at position 0.58 with {\fill circle (1.5pt) node[left] {\footnotesize$v_1$};}, 
        mark=at position 0.76 with {\arrow[thick]{>}},
        mark=at position 0.91 with {\fill circle (1.5pt) node[right] {\footnotesize$v_2$};}, 
        },
        postaction={decorate}
        ]
        (3,0) circle (0.6);
\end{tikzpicture}}
\end{example}

Let us consider the pushout of two copies of the simplicial map $\eta\colon C_p \to pS^1_\bullet$, for $p$ prime, and let us call this pushout $pS^1_\bullet \wedgey_{C_p} pS^1_\bullet.$ Define the fold map $\phi\colon pS^1_\bullet \wedgey_{C_p} pS^1_\bullet \to pS^1_\bullet$ as folding the $1$-cells together that share the same boundary. This chosen notation is meant to evoke that this is an equivariant analogue of the classical wedge.

\begin{example}
    The spaces $2S^1_\bullet \wedgey_{C_2} 2S^1_\bullet$ and $3S^1_\bullet \wedgey_{C_3} 3S^1_\bullet$ can be depicted with the following diagrams, respectively:

\centerline{
\begin{tabular}{ccc}
    \begin{tikzpicture}[>=stealth]
    \draw[ 
        decoration={
            markings, 
            mark=at position 0.005 with {\fill circle (1.5pt) node[right] {\footnotesize$v_0$};}, 
            mark=at position 0.27 with {\arrow[thick]{>}}, 
            mark=at position 0.5 with {\fill circle (1.5pt) node[left] {\footnotesize$v_1$};}, 
            mark=at position 0.77 with {\arrow[thick]{>}},
        },
        postaction={decorate}
    ]
    (3,0) circle (0.6);
    
    \draw[
        decoration={
            markings, 
            mark=at position 0.27 with {\arrow[thick]{>}}, 
            mark=at position 0.77 with {\arrow[thick]{>}},
        },
        postaction={decorate}
    ]
    (3,0) ellipse (.59 and 0.4);
\end{tikzpicture}
 & & 
\begin{tikzpicture}[>=stealth]
    \draw[ 
        decoration={
            markings,
            mark=at position 0.17 with {\arrow[thick]{>}},
            mark=at position 0.32 with {\fill circle (1.5pt) node[above] {\footnotesize$v_0$};}, 
            mark=at position 0.50 with {\arrow[thick]{>}}, 
            mark=at position 0.64 with {\fill circle (1.5pt) node[left] {\footnotesize$v_1$};}, 
            mark=at position 0.83 with {\arrow[thick]{>}},
            mark=at position 0.999 with {\fill circle (1.5pt) node[right] {\footnotesize$v_2$};}, 
        },
        postaction={decorate}
    ]
    (.6,-.3) to [bend left=30,looseness=0.8] (0,.6) to [bend left=30,looseness=0.8] (-.6, -.3) to [bend left=30,looseness=0.8] (.6,-.3) ;
    
    \draw[ 
        decoration={
            markings,
            mark=at position 0.17 with {\arrow[thick]{>}},
            mark=at position 0.50 with {\arrow[thick]{>}}, 
            mark=at position 0.83 with {\arrow[thick]{>}},
        },
        postaction={decorate}
    ]
    (.6,-.3) to [bend right=30,looseness=0.8] (0,.6) to [bend right=30,looseness=0.8] (-.6, -.3) to [bend right=30,looseness=0.8] (.6,-.3) ;
\end{tikzpicture}
\end{tabular}}
\noindent where the $C_2$-action on the first diagram is counter clockwise rotation by $\pi$, and the $C_3$-action on the second diagram is counter clockwise rotation by $2\pi/3$.
\end{example}

In order to show that this fold map induces the product map $\phi\colon \THH_{C_p}(R) \smashy_R \THH_{C_p}(R) \to \THH_{C_p}(R)$, we need to show $R \otimes_{C_p} (pS^1_\bullet \wedgey_{C_p} pS^1_\bullet)$ is isomorphic to $(R \otimes_{C_p} pS^1_\bullet) \smashy_R (R \otimes_{C_p} pS^1_\bullet)$ as simplicial $C_p$-spectra. This question reduces to whether the functor $R \otimes_{C_p} (-)$ (from \cref{coequalizer for THHCn}) from unbased spaces with a free $C_p$-action to $C_p$-spectra preserves pushouts.

\begin{proposition}
    Let $R$ be a commutative ring $C_p$-spectrum and consider $C_p$ as an unbased space with a free $C_p$-action, $p$ prime. The functor $R \otimes_{C_p} (-)$ from the category of unbased spaces with a free $C_p$-action to the category of commutative ring $C_p$-spectra preserves pushouts.
\end{proposition}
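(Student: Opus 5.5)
The plan is to reduce the statement to the fact that $R \otimes_{C_p}(-)$ is a left adjoint, which preserves all colimits. I will use two ingredients: the non-equivariant tensor $R \otimes (-)$, from unbased spaces to commutative ring spectra, preserves pushouts, and the coequalizer in \cref{coequalizer for THHCn} commutes with pushouts.

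\textbf{Step 1: the untwisted tensor.} For a commutative ring $C_p$-spectrum $R$ and an unbased space $X$, $R \otimes X$ is the tensoring of commutative ring spectra over spaces. For finite sets it is $R^{\smashy X}$, the coproduct in commutative rings; for general $X$ it is the realization of the levelwise tensor. By definition of a tensoring there is a natural bijection
\[
\Hom_{\mathrm{Comm}}(R \otimes X, T) \cong \Hom_{\mathrm{Top}}(X, \Hom_{\mathrm{Comm}}(R,T)).
\]
So $(-)\otimes$ is a left adjoint and preserves pushouts, in the variable $X$ and also in the pair $(R,X)$. Concretely, for a pushout $X = Y \cup_W Z$ of spaces I get
\[
R\otimes X \cong (R\otimes Y)\smashy_{R\otimes W}(R\otimes Z),
\]
since pushouts of commutative rings are relative smash products. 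Equivariance of everything follows from naturality in $R$ with respect to the $C_p$-action $\ell$.

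\textbf{Step 2: the twisted tensor as a colimit.} The object $R \otimes_{C_p} X$ is the coequalizer of $\id\otimes r$ and $\ell\otimes\id$ from $R\otimes C_p\otimes X$ to $R\otimes X$. This coequalizer is a colimit of a diagram that is functorial in $X$, and colimits commute with colimits. Take a pushout $Y \leftarrow W \to Z$ of free $C_p$-spaces with pushout $X$ (for instance $Y = Z = pS^1_\bullet$, $W = C_p$, levelwise in the simplicial direction). Then:
\begin{enumerate}
\item Each term $R\otimes C_p\otimes(-)$ and $R\otimes(-)$ sends this pushout to a pushout, by Step 1. The map $C_p\otimes(-)$, i.e.\ $C_p\times(-)$, preserves pushouts of spaces.
\item Both parallel maps are natural in $X$.
\end{enumerate}
Hence the coequalizer of pushouts is the pushout of the coequalizers. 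Equivalently, I can exhibit the right adjoint: a map $R\otimes_{C_p}X\to T$ corresponds to a $C_p$-equivariant map $X\to \Hom_{\mathrm{Comm}}(R,T)$, with $C_p$ acting on the target through $\ell$ on $R$ and the action on $T$. This makes $R\otimes_{C_p}(-)$ a left adjoint from free $C_p$-spaces, so it preserves pushouts, which is the cleanest route. Note that the pushout of free $C_p$-spaces along injections remains free, so the pushout stays in the domain category.

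\textbf{Main obstacle.} The hardest step is making the adjunction honest in the orthogonal $C_p$-spectra setting, including the change-of-universe functors suppressed in the notation. The key check is that $\Hom_{\mathrm{Comm}}(R,T)$ with the conjugation $C_p$-action is the correct right adjoint: an equivariant map out of $X$ must intertwine $r$ with $\ell$, which is exactly the coequalizer relation. I would verify this by writing out the universal property of the coequalizer against the tensoring adjunction of Step 1.

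Finally, I would record the consequence used afterward. Applied levelwise to $C_p \to pS^1_\bullet \leftarrow C_p$, it gives an isomorphism of simplicial $C_p$-spectra
\[
R\otimes_{C_p}(pS^1_\bullet\wedgey_{C_p}pS^1_\bullet)\cong (R\otimes_{C_p}pS^1_\bullet)\smashy_R(R\otimes_{C_p}pS^1_\bullet),
\]
using $R\otimes_{C_p}C_p\cong R$.
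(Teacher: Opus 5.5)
Your Steps 1 and 2 are exactly the paper's proof. The two functors $R\otimes C_p\otimes(-)$ and $R\otimes(-)$ preserve colimits, the coequalizer defining $R\otimes_{C_p}(-)$ is itself a colimit, and colimits commute with colimits. You justify the first point via the tensoring adjunction, and you note that pushouts of free $C_p$-spaces along injections stay free. Both are sensible additions that the paper leaves implicit; the second matters because a general pushout of free $C_p$-sets can collapse onto a fixed point.

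You should drop the ``equivalent'' right adjoint, though, because it is wrong. The mapping space $\Hom_{\mathrm{Comm}}(R,T)$ with the conjugation action is the right adjoint of the \emph{untwisted} tensor: $\Hom_{C_p}(R\otimes X,T)\cong \mathrm{Map}_{C_p}(X,\Hom_{\mathrm{Comm}}(R,T))$, with the diagonal action on $R\otimes X$. If it were also right adjoint to $R\otimes_{C_p}(-)$, uniqueness of adjoints would give $R\otimes_{C_p}X\cong R\otimes X$. That already fails at $X=C_p$, where the left side is $R$ and the right side is $R^{\smashy p}$.

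Equivariance for the conjugation action is not the same as the coequalizer relation $\phi_{gx}=\phi_x\circ\ell_g$, where $\phi_x$ is the restriction to the factor indexed by $x$. The coequalizer relation is an extra condition for a map out of $R\otimes X$ to factor through $R\otimes_{C_p}X$. Your colimit-interchange argument never uses this adjunction, so the proof is complete without it.
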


\begin{proof}
The tensor products $R \otimes C_p \otimes (-)$ and $R \otimes (-)$ both preserve colimits. All colimits commute with other colimits, and the coequalizer is a colimit so $R \otimes_{C_p} (-)$ commutes with all colimits and therefore preserves pushouts.
\end{proof}

Now we can say that the simplicial maps:
\begin{center}
\begin{tabular}{l}
    $\eta\colon C_p \to pS^1_\bullet$ \\
    $\phi\colon pS^1_\bullet \wedgey_{C_p} pS^1_\bullet \to pS^1_\bullet$
    \end{tabular}
\end{center}
\noindent induce the following maps of commutative ring $C_p$-spectra:
\begin{equation} \label{SimpProduct} 
    \begin{tabular}{l}
    $\eta\colon R \to \THH_{C_p}(R)$ \\
    $\phi\colon \THH_{C_p}(R) \smashy_R \THH_{C_p}(R) \to \THH_{C_p}(R)$
    \end{tabular}
\end{equation}
\noindent which are the unit and product maps respectively. We will use these maps to show that $\THH_{C_p}(R)$ is a commutative $R$-algebra.

In order to check associativity of this product map we will need to understand the pushout of the span $pS^1_\bullet \wedgey_{C_p} pS^1_\bullet \leftarrow C_p \to pS^1_\bullet$, which is $pS^1_\bullet \wedgey_{C_p} pS^1_\bullet \wedgey_{C_p} pS^1_\bullet$. This should be thought of as an equivariant analogue to the wedge of three circles.

\begin{example} \label{3waywedge}
    Consider $3S^1_\bullet \wedgey_{C_3} 3S^1_\bullet \wedgey_{C_3} 3S^1_\bullet$ which can be pictured as:

    \centerline{
    \begin{tikzpicture}[>=stealth, scale=1.4]
    \draw[ 
        decoration={
            markings,
            mark=at position 0.17 with {\arrow[thick]{>}},
            mark=at position 0.32 with {\fill circle (1.5pt) node[above] {\footnotesize$v_0$};}, 
            mark=at position 0.50 with {\arrow[thick]{>}}, 
            mark=at position 0.64 with {\fill circle (1.5pt) node[left] {\footnotesize$v_1$};}, 
            mark=at position 0.83 with {\arrow[thick]{>}},
            mark=at position 0.999 with {\fill circle (1.5pt) node[right] {\footnotesize$v_2$};}, 
        },
        postaction={decorate}
    ]
    (.6,-.3) to [bend left=30,looseness=0.8] (0,.6) to [bend left=30,looseness=0.8] (-.6, -.3) to [bend left=30,looseness=0.8] (.6,-.3) ;

    \draw[ 
        decoration={
            markings,
            mark=at position 0.17 with {\arrow[thick]{>}},
            mark=at position 0.50 with {\arrow[thick]{>}}, 
            mark=at position 0.83 with {\arrow[thick]{>}},
        },
        postaction={decorate}
    ]
    (.6,-.3) to (0,.6) to (-.6, -.3) to (.6,-.3) ;

    \draw[ 
        decoration={
            markings,
            mark=at position 0.17 with {\arrow[thick]{>}},
            mark=at position 0.50 with {\arrow[thick]{>}}, 
            mark=at position 0.83 with {\arrow[thick]{>}},
        },
        postaction={decorate}
    ]
    (.6,-.3) to [bend right=30,looseness=0.8] (0,.6) to [bend right=30,looseness=0.8] (-.6, -.3) to [bend right=30,looseness=0.8] (.6,-.3) ;
\end{tikzpicture}
    }

    \noindent Here $\phi \wedgey \id\colon 3S^1_\bullet \wedgey_{C_3} 3S^1_\bullet \wedgey_{C_3} 3S^1_\bullet \to 3S^1_\bullet \wedgey_{C_3} 3S^1_\bullet$ folds the outer copy of $3S^1_\bullet$ with the middle copy of $3S^1_\bullet$ and leaves the inner copy of $3S^1_\bullet$ alone. Similarly, $\id \wedgey \phi\colon 3S^1_\bullet \wedgey_{C_3} 3S^1_\bullet \wedgey_{C_3} 3S^1_\bullet \to 3S^1_\bullet \wedgey_{C_3} 3S^1_\bullet$ folds the inner copy of $3S^1_\bullet$ with the middle copy of $3S^1_\bullet$ and leaves the outer copy of $3S^1_\bullet$ alone.
\end{example}

\begin{theorem} \label{THHAlgebra}
Let $p$ be prime. For a commutative ring $C_p$-spectrum $R$, $\THH_{C_p}(R)$ is a commutative $R$-algebra in the category of commutative ring $C_p$-spectra.
\end{theorem}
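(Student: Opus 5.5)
The plan is to run the Angeltveit--Rognes argument for $\THH(A)$, replacing the point and the circle by the orbit $C_p$ and the equivariant circle $pS^1_\bullet$. Take the unit $\eta\colon R \to \THH_{C_p}(R)$ and product $\phi\colon \THH_{C_p}(R)\smashy_R\THH_{C_p}(R)\to\THH_{C_p}(R)$ from \eqref{SimpProduct}. These are obtained by applying $R\otimes_{C_p}(-)$ to the simplicial maps $\eta\colon C_p\to pS^1_\bullet$ and $\phi\colon pS^1_\bullet\wedgey_{C_p}pS^1_\bullet\to pS^1_\bullet$, followed by realization and the change of universe $\I^U_{\RR^\infty}$. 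By \Cref{pS^1 gives cyclic bar}, $R\otimes_{C_p}pS^1_\bullet\cong B^{cy,C_p}_\bullet(R)$, so the target really is $\THH_{C_p}(R)$. Since $R\otimes_{C_p}(-)$ preserves pushouts (the preceding proposition) and geometric realization commutes with the relevant colimits, we get
\[
R\otimes_{C_p}\bigl(pS^1_\bullet\wedgey_{C_p}pS^1_\bullet\bigr)\cong \THH_{C_p}(R)\smashy_R\THH_{C_p}(R).
\]
Each algebra axiom then reduces to an identity, or a $C_p$-equivariant simplicial homotopy, between maps of free $C_p$-simplicial sets built from copies of $pS^1_\bullet$ glued along $C_p$. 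Functoriality of $R\otimes_{C_p}(-)$ turns these into identities or homotopies of $C_p$-spectra.

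First, unitality. The composite $pS^1_\bullet\cong C_p\cup_{C_p}pS^1_\bullet \xrightarrow{\eta\vee\id} pS^1_\bullet\wedgey_{C_p}pS^1_\bullet\xrightarrow{\phi}pS^1_\bullet$ is literally the identity, since folding a circle against the degenerate copy of the orbit does nothing. The same holds on the other side. Second, associativity. Using the triple wedge of \Cref{3waywedge}, the two composites $\phi\circ(\phi\vee\id)$ and $\phi\circ(\id\vee\phi)$ both send each $1$-cell of each of the three copies to the corresponding $1$-cell of $pS^1_\bullet$ with the same boundary. They therefore agree on the nose as simplicial maps. Third, the $R$-algebra structure: the $R$-module structure on $\THH_{C_p}(R)$ comes from $\eta$, and $\phi$ is $R$-bilinear because it is a map under $C_p$. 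Both of these follow from the maps being maps of spaces under $C_p$.

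Commutativity is the main obstacle. The twist $\tau$ of $pS^1_\bullet\wedgey_{C_p}pS^1_\bullet$ swaps the two circles, and $\phi\circ\tau=\phi$ holds strictly, because the fold map does not distinguish the two copies. One must still check that $R\otimes_{C_p}\tau$ is the symmetry isomorphism of $\THH_{C_p}(R)\smashy_R\THH_{C_p}(R)$, so that $\phi\circ\mathrm{sym}=\phi$. This requires verifying that $\tau$ is $C_p$-equivariant, and that under the pushout identification it corresponds to the swap of the two smash factors. I would check this levelwise: the $k$-simplices of $pS^1_\bullet\wedgey_{C_p}pS^1_\bullet$ are the set of $p(k+1)$ elements of each circle with the $p$ vertices identified. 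One then follows the coequalizer description from the proof of \Cref{pS^1 gives cyclic bar}, tracking which $R_{s}$ factor goes where.

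The remaining subtlety is homotopical. If the paper wants these structures in the point-set category of commutative ring $C_p$-spectra, the above strict identities suffice, with $R$ cofibrant. If instead one wants the product to be compatible with other simplicial models (as later needed for the coproduct discussion and for the spectral sequence), I would invoke \Cref{2THHtoTHH} to replace $pS^1_\bullet$ by $mpS^1_\bullet$ where needed. In that setting only $C_p$-weak equivalences are required.
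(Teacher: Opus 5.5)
Your proposal is correct and is essentially the paper's proof: the unit and product come from applying $R\otimes_{C_p}(-)$ to $\eta\colon C_p\to pS^1_\bullet$ and to the fold map $\phi\colon pS^1_\bullet\wedgey_{C_p}pS^1_\bullet\to pS^1_\bullet$, pushout preservation identifies the source of the product with $\THH_{C_p}(R)\smashy_R\THH_{C_p}(R)$, and associativity, unitality and commutativity are then checked as strict identities of simplicial $C_p$-maps. The point you flag as the main obstacle, that $R\otimes_{C_p}\tau$ is the symmetry isomorphism, needs no levelwise check: $\tau$ is the map of pushouts that swaps the two legs, and $R\otimes_{C_p}(-)$ preserves pushouts, so this follows from the universal property; likewise \Cref{2THHtoTHH} is not needed in this proof.
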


\begin{proof} We begin by checking associativity of the product map $\phi\colon \THH_{C_p}(R) \smashy_R \THH_{C_p}(R) \to \THH_{C_p}(R)$. For ease of notation, let $T \coloneqq \THH_{C_p}(R)$. We need to verify that the following diagram commutes:
% https://q.uiver.app/#q=WzAsNCxbMCwwLCJUIFxcc21hc2h5X1IgVCBcXHNtYXNoeV9SIFQiXSxbMiwwLCJUIFxcc21hc2h5X1IgVCJdLFswLDIsIlQgXFxzbWFzaHlfUiBUIl0sWzIsMiwiVCJdLFswLDEsIlxcaWQgXFxzbWFzaHkgXFxwaGkiXSxbMCwyLCJcXHBoaSBcXHNtYXNoeSBcXGlkIiwyXSxbMiwzLCJcXHBoaSIsMl0sWzEsMywiXFxwaGkiXV0=
\[\begin{tikzcd}[column sep=small,row sep=small]
	{T \smashy_R T \smashy_R T} && {T \smashy_R T} \\
	\\
	{T \smashy_R T} && T.
	\arrow["{\id \smashy \phi}", from=1-1, to=1-3]
	\arrow["{\phi \smashy \id}"', from=1-1, to=3-1]
	\arrow["\phi", from=1-3, to=3-3]
	\arrow["\phi"', from=3-1, to=3-3]
\end{tikzcd}\]
\noindent It is sufficient to show that the following diagram of $C_p$-simplicial spaces commutes:
% https://q.uiver.app/#q=WzAsNCxbMCwwLCJwU14xX1xcYnVsbGV0IFxcd2VkZ2V5X3tDX3B9IHBTXjFfXFxidWxsZXQgXFx3ZWRnZXlfe0NfcH0gcFNeMV9cXGJ1bGxldCJdLFsyLDAsInBTXjFfXFxidWxsZXQgXFx3ZWRnZXlfe0NfcH0gcFNeMV9cXGJ1bGxldCJdLFswLDIsInBTXjFfXFxidWxsZXQgXFx3ZWRnZXlfe0NfcH0gcFNeMV9cXGJ1bGxldCJdLFsyLDIsInBTXjFfXFxidWxsZXQiXSxbMCwxLCJcXGlkIFxcd2VkZ2V5IFxccGhpIl0sWzAsMiwiXFxwaGkgXFx3ZWRnZXkgXFxpZCIsMl0sWzIsMywiXFxwaGkiLDJdLFsxLDMsIlxccGhpIl1d
\[\begin{tikzcd}[column sep=small,row sep=small]
	{pS^1_\bullet \wedgey_{C_p} pS^1_\bullet \wedgey_{C_p} pS^1_\bullet} && {pS^1_\bullet \wedgey_{C_p} pS^1_\bullet} \\
	\\
	{pS^1_\bullet \wedgey_{C_p} pS^1_\bullet} && {pS^1_\bullet.}
	\arrow["{\id \wedgey \phi}", from=1-1, to=1-3]
	\arrow["{\phi \wedgey \id}"', from=1-1, to=3-1]
	\arrow["\phi", from=1-3, to=3-3]
	\arrow["\phi"', from=3-1, to=3-3]
\end{tikzcd}\]
\noindent The maps $\id \wedgey \phi$ and $\phi \wedgey \id$ fold the inner two and outer two copies of $pS^1_\bullet$ together, respectively, where $\phi$ folds the remaining two copies of $pS^1_\bullet$ together. Therefore this diagram commutes.

To check unitality and commutativity of the product map, we need to show that the following diagrams commute:
% https://q.uiver.app/#q=WzAsNyxbMCwwLCJwU14xX1xcYnVsbGV0IFxcd2VkZ2V5X3tDX3B9IENfcCJdLFsyLDAsInBTXjFfXFxidWxsZXQgXFx3ZWRnZXlfe0NfcH0gcFNeMV9cXGJ1bGxldCJdLFs0LDAsIkNfcCBcXHdlZGdleV97Q19wfSBwU14xX1xcYnVsbGV0Il0sWzIsMywicFNeMV9cXGJ1bGxldCJdLFswLDUsInBTXjFfXFxidWxsZXQgXFx3ZWRnZXlfe0NfcH0gcFNeMV9cXGJ1bGxldCJdLFs0LDUsInBTXjFfXFxidWxsZXQgXFx3ZWRnZXlfe0NfcH0gcFNeMV9cXGJ1bGxldCJdLFsyLDgsInBTXjFfXFxidWxsZXQiXSxbMCwxLCJcXGlkIFxcd2VkZ2V5IFxcZXRhIl0sWzIsMSwiXFxldGEgXFx3ZWRnZXkgXFxpZCIsMl0sWzAsMywiXFxzaW1lcSIsMl0sWzIsMywiXFxzaW1lcSJdLFsxLDMsIlxccGhpIl0sWzUsNiwiXFxwaGkiXSxbNCw2LCJcXHBoaSIsMl0sWzQsNSwiXFx0YXUiXV0=
\[\begin{tikzcd}[column sep=small,row sep=tiny]
	{pS^1_\bullet \wedgey_{C_p} C_p} && {pS^1_\bullet \wedgey_{C_p} pS^1_\bullet} && {C_p \wedgey_{C_p} pS^1_\bullet} \\
	\\
	\\
	&& {pS^1_\bullet} \\
	{pS^1_\bullet \wedgey_{C_p} pS^1_\bullet} &&&& {pS^1_\bullet \wedgey_{C_p} pS^1_\bullet} \\
	\\
	\\
	&& {pS^1_\bullet}
	\arrow["{\id \wedgey \eta}", from=1-1, to=1-3]
	\arrow["\simeq"', from=1-1, to=4-3]
	\arrow["\phi", from=1-3, to=4-3]
	\arrow["{\eta \wedgey \id}"', from=1-5, to=1-3]
	\arrow["\simeq", from=1-5, to=4-3]
	\arrow["\tau", from=5-1, to=5-5]
	\arrow["\phi"', from=5-1, to=8-3]
	\arrow["\phi", from=5-5, to=8-3]
\end{tikzcd}\]
\noindent where $pS^1_\bullet \wedgey_{C_p} C_p \cong pS^1_\bullet$ is the pushout of the span $pS^1_\bullet \leftarrow C_p \to C_p$. The map $\id \wedgey \eta\colon pS^1_\bullet \wedgey_{C_p} C_p \to pS^1_\bullet \wedgey_{C_p} pS^1_\bullet$ is the identity on $pS^1_\bullet$ and includes $C_p$ into the second copy of $pS^1_\bullet$, similarly $\eta \wedgey \id\colon C_p \wedgey_{C_p} pS^1_\bullet \to pS^1_\bullet \wedgey_{C_p} pS^1_\bullet$ is the identity on $pS^1_\bullet$ and includes $C_p$ into the first copy of $pS^1_\bullet$. The map $\tau$ swaps the first and second copies of $pS^1_\bullet$. 

For the unitality diagram, note that $\phi\colon pS^1_\bullet \wedgey_{C_p} pS^1_\bullet \to pS^1_\bullet$ is the fold map and the maps $\id \wedgey \eta$ and $\eta \wedgey \id$ both have an image of one copy of $pS^1_\bullet$, so this diagram commutes. For the commutativity diagram, the fold map has the same image no matter the position of the two copies of $pS^1_\bullet$.

Therefore, $\THH_{C_p}(R)$ is a commutative $R$-algebra.
\end{proof}

\subsection{Coproduct Structure Failure} \label{SubSec:Coproduct Structure}

Classically, for $A$ a commutative ring spectrum, $\THH(A)$ has a coassociative coproduct structure which is not in general cocommutative in the stable homotopy category \cite{Angeltveit-Rognes}. Classically one can consider the collapse map from $S^1$ to the point to produce the counit and a simplicial pinch map from a double model of the circle to a wedge of two circles to produce the coproduct map.

In our equivariant model, to get a counit, we would need a $C_p$-equivariant simplicial map from $pS^1_\bullet$ to $C_p$. This is impossible as $pS^1_\bullet$ is connected and $C_p$ is $p$ 0-cells which has no $C_p$-fixed points, therefore there is no $C_p$-equivariant simplicial way to collapse $pS^1_\bullet$ to a point in $C_p$. 

Since $2pS^1_\bullet$ is the $C_p$-equivariant analogue to the double model of the circle and $pS^1_\bullet \wedgey_{C_p} pS^1_\bullet$ is the $C_p$-equivariant analogue to the wedge of two circles, to obtain a coproduct we want a $C_p$-equivariant simplicial pinch map on $2pS^1_\bullet$ whose image is $pS^1_\bullet \wedgey_{C_p} pS^1_\bullet$. Classically, the pinch map identifies opposite 0-cells. Equivariantly, the pinch map identifies the $C_2$-orbits of 0-cells in $2pS^1_\bullet$ which were induced from applying $sd_2(-)$ to $pS^1_\bullet$. The pinch map on $2pS^1_\bullet$ has an interesting image which is not $pS^1_\bullet \wedgey_{C_p} pS^1_\bullet$, as demonstrated by the following two examples.

\begin{example} \label{p=2 problem}
    The pinch map for $p = 2$ is from $4S^1_\bullet$ to a space that looks like $2S^1_\bullet \wedgey_{C_2} 2S^1_\bullet$ but with a different $C_2$-action. The pinch map is pictured here:
\begin{center}
\begin{tikzpicture}[>=stealth]
    \draw[ 
        decoration={markings, 
        mark=at position 0.005 with {\filldraw[fill=red,draw=red] (0,0) circle (1.5pt) node[right] {\footnotesize$v_1$};}, 
        mark=at position 0.15 with {\arrow[thick]{>}}, 
        mark=at position 0.25 with {\fill circle (1.5pt) node[above] {\footnotesize$v_0$};}, 
        mark=at position 0.40 with {\arrow[thick,draw=red]{>}}, 
        mark=at position 0.5 with {\filldraw[fill=red,draw=red] (0,0) circle (1.5pt) node[left] {\footnotesize$v_1$};}, 
        mark=at position 0.65 with {\arrow[thick]{>}}, 
        mark=at position 0.75 with {\fill circle (1.5pt) node[below] {\footnotesize$v_2$};}, 
        mark=at position 0.9 with {\arrow[thick,draw=red]{>}}, 
        },
        postaction={decorate}
        ]
        (-4,0) circle (0.6);

    \draw[ 
        decoration={
            markings, 
            mark=at position 0.005 with {\fill circle (1.5pt) node[right] {\footnotesize$v_0$};}, 
            mark=at position 0.27 with {\arrow[thick,draw=red]{>}}, 
            mark=at position 0.5 with {\filldraw[fill=red,draw=red] circle (1.5pt) node[left] {\footnotesize$v_1$};}, 
            mark=at position 0.77 with {\arrow[thick]{>}},
        },
        postaction={decorate}
    ]
    (0,0) circle (0.6);
    
    \draw[
        decoration={
            markings, 
            mark=at position 0.27 with {\arrow[thick,draw=red]{>}}, 
            mark=at position 0.77 with {\arrow[thick]{>}},
        },
        postaction={decorate}
    ]
    (0,0) ellipse (.59 and 0.4);

    \draw[->] (-2.75,0) -- (-1.25,0);
\end{tikzpicture}

\end{center}

\noindent the $C_2$-action on the image swaps the edges with red arrows with each other and the edges with black arrows with each other. Whereas, the $C_2$-action on $2S^1_\bullet \wedgey_{C_2} 2S^1_\bullet$ is counter-clockwise rotation by $\pi$.
\end{example}

\begin{example}
The pinch map for $p = 3$ is from $6S^1_\bullet$ to a space that looks like $3S^1_\bullet \wedgey_{C_3} 3S^1_\bullet$ but with a different $C_3$-action. The pinch map is pictured here:

\centerline{
\begin{tikzpicture}[>=stealth]
\draw[ 
        decoration={markings, 
        mark=at position 0.01 with {\arrow[thick,draw=blue]{>}},
        mark=at position 0.09 with {\filldraw[fill=blue,draw=blue] (0,0) circle (1.5pt) node[right] {\footnotesize$v_5$};},
        mark=at position 0.18 with {\arrow[thick]{>}},
        mark=at position 0.25 with {\fill circle (1.5pt) node[above] {\footnotesize$v_0$};}, 
        mark=at position 0.36 with {\arrow[thick,draw=red]{>}},
        mark=at position 0.42 with {\filldraw[fill=red,draw=red] (0,0) circle (1.5pt) node[left] {\footnotesize$v_1$};}, 
        mark=at position 0.52 with {\arrow[thick,draw=blue]{>}},
        mark=at position 0.58 with {\filldraw[fill=blue,draw=blue] (0,0) circle (1.5pt) node[left] {\footnotesize$v_2$};}, 
        mark=at position 0.68 with {\arrow[thick]{>}},
        mark=at position 0.76 with {\fill circle (1.5pt) node[below] {\footnotesize$v_3$};},
        mark=at position 0.85 with {\arrow[thick,draw=red]{>}},
        mark=at position 0.91 with {\filldraw[fill=red,draw=red] (0,0) circle (1.5pt) node[right] {\footnotesize$v_4$};}, 
        },
        postaction={decorate}
        ]
        (-4,0) circle (0.6);

\draw[->] (-2.75,0) -- (-1.25,0);

    \draw[ 
        decoration={
            markings,
            mark=at position 0.17 with {\arrow[thick]{>}},
            mark=at position 0.32 with {\fill circle (1.5pt) node[above] {\footnotesize$v_0$};}, 
            mark=at position 0.50 with {\arrow[thick,draw=red]{>}}, 
            mark=at position 0.64 with {\filldraw[fill=red,draw=red] (0,0) circle (1.5pt) node[left] {\footnotesize$v_1$};}, 
            mark=at position 0.83 with {\arrow[thick,draw=blue]{>}},
            mark=at position 0.999 with {\filldraw[fill=blue,draw=blue] (0,0) circle (1.5pt) node[right] {\footnotesize$v_2$};}, 
        },
        postaction={decorate}
    ]
    (.6,-.3) to [bend left=30,looseness=0.8] (0,.6) to [bend left=30,looseness=0.8] (-.6, -.3) to [bend left=30,looseness=0.8] (.6,-.3) ;
    
    \draw[ 
        decoration={
            markings,
            mark=at position 0.17 with {\arrow[thick,draw]{>}},
            mark=at position 0.50 with {\arrow[thick,draw=red]{>}}, 
            mark=at position 0.83 with {\arrow[thick,draw=blue]{>}},
        },
        postaction={decorate}
    ]
    (.6,-.3) to [bend right=30,looseness=0.8] (0,.6) to [bend right=30,looseness=0.8] (-.6, -.3) to [bend right=30,looseness=0.8] (.6,-.3) ;
\end{tikzpicture}
}
\noindent the $C_3$-action on the image is counter-clockwise rotation by $4\pi/3$. Whereas, the $C_3$-action on $3S^1_\bullet \wedgey_{C_3} 3S^1_\bullet$ is counter-clockwise rotation by $2\pi/3$
\end{example}

This problem persists for larger primes, and these maps also fail to be coassociative. There may be a way to produce a counit $R \to \THH_{C_p}(R)$ or a coproduct $\THH_{C_p}(R) \to \THH_{C_p}(R) \smashy_R \THH_{C_p}(R)$ in a non-simplicial way. However, in this paper we are focused on extending these structures to the twisted B\"okstedt spectral sequence and are therefore focused on simplicial maps.

\section{Algebraic structure on the twisted B\"okstedt spectral sequence} \label{Sec: Equivariant Bokstedt SS Structure}

In this section, we will discuss the algebraic structure of the twisted B\"okstedt spectral sequence that is induced from the algebraic structure on twisted THH. Recall the twisted B\"okstedt spectral sequence defined in \cite[Theorem 4.2.7]{AGHKK} (see \Cref{EquivBokSS}). These authors show that the twisted B\"okstedt spectral sequence has an algebraic structure. The first two sentences of the following proposition are the same conditions as in the definition of the twisted B\"okstedt spectral sequence.

\begin{proposition}[{\cite[Proposition 4.2.8]{AGHKK}}]  Let $C_p$ be a finite subgroup of $S^1$ such that $C_p = \langle \gamma\rangle$. Let $A$ be an associative ring $C_p$-spectrum, and $E$ a commutative ring $C_p$-spectrum such that $\gamma$ acts trivially on $E$ such that $\underline{E}_\star(A)$ is flat over $\underline{E}_\star$. If $A$ is a commutative ring $C_n$-spectrum, then the twisted B\"okstedt spectral sequence is a spectral sequence of $RO(C_n)$-graded algebras over $\underline{E}_\star$.
\end{proposition}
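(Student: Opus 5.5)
The approach follows the classical argument that the Bökstedt spectral sequence of a commutative ring spectrum is multiplicative, adapted to the twisted setting. The spectral sequence of \Cref{EquivBokSS} is the spectral sequence of the simplicial $C_p$-spectrum $B_\bullet^{cy,C_p}(A)$, obtained by filtering the geometric realization by skeleta and applying $\underline{E}_\star$. Flatness of $\underline{E}_\star(A)$ over $\underline{E}_\star$ gives the Künneth isomorphisms
\[
\underline{E}_\star(A^{\smashy k+1}) \cong \underline{E}_\star(A)^{\square_{\underline{E}_\star}(k+1)}.
\]
Because $\gamma$ acts trivially on $E$, the twist map $\alpha_k$ induces on homology the algebraic twist used in $B^{cy,C_p}_\bullet(\underline{E}_\star(A))$. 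This identifies the $E^1$-page with the normalized chain complex of the twisted cyclic bar complex of Green functors, and the $E^2$-page with $\underline{\HH}^{\underline{E}_\star,C_p}_*(\underline{E}_\star(A))$.

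When $A$ is commutative, the multiplication $\mu\colon A\smashy A\to A$ is a map of ring $C_p$-spectra, and it commutes with the $\gamma$-action since the action is by ring maps. It follows that the levelwise smash product $B_k^{cy,C_p}(A)\smashy B_k^{cy,C_p}(A)\cong (A\smashy A)^{\smashy k+1}\to A^{\smashy k+1}$, applying $\mu$ coordinatewise, commutes with all face and degeneracy maps, including $d_k=(\mu\smashy\id)\circ\alpha_k$. The required identity is $\mu\circ(\gamma\smashy\gamma)=\gamma\circ\mu$, together with the fact that cyclic rotation commutes with shuffling the factors. This gives a map of simplicial $C_p$-spectra
\[
(B^{cy,C_p}(A)\smashy B^{cy,C_p}(A))_\bullet\to B^{cy,C_p}_\bullet(A).
\]
The unit $\SS\to A$ similarly gives a simplicial unit map.

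Next I would invoke the standard pairing of skeletal spectral sequences. The realization of the diagonal of a bisimplicial object is homeomorphic to that of the bisimplicial object itself (Quillen), and the skeletal filtration of the diagonal is compatible, via the Eilenberg–Zilber shuffle map, with the product filtration on $|X_\bullet|\smashy|Y_\bullet|$. Combined with a Künneth map $\underline{E}_\star(X)\square_{\underline{E}_\star}\underline{E}_\star(Y)\to\underline{E}_\star(X\smashy Y)$, which exists because $E$ is a commutative ring spectrum, this produces a pairing of spectral sequences
\[
E^r\square_{\underline{E}_\star}E^r\to E^r.
\]
The differentials satisfy the Leibniz rule, and the pairing converges to the product on $\underline{E}_\star(i^*_{C_p}\THH_{C_p}(A))$ induced by $\THH_{C_p}(A)\smashy\THH_{C_p}(A)\to\THH_{C_p}(A)$. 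On $E^2$ the pairing is the shuffle product on $\underline{\HH}_*$. Associativity and unitality hold because they hold at the simplicial level, and the pairing is $\underline{E}_\star$-bilinear by construction. The result is a spectral sequence of $RO(C_p)$-graded $\underline{E}_\star$-algebras.

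I expect the main obstacle to be making the Künneth and Eilenberg–Zilber steps work in the $RO(C_p)$-graded Mackey setting. Specifically, one must check that the box product over $\underline{E}_\star$ agrees with the homotopy of the smash product under the flatness hypothesis, and that the rotating isomorphism of \Cref{rotating isomorphism} (rather than a naive sign) is the correct symmetry when identifying the twist and shuffle maps. I would handle this using the equivariant homological algebra of \cite{LM}, and the Reedy $h$-cofibrancy discussed above to ensure that the skeletal filtrations are well-behaved.
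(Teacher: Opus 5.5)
The paper does not prove this statement. It is quoted from \cite{AGHKK}, so there is no in-paper argument to match. Your proof is correct and is the standard one.

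When $A$ is commutative, $B^{cy,C_p}_\bullet(A)$ is a simplicial commutative ring $C_p$-spectrum. Your check of $d_k$ via $\mu\circ(\gamma\smashy\gamma)=\gamma\circ\mu$ is exactly what is needed. Since the coordinatewise product commutes with $\alpha_k$, it also commutes with the levelwise $C_p$-action $\alpha_k^{k+1}$. The shuffle map from the product filtration on $|X_\bullet|\smashy|Y_\bullet|$ into the skeletal filtration of the diagonal, combined with the external product for $E$, then makes the skeletal spectral sequence multiplicative.

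The nearest argument in the paper is the proof of \Cref{BokAlgebra}, which runs the same mechanism in relative form. There, the fold map $pS^1_\bullet\wedgey_{C_p}pS^1_\bullet\to pS^1_\bullet$ induces on $k$-simplices the coordinatewise product $R^{\smashy k+1}\smashy_R R^{\smashy k+1}\to R^{\smashy k+1}$, with the factor at the base vertices shared. Your levelwise product $B_k\smashy B_k\to B_k$ is instead what the codiagonal $pS^1_\bullet\sqcup pS^1_\bullet\to pS^1_\bullet$ induces, because $R\otimes_{C_p}(-)$ takes disjoint unions to smash products.

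Your absolute version needs no subdivided circles and works verbatim for any $C_n$. It produces box products over $\underline{E}_\star$, which is exactly the $\underline{E}_\star$-algebra structure claimed. The paper's relative version gains the finer $\underline{E}_\star(R)$-algebra structure, at the cost of identifying ${}'E^1$ with $E^1\square_{\underline{E}_\star(R)}E^1$.

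Your explicit Eilenberg--Zilber step is also the right justification for what that proof calls ``the homology cross product''. ${}'E^1_s$ is a levelwise product of $E^1_s$ with itself, not the total $\bigoplus_{i+j=s}E^1_i\square E^1_j$, so a shuffle map is genuinely needed to compare them.
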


In \Cref{THHAlgebra} we show that for $R$ a commutative ring $C_p$-spectrum, $\THH_{C_p}(R)$ is a commutative $R$-algebra for all primes $p$ in the category of $C_p$-spectra. The structure maps which demonstrate this structure, namely (\ref{SimpProduct}), are all induced from simplicial maps. Since they are induced from simplicial maps, they respect the skeletal filtration on $\THH_{C_p}(R)$. We will use this fact to induce structure maps on the twisted B\"okstedt spectral sequence.

Let us recall an equivariant analogue to a differential bigraded algebra. 

\begin{definition}
    Let $p$ be prime, $\underline{R}$ a commutative $C_p$-Green functor, and $\underline{M}$ a commutative $\uR$-algebra that is flat over $\underline{R}$. A \emph{differential $(\ZZ,RO(C_p))$-graded $\underline{M}$-algebra} is a collection of $(\ZZ,RO(C_p))$-graded $\underline{M}$-bimodules and a differential, $(E_{*,\star}, d),$ with the following maps of $\uM$-bimodules:
    \begin{align*}
        d\colon& E_{s,\alpha} \to E_{s-r,\alpha +r-1} \\
        \mu\colon& E_{s,\alpha} \, \square_{\underline{M}} \, E_{t,\beta} \to E_{s+t, \alpha + \beta} \\
        \eta\colon& \underline{M} \to E_{*,\star}
        \end{align*}
    \noindent for some $s,r \in \ZZ$ and $\alpha \in RO(C_p)$. The first map defines the differential, the second map is the multiplication map, and the third map is the unit map. These maps must make all the usual associativity and unitality diagrams commute. The differential $d$ must be compatible with the product map in the sense that it satisfies the Leibniz rule:
    \[
        d \circ \mu = \mu \circ (d \, \square_{\underline{M}} \, \id + (-1)^{s + \dim(\alpha^{C_p})} \id \, \square_{\underline{M}} \, d).
    \] 
\end{definition}

Now, let us recall an equivariant analogue to a spectral sequence of algebras. Note that the flatness assumption in the following definition is so that the equivariant K\"unneth spectral sequence, as defined in \cite[Theorem 1.3]{LM}, collapses to give the K\"unneth isomorphism.

\begin{definition}
    Let $p$ be prime, $\underline{R}$ a commutative $C_p$-Green functor, and $\underline{M}$ a commutative $\uR$-algebra which is flat over $\underline{R}$. A \emph{spectral sequence of $\underline{M}$-algebras} is a collection of differential $(\ZZ,RO(C_p))$-graded $\uM$-algebras $\{E^r_{*,\star}, d^r \},$ with multiplication maps $\phi_r$ such that $\phi_{r+1}$ is the composite
    \begin{center}
        $\phi_{r+1}\colon E^{r+1}_{*,\star} \, \square_{\underline{M}} \, E^{r+1}_{*,\star} \overset{\cong}{\longrightarrow} H_*(E^r_{*,\star}) \, \square_{\underline{M}} \, H_*(E^r_{*,\star}) \overset{p}{\longrightarrow}$ \\
        $H_*(E^r_{*,\star} \, \square_{\underline{M}} \, E^r_{*,\star}) \overset{H_*(\phi_r)}{\longrightarrow} H_*(E^r_{*,\star}) \overset{\cong}{\longrightarrow} E^{r+1}_{*,\star},$
    \end{center}
    \noindent where the homomorphism $p$ is induced from the homology cross product map.
\end{definition}

Using this definition and the simplicial maps from earlier, we can prove the following theorem.

\begin{theorem}\label{BokAlgebra}
    Let $p$ be prime, and let $R$ and $E$ be commutative ring $C_p$-spectra, such that the chosen generator of $C_p$ acts trivially on $E$, and $\underline{E}_\star(R)$ is flat over $\underline{E}_\star$.  The twisted B\"okstedt spectral sequence $E^r_{*,\star}$ is a spectral sequence of commutative $\underline{E}_\star(R)$-algebras.
\end{theorem}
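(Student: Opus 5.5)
The plan mirrors Angeltveit--Rognes: realize the unit and product maps (\ref{SimpProduct}) at the simplicial level, pass to skeletal filtrations, and read off the multiplicative structure page by page. Recall that the twisted B\"okstedt spectral sequence of \Cref{EquivBokSS} is the spectral sequence of the skeletal filtration of $|B^{cy,C_p}_\bullet(R)| \simeq |R \otimes_{C_p} pS^1_\bullet|$ after smashing with $E$. By \Cref{pS^1 gives cyclic bar}, its $E^1$-term is the simplicial $RO(C_p)$-graded Mackey functor $[k] \mapsto \underline{E}_\star(R^{\smashy k+1})$. The flatness of $\underline{E}_\star(R)$ over $\underline{E}_\star$, together with the collapse of the Lewis--Mandell K\"unneth spectral sequence, identifies this with $\underline{E}_\star(R)^{\square_{\underline{E}_\star}(k+1)} = B^{cy,C_p}_k(\underline{E}_\star(R))$. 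Its homology is the $E^2$-term $\underline{\HH}^{\underline{E}_\star,C_p}_*(\underline{E}_\star(R))$. Because $\gamma$ acts trivially on $E$, the Weyl/rotation twisting on $E$-homology comes only from $R$; this is exactly what makes the face map $d_k$ correspond to $\alpha_k$ algebraically.

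\textbf{Unit.} The simplicial map $\eta\colon C_p \to pS^1_\bullet$, with $C_p$ a constant simplicial set, induces $R \to R\otimes_{C_p} pS^1_\bullet$. Viewing $R$ as a constant simplicial object concentrated in filtration $0$, this is a map of filtered $C_p$-spectra. It therefore gives a map of spectral sequences from the spectral sequence of $R$, which is $\underline{E}_\star(R)$ in column $s=0$ with no differentials, into $E^r_{*,\star}$. This is the unit $\eta\colon \underline{E}_\star(R) \to E^r_{*,\star}$ on every page, and it makes each $E^r$ an $\underline{E}_\star(R)$-module.

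\textbf{Product.} The fold map $\phi\colon pS^1_\bullet \vee_{C_p} pS^1_\bullet \to pS^1_\bullet$ is simplicial, so by the pushout-preservation proposition it gives a simplicial map
\[(R\otimes_{C_p} pS^1_\bullet)\smashy_R (R\otimes_{C_p} pS^1_\bullet) \to R\otimes_{C_p} pS^1_\bullet.\]
The left side is the diagonal of a bisimplicial object. Its skeletal filtration, computed via the diagonal and Eilenberg--Zilber, is the product filtration, and the map respects filtrations. The spectral sequence of the left side must then be identified with the box product over $\underline{E}_\star(R)$ of two copies of $E^r$. On $E^1$ this is again K\"unneth plus flatness, now using that $\underline{E}_\star(R)$ is flat and that the levelwise terms $\underline{E}_\star(R)^{\square(k+1)}$ are flat over $\underline{E}_\star(R)$. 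This yields pairings
\[\phi_r\colon E^r\square_{\underline{E}_\star(R)}E^r \to E^r.\]
These satisfy the Leibniz rule, with the sign $(-1)^{s+\dim(\alpha^{C_p})}$ coming from the standard cross-product sign in the filtered setting. Each $\phi_{r+1}$ is induced from $\phi_r$ through the homology cross product, as required by the definition of a spectral sequence of $\underline{E}_\star(R)$-algebras.

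\textbf{Axioms.} Associativity, unitality and commutativity follow from the commuting diagrams of simplicial $C_p$-sets in the proof of \Cref{THHAlgebra}. Each such diagram of simplicial maps yields a commuting diagram of filtered spectra, hence of spectral sequences. For commutativity one must check that the twist $\tau$ on $pS^1_\bullet\vee_{C_p}pS^1_\bullet$ induces on $E^r\square E^r$ the rotating isomorphism of \Cref{rotating isomorphism}, with the appropriate simplicial sign. This is an identification of the switch map $S^\alpha\smashy S^\beta$ with $s(\alpha,\beta)$ and should be routine.

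The main obstacle I expect is the K\"unneth identification in the product step: showing that the spectral sequence of $T\smashy_R T$ has $E^2$-term $E^2\square_{\underline{E}_\star(R)}E^2$, with compatible $E^r$-pairings. I would handle this as in Angeltveit--Rognes, working at $E^1$ with the flatness assumption so that the Lewis--Mandell K\"unneth spectral sequence collapses at each simplicial level. Equivariant Eilenberg--Zilber then compares the diagonal with the total complex of the bisimplicial Mackey functor; this is valid levelwise at each orbit $C_p/H$ via Dold--Kan.
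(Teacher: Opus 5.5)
Your proposal is correct and follows essentially the same route as the paper: the simplicial unit and fold maps give filtration-preserving maps, and flatness gives ${}'E^1_s\cong E^1_s\square_{\underline{E}_\star(R)}E^1_s$ for the skeletal spectral sequence of $R\otimes_{C_p}(pS^1_\bullet\vee_{C_p}pS^1_\bullet)$; the homology cross product then gives $E^r\square_{\underline{E}_\star(R)}E^r\to{}'E^r$ inductively, and composing with the fold map and using the diagrams of \Cref{THHAlgebra} gives the algebra structure. You should drop your stated ``main obstacle'': you neither need nor, under these hypotheses, should expect ${}'E^2\cong E^2\square_{\underline{E}_\star(R)}E^2$ (that needs Tor-vanishing, e.g.\ $E^2$ flat over $\underline{E}_\star(R)$, which Angeltveit--Rognes require for the coproduct, not the product), because a spectral sequence of algebras only uses the cross-product map, which is all that you and the paper actually construct.
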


\begin{proof} Unless otherwise specified, every box product is over $\underline{E}_\star$. Let ${}'E^r_{*,\star}$ be the spectral sequence associated to the skeleton filtration on $R \otimes_{C_p} (pS^1_\bullet \wedgey_{C_p} pS^1_\bullet) \cong \THH_{C_p}(R) \smashy_R \THH_{C_p}(R).$ By \Cref{THHAlgebra} we know that $\THH_{C_p}(R)$ is a commutative $R$-algebra. Further, all the relevant structure maps (\ref{SimpProduct}) are induced from $C_p$-equivariant simplicial maps, so they respect the skeleton filtration on $\THH_{C_p}(R)$. As a result, the ``fold" maps $\phi\colon {}'E^r_{*,\star} \to E^r_{*,\star}$ respect the differentials of the twisted B\"okstedt spectral sequence.

Consider $\THH_{C_p}(R)_\bullet$, the simplicial $C_p$-spectrum $R \otimes_{C_p} pS^1_\bullet \cong B^{cy,C_p}_\bullet(R)$. By our assumption that $\underline{E}_\star(R)$ is flat over $\underline{E}_\star$, and since $\THH_{C_p}(R)_s = R^{\smashy s + 1}$, then we have the following isomorphism:
    \begin{center}
$\underline{E}_\star(R^{\smashy (s+1)} \smashy_R R^{\smashy (s+1)}) \cong \underline{E}_\star(R)^{\square s+1} \, \square_{\underline{E}_\star(R)} \, \, \underline{E}_\star(R)^{\square s+1}.$
    \end{center}
    \noindent By definition, the left hand side is ${}'E^1_{s,\star}$ and the right hand side is $E^1_{s,\star} \, \square_{\underline{E}_\star(R)} \, \, E^1_{s,\star}$.

    The following map is induced from the homology cross product map    
    \[
    H_*(E^r_{*,\star}) \, \square_{\underline{E}_\star(R)} \, H_*(E^r_{*,\star}) \to H_*(E^r_{*,\star} \, \square_{\underline{E}_\star(R)} \, E^r_{*,\star}).
    \]
    \noindent Therefore we have a map
    \[
    E^2_{*,\star} \, \square_{\underline{E}_\star(R)} \, \, E^2_{*,\star} \cong H_*(E^1_{*,\star}) \, \square_{\underline{E}_\star(R)} \, \, H_*(E^1_{*,\star}) \to H_*(E^1_{*,\star} \, \square_{\underline{E}_\star(R)} \, \, E^1_{*,\star}) \cong H_*({}'E^1_{*,\star}) \cong {}'E^2_{*,\star}
    \]
    \noindent which, by induction, induces the following maps
    \[
    E^r_{*,\star} \, \square_{\underline{E}_\star(R)} \, \, E^r_{*,\star} \to {}'E^r_{*,\star}
    \]
    \noindent for all $r \geq 2$.
    
    Then we can define the composite map of spectral sequences
    \[
    \phi_r\colon E^r_{*,\star} \, \, \, \square_{\underline{E}_\star(R)} \, \, E^r_{*,\star} \longrightarrow {}'E^r_{*,\star} \overset{\phi}{\longrightarrow} E^r_{*,\star}
    \]
    \noindent for any $r \geq 1$. The maps $\phi_r$ respect differentials for all $r$ since $\phi$ and the product maps respect the differentials. Since all of the necessary maps respect the differential, the commutative diagrams in \Cref{THHAlgebra} induce the necessary commutative diagrams for the twisted B\"okstedt spectral sequence. Therefore $E^r_{*,\star}$ is a spectral sequence of commutative $\underline{E}_\star(R)$-algebras.
\end{proof}

\section{Further computations}\label{More Computations}

In \Cref{A Computation} we made a computation of $C_2$-twisted THH of the Real bordism spectrum with coefficients in the $C_2$-Mackey field $\uF$ such that $\uF(C_2/C_2) = \FF_2$ and $\uF(C_2/e) = 0$ (see \Cref{thm:MUR computation}). In this section, we will leverage known computations of classical THH and the simplicial structure from \Cref{bar complexes isoms} to make new computations of twisted THH. These computations include the $C_p$-geometric fixed points of twisted THH of specific $C_p$-spectra and $\underline{E}_\star(\THH_{C_p}(A))$ for $A$ a cofibrant associative ring $C_p$-spectrum and $E$ a geometric $C_p$-spectrum. We want to note that Hill alerted us to the arguments in this section.

Geometric fixed points and geometric realization commute with each other as they are both colimits. So we can use the following proposition to make new computations of $C_p$-twisted THH from already known computations of classical THH. The following result is discussed around 9.11.33 in \cite{HHRBook}.

\begin{proposition}[{\cite{HHRBook}}] \label{factor Phi}
    Let $A$ be an associative ring $G$-spectrum, and let $M$ and $N$ be right and left $A$-modules respectively. The map
    \[
\Phi^G(M \smashy_A N) \to \Phi^G(M) \smashy_{\Phi^G(A)} \Phi^G(N)
\]
\noindent is an isomorphism if $M$ and $N$ are cofibrant.
\end{proposition}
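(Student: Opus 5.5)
The plan is to reduce to the strong symmetric monoidality of $\Phi^G$ on cofibrant objects, combined with the fact that it preserves colimits. The relative smash product $M \smashy_A N$ is the coequalizer of the two maps
\[
M \smashy A \smashy N \rightrightarrows M \smashy N,
\]
given by the right action of $A$ on $M$ and the left action of $A$ on $N$. Equivalently, it is the geometric realization of the two-sided bar construction $B_\bullet(M,A,N)$, with $B_k = M \smashy A^{\smashy k} \smashy N$. I will work with the coequalizer version and compare it with the bar construction only where cofibrancy is needed.

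\textbf{Step 1.} I would use that $\Phi^G$, in the point-set model $\Phi^G(X) = (\widetilde{E}\P \smashy X)^G$ (or the monoidal geometric fixed points of \cite{HHR}), commutes with colimits. Hence
\[
\Phi^G(M \smashy_A N) \cong \operatorname{coeq}\bigl(\Phi^G(M \smashy A \smashy N) \rightrightarrows \Phi^G(M \smashy N)\bigr).
\]

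\textbf{Step 2.} I would then invoke the lax monoidal structure map $\Phi^G(X) \smashy \Phi^G(Y) \to \Phi^G(X \smashy Y)$, which is an isomorphism when $X$ and $Y$ are cofibrant (HHR, Proposition 2.45 / B.197). Applying it to $M \smashy N$ and $M \smashy A \smashy N$ identifies the diagram above with
\[
\Phi^G M \smashy \Phi^G A \smashy \Phi^G N \rightrightarrows \Phi^G M \smashy \Phi^G N.
\]
Naturality of the monoidal map shows that the two arrows become the induced $\Phi^G(A)$-module actions. The coequalizer of this diagram is $\Phi^G(M) \smashy_{\Phi^G(A)} \Phi^G(N)$, and the comparison map in the statement is exactly the one induced on coequalizers.

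\textbf{Main obstacle.} The hardest step is making sure the monoidal isomorphism applies to every object appearing, in particular to $A$. Step 2 needs $A$ to be cofibrant as an underlying spectrum as well, or at least $M \smashy A$ and $A \smashy N$ to be of the right homotopy type. For this I would use the convention already in force that $A$ is replaced cofibrantly; by \cite[Theorem 5.4.16]{EquivariantStructureSmashPowers} its underlying spectrum is then cofibrant. If $A$ is not underlying cofibrant, I would pass to the bar construction instead. Its levels $M \smashy A^{\smashy k} \smashy N$ are cofibrant once $M$ and $N$ are cofibrant as $A$-modules: a cofibrant module is a retract of a cell module built from $A \smashy G/H_+ \smashy S^n$, and $\Phi^G$ is monoidal on such free cells. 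Then I would commute $\Phi^G$ with realization, which is a colimit, to reach the same conclusion.
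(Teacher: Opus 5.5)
The paper gives no argument of its own here (it defers to \cite{HHRBook}), so your proof has to stand on its own. It has a genuine gap, and the gap is in Step~1, not where you place the main obstacle.

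Point-set geometric fixed points do \emph{not} commute with colimits. Both $(\widetilde{E}\P\smashy X)^G$ and the monoidal $\Phi^G$ of \cite{HHR} are built from categorical fixed points, which form a right adjoint. They commute with wedges, with pushouts along $h$-cofibrations and with sequential colimits along $h$-cofibrations. They do not commute with coequalizers, even reflexive ones between cofibrant objects. (That $\Phi^G$ preserves homotopy colimits does not help: the strict coequalizer defining $M\smashy_A N$ is a homotopy colimit only once module cofibrancy is used.)

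Here is a counterexample to your Steps~1--2 as written. Let $G=C_2$, let $\Gamma=C_2$ be a group with trivial $G$-action, and set $A=\Sigma^\infty_G\Gamma_+$. Let $X$ be a two-point set on which $G$ acts freely from the left and $\Gamma$ freely from the right, $M=\Sigma^\infty_G X_+$, and $N=\SS_G$ via the augmentation $\Gamma\to *$.
\begin{itemize}
\item Then $M\smashy_A N\cong\Sigma^\infty_G(X/\Gamma)_+=\SS_G$, so $\Phi^G(M\smashy_A N)\simeq\SS$.
\item For the monoidal model, $\Phi^G M\cong\Sigma^\infty(X^G)_+=*$, so $\Phi^G M\smashy_{\Phi^G A}\Phi^G N=*$.
\item $M$, $A$, $N$ are cofibrant $G$-spectra, and Step~2 holds, since both $\Phi^G(M\smashy N)$ and $\Phi^G(M\smashy A\smashy N)$ are $*$. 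Only the interchange in Step~1 fails.
\end{itemize}
The derived relative smash product is $|B_\bullet(M,A,N)|\simeq\Sigma^\infty_G EG_+$, whose geometric fixed points are indeed $*$. The strict one is wrong because $M$ is not a cell $A$-module: it is not of the form $A\smashy G/H_+$, since its isotropy in $G\times\Gamma$ is the diagonal. Your argument never uses that $M$ and $N$ are cofibrant \emph{as $A$-modules}, and that hypothesis is exactly what carries the statement.

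The missing idea is a cell induction on $M$, with $A$ underlying cofibrant as you arrange. Both sides are natural in $M$ and preserve retracts, so you may take $M$ to be a cell right $A$-module.
\begin{itemize}
\item \textbf{Free case.} Let $M=F\smashy A$ with $F=G/H_+\smashy S^{-V}\smashy K_+$ cofibrant. The defining coequalizers are split, so $M\smashy_A N\cong F\smashy N$. By strong monoidality on cofibrant objects, $\Phi^G M\smashy_{\Phi^G A}\Phi^G N\cong\Phi^G F\smashy\Phi^G N\cong\Phi^G(F\smashy N)$, compatibly with the comparison map.
\item \textbf{Cell attachment.} Attaching a cell is a pushout along $G/H_+\smashy S^{-V}\smashy S^{n-1}_+\smashy A\to G/H_+\smashy S^{-V}\smashy D^n_+\smashy A$. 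The functor $-\smashy_A N$ turns this into a pushout along the same map with $A$ replaced by $N$, and both maps are $h$-cofibrations.
\item \textbf{Passage to cell modules.} The functors $-\smashy_A N$ and $-\smashy_{\Phi^G A}\Phi^G N$ preserve all colimits. $\Phi^G$ preserves exactly these pushouts and sequential colimits along $h$-cofibrations, so the isomorphism passes from cells to cell modules.
\end{itemize}

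Your bar-construction fallback does not close the gap, for three reasons.
\begin{itemize}
\item If $A$ is not underlying cofibrant, then neither is a cofibrant $A$-module such as $M=A$, so the levels $M\smashy A^{\smashy k}\smashy N$ need not be cofibrant.
\item The map $|B_\bullet(M,A,N)|\to M\smashy_A N$ is only a weak equivalence. At best you get a weak equivalence, and only after also checking that $\Phi^G M$ is a cofibrant $\Phi^G A$-module.
\item ``Realization is a colimit'' is the same faulty principle as Step~1. Fixed points do commute with realization, but for the special reasons above.
\end{itemize}
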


Recall that when discussing $\THH_{C_p}(-)$, we mean the left derived functor. This is the same as considering $\THH_{C_p}(A)$ to mean $\THH_{C_p}(-)$ of the cofibrant replacement of $A$.

\begin{proposition}\label{twisted to THH}
    Let $U$ be a complete $S^1$-universe, and let $\widetilde{U} \coloneqq i^*_{C_p} U$. Let $A$ be a cofibrant, associative ring $C_p$-spectrum indexed on the $C_p$-universe $\widetilde{U}$, for $p$ prime. Then the following map is an isomorphism of spectra:

\begin{center}
    $\Phi^{C_p}(\THH_{C_p}(A)) \to \THH(\Phi^{C_p}A)$.
\end{center}
\end{proposition}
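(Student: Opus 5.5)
The plan is to compute $\Phi^{C_p}$ levelwise on the simplicial model and then pass through geometric realization. First I would write $\THH_{C_p}(A)$ as the geometric realization of the twisted cyclic bar complex, $|\I^U_{\RR^\infty}B^{cy,C_p}_\bullet(\I^{\RR^\infty}_{\widetilde U}A)|$ (\Cref{twistedcyclicbar} and the definition following it). As noted just before \Cref{factor Phi}, $\Phi^{C_p}$ commutes with geometric realization. It therefore suffices to produce a levelwise isomorphism of simplicial spectra
\[
\Phi^{C_p}\big(B^{cy,C_p}_\bullet(A)\big) \cong B^{cy}_\bullet(\Phi^{C_p}A)
\]
that is compatible with face and degeneracy maps. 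The change-of-universe functors will need separate bookkeeping, since $\Phi^{C_p}$ is insensitive to the (complete) universe used up to equivalence.

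At simplicial level $k$ the object is $A^{\smashy k+1}$. Since $A$ is cofibrant, $\Phi^{C_p}$ is strong symmetric monoidal on cofibrant objects; this is the case $A=\SS$ of \Cref{factor Phi}, or the monoidality of the geometric fixed points for orthogonal spectra. This gives $\Phi^{C_p}(A^{\smashy k+1}) \cong (\Phi^{C_p}A)^{\smashy k+1}$. For the inner faces $d_i=\id^i\smashy\mu\smashy\id^{k-i-1}$ and the degeneracies, monoidality shows that $\Phi^{C_p}$ carries them to the corresponding maps built from $\Phi^{C_p}\mu$ and $\Phi^{C_p}\eta$, which are the ring structure maps of $\Phi^{C_p}A$.

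The main obstacle is the last face $d_k=(\mu\smashy\id^{k-1})\circ\alpha_k$, because it involves the twist by $\gamma$. Here I would argue that the action map $\ell_\gamma\colon A\to A$ is a map of $C_p$-spectra that acts trivially on geometric fixed points. The reason is that $\gamma\in C_p$ acts through the Weyl group $W C_p = C_p/C_p$, which is trivial, so $\Phi^{C_p}(\ell_\gamma)$ is the identity (or at least canonically homotopic to it; I would try to check it is literally the identity on the point-set model of $\Phi^{C_p}$). Then $\Phi^{C_p}(\alpha_k)$ is just the cyclic rotation $t$, and $\Phi^{C_p}(d_k)$ becomes the usual last cyclic face map. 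Hence $\Phi^{C_p}B^{cy,C_p}_\bullet(A)\cong B^{cy}_\bullet(\Phi^{C_p}A)$ as simplicial spectra.

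An alternative route, if the levelwise point-set claim is delicate, is to use \Cref{bar complexes isoms}, $B^{cy,C_p}_\bullet(A)\cong A\smashy_{A\smashy A^{op}}B_\bullet(A,A,{}^\gamma A)$, together with \Cref{factor Phi}. The flatness and cofibrancy needed here would come from the Reedy $h$-cofibrancy discussion. Applying $\Phi^{C_p}$ then gives $\Phi^{C_p}A\smashy_{\Phi A\smashy\Phi A^{op}}B(\Phi A,\Phi A,{}^{\Phi\gamma}\Phi A)$. Since $\Phi^{C_p}(\gamma)=\id$, this is the classical model of $\THH(\Phi^{C_p}A)$. Finally, realizing the resulting simplicial isomorphism yields the claimed isomorphism $\Phi^{C_p}(\THH_{C_p}(A))\to\THH(\Phi^{C_p}A)$.
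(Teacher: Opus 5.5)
Your proposal is correct. Your fallback route is exactly the paper's proof, which runs as follows:
\begin{enumerate}
\item rewrite $B^{cy,C_p}_\bullet(A)$ as $A\smashy_{(A\smashy A^{op})}B_\bullet(A,A,{}^\gamma A)$ via \Cref{bar complexes isoms};
\item commute $\Phi^{C_p}$ past realization and apply the relative form of \Cref{factor Phi};
\item observe that ${}^\gamma A$ and $A$ become the same module after applying $\Phi^{C_p}$;
\item invoke EKMM to recognize $\THH(\Phi^{C_p}A)$.
\end{enumerate}

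Your main route is a mild but genuine variant. You work directly with \Cref{twistedcyclicbar} and use only the absolute monoidality of $\Phi^{C_p}$ (the case $A=\SS$ of \Cref{factor Phi}). You then land on the ordinary cyclic bar complex $B^{cy}_\bullet(\Phi^{C_p}A)$, which is $\THH$ by definition. This avoids three things:
\begin{enumerate}
\item the relative form of \Cref{factor Phi}, and hence the need for cofibrancy of $B_\bullet(A,A,{}^\gamma A)$ as an $A\smashy A^{op}$-module;
\item the EKMM identification;
\item any use of \Cref{bar complexes isoms}, which the paper states and proves only for commutative $R$, although $A$ here is merely associative.
\end{enumerate}
The price is that you must push the twist through the last face $d_k=(\mu\smashy\id^{k-1})\circ\alpha_k$ and use symmetric monoidality to handle the rotation. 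The paper instead isolates the twist in the single bimodule ${}^\gamma A$ and checks it once.

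In both routes the crux is $\Phi^{C_p}(\ell_\gamma)=\id$. You are right to want this on the nose rather than up to homotopy, since a homotopy alone would not identify the two simplicial objects. It does hold in the monoidal (coend) point-set model of geometric fixed points. For a $C_p$-spectrum $X$ and a $C_p$-representation $V$, the map $\ell_\gamma$ on $X(V)^{C_p}$ agrees with $X(\gamma_V^{-1})$. Here $\gamma_V^{-1}$ is a $C_p$-equivariant isometry of $V$ restricting to the identity on $V^{C_p}$, so it is identified with the identity in the coend. This is a sharper form of your Weyl-group heuristic, and of the paper's remark that the two module structures agree after taking fixed points.
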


\begin{proof} For simplicity, let $\widetilde{A} \coloneqq \Phi^{C_p}(A)$. Recall from \Cref{bar complexes isoms} that $B_\bullet^{cy,C_p}(A) \cong A \smashy_{(A \smashy A\op)} B_\bullet(A,A,{}^\gamma A)$, and geometric realization commutes with geometric fixed points, so we have the following isomorphism of ring spectra
\[
\Phi^{C_p}(\THH_{C_p}(A)) \cong |\Phi^{C_p}(A \smashy_{(A \smashy A\op)} B_\bullet(A,A,{}^\gamma A))|.
\]
\noindent Note that $B_\bullet(\widetilde{A},\widetilde{A},\Phi^{C_p}({}^\gamma A)) \cong B_\bullet(\widetilde{A},\widetilde{A},\widetilde{A})$ as the only difference between $A$ and ${}^\gamma A$ is the left $A$-module multiplication map, but after taking fixed points their left $\widetilde{A}$-module multiplication maps are the same. Since we assumed that $A$ is cofibrant then so is $B_\bullet(A,A,{}^\gamma A)$, therefore \Cref{factor Phi} gives us the following isomorphism
\[
|\Phi^{C_p}(A \smashy_{(A \smashy A\op)} B_\bullet(A,A,{}^\gamma A))| \to |\widetilde{A} \smashy_{\widetilde{A} \smashy \widetilde{A}\op} B_\bullet(\widetilde{A},\widetilde{A},\widetilde{A})|
\]
\noindent By \cite[IX.2.4]{EKMM} we know that $|\widetilde{A} \smashy_{\widetilde{A} \smashy \widetilde{A}\op} B_\bullet(\widetilde{A},\widetilde{A},\widetilde{A})| \cong \THH(\widetilde{A})$. \end{proof}

This corollary can be used in conjunction with the following two theorems to make new computations of twisted THH.

\begin{theorem}[{\cite[Theorem 1.3]{BCSTHH(Thom)}}] \label{THH(HZp)}
     There is a stable equivalence
    \[
    \THH(H\FF_p) \simeq H\FF_p \smashy \Omega(S^3)_+
    \]
    \noindent for each prime $p$.
\end{theorem}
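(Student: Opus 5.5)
The statement immediately above is \Cref{THH(HZp)}, B\"okstedt's equivalence $\THH(H\FF_p) \simeq H\FF_p \smashy \Omega(S^3)_+$. It is cited from \cite{BCSTHH(Thom)}, so in the paper it needs no proof. If I were to prove it, I would follow the Thom spectrum route of Blumberg--Cohen--Schlichtkrull. The starting point is the Hopkins--Mahowald theorem: $H\FF_p$ is the Thom spectrum of an $E_2$ (double loop) map $f\colon \Omega^2 S^3 \to BGL_1(\SS_p^\wedge)$. This map extends the generator $S^1 \to BGL_1(\SS_p^\wedge)$ given by the unit $1-p$ (for $p=2$, use $-1$).

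The key step is the main theorem of \cite{BCSTHH(Thom)}. For a Thom spectrum $T(f)$ of an $E_2$-map $f\colon X \to BGL_1(\SS)$ with $X = \Omega B X$, there is an equivalence
\[
\THH(T(f)) \simeq T(f) \smashy BX_+ .
\]
Proving this is the main obstacle, and I would not redo it. The idea is to identify $\THH(T(f))$ with the Thom spectrum of a map $L^{\mathrm{free}}BX \to BGL_1(\SS)$ on the free loop space. One then uses the $E_2$ (in fact $E_1$ delooping) structure to split $L BX \simeq BX \times X$ compatibly with the map. The essential input is that the composite $BX \to B^2GL_1(\SS)$ is nullhomotopic, or more precisely that the relevant component factors through $\eta$ and vanishes after the splitting, so the $BX$ factor contributes only $BX_+$.

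Applying this with $X = \Omega^2 S^3$ gives $BX = \Omega S^3$, and hence
\[
\THH(H\FF_p) \simeq H\FF_p \smashy \Omega(S^3)_+ .
\]
This is consistent with the B\"okstedt spectral sequence. There $\HH_*(\mathcal{A}_*) $ gives $H_*(\THH(H\FF_p);\FF_p) \cong \mathcal{A}_* \otimes \FF_p[\sigma\bar\xi_1]$, and $\FF_p[\sigma\bar\xi_1]$ is $H_*(\Omega S^3)$ with $|\sigma\bar\xi_1| = 2$. That comparison could serve as a homological sanity check on the equivalence.

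In the paper, the role of this result is to combine with \Cref{twisted to THH}. One shows $\Phi^{C_p}(H\uF) \simeq H\FF_q$ using \Cref{phiHR}, and $\Phi^{C_2}(MU_\RR) \simeq MO$. These yield the stated geometric fixed point computations, with $\THH(MO) \simeq MO \smashy BBO_+$ again coming from \cite{BCSTHH(Thom)}.
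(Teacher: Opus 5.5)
You are right that the paper gives no proof and simply cites \cite{BCSTHH(Thom)}. However, your sketch applies the Blumberg--Cohen--Schlichtkrull theorem outside its hypotheses.

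For a loop map $f\colon X\to BGL_1(\SS)$, their general result identifies $\THH(T(f))$ with the Thom spectrum of
\[
L(BX)\xrightarrow{L(Bf)}L(B^2GL_1(\SS))\simeq B^2GL_1(\SS)\times BGL_1(\SS)\xrightarrow{\eta\times 1}BGL_1(\SS)\times BGL_1(\SS)\xrightarrow{\mu}BGL_1(\SS),
\]
where $\eta$ is the Hopf map. When $f$ is only a $2$-fold loop map, the splitting $L(BX)\simeq BX\times X$ is compatible with $L(Bf)$, but all you get is $\THH(T(f))\simeq T(f)\smashy T(\eta\circ Bf)$. Here $\eta\circ Bf\colon BX\to BGL_1(\SS)$ is a possibly nontrivial spherical fibration over $BX$. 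The clean form $T(f)\smashy BX_+$ holds for $3$-fold loop maps. In that case $\eta\circ Bf$ factors through $f$ and is absorbed by a shearing self-equivalence of $BX\times X$; it does not simply vanish. The Hopkins--Mahowald map $\Omega^2S^3\to BGL_1$ is only a $2$-fold loop map, so the step you call the key one does not apply as you stated it.

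Your description of the mechanism is also wrong. $Bf\colon\Omega S^3\to B^2GL_1$ is not nullhomotopic: it sends a generator of $\pi_2$ to the unit $1-p$. If it were null, then $f=\Omega Bf$ would be null and $T(f)$ would be untwisted rather than $H\FF_p$.

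The missing step is a Thom isomorphism. Since $\Omega S^3$ is simply connected, the fibration $\eta\circ Bf$ is $H\FF_p$-orientable; at $p=2$ every spherical fibration is. Hence $H\FF_p\smashy T(\eta\circ Bf)\simeq H\FF_p\smashy\Omega(S^3)_+$. For odd $p$ you can even see that $\eta\circ Bf$ is null: it is $\Omega$ of a map $S^3\to B^2GL_1$, classified by $\pi_1$ of the $p$-complete sphere, which is zero. With this step added, your outline does give the stated equivalence.

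Your homological sanity check is correct only at $p=2$. Even there, the $E^2$-term is $\mathcal{A}_*\otimes\Lambda(\sigma\bar\xi_1,\sigma\bar\xi_2,\ldots)$, and the polynomial algebra comes from the multiplicative extensions $(\sigma\bar\xi_i)^2=\sigma\bar\xi_{i+1}$. At odd $p$, $\sigma\bar\xi_1$ has odd degree $2p-1$, and the degree-$2$ polynomial generator is $\sigma\bar\tau_0$.
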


\begin{theorem}
    [{\cite[Theorem 3]{BCSTHH(Thom)}}] There are stable equivalences of spectra
    \[
    \THH(MG) \simeq MG \smashy BBG_+
    \]
    \noindent for $G$ one of the stabilized Lie groups: $O, SO, Spin, U,$ or $Sp$.
\end{theorem}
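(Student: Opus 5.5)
The plan is to use the Thom spectrum description of $MG$ together with the cyclic bar construction on the underlying loop space, following the strategy of \cite{BCSTHH(Thom)}. For each of the listed groups, the stable $J$-homomorphism gives an infinite loop map $f\colon BG \to BGL_1(\SS)$, and $MG \simeq T(f)$ is its Thom spectrum. Set $X = BG$, so that $BX = BBG$ is a delooping and $f$ extends to a loop map $Bf\colon BX \to B^2GL_1(\SS)$. The strategy is first to identify $\THH(T(f))$ as a Thom spectrum over the free loop space $LBX$, and then to use the extra loop structure on $f$ to untwist this Thom spectrum into $T(f) \smashy BX_+$.

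The first step works at the level of simplicial objects. The cyclic bar construction $B^{cy}_\bullet(X)$ realizes to $L BX$, and the Thom spectrum functor is symmetric monoidal and commutes with geometric realization. Applying it levelwise to the maps $X^{k+1} \to BGL_1(\SS)$ induced by $f$ should therefore give an equivalence
\[
\THH(T(f)) \simeq T\bigl(L^\eta(Bf)\bigr),
\]
where $L^\eta(Bf)\colon LBX \to BGL_1(\SS)$ is the composite
\[
LBX \xrightarrow{\;L(Bf)\;} LB^2GL_1(\SS) \simeq B^2GL_1(\SS) \times BGL_1(\SS) \xrightarrow{\;\eta \times \id\;} BGL_1(\SS) \times BGL_1(\SS) \to BGL_1(\SS).
\]
Here $\eta$ is the Hopf map acting through $B^2GL_1(\SS) \to BGL_1(\SS)$. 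Making this identification precise requires a point-set model in which the Thom functor preserves realizations and in which $B^{cy}_\bullet X \to LBX$ is an equivalence; this uses that $X$ is group-like.

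The second step uses that $f$ is an infinite loop map, and in particular a $3$-fold loop map. Under this hypothesis the splitting $LBX \simeq BX \times X$ is compatible with $f$, and the twisting term $\eta\circ Bf$ becomes nullhomotopic as a map of the relevant structured spaces. The twisting map then factors, up to homotopy, as the projection $LBX \simeq BX \times X \to X$ followed by $f$. Since Thom spectra of a map pulled back along a projection split off the base, this yields
\[
T\bigl(L^\eta(Bf)\bigr) \simeq T(f) \smashy BX_+ = MG \smashy BBG_+.
\]

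I expect the main obstacle to be showing that the $\eta$-twist vanishes. The map $\eta\colon B^2GL_1(\SS)\to BGL_1(\SS)$ is not null in general, so one must use the extra deloopings of $f$. The approach I would take first is to factor $\eta\circ B^2 f$ through a map from $B^3 X$ and use that $\eta$ is a stable class of positive degree, so that precomposition with a suspension-type map kills it. Failing that, I would argue via the $E_\infty$ structure that $T(L^\eta Bf)$ is a commutative $MG$-algebra with the expected homology, and compare it with $MG\smashy BBG_+$ using the $MG$-Thom isomorphism.
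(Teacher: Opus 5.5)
Your overall route matches the one in \cite{BCSTHH(Thom)}, which the paper only cites: identify $\THH(T(f))$ with $T(L^\eta(Bf))$, then untwist using the extra loop structure on $f$. The gap is in your second step, which rests on a false claim. The twist $\eta\circ Bf\colon BX\to BGL_1(\SS)$ is \emph{not} nullhomotopic in general, even for infinite loop maps.

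By naturality of the stable Hopf map, $\eta\circ Bf\simeq f\circ\eta_X$, where $\eta_X\colon BX\to X$ is multiplication by $\eta$ on the spectrum underlying $X$. For $G=O$ this composite is essential:
\begin{itemize}
\item on $\pi_2$, $\eta_X$ is the map $\pi_1 ko\to\pi_2 ko$, $\eta\mapsto\eta^2\neq 0$;
\item $f_*\colon\pi_2 BO\to\pi_2 BGL_1(\SS)=\pi_1^s$ is the isomorphism $J\colon\pi_1 O\cong\pi_1^s$.
\end{itemize}
Restricting $L^\eta(Bf)$ to $BX\times\{*\}$ in the evaluation splitting recovers exactly this map. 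So $L^\eta(Bf)$ is not homotopic to $f\circ\mathrm{pr}_X$ for that splitting. No argument of the form ``$\eta$ has positive degree, so precomposing with a suspension-type map kills it'' can work.

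The missing idea is that the extra deloopings are used to \emph{factor the twist through $f$}, not to kill it. Write $X=\Omega^3W$ with $f$ a $3$-fold loop map. The unstable Hopf map $S^3\to S^2$ induces $\eta_X\colon BX\simeq\Omega^2W\to\Omega^3W=X$, and naturality gives $\eta\circ Bf\simeq f\circ\eta_X$. Since $BX$ is a loop space, $L(BX)\simeq BX\times X$ compatibly with $L(Bf)$. Under this splitting $L^\eta(Bf)$ becomes $(b,x)\mapsto f(\eta_X(b)\cdot x)$. The shearing map $(b,x)\mapsto(b,\eta_X(b)\cdot x)$ is a self-equivalence of $BX\times X$ because $X$ is grouplike. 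It carries $f\circ\mathrm{pr}_X$ to $L^\eta(Bf)$, so $T(L^\eta(Bf))\simeq T(f\circ\mathrm{pr}_X)\simeq T(f)\smashy BX_+$.

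Equivalently, $T(L^\eta(Bf))\simeq T(\eta\circ Bf)\smashy T(f)$, and $T(\eta\circ Bf)\simeq T(f\circ\eta_X)$ is $T(f)$-oriented, so the Thom isomorphism applies. Your fallback invokes the $MG$-Thom isomorphism but supplies neither this orientation input nor a comparison map. Agreement of homology alone does not produce an equivalence.

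In Step~1, the nontrivial content of the first theorem of \cite{BCSTHH(Thom)} is identifying the realized map $B^{cy}_\bullet X\to BGL_1(\SS)$ with $L^\eta(Bf)$, including where $\eta$ comes from. That map only makes sense after choosing a strictly commutative model of $BGL_1(\SS)$. You assert the formula rather than derive it; citing that theorem would suffice.
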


Recall the result in \Cref{phiHR} allows us to say that for $\uF$ a $C_p$-Mackey field such that $\uF(C_p/C_p) = k$ and $\uF(C_p/e) = 0$, that $\Phi^{C_p}(H\uF) \cong Hk$ as a non-equivariant spectrum.

\begin{corollary}
    Let $p$ and $q$ be distinct primes. If $\uF$ is the $C_p$-Mackey field such that $\uF(C_p/C_p) = \FF_q$ and $\uF(C_p/e) = 0$, then
    \[
    \Phi^{C_p}(\THH_{C_p}(H\uF)) \simeq H\FF_q \smashy \Omega(S^3)_+.
    \]
\end{corollary}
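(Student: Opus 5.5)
The plan is to chain together three earlier results. First, I replace $H\uF$ by a cofibrant associative ring $C_p$-spectrum model. This is allowed because $\THH_{C_p}(-)$ always means the left derived functor. \Cref{twisted to THH} then gives an equivalence
\[
\Phi^{C_p}(\THH_{C_p}(H\uF)) \simeq \THH(\Phi^{C_p}(H\uF)).
\]
Second, by \Cref{phiHR}, the geometric fixed points $\Phi^{C_p}(H\uF)$ are equivalent, as a non-equivariant ring spectrum, to $H\FF_q$. Since $\THH$ is homotopy invariant under weak equivalences of ring spectra (after cofibrant replacement), this gives $\THH(\Phi^{C_p}(H\uF)) \simeq \THH(H\FF_q)$. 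Finally, \Cref{THH(HZp)} applied at the prime $q$ yields $\THH(H\FF_q) \simeq H\FF_q \smashy \Omega(S^3)_+$.

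The one point needing care is the cofibrancy hypothesis in \Cref{twisted to THH}. If $A \to H\uF$ is a cofibrant replacement in associative ring $C_p$-spectra, I need $\Phi^{C_p}(A) \simeq \Phi^{C_p}(H\uF)$ as ring spectra. This follows because the geometric fixed points functor preserves weak equivalences, at least between cofibrant objects or in the derived sense. I also need that a weak equivalence of ring spectra induces an equivalence on $\THH$, which is standard for cofibrant (or flat) inputs. I would also check that \Cref{phiHR} gives an equivalence of ring spectra rather than only of underlying spectra; the paper states it as a ring-spectrum isomorphism, so this is available.

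I expect the main obstacle to be exactly this bookkeeping of cofibrant replacements and the ring structure on $\Phi^{C_p}(A)$. The distinctness of $p$ and $q$ is not used in any essential way beyond making $\uF$ a natural example; the field condition $\uF(C_p/e)=0$ is what \Cref{phiHR} uses.
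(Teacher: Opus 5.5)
Your proposal is correct and is the paper's argument: the corollary comes from composing \Cref{twisted to THH} (applied to a cofibrant model of $H\uF$), the identification $\Phi^{C_p}(H\uF)\simeq H\FF_q$ from \Cref{phiHR}, and \Cref{THH(HZp)} at the prime $q$. Your extra care about cofibrant replacement, derived geometric fixed points, and the ring structure on $\Phi^{C_p}(A)$ is more than the paper spells out, and you are right that $p\neq q$ plays no essential role.
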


We can also use the fact that $\Phi^{C_2}(MU_\RR) = MO$ to show the following corollary.

\begin{corollary}
    There is an equivalence of spectra
    \[
    \Phi^{C_2}(\THH_{C_2}(MU_\RR)) \simeq MO \smashy BBO_+.
    \]
\end{corollary}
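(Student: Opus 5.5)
The plan is to apply \Cref{twisted to THH} with $A = MU_\RR$ (or, more precisely, with a cofibrant replacement of $MU_\RR$ as an associative ring $C_2$-spectrum indexed on $\widetilde{U}$, since $\THH_{C_2}$ denotes the left derived functor), and then to invoke the classical computation of $\THH(MO)$ from \cite[Theorem 3]{BCSTHH(Thom)}. Concretely, choose a cofibrant replacement $A \to MU_\RR$ in associative ring $C_2$-spectra. Because $\THH_{C_2}(MU_\RR)$ is by convention $\THH_{C_2}(A)$, \Cref{twisted to THH} gives an equivalence
\[
\Phi^{C_2}(\THH_{C_2}(MU_\RR)) \simeq \THH(\Phi^{C_2} A).
\]

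Next, identify $\Phi^{C_2}A$ with $MO$ as an associative ring spectrum. Geometric fixed points preserve weak equivalences between cofibrant objects. Moreover, $\Phi^{C_2}$ is strong symmetric monoidal on cofibrant objects, so the weak equivalence $A \to MU_\RR$ induces a weak equivalence of ring spectra $\Phi^{C_2}A \simeq \Phi^{C_2} MU_\RR$ (using the derived $\Phi^{C_2}$ on the target). The classical fact $\Phi^{C_2}(MU_\RR) \simeq MO$, as ring spectra, then yields $\Phi^{C_2}A \simeq MO$. Since $\THH$ is homotopy invariant for weak equivalences of ring spectra, provided one works with cofibrant models or the derived functor, we obtain $\THH(\Phi^{C_2}A) \simeq \THH(MO)$.

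Finally, apply \cite[Theorem 3]{BCSTHH(Thom)} with $G = O$ to get $\THH(MO) \simeq MO \smashy BBO_+$. Stringing the equivalences together gives the corollary.

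The main obstacle is the middle step: ensuring that the equivalence $\Phi^{C_2}(MU_\RR) \simeq MO$ is an equivalence of \emph{associative ring} spectra compatible with the cofibrant replacement, not merely an equivalence of underlying spectra, since $\THH$ depends on the multiplicative structure. I would handle this by citing the standard identification of $\Phi^{C_2} MU_\RR$ with $MO$ as $E_\infty$ (hence associative) Thom spectra. Geometric fixed points of a Thom spectrum over the $C_2$-space $BU_\RR$ are the Thom spectrum over $(BU_\RR)^{C_2} = BO$, and this identification is multiplicative because $\Phi^{C_2}$ is symmetric monoidal on cofibrant objects. Homotopy invariance of $\THH$ on cofibrant associative rings then finishes the argument.
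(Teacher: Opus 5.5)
Your proposal is correct and follows the paper's route. You apply \Cref{twisted to THH} to a cofibrant model of $MU_\RR$, use $\Phi^{C_2}MU_\RR \simeq MO$, and invoke \cite[Theorem 3]{BCSTHH(Thom)} with $G = O$; your extra care that $\Phi^{C_2}MU_\RR \simeq MO$ holds as ring spectra (so that $\THH$ is computed correctly) addresses a point the paper leaves implicit.
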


\begin{remark}
   Kochman computes $H_*(BBSO;\FF_2)$ in \cite[Theorem 92]{Kochman}, in fact $H_*(BBSO;\FF_2)$ is polynomial. There is a fiber sequence $B\FF_2 \to BBSO \to BBO$ and therefore one could use the Serre spectral sequence to compute $H_*(BBO;\FF_2)$.
\end{remark}

The following proposition gives a formula for $\underline{E}_\star(\THH_{C_p}(A))$, where $E$ is a geometric $C_p$-spectrum, and $A$ is a cofibrant, associative ring $C_p$-spectrum. The above computations of the geometric fixed points of twisted THH can be plugged into this formula to make many more computations of the homology of twisted THH.

\begin{proposition}
    Let $U$ be a complete $S^1$-universe, and let $\widetilde{U} \coloneqq i^*_{C_p} U$. Let $A$ be a cofibrant, associative ring $C_p$-spectrum indexed on the $C_p$-universe $\widetilde{U}$, for $p$ prime. If $E$ is a geometric $C_p$-spectrum, then $\underline{E}_\star(\THH_{C_p}(A))(C_p/C_p) \cong \Phi^{C_p}(E)_{\dim(\star^{C_p})}(\THH(\Phi^{C_p}A))$ and $\underline{E}_\star(\THH_{C_p}(A))(C_p/e) \cong 0$.
\end{proposition}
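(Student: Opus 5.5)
The plan is to reduce the whole computation to geometric fixed points, using that $E$ is geometric, and then to invoke \Cref{twisted to THH}. Write $T \coloneqq \THH_{C_p}(A)$. The underlying level comes first and is quick. By definition $\underline{E}_\star(T)(C_p/e) = \pi_\star^e(T \smashy E)$, and this is the underlying (nonequivariant) homotopy of $i_e^*T \smashy i_e^*E$. Since $E$ is geometric, $\underline{\pi}_\star(E)(C_p/e)=0$, so $i_e^*E$ is contractible. Hence $i_e^*(T\smashy E)\simeq *$, and so $\underline{E}_\star(T)(C_p/e)\cong 0$. Put differently, $T \smashy E$ is again geometric.

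For the top level, I would use \Cref{GeometricSpectra}. This gives $E \simeq \widetilde{E}\P \smashy E$, and therefore
\[
T \smashy E \simeq \widetilde{E}\P \smashy T \smashy E.
\]
Next I use the standard fact that for any $C_p$-spectrum $X$ we have $(\widetilde{E}\P \smashy X)^{C_p} \simeq \Phi^{C_p}(X)$. This is the definition of geometric fixed points for $G=C_p$, or equivalently \Cref{geom=fixed} applied to the spectrum $\widetilde{E}\P\smashy X$, which is concentrated over $C_p$. Applying it with $X = T\smashy E$ and $S^{-\beta}$ smashed in, an element of degree $\alpha=[\gamma]-[\beta]$ is computed by
\[
[S^\gamma, S^\beta\smashy T\smashy E]^{C_p} \cong \pi_0\big(\Phi^{C_p}(S^{\beta-\gamma}\smashy T\smashy E)\big).
\]
Since $\Phi^{C_p}$ is strong symmetric monoidal (on cofibrant objects, which we may assume by cofibrant replacement) and $\Phi^{C_p}(S^V) \simeq S^{V^{C_p}}$, the right-hand side becomes
\[
\pi_{\dim(\alpha^{C_p})}\big(\Phi^{C_p}(T)\smashy\Phi^{C_p}(E)\big) = \Phi^{C_p}(E)_{\dim(\alpha^{C_p})}(\Phi^{C_p}T).
\]
Alternatively, this step is exactly \Cref{SwitchEquivariance} with $D=S^\gamma$, combined with \Cref{geom=fixed}, as in the proof of \Cref{RealOrientedField}.

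The last step is to substitute \Cref{twisted to THH}, namely $\Phi^{C_p}(T)\simeq\THH(\Phi^{C_p}A)$, which uses the hypothesis that $A$ is cofibrant. This yields the stated formula $\Phi^{C_p}(E)_{\dim(\star^{C_p})}(\THH(\Phi^{C_p}A))$.

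The main obstacle is the identification $(\widetilde{E}\P\smashy X)^{C_p}\simeq\Phi^{C_p}X$ together with the monoidality of $\Phi^{C_p}$ on the relevant (possibly non-cofibrant) smash products. I would handle this by cofibrantly replacing $E$ (and $T$) first, and by citing \cite{LMS} and \cite{HHRBook} for these facts. Beyond that I need to make sure the grading bookkeeping produces $\dim(\alpha^{C_p})$ rather than $\dim(\alpha)$.
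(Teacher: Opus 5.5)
Your proposal is correct and follows essentially the paper's argument. You use that $E$, and hence $E \smashy \THH_{C_p}(A)$, is geometric, which kills the $C_p/e$ level and identifies categorical $C_p$-fixed points with geometric fixed points. You then use monoidality of $\Phi^{C_p}$ together with $\Phi^{C_p}(S^V)\simeq S^{V^{C_p}}$, and finally substitute \Cref{twisted to THH}. The only difference is cosmetic: you move the representation sphere into the target and take $\pi_0$ of fixed points, whereas the paper applies \Cref{SwitchEquivariance} directly with $D=S^\star$, which is the alternative you already note.
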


\begin{proof}
    Since $E$ is a geometric $C_p$-spectrum, it is $C_p$-weakly equivalent to $\widetilde{E}\mathcal{P} \smashy E$. Further, since any spectrum smashed with a geometric spectrum is again geometric, then $E \smashy \THH_{C_p}(A)$ is geometric. Therefore $\underline{E}_\star(\THH_{C_p}(A)) \cong \underline{\pi}_\star(E \smashy \THH_{C_p}(A))$ is concentrated over $C_p$, namely, $\underline{E}_\star(\THH_{C_p}(A))(C_p/e) \cong 0$.
    
     Using the definition of geometric spectra, \Cref{SwitchEquivariance}, the definition of geometric fixed points, and \Cref{twisted to THH} we have the following isomorphisms:
    \begin{align*}
        \underline{E}_\star(\THH_{C_p}(A)) & \cong [S^\star,E \smashy \widetilde{E}\mathcal{P} \smashy \THH_{C_p}(A)]^{C_p} \\
        & \cong [\Phi^{C_p}(S^\star), (E \smashy \widetilde{E}\mathcal{P} \smashy \THH_{C_p}(A))^{C_p}]^e \\
        & \cong [S^{\dim(\star^{C_p})}, \Phi^{C_p}(E \smashy \THH_{C_p}(A))]^e \\
        & \cong [S^{\dim(\star^{C_p})}, \Phi^{C_p}(E) \smashy \THH(\Phi^{C_p} A)]^e.
    \end{align*}
\noindent Evaluated at $C_p/C_p$ we have the following isomorphisms:
\begin{align*}
    \underline{E}_\star(\THH_{C_p}(A))(C_p/C_p) & \cong [S^{\dim(\star^{C_p})} \smashy \Sigma^\infty (C_p/C_p)_+, \Phi^{C_p}(E) \smashy \THH(\Phi^{C_p} A)]^e \\
   & \cong [S^{\dim(\star^{C_p})}, \Phi^{C_p}(E) \smashy \THH(\Phi^{C_p} A)]^e \\
   & \cong \Phi^{C_p}(E)_{\dim(\star^{C_p})}(\THH(\Phi^{C_p} A)).
\end{align*} 
\noindent This proves the result. \end{proof}

\bibliographystyle{amsalpha}
\bibliography{bibby.bib}

\end{document}